\setlist[enumerate]{itemsep=1pt}
\newtheorem{assumption}{Assumption}
\newcommand{\thickhline}{%
    \noalign {\ifnum 0=`}\fi \hrule height 1pt
    \futurelet \reserved@a \@xhline
}
\newcolumntype{"}{@{\hskip\tabcolsep\vrule width 1pt\hskip\tabcolsep}}
\def\grad{\nabla}
\def\ba{\mathbf{a}}
\def\bw{\mathbf{w}}
\def\bx{\mathbf{x}}  
\def\by{\mathbf{y}}
\def\bz{\mathbf{z}}
\def\bE{\mathbf{E}}
\def\bI{\mathbf{I}}
\def\bP{\mathbf{P}}
\def\p{{\boldsymbol{\pi}}}
\def\cA{\mathcal{A}}
\def\cD{\mathcal{D}}
\def\cE{\mathcal{E}}
\def\cF{\mathcal{F}}
\def\cG{\mathcal{G}}
\def\cL{\mathcal{L}}
\def\cM{\mathcal{M}}
\def\cN{\mathcal{N}}
\def\cO{\mathcal{O}}
\def\cP{\mathcal{P}}
\def\cR{\mathcal{R}}
\def\cU{\mathcal{U}}
\def\cX{\mathcal{X}}
\def\cY{\mathcal{Y}}
\def\smskip{\smallskip}
\def\texitem#1{\par\smskip\noindent\hangindent 25pt
               \hbox to 25pt {\hss #1 ~}\ignorespaces}
\def\norm#1{\|#1\|}
\newcommand{\BEAS}{\begin{eqnarray*}}
\newcommand{\EEAS}{\end{eqnarray*}}
\newcommand{\BEA}{\begin{eqnarray}}
\newcommand{\EEA}{\end{eqnarray}}
\newcommand{\BEQ}{\begin{eqnarray}}
\newcommand{\EEQ}{\end{eqnarray}}
\newcommand{\BIT}{\begin{itemize}}
\newcommand{\EIT}{\end{itemize}}
\newcommand{\BNUM}{\begin{enumerate}}
\newcommand{\ENUM}{\end{enumerate}}
\newcommand{\BA}{\begin{array}}
\newcommand{\EA}{\end{array}}
\newcommand{\reals}{\mathbb{R}}
\newcommand{\integers}{\mathbb{Z}}
\newcommand{\Rank}{\mathop{\bf rank}}
\newcommand{\Tr}{\mathop{\bf Tr}}
\def\fprod#1{\left\langle#1\right\rangle}
\def\prox#1{\mathbf{prox}_{#1}}
\DeclareMathOperator*{\argmax}{\mathbf{argmax}}
\DeclareMathOperator*{\argmin}{\mathbf{argmin}}
\newcommand{\dom}{\mathop{\bf dom}}
\newif\ifpagenumbering
\newsavebox{\theorembox}
\newsavebox{\lemmabox}
\newsavebox{\defnbox}
\newsavebox{\assbox}
\savebox{\theorembox}{\noindent\bf Theorem}
\savebox{\lemmabox}{\noindent\bf Lemma}
\savebox{\defnbox}{\noindent\bf Definition}
\newtheorem{defn}{\usebox{\defnbox}}
\newcommand{\sgdab}{\texttt{SGDA-B}{}}
\newcommand{\rbsgda}{\texttt{RB-SGDA}{}}
\newcommand{\rbagda}{\texttt{RB-SAGDA}{}}
\newcommand{\agda}{\texttt{AGDA}{}}
\newcommand{\gda}{\texttt{GDA}{}}
\newcommand{\smagda}{\texttt{sm-AGDA}{}}
\newcommand{\tiada}{\texttt{TiAda}{}}
\newcommand{\neada}{\texttt{NeAda}{}}
\newcommand{\vrlm}{\texttt{VRLM}{}}
\crefname{assumption}{Assumption}{Assumptions}
\crefname{theorem}{Theorem}{Theorems}
\crefname{lemma}{Lemma}{Lemmas}
\def\xz#1{\textcolor{black}{#1}}
\def\sa#1{\textcolor{black}{#1}}
\def\rv#1{\textcolor{black}{#1}}
\def\saa#1{\textcolor{black}{#1}}
\def\xqs#1{\textcolor{black}{#1}}
\def\mg#1{\textcolor{black}{#1}}
\DeclareMathOperator{\gt}{\widetilde\nabla}
\DeclareMathOperator{\bom}{\boldsymbol{\omega}}
\DeclareMathOperator{\bzt}{\boldsymbol{\zeta}}
\DeclareMathOperator{\bxi}{\boldsymbol{\xi}}
\def\Ek#1{\mathbf{E}\Big[#1~\Big|~\cF^{k}\Big]}
\def\E#1{\mathbf{E}\left[#1\right]}
\def\p#1{\mathbf{P}\left(#1\right)}
\def\tz{\tilde\bz_{(\ell)}}
\def\tx{\tilde\bx_{(\ell)}}
\def\ty{\tilde y_{(\ell)}}
\def\txi{\tilde\bxi_{(\ell)}}
\definecolor{darkgreen}{rgb}{0.2,0.8,0.1}
\def\mg#1{\textcolor{black}{#1}}
\def\mgb#1{\textcolor{black}{#1}}
\def\mgrev#1{\textcolor{black}{#1}}
\begin{document}

\title{\vspace*{-7mm} A Stochastic GDA Method With Backtracking For Solving Nonconvex 
Concave Minimax Problems}

\author{\name Necdet Serhat Aybat \email nsa10@psu.edu \\
       \addr Department of Industrial \& Manufacturing Engineering\\
       Penn State University\\
       University Park, PA 16802, USA
       \AND
       \name Qiushui Xu \email qjx5019@psu.edu \\
       \addr Department of Industrial \& Manufacturing Engineering\\
       Penn State University\\
       University Park, PA 16802, USA
       \AND
       \name Xuan Zhang \email xxz358@psu.edu \\
       \addr Department of Industrial \& Manufacturing Engineering\\
       Penn State University\\
       University Park, PA 16802, USA
       \AND
       \name Mert G\"urb\"uzbalaban \email mg1366@rutgers.edu \\
       \addr Department of Management Science and Information Systems\\
       Rutgers University\\
       Piscataway, NJ 08854, USA}

\editor{}
\maketitle
\vspace*{-2mm}
\begin{abstract} 
We propose a stochastic GDA (\mg{g}radient \mg{d}escent \mg{a}scent) method with backtracking (\sgdab) to solve nonconvex-
concave~\sa{(NCC)}
minimax problems of the form: $\min_{\bx} \max_y \sum_{i=1}^N \sa{g_i(x_i)}+f(\bx,y)-h(y)$, where $h$ and $g_i$ for $i=1,\cdots,N$ are closed, convex functions, and \sa{for some $L,\mu\geq 0$}, $f$ is $L$-smooth and $f(\bx,\cdot)$ is $\mu$-strongly concave for all $\bx$ in the problem domain.  
We consider 
the stochastic setting where 
one only has an access to an unbiased stochastic oracle of $\nabla f$ with a finite variance \rv{bound $\sigma^2$}. \rv{While most of the existing methods assume knowledge of 
$L$, 
\rv{$\mu$ and/or $\sigma^2$}, 
\sgdab{} is agnostic to 
all of these problem parameters.} Moreover, \sgdab~\mg{can support random block-coordinate updates}.
\rv{In the deterministic setting, i.e., $\sigma^2=0$ and one can compute $\nabla f$ exactly, \sgdab{} can compute an $\epsilon$-stationary point within $\cO(L\kappa^2/\epsilon^2)$ and $\cO(L^3/\epsilon^4)$ gradient calls when $\mu>0$ and $\mu=0$, respectively, where $\kappa\triangleq L/\mu$.} \sa{In the stochastic setting, i.e., $\sigma^2>0$, for any $p\in(0,1)$ and $\epsilon>0$,} 
it can compute an $\epsilon$-stationary point \saa{with high probability, which requires} $\cO(L \kappa^3  \epsilon^{-4} \log^2(1/p))$ and \sa{$\tilde\cO(L^4\epsilon^{-7}\log^2(1/p))$ stochastic oracle calls, \saa{with probability at least $1-p$,} when $\mu>0$ and $\mu=0$, respectively}. To our knowledge, \sgdab{} is the first GDA-type method with backtracking to solve \sa{NCC} minimax problems \mg{and achieves the best complexity among the methods that are agnostic to \rv{$L$, $\mu$ and $\sigma^2$}.} We \mg{also} provide numerical results for \sgdab{} on a distributionally robust learning problem \mg{illustrating the potential performance gains that can be 
\saa{achieved} by \sgdab{}.}
\end{abstract}

\begin{keywords}
minimax, backtracking, convergence rate, 
parameter agnostic, nonconvexity\looseness=-1
\end{keywords}
\vspace*{-2mm}
\section{Introduction}
\vspace*{-2mm}
\sa{Let $\mathcal{X}_i=\mathbb{R}^{n_i}$ for $i\in \mathcal{N} \triangleq\{1,2, \ldots, N\}$ and \rv{$\mathcal{Y}=\mathbb{R}^{n_y}$} be finite dimensional Euclidean spaces, and define} $\mathbf{x}\triangleq\left[x_i\right]_{i \in \mathcal{N}} \in \Pi_{i \in \mathcal{N}} \mathcal{X}_i \triangleq \mathcal{X}=\mathbb{R}^{n_x}$ where $\rv{n_x} \triangleq \sum_{i \in \mathcal{N}} n_i$. \sa{Optimization problems with \mg{such} block-coordinate structure appear in many important applications including, training support vector machines~\citep{chang2008coordinate}, \mg{layer-wise training of deep learning models}~\citep{layer_wise2021}, compressed sensing~\citep{li2009coordinate}, regularized regression~\citep{wu2008coordinate} \mg{and} truss topology design~\citep{richtarik2012efficient}.}  In this paper, we study the following class of \emph{non-convex minimax} problems: \vspace*{-1mm}
\begin{equation}
\begin{aligned}\label{eq:main-problem}
\min_{\mathbf{x} \in \mathcal{X}} \max _{y \in \mathcal{Y}} \mathcal{L}(\mathbf{x}, y) \triangleq 
g(\bx)+f(\mathbf{x}, y)-h(y) \quad \mbox{with} \quad g(\bx)\triangleq \sum_{i\in\cN}g_i(x_i),
\end{aligned}
\vspace*{-1mm}
\end{equation}
where $f:\cX\times\cY\to \mathbb{R}$ is differentiable with a Lipschitz gradient, $f$ is \sa{possibly} non-convex in $\bx$ and (strongly) concave in $y$ with modulus $\mu\geq 0$, $h:\cY\to\reals\cup\{+\infty\}$ and $g_i:\cX_i\to\reals\cup\{+\infty\}$ for $i\in\cN$ are closed convex functions. \sa{We consider two scenarios: \mg{$(i)$} \textit{deterministic setting}, where we assume that one can compute $\grad f$ exactly, \mg{and} \mg{$(ii)$} \textit{stochastic setting}, where one can only access to stochastic estimates of $\grad f$, see \cref{assumption:noise} for details.} If $\mu>0$, then we call the \sa{minimax problem in~\eqref{eq:main-problem}} as \textit{weakly convex-strongly concave} (WCSC), whereas \sa{for} $\mu=0$, we call it as \textit{weakly convex-merely concave}~(WCMC). Both problems arise frequently in \mg{many} applications including \textit{constrained optimization} of weakly-convex objectives based on Lagrangian duality \citep{li2021augmented}, Generative Adversarial Networks
~\citep{goodfellow2014generative}
\mg{and} distributional robust learning with weakly convex 
loss functions such as those arising in deep learning \citep{gurbuzbalaban2020stochastic,rafique1810non}. 
These applications admitting the formulation in~\eqref{eq:main-problem} often require (\textit{i}) handling non-convexity in high dimensions and (\textit{ii}) exploiting the block structure in the primal variable. \sa{Indeed, for these machine learning~(ML) problems, the setting with larger model size compared to the number of data points 
has 
\mg{attracted significant active research, where it has been commonly observed that larger models can often generalize better, i.e., they perform better on unseen data 
~\citep{nakkiran2021deep}.}} 
\sa{In addition to \mg{the above-mentioned} 
examples, another particular example for \eqref{eq:main-problem} with a block-structure would be the \textit{distributed computation} setting for WCSC/WCMC problems~\citep{zhang2023jointly}, i.e., $\min_{x\in\reals^d}\max_{y\in\cY}\sum_{i\in\cN} f_i(x,y)$ over a network of processing nodes represented by a connected undirected graph $\cG=(\cN,\cE)$ with $\cN$ denoting the set of computing nodes and $\cE \mg{\subseteq} \cN\times\cN$ denoting the edges of $\cG$ through which incident nodes can 
\saa{communicate} with each other; in case that synchronous parallel processing is available, one can reformulate the problem as $\min_{\bx\in\cX}\{\max_{y\in\cY}\sum_{i\in\cN} f_i(x_i,y):\ x_i=x_j~\forall~(i,j)\in\cE\}$, i.e., $\cX_i=\reals^d$ for all $i\in\cN$; hence, $n_x=|\cN| d$. 
}

\textbf{\mg{\textit{Our goals:}}
} The use of first-order primal-dual~(FOPD) methods have turned out to be an effective approach to tackle with \eqref{eq:main-problem}. Majority of 
existing 
    FOPD methods require the knowledge of \textit{global Lipschitz constant} for $\grad f$, \rv{\textit{concavity modulus} related to $f$ and also \textit{a variance bound} for the first-order stochastic oracle}; these are relatively standard assumptions when deriving rate statements for first-order~(FO) schemes. That being said, in many practical settings either these constants may not be readily available or using the global constants generally leads to conservative updates, e.g., \rv{smaller step sizes and/or large sample sizes,} 
    \sa{slowing down the} convergence in practice. One avenue for removing such a requirement lies in adopting 
    step-size search or backtracking schemes to exploit \textit{local} Lipschitz constants \rv{and concavity modulus} when determining the step sizes. For deterministic 
    \sa{convex-concave minimax} problems, 
    \sa{backtracking} methods that adjust the step size 
    \sa{adaptively 
    through estimating local Lipschitz constants have proven to be very effective both in theory\footnote{\sa{These methods relying on backtracking achieve the same complexity guarantees (up to $\cO(1)$ constant) with those requiring the availability of global Lipschitz constants.}} and practice, e.g., see~\citep{malitsky2018first,malitsky2018proximal,hamedani2018_RBC,jalilzadeh2019doubly,hamedani2021primal,jiang2022generalized}}. However, for \emph{stochastic non-convex} problems such as \eqref{eq:main-problem}, we are not aware of any \textit{backtracking}/\textit{step-size search} strategy with iteration complexity guarantees. \sa{In this paper, our aim is to fill this gap \rv{even if the variance bound $\sigma^2$ is unknown,} and develop a backtracking technique 
    to be incorporated into a \textit{single-loop stochastic FOPD method}\footnote{It is more appropriate to adopt single-loop algorithms
    as subroutines while solving large-scale problems efficiently --usually in methods with nested loops, inner iterations are terminated when a sufficient optimality condition holds and these conditions are usually very conservative, leading to excessive number of inner iterations. Indeed, single loop algorithms are preferable compared to their multi-loop 
    alternatives in many settings, e.g., see~\citep{zhang2020single} for a discussion.} for the nonconvex minimax problem in \eqref{eq:main-problem}.} 
    
    Our secondary goal is to allow 
    for \mg{block-}coordinate updates within the backtracking scheme we design. Indeed, there are some practical scenarios for which adopting randomized block coordinate updates~\sa{(RBC)} would be beneficial, \mg{which we discuss next. 
    } 
    

\sa{\textit{Scenario I:} \saa{Consider the setting with} a very large primal dimension, i.e., $n_x\gg 1$. 
\saa{Suppose that} while it is impractical to 
compute $\grad_\bx f$ at every iteration,
the problem has a \emph{coordinate-friendly} structure~\citep{peng2016coordinate}, i.e., {for any $i\in\cN$,} the
{amount of work}
to compute the partial-gradient $\grad_{x_i} f$ 
{is}
${n_i/n\approx} 1/N$ fraction of the {work required for} 
$\grad_{\bx} f$ 
computation.} In this setup, randomized block updates will lead to low per-iteration cost and possibly small memory overhead, \mg{e.g., \citep{nesterov2012efficiency,hamedani2018_RBC,fercoq-bianchi,xiao2019dscovr}.} 

\sa{\textit{Scenario II:} In the nonconvex regime, even if the problem is not coordinate friendly, adopting random block coordinate updates might still be beneficial as it improves the generalization power of particular machine learning models, e.g., layer-by-layer training of deep neural networks is commonly used in practice for this purpose, e.g., see~\citep{nakamura2021block}.} In this setup, randomized block updates can help with generalization in deep learning, even though \sa{the per-iteration complexity} may not \mg{necessarily} improve.

    \rv{Therefore, our objective in this paper is to design an efficient stochastic first-order primal-dual method with \emph{randomized} block-coordinate updates that can generate an $\epsilon$-stationary point (see Definition~\ref{def:eps-stationarity}) of the structured non-convex minimax problem {given} in~\eqref{eq:main-problem} with high probability and without requiring the {knowledge of the} Lipschitz constant $L$, concavity modulus $\mu$ and/or the variance bound $\sigma^2$ for the stochastic oracle, \mg{which are typically unknown a priori}. 
    } 


    \textbf{\textit{Challenges:}} In contrast to the convex setting, \sa{i.e., 
    (strongly) convex-(strongly) concave minimax problems,} the \textit{nonconvexity} 
    introduces 
    significant extra challenges in designing a reasonable \textit{backtracking} 
    \sa{\textit{condition} 
    --\saa{which is employed to decide whether} a candidate step size should be accepted or rejected through checking the condition.} 
    Excluding the \textit{multi-loop} methods such as~\citep{nouiehed2019solving,lin2020near,ostrovskii2021efficient,kong2021accelerated}, in {the} non-convex setting the convergence guarantees for 
    all \sa{\textit{single-loop}} FO methods that we are aware of, except for very few, e.g., \citep{lu2020hybrid,zhang2020single,yang2022faster,xu2023unified}, are provided 
    in terms of the partial sum of gradient norms corresponding to the primal function or 
    \sa{its} Moreau envelope 
    over the past iterations, e.g., \citep{lin2020gradient,boct2020alternating,sebbouh2022randomized,huang2021efficient,chen2022accelerated}, rather than having guarantees \sa{involving the gradients of the coupling function $f$ evaluated at} the last iterate or at an ergodic average. 
    \sa{Clearly, computing the gradient of the \textit{primal function} or of the \textit{Moreau envelope} at each iteration to check a backtracking condition would be impractical \rv{even} in the deterministic setting, {as the former is typically not easy to compute exactly and it requires solving an optimization problem while the latter one requires evaluating the proximal 
    map of the primal function \citep{chen2022accelerated}.} 
    More importantly, this computation would be impossible for the stochastic setting as we can only access to the stochastic estimates of $\grad f$ at a given point. Thus, designing a backtracking condition that would lead to the state-of-the-art
    complexity guarantees for WCSC and WCMC problems
    and that is \textit{easy} to check in practice
    requires developing new analysis techniques (different from those used for the convex setting as they do not extend to the non-convex setting) to provide guarantees for quantities that are readily available such as the \textit{stochastic gradient map} involving the coupling function --see Definition~\ref{def:s-gradmapping} and note that the computation cost of the (stochastic) gradient map is significantly lower compared to the impractical computational burden associated with computing the norm of the gradient of the primal function\footnote{\mg{
    This quantity 
    can be computed based on the Danskin's theorem; but, this would require solving an optimization problem \sa{at every time the backtracking condition is checked, which would be} computationally expensive.}}.}
    
    Another source of difficulty, 
    in the analysis of single-loop methods for the non-convex setting is 
    \sa{the \textit{necessity} of imposing} \saa{an} appropriate \textit{time-scale separation} between the primal and dual updates in attaining convergence for the nonconvex setting~{\citep{li2022convergence}}. \sa{For instance, in both \citep{lin2020gradient,boct2020alternating}, the convergence is shown for \mg{dual stepsize} $\eta_y=\Theta(\frac{1}{L})$ and \mg{primal stepsize} $\eta_x=\Theta(\frac{1}{\kappa^2 L})$ for deterministic WCSC minimax problems where $L$ is the Lipschitz constant of the gradient of $f$ and $\kappa=L/\mu$, i.e., the ratio $\eta_y/\eta_x=\cO(\kappa^2)$ is needed for the convergence analysis; however, it is not clear how one can incorporate backtracking on $\eta_y$ and preserve the ratio between the primal and dual step sizes with some theoretical guarantees. 
    For the deterministic setting, there are very \textit{few} backtracking methods for the weakly convex-weakly concave~(WCWC) setting under Weak MVI or negative-comonotonicity assumptions, e.g.,~\citep{lee2021fast,pethick2023escaping}; however, these conditions are not guaranteed to hold \saa{in general for the WCSC or WCMC minimax problems we consider in this paper in the form of~\eqref{eq:main-problem}} --see the discussion in~\cref{sec-appendix-deterministic} on existing methods for WCWC problems.} 
    
    \rv{Finally, another challenge, maybe the main one, is that, even if we design a backtracking condition based on stochastic gradient estimates, the absence of a \textit{known} variance bound for the first-order stochastic oracle makes it difficult to control the stochastic error in the backtracking condition while simultaneously searching for a reasonable mini-batch sample size. This, in turn, necessitates dynamically estimating the variance via a suitable estimator, along with 
    conducting reliable tests to assess the accuracy of that variance estimate itself.}
 \textbf{\textit{\mg{Our contributions:}}} 
 For stochastic WCSC or WCMC minimax problems in the form of \eqref{eq:main-problem}, the algorithm \sgdab{} we propose in this paper achieves the best 
 sample complexity for computing an $\epsilon$-stationary point (\mg{in the sense of Definition~\ref{def:eps-stationarity}}) 
 among the existing methods that are agnostic to the global Lipschitz constant $L$, \rv{concavity modulus $\mu$ and to the variance bound $\sigma^2$ for the stochastic oracle.}
 More precisely, our main contribution is to show that without knowing/using $L,\mu$ and $\sigma^2$ values, \saa{for 
 given $\epsilon>0$, \sgdab{} can compute an $\epsilon$-stationary point of \eqref{eq:main-problem} with the following guarantees:} for deterministic WCSC and WCMC problems, \sgdab{} requires 
 $\cO(L\kappa^2/\epsilon^2)$ and $\cO(L^3/\epsilon^4)$ gradient calls, respectively; furthermore, we also show that for any $p\in(0,1)$, \sgdab{} can generate an $\epsilon$-stationary point w.p. at least $1-p$ requiring $\cO(L\kappa^3\epsilon^{-4}\log^2(1/p))$ and $\sa{\tilde\cO}(L^4\epsilon^{-7}\log^2(1/p))$ stochastic gradient calls for stochastic WCSC and WCMC problems, respectively. \mg{To our knowledge,} these are the best stochastic gradient complexities among all the first-order methods that are \rv{\textit{agnostic} to $L,\mu$ and $\sigma^2$,} and designed for WCSC and WCMC problems of the form in~\eqref{eq:main-problem} -- see the comparison provided in Table~\ref{table_stoc} 
 \rv{for the stochastic 
 setting which is the main focus of our paper. For brevity, our results for the deterministic gradient oracle setting are summarized in Table~\ref{table_deter} in Appendix \ref{sec-appendix-deterministic}, along with a discussion of existing results from the literature.} 

\sa{\textbf{\textit{Outline:}}
In Section~\ref{sec:past_work}, we discuss the existing work closely-related to our paper. Later in \cref{sec_pre}, we state our assumptions and provide the algorithmic framework \sgdab{} with backtracking. In \cref{sec:WCSC} and \cref{sec:WCMC}, we establish the convergence properties of \sgdab{} on WCSC and WCMC minimax problems, respectively. Finally, in \cref{sec:numerics}, on problems with synthetic and real data, we test \sgdab{} against other 
methods that are agnostic to \rv{the parameters
$L,\mu$ and $\sigma^2$,} and compare the results against the benchmark obtained using other state-of-the-art methods which require \mg{the knowledge of} $L,\mu$ and/or $\sigma^2$ to properly fix \saa{their} step sizes.}

\textbf{\emph{Notations.}} The set $\integers_+$ denotes 
\mg{non-negative} integers, \mg{whereas $\integers_{++}\triangleq \integers_+ \saa{\setminus} \{ 0\} $ denotes the set of positive integers.} The same 
convention is also used for $\reals_+$ and $\reals_{++}$. $\cU[1,N]$ denotes the uniform distribution on the set \rv{$[N]\triangleq\{1,2,\dots,N\}$}. \rv{The norm $\norm{\cdot}$ denotes the Euclidean norm for vectors and the spectral norm for matrices.} Given a closed convex function $h:\cY\to\reals\cup\{+\infty\}$, the proximal map is defined as $\prox{h}(y')\triangleq \argmin_{y\in\cY}h(y)+\frac{1}{2}\norm{y-y'}^2$ for all $y'\in\cY$. \rv{For $a,b\in\reals$, $a \vee b \triangleq \max\{a,b\}$ and $a \wedge b = \min\{a,b\}$.} 
Throughout the text, For some given $p\in(0,1)$, we use ``w.p. $p$" as an abbreviation for ``with probability $p$."
\renewcommand{\arraystretch}{1.4}
    \begin{table}[!htbp]
    \captionsetup{justification=raggedright, singlelinecheck=false}
        \centering
        \resizebox{\textwidth}{!}{
        \begin{tabular}{|l"l|c|c|c|c|l|l|}
            \hline
            \textbf{Work} & \textbf{Ref.} & \textbf{Agnostic} 
            & $g$ & $h$ & \textbf{BCU} & \textbf{Complexity} $(\mu>0)$ & \textbf{Complexity} $(\mu=0)$ \\
            \thickhline
            ${}^*$\gda{} & ~\cite{lin2020gradient} & \XSolidBrush & \XSolidBrush & $\mathds{1}_{Y}$ & \XSolidBrush & M1:\ $\cO\left(\rv{(\kappa^3 L B_0+ \kappa^2L^2\cD_y^2)} \epsilon^{-4}\right)$ & M3: $\cO(\rv{L^3\ell^2\cD_y^2\hat B_0}\epsilon^{-8})$\\
            \hline
            ${}^*$\agda{} & ~\cite{boct2020alternating} & \XSolidBrush & \CheckmarkBold & \CheckmarkBold & \XSolidBrush & M1: $\cO(\rv{(\kappa^3 L B_0+ \kappa^2L^2\cD_y^2)} \epsilon^{-4})$ & M3: $\cO(\rv{L^3\ell^2\cD_y^2 B_0}\epsilon^{-8})$\\
            \hline
             ${}^\dag$\texttt{SMDA} & ~\cite{huang2021efficient} & \XSolidBrush & \CheckmarkBold & \CheckmarkBold & \XSolidBrush & M1: $\cO(\rv{\kappa^{5}\mu^{-2} B_0}
             \epsilon^{-4})$ & \qquad\qquad\XSolidBrush \\
            \hline
             ${}^\dag$\texttt{Acc-MDA} & ~\cite{huang2022accelerated} & \XSolidBrush & \xqs{$\mathds{1}_{\sa{X}}$} & \xqs{$\mathds{1}_{Y}$} & \XSolidBrush & M1: $\cO(\kappa^{3.5}L^{1.5}\rv{B_0^{1.5}}\epsilon^{-3}\log(\epsilon^{-1}))$ & \qquad\qquad\XSolidBrush \\
            \hline
            \vrlm{} & ~\cite{mancino2023variance} & \XSolidBrush & \CheckmarkBold & \CheckmarkBold & \XSolidBrush & M1: $\cO(\kappa^3 L \rv{B_0}\epsilon^{-3})$ & \qquad\qquad\XSolidBrush \\
            \hline
            \smagda{} & ~\cite{yang2022faster} & \XSolidBrush & \XSolidBrush & \XSolidBrush & \XSolidBrush & M2: $\cO(\kappa^2 L \rv{B_0}\epsilon^{-4})$ & \qquad\qquad\XSolidBrush \\
            \thickhline
            ${}^\ddag$\neada{} & ~\cite{junchinest}  & {$L,\mu,\sigma^2$} & \XSolidBrush & $\mathds{1}_{Y}$ & \XSolidBrush & M2: $
            {{\cO}(\rv{(\kappa^2G^2+\kappa L+B_0)^4}\epsilon^{-4}\log(\epsilon^{-1}))}$ & \qquad\qquad\XSolidBrush\\
            \hline
            ${}^\S$\tiada{} & ~\cite{li2022tiada}  & {$L,\mu,\sigma^2$} & \XSolidBrush & $\mathds{1}_{Y}$ & \XSolidBrush & M2: 
            $\saa{\cO(\kappa^{12}\rv{G^6}\bar{L}^4\mu^{-4}\epsilon^{-4})}$ & \qquad\qquad\XSolidBrush \\
            \hline
            ${}^\P${\sgdab} &  \textbf{Ours} & \rv{$L,\sigma^2$} & \CheckmarkBold & \CheckmarkBold & \CheckmarkBold & M2: $\cO({\kappa^3 L \rv{B_0}\log^2(p^{-1})\epsilon^{-4}})$ & M2: $\sa{\tilde{\cO}}(L^4\cD_y^3\rv{B_0}\saa{\log^2(p^{-1})}\epsilon^{-7})$\\
            \hline
             ${}^\P${\sgdab} &  \textbf{Ours} & \rv{$L,\mu,\sigma^2$} & \CheckmarkBold & \CheckmarkBold & \CheckmarkBold & M2: \rv{$\cO({\hat{\kappa}^3 \hat{L} \rv{B_0} \log^2(p^{-1})\epsilon^{-4}})$ }& M2: $\sa{\tilde{\cO}}(\rv{\hat{L}^4}\cD_y^3\rv{B_0}\saa{\log^2(p^{-1})}\epsilon^{-7})$\\
            \hline
        \end{tabular}%
        }
        \raggedright
        \caption{\sa{Comparison of methods agnostic to \rv{$L,\mu$ and $\sigma^2$}, i.e., \sgdab{} (this paper), NeAda~\citep{junchinest} and TiAda~\citep{li2022tiada}, with \textit{single-loop} methods that require $L,\mu$ and $\sigma^2$ for solving \textit{stochastic} WCSC ($\mu>0$) and WCMC ($\mu=0$) minimax problems. {The column ``Agnostic" indicates whether the method requires knowing $L,\mu$ and/or $\sigma^2$, and the column ``BCU" indicates whether \saa{the method can use} block-coordinate updates.} In the columns we state complexity, we indicate the metric used for defining $\epsilon$-stationarity; 
        \saa{the definitions of 
        metrics M1, M2, and M3 are provided at the beginning of \cref{sec:past_work}. All the methods listed above, except for \sgdab, provide guarantees for these metrics in the expected sense, while \sgdab{} give guarantees for M2 with high probability. In the table $\cD_y$ denotes the diameter of the dual domain, and \rv{$B_0=F(\bx^0)-F^*$ denotes the initial suboptimality with respect to the primal function $F(\cdot)$ where $F^*=\min_{\bx\in\cX}F(\bx)$.} The derivation of complexity results for the methods listed here is provided in \cref{sec:complexity_comparison} of the supplementary material.}
        }\\ \sa{\textbf{Table Notes:}} ${}^*$For the case $\mu=0$, \cite{lin2020gradient} and \cite{boct2020alternating} 
        assume that \rv{$f(\cdot,y)$} is $\ell$-Lipschitz for each $y\in\cY$. \rv{Here $\hat B_0=F_\lambda(\bx^0)-F^*$ for $\lambda=\frac{1}{2L}$, where $F_\lambda(\bx^0)$ denotes the Moreau envelope of $F(\cdot)$.} \sa{${}^\dag$The complexity results reported here are different than those in \citep{huang2021efficient,huang2022accelerated}. The issues in their proofs leading to the wrong complexity 
        bounds are explained in \cite[Appendix I]{zhang2022sapd+}.} \saa{${}^\ddag$In~\citep{junchinest}, it is assumed that there exists \rv{a constant $G>0$ such that $\norm{\tilde\grad f(x,y;\xi)}\leq G$} for any $x\in \cX$ and $y\in Y$}. \sa{${}^\S$}
        In~\citep{li2022tiada} it is assumed that (i) \saa{$G\in(0,\infty)$ exists as in~\citep{junchinest}}; (ii) \saa{$\grad^2_{xy}f$ and $\grad^2_{yy}f$ are $\bar{L}$-Lipschitz;} 
        (iii) 
        $F(\cdot)$ is upper-bounded; and (iv) $y_*(x)\in{\rm int}(Y)$ for $x\in\cX$.\\
        {For any $\delta>0$, \mg{the 
        bound for TiAda is at least} 
        \saa{$\cO(G^{6-\frac{4\delta}{2+\delta}}\kappa^{12-\frac{6\delta}{2+\delta}}\bar L^{4-\frac{2\delta}{2+\delta}} \mu^{-4+\frac{2\delta}{2+\delta}} \epsilon^{-4+\frac{2\delta}{2+\delta}} + G^{2+\delta}\kappa^{(2+\frac{\delta}{2})} L^{(2+\frac{\delta}{2})} \epsilon^{-(4+\delta)})$.}
        In the table, we state the complexity for $\delta>0$ sufficiently small. ${}^\P$ \rv{In the complexity bounds for \sgdab, $p\in (0,1)$ is an algorithm parameter bounding the number of backtracking iterations, i.e., \sgdab{} requires at most $\cO(\log(\cR))$ backtracking iterations to generate an $\epsilon$-stationary point in (M2) metric with probability at least $1-p$. When $\mu>0$ is known, $\cR=\kappa$, and for the case $\mu>0$ is unknown, $\cR\triangleq\max\{\mu^{0}/\mu,L/L^{0},1\}$ and $\hat{\kappa}\triangleq\hat{L}/\hat{\mu}$, where $\hat{L}\triangleq\cR L^{0}$ and $\hat{\mu}\triangleq \mu^{0}/\cR$--here, $\mu^{0}\geq\mu$ and $L^{0}\leq L$ denote the initial estimates of the concavity modulus $\mu$ and the local Lipschitz constant of $\grad f$ at $(\bx^0,y^0)$, respectively. For the case $\mu$ is known, we set $\mu^0=\mu$ and $L^0=\mu/\gamma$ for some $\gamma\in(0,1)$.} 
        \vspace*{-0.75cm}
        }} 
        \label{table_stoc}
    \end{table}

\section{Related work}\label{sec:past_work}
We first briefly discuss some commonly adopted metrics for defining $\epsilon$-stationarity. The relation between these different metrics are discussed in \cref{sec:metrics}.
\vspace*{-2mm}
\begin{enumerate}
    \item[(i)] \sa{When $f$ is WCSC and $g(\cdot)=0$ in \eqref{eq:main-problem}, a commonly used metric for stationarity is \mg{the} gradient of the primal function, i.e., $\bx_\epsilon$ is $\epsilon$-stationary if $\norm{\grad \Phi(\bx_\epsilon)}\leq \epsilon$, where $\Phi(\bx)\triangleq \max_{y\in\cY}f(\bx,y)-h(y)$; moreover, using the gradient map, one can extend $\epsilon$-stationarity to the case with $g(\cdot)\neq 0$. Indeed, given $\epsilon>0$, $\bx_\epsilon$ is called $\epsilon$-stationary in metric \textbf{(M1)} if $\norm{\cG(\bx_\epsilon)}\leq \epsilon$ for some $\alpha>0$, where $\cG(\bx)\triangleq [\bx-\prox{\alpha g}(\bx-\alpha\grad \Phi(\bx))]/\alpha$.} 
    \item[(ii)] \sa{
    Furthermore, $\epsilon$-stationarity can also be defined using the coupling function $f$; 
    indeed, 
    for any given 
    $\eta_x,\eta_y>0$, let $G_\bx(\bx_\epsilon,y_\epsilon)\triangleq\big(\bx_\epsilon-\prox{\eta_x g}\big(\bx_\epsilon-\eta_x\grad_\bx f(\bx_\epsilon,y_\epsilon)\big)\big)/\eta_x$ and $G_y(\bx_\epsilon,y_\epsilon)\triangleq\big(\prox{\eta_y h}\big(y_\epsilon+\eta_y\grad_y f(\bx_\epsilon,y_\epsilon)\big)-y_\epsilon\big)/\eta_y$, then we call $(\bx_\epsilon,y_\epsilon)$ $\epsilon$-stationary\footnote{\rv{If $(\bx_\epsilon,y_\epsilon)$ is $\epsilon$-stationary in \textbf{(M2)}, then $ {\rm dist}\big(0,\grad_{\bx}f(\bx_\epsilon,y_\epsilon)+\partial g(\bx_\epsilon)\big)\leq (L+1)\epsilon$, and ${\rm dist}\big(0,\grad_{y}f(\bx_\epsilon,y_\epsilon)-\partial h(y_\epsilon)\big)\leq (L+1)\epsilon$, i.e., the distance of $\mathbf{0}$ to the subdifferential of $\cL$ is $\cO(\epsilon)$, which is independent of the step sizes $\eta_x,\eta_y$.}} in metric \textbf{(M2)} if $\norm{G(\bx_\epsilon,y_\epsilon)}\leq \epsilon$, 
    where $G(\bx_\epsilon,y_\epsilon)=[G_\bx(\bx_\epsilon,y_\epsilon)^\top, G_y(\bx_\epsilon,y_\epsilon)^\top]^\top$.}
    \item[(iii)] \sa{An alternative definition of $\epsilon$-stationarity when $g(\cdot)\neq 0$ is to use the Moreau envelope of the primal function. Indeed, $\bx_\epsilon$ is $\epsilon$-stationary in metric \textbf{(M3)} if $\norm{\grad F_\lambda(\bx_\epsilon)}\leq \epsilon$ for some $\lambda\in(0, \frac{1}{L})$, where $F=g+\Phi$ is the primal function and $F_{\lambda}:\bx\mapsto \min_{\bw\in\cX} F(\bw) + \frac{1}{2\lambda} \norm{\bw-\bx}^2$ is the Moreau envelope of $F$.}
\end{enumerate}
    \sa{Our objective in this paper is to incorporate a step size search within the (proximal) GDA method, analyzed in~\citep{lin2020gradient,boct2020alternating}, to make it agnostic to $L$ and $\mu$ \rv{even when the variance bound $\sigma^2$ is unknown}. For the reasons discussed above in the introduction, we would like to design a backtracking condition based on the stochastic gradient of the coupling function $f$ and provide some \rv{high-probability} guarantees in terms of metric \textbf{(M2)} for the new method. \saa{In general, compared to \textbf{(M1)} and \textbf{(M3)}, providing guarantees in \textbf{(M2)} requires checking a \textit{simpler} backtracking condition depending on the stochastic gradient map which can be easily calculated in practice; moreover, $\epsilon$-stationarity in \textbf{(M2)} also implies $\cO(\epsilon)$--stationarity with respect to \textbf{(M1)} and \textbf{(M3)} metrics --see~\cref{sec:metrics} of the supplementary material.}
    Since we are building on the (proximal) GDA method, our main goal is to design the new backtracking framework that can obtain similar complexity bounds with those established in~\citep{lin2020gradient,boct2020alternating} \rv{(where it is assumed that $L$, $\mu$ and $\sigma^2$ are all known)} --see Tables~\ref{table_stoc} and~\ref{table_deter}. 
    We choose the proximal GDA method 
    for backtracking due to its simplicity and its decent 
    complexity bounds with a reasonable $\cO(1)$ constant for the case $L$, $\mu$ and $\sigma^2$ are known; that said, one can also consider designing backtracking for other classes of single-loop methods, e.g.,~\citep{xu2023unified,yang2022faster}.} To put our work into context, we first briefly mention the two recent papers~\citep{xu2023unified,yang2022faster} that also provide guarantees involving the coupling function $f$ --both methods require that the global Lipschitz constant \rv{and the concavity modulus $\mu$} be known (to properly initialize the step sizes).
    
    Assuming $g$ and $h$ are indicator functions of some convex compact sets, $X$ and $Y$, respectively; \cite{xu2023unified} consider \eqref{eq:main-problem} in the \textit{deterministic} WCSC and WSMC settings with number of blocks $N=1$, and it does not consider the stochastic setting. The method in~\citep{xu2023unified} can compute $(\bx_\epsilon,y_\epsilon)$ such that $\norm{G(\bx_\epsilon,y_\epsilon)}\leq \epsilon$ in $\cO(\rv{\kappa^6} L^2\epsilon^{-2})$ and $\cO(L^4\cD_y^4\epsilon^{-4})$ gradient calls when $\mu>0$ and $\mu=0$, respectively \rv{--the derivation of these complexity bounds is provided in \cref{sec:complexity_comparison} of the online supplementary document.} Since it provides guarantees for both $\grad_x f$ and $\grad_y f$ --hence, easy to check in practice for the deterministic setting with $\sigma^2=0$, it may potentially be used to design a backtracking technique similar to the one proposed in our paper, i.e., searching for an admissible dual stepsize while imposing an appropriate time-scale separation between the primal and dual step sizes; however, it is not clear how their can be extended to the \textit{stochastic} setting especially when $\sigma^2$ is not known.
    \sa{On the other hand, the 
    Smoothed AGDA \mg{(\smagda)} method proposed in~\citep{yang2022faster} can handle the stochastic \saa{WCSC} setting when $\sigma^2$ is known, and provides the best known guarantees for the \textbf{(M2)} metric; 
    however, these results are only valid for the smooth case, i.e., $h(\cdot)=g(\cdot)=0$, and it is not trivial to extend the proof by~\cite{yang2022faster} \rv{either to the more general setup considered in \eqref{eq:main-problem}, which involves closed convex functions $g$ and $h$, or to the stochastic setting with an unknown variance bound $\sigma^2$.}}
    
    \sa{It is worth emphasizing that the existence of closed convex functions $g,h$ complicates the backtracking analysis even in \mg{the} deterministic setting,
    and dealing with these non-smooth terms in the stochastic setting is inherently more difficult; indeed, due to use of proximal operation for $g$ and $h$, \saa{the effect of noise (as a result of noisy gradients) showing up in the error bound for stationarity, which we call the \textit{variance term},} cannot be directly controlled through choosing a \textit{small} step size --this issue can also be observed in existing papers: compare the variance term of the error bound in \citep{lin2020gradient}, which assumes $g(\cdot)=0$, 
    with that in \citep{boct2020alternating}, which 
    considers $g$ and $h$ as in our paper. Indeed, according to~\cite[Eq. (C.12)]{lin2020gradient} (which is a result of the second inequality in~\cite[Lemma~C5]{lin2020gradient}) the variance \saa{term} can be controlled by adopting small step size $\eta_x>0$ --as the variance term scales with $\eta_x\sigma^2$,
    while \cite[Proof of Thm 4.2]{boct2020alternating} shows that $\sum_{k=1}^K\E{\norm{w_k}^2}=\cO(K\sigma^2/M)$ is \textit{independent} of $\eta_x$, where $w_k$ denotes a subgradient of the primal function $g(\cdot)+\max_{y\in\cY}f(\cdot,y)-h(y)$, and $M$ is the batch size. Finally, for the stochastic setting, even if we consider the smooth case, i.e., $g(\cdot)=h(\cdot)=0$,  it is not trivial to extend the proof by \cite{yang2022faster} 
    \saa{to use} backtracking as the guarantees given in the paper are on exact gradients of the coupling function $\grad f$ rather than its stochastic estimate $\tilde \grad f$; therefore, one cannot directly use those guarantees on $\grad f$ to accept or reject step sizes within a backtracking scheme as they cannot be verified in practice.
    }

    \sa{In this work, we 
    show that a simple backtracking technique 
    can be incorporated into the proximal GDA algorithm 
    with complexity guarantees matching (up to $\cO(1)$ constant) the results given by \cite{lin2020gradient,boct2020alternating}; 
    unlike \citep{lin2020gradient,boct2020alternating,xu2023unified,yang2022faster} discussed above,} our results do not require knowing \rv{the Lipschitz constant $L$, the concavity modulus $\mu$ and the variance bound $\sigma^2$}. In summary, to the best of our knowledge, there are no backtracking results for \textit{stochastic} WCSC and WCMC problems in the form of~\eqref{eq:main-problem}, and  our \rv{results 
    provide 
    the first guarantees for WCMC and WCSC minimax problems
    with possibly non-smooth closed convex 
    $h$ and $g$ as in \eqref{eq:main-problem}, and \sa{\sgdab{} algorithm proposed in this paper achieves} the best known complexity in terms of $\epsilon$ 
    without requiring any prior knowledge of the Lipschitz constant and concavity modulus associated with f, or of the variance bound of the stochastic oracle.} 
    \sa{Next, we briefly mention some existing work that are closely related to ours.} 

\textbf{{Existing work for the stochastic {setting}.
}} \sa{To the best of our knowledge, efficient
\textit{stochastic backtracking}~(SB) step-size search techniques that can exploit the structure of minimax problems are still missing in the literature even for the convex-concave minimax problems. Although there are some recent developments on SB for stochastic optimization, e.g., \citep{jin2021high,paquette2018stochastic,jin2023sample}, 
{these methods} do not extend to more 
general minimax problems. {There are also} methods developed
for stochastic monotone variational inequalities~(VI), e.g., \citep{iusem2019variance,antonakopoulos2021sifting}; 
that being said, these methods are not applicable for WCSC and WCMC minimax problems as the corresponding VI is \textit{not} monotone anymore, and we are not aware of any stochastic VI methods that can deal with such problems using adaptive stepsizes.} 

\sa{For solving \textit{stochastic} WCSC minimax problems, the only FOPD algorithms that are \textit{agnostic} to 
$L$ and $\mu$, and \sa{that} use adaptive step sizes are \neada{} (double-loop) by~\cite{junchinest} and \tiada{} (single loop) by~{\cite{li2022tiada}}, which 
are both based on AdaGrad stepsizes~{\citep{duchi2011adaptive}} and these methods do not use backtracking {step-size} search. The analyses of these algorithms are given for smooth \sa{WCSC} problems with 
$g(\cdot)=0$ and $h(\cdot)=\mathds{1}_{Y}(\cdot)$, i.e., the indicator function of a closed convex set $Y$\footnote{\label{fn:tiada-interior}In~\citep{li2022tiada}, \rv{it is assumed that for any $x\in\cX$, $y_*(x)\in{\rm int}(Y)$; that is why they adopt the stationarity measure $\norm{\grad f}^2$. This is important for \tiada{} as it uses AdaGrad-type step sizes, see \citep[Remark 1]{li2022tiada}.}}.} 
\cite{li2022tiada} showed that \tiada{} can compute $(x_\epsilon,y_\epsilon)$ such that $\E{\norm{\grad f(x_\epsilon,y_\epsilon)}^2}\leq \epsilon^2$ with at most $C^{\rm TiAda}_{\epsilon,\kappa}=\cO(\epsilon^{-(4+\delta)})$ oracle complexity for any $\delta>0$ --however, how $\cO(1)$ constant depends on $\kappa$ is not discussed in~\cite{li2022tiada}; indeed, in \cref{sec:complexity_comparison}, we argue that for any given $\delta>0$, \saa{$C^{\rm TiAda}_{\epsilon,\kappa}=\Omega({\kappa^{2+\frac{\delta}{2}}}\epsilon^{-(4+\delta)}+\kappa^{12-\frac{6\delta}{2+\delta}}\epsilon^{-4+\frac{2\delta}{2+\delta}})$.}
On the other hand,
\cite{junchinest} showed that the oracle complexity of \neada{} can be bounded by $C^{\rm NeAda}_{\epsilon,\kappa}=\cO(\epsilon^{-4}\log(1/\epsilon))$ to compute $(x_\epsilon,y_\epsilon)$ such that $\E{\norm{\grad_x f(x_\epsilon,y_\epsilon)}}\leq \epsilon$ and $\E{\norm{y_\epsilon-y^*(x_\epsilon)}}\leq \epsilon$; in \cref{sec:complexity_comparison}, we argue that $C^{\rm NeAda}_{\epsilon,\kappa}=\Omega(\kappa^8\epsilon^{-4}\log(1/\epsilon))$. While the single-loop method \tiada{} is agnostic to the strong concavity modulus $\mu$ and to the stochastic oracle variance bound $\sigma^2$, \rv{its convergence analysis is given under some assumptions stronger than ours:} 
\textit{(i)} $\grad^2_{xy} f$ and $\grad^2_{yy} f$ are Lipschitz, i.e., the coupling function $f$ is twice differentiable in $y$ with a Lipschitz second derivative; 
\textit{(ii)} \textit{stochastic} gradients are uniformly bounded, i.e., $\exists G>0$ such that $\|\tilde\grad f(x,y)\|\leq G$ for all $(x,y)\in\cX\times Y$, which clearly does not hold even for additive Gaussian noise structure; \rv{\textit{(iii)} $y_*(x)\in{\rm int}(Y)$ for all $x\in\cX$,} and \textit{(iv)} the primal function $\Phi(\cdot)\triangleq \max_{y\in Y} f(\cdot,y)$ is bounded above, i.e., $\exists B_\Phi>0$ such that $\Phi(x)\leq B_\Phi$ for all $x\in\cX$, which is a strong assumption considering that $\Phi$ is weakly convex and $\cX$ is a vector space. \rv{In our analysis of \sgdab{} for the case $\sigma^2$ is \textit{not} known, after we provide the analysis under the bounded \textit{stochastic} gradient assumption as in \citep{li2022tiada} for \tiada{}, we next discuss how this assumption can be relaxed to sub-Gaussian noise setting.}

\sa{In our 
numerical tests (see Figures~\ref{fig:Q},~\ref{fig:tildeQ} and~\ref{figs:tilde LG Regression}), the practical performance of \neada{} and 
\tiada{} did not match their nice theoretical complexity bounds 
    --this may be due to large $\cO(1)$ constants with \textit{high} powers of $\kappa$ arising in their bounds; indeed, {both of them}
    \sa{adopt AdaGrad-like step size sequences, and while this choice makes the algorithm agnostic to $L$ and $\mu$, it also potentially causes issues in practice:}
    since initial iterates are potentially far from stationarity (with gradient norms \textit{not} close to $0$), in the initial transient phase of the algorithm the step sizes 
    decrease very quickly, as they are inversely proportional \sa{to the partial sum of
    gradient norms over all past iterations}. These quickly decaying step sizes 
    cause slow convergence in practice 
    \sa{(see Figures~\ref{fig:Q} and~\ref{fig:tildeQ})
    }.}
\section{Assumptions and Algorithmic Framework}\label{sec_pre}
\begin{assumption}\label{ass1}
    Let $g_i:\cX_i\to\reals\cup\{+\infty\}$ for $i\in\cN$, $f:\cX\times\cY\to \mathbb{R}$, $h:\cY\to\reals\cup\{+\infty\}$ such that
    \begin{enumerate}
        \item [(i)] $g_i$, for $i\in \mathcal{N}$, and $h$ are closed convex functions;
        \item [(ii)] $f$ is differentiable on an open set containing $\dom g \times \dom h$ such that $\grad f$ is $L$-Lipschitz on $\dom g \times \dom h$; furthermore, $f(\bx,\cdot)$ is (strongly) concave with modulus $\mu$ for each $\bx\in\dom g$ for some $\mu\geq 0$;
        \item [(iii)] $-\infty<F^*\triangleq \inf_{\bx\in\cX} F(\bx)$, where $F\triangleq g+\Phi$ for $\Phi:\cX\to\reals$ such that $\Phi(\cdot)\triangleq\max_{y\in\cY}f(\cdot,y)-h(y)$. \saa{Suppose we are given $\bar F\in\reals$ such that $\bar F\leq F^*$.}
    \end{enumerate}
\end{assumption}
\begin{remark}
\label{rem:barF}
    \rv{In many applications $\bar F$ is readily available from the structure of the problem or through Lagrangian duality (via weak duality). A canonical application of \eqref{eq:main-problem} is the DRO problem where one is interested in minimizing the loss under the worst-case data distribution, see~\eqref{eq:dro-problmm} in the numerical section. In this scenario, $f(\bx,y)$ takes the form $\sum_{i=1}^d y_i\ell_i(\bx)$ where $\ell_i(\bx)\geq 0$ is the loss value for data point $i=1,\ldots, d$ and $y=[y_i]_{i=1}^d$ such that $y$ lies in a unit simplex; thus, it is apparent that one can set $\bar F=0$. 
    {Similarly, for many other minimax problems arising in machine learning such as fair learning \citep{singh2023minimax}, adversarial learning \citep{madry2017towards} or statistical learning subject to model constraints involve \rv{non-negative} loss functions, 
    one can set $\bar{F}=0$.}}
\end{remark}
\begin{defn}\label{def-ystar}
    Define ${y}_{*}:\cX\to\cY$ such that ${y}_{*}(\bx)\triangleq\argmax_{y\in\cY}f(\bx,y)-h(y)$ for all $\bx\in\dom g$.
\end{defn}

\sa{In many practical settings \mg{partial gradients} $\grad_{\bx} f$ and $\grad_y f$ may not be 
available or computing them may be very costly; but, \mg{rather we have access to their noisy unbiased stochastic estimates at a given point \sa{$(\bx,y)$} through a stochastic oracle. This would be 
the natural setting in machine learning applications where the gradients are estimated from randomly sampled subsets (batches) of data {\citep{bottou2018optimization}}. We make the following assumption \sa{on the stochastic oracles} which is \sa{also commonly 
adopted} in the literature~{\citep{zhang2022sapd+,zhang2021robust}}}. 
}
\sa{\begin{assumption}
\label{assumption:noise}
Let $\{\gt_{x_i} f\}_{i\in\cN}$ and $\gt_y f$ denote stochastic oracles for $\{\grad_{x_i} f\}_{i\in\cN}$ and $\grad_y f$, respectively, satisfying the following assumptions for all $(\bx,y)\in\dom g\times\dom h$:
{\small
\begin{enumerate}
    \item [(i)] $\mathbf{E}_{\omega}[\gt_{x_i} f(\bx,y;\omega)|\bx,y]=\grad_{x_i} f(\bx,y),\quad
    \forall~i\in\cN$;
    \item[(ii)] $\mathbf{E}_{\zeta}[\gt_{y} f(\bx,y;\zeta)|\bx,y]=\grad_{y} f(\bx,y)$; 
    \item[(iii)] $\exists~\sigma_x\geq 0:\ \mathbf{E}_{\omega}\Big[\norm{\gt_{x_i} f(\bx,y;\omega) - \grad_{x_i} f(\bx,y)}^2~\Big|~\bx,y\Big]\leq\sigma_x^2/N,\quad
    \forall~i\in\cN$;
    \item[(iv)] $\exists~\sigma_y\geq 0:\ \mathbf{E}_{\zeta}\Big[\norm{\gt_{y} f(\bx,y;\zeta) - \grad_{y} f(\bx,y)}^2~\Big|~\bx,y\Big]\leq\sigma_y^2$.
\end{enumerate}}%
\end{assumption}}%
\sa{Here, we consider the setting where we are allowed to call the stochastic oracles in batches of size $M\geq 1$. 
The result in \citep{lin2019gradient}, assuming $g(\cdot)=0$ and 
$h(\cdot)=\mathds{1}_{Y}(\cdot)$ in~\eqref{eq:main-problem} for some closed convex set $Y$, can be extended to cover purely stochastic case of $M=1$; however, this would degrade the complexity from $\cO(\epsilon^{-4})$ for $M=\cO(\epsilon^{-2})$ to $\cO(\epsilon^{-5})$ for $M=\cO(1)$. 
The 
\saa{\smagda{}} method proposed by~\cite{yang2022faster} has $\cO(\epsilon^{-4})$ complexity guarantee for $M=\cO(1)$; however, \saa{this requires setting $\eta_x=\eta_y=\cO(\frac{\epsilon^2}{\sigma^2L\kappa})$} \rv{for some $\sigma^2\geq\max\{\sigma_x^2,\sigma_y^2\}$} and this result is only valid for the smooth case, i.e., $h(\cdot)=g(\cdot)=0$; furthermore, \saa{it is not trivial to extend the proof by~\cite{yang2022faster}, showing the $\cO(\epsilon^{-4})$ complexity result with $M=1$,} to the more general setup we considered in \eqref{eq:main-problem} involving possibly \textit{nonsmooth} closed convex functions $g$ and $h$. 
There are only very few \textit{single-loop} methods that can handle \textit{stochastic} WCSC problem in \eqref{eq:main-problem} with arbitrary closed convex functions $g$ and $h$; indeed, \citep{boct2020alternating,mancino2023variance,huang2021efficient} are the only ones we are aware of. The methods by~\cite{boct2020alternating} and \cite{huang2021efficient} both can guarantee computing an $\epsilon$-stationary point with $\cO(\epsilon^{-4})$ complexity and using batch size $M=\cO(\epsilon^{-2})$ \saa{--\cite{boct2020alternating} require setting $\eta_y=1/L$, $\eta_x=\frac{1}{L\kappa^2}$, $M=\cO(\kappa^2\sigma^2\epsilon^{-2})$, and 
\texttt{SMDA} in \citep{huang2021efficient} requires $\eta_y=\frac{1}{L}$, $\eta_x=\frac{1}{\kappa^3}$ and $M=\cO(\frac{\kappa^2\sigma^2}{\mu^2\epsilon^2})$;} on the other hand, using variance reduction \citep{huang2021efficient,mancino2023variance} can achieve $\cO(\epsilon^{-3})$ complexity ---\cite{mancino2023variance} require batch size $M=\cO(1)$ \saa{and step sizes $\eta_y=\cO(\frac{\epsilon}{\sigma L \kappa})$ and $\eta_x=\cO(\frac{\epsilon}{\sigma L \kappa^3})$, and \texttt{VR-SMDA} in~\citep{huang2021efficient} requires setting $\eta_y=\frac{1}{L\kappa}$, $\eta_x=\frac{1}{\kappa^4}$, and sampling large batch size of $\cO(\frac{\kappa^2\sigma^2}{\mu^2\epsilon^2})$ once in every $q=\cO(\frac{\kappa}{\mu\epsilon})$ iterations which all use small batch size of $\cO(\frac{\kappa}{\mu\epsilon})$}. That being said, 
\cite{boct2020alternating,mancino2023variance,huang2021efficient} all define $\epsilon$-stationarity using the gradient of the primal function, \saa{i.e., using metric \textbf{(M1)}}; therefore, these methods are not suitable for backtracking and they require knowledge of \rv{$L$, $\mu$ and a variance bound $\sigma^2$, where $\sigma\triangleq\max\{\sigma_x,\sigma_y\}$.} Indeed, we are not aware of any work that achieves $\cO(\epsilon^{-4})$ complexity with $M=\cO(1)$ when there are arbitrary closed convex functions $g$ and $h$ as in~\eqref{eq:main-problem} and that 
has \saa{\textit{direct}} guarantees 
for the \textit{coupling function}, \rv{i.e., in metric \textbf{(M2)}}, which 
\saa{is desirable} for being able to implement backtracking on the dual step size. In the rest of this paper, we restrict the discussion to a method that does not require a bound on \rv{$L$, $\mu$ and $\sigma^2$, i.e., agnostic to $L$, $\mu$ and $\sigma^2$,} and still achieving $\cO(\epsilon^{-4})$ complexity with batch size of $M=\cO(\epsilon^{-2})$.}

\mg{We next introduce the stochastic gradient estimates $s_{\bx}$ and $s_{\by}$ that are obtained based on $M_x, M_y\in \integers_+$ calls to the stochastic oracle and the corresponding stochastic gradient maps $\tilde G_{\bx}$ and $\tilde G_{y}$, which arise frequently in the study of proximal-gradient methods~{\citep{chen2022accelerated,xu2023unified,lu2020hybrid}.}} 
\begin{defn}\label{def:s-gradmapping}
\sa{Given $\mg{M_x, M_y}\in\integers_{++}$ and 
$\eta_x,\eta_y>0$, define $s_{\bx},\tilde G_{\bx}:\cX\times\cY\to\cX$ 
and $s_y,\tilde G_y:\cX\times\cY\to\cY$ such that $s_{\bx}(\bx,y;\bom)=[s_{x_i}(\bx,y;\bom)]_{i=1}^N\in\cX$ and $\tilde G_{\bx}(\bx,y;\bom)=[\tilde G_{x_i}(\bx,y;\bom)]_{i=1}^N\in\cX$, 
{\small
\begin{align*}
    &s_{x_i}(\bx,y;\bom)\triangleq \frac{1}{M_x} \sum_{j=1}^{M_x}\gt_{x_i} f({\bx},y;\omega_j),\quad \sa{\tilde G_{\sa{x_i}}(\bx,y;\bom)}\sa{\triangleq \Big[{x}_i - \prox{\eta_x g_i} \Big({x}_i-\eta_x s_{x_i}(\bx,y;\bom)\Big)\Big]/\eta_x}, \quad\forall~i\in\cN,\\
    &s_y(\bx,y;\bzt)\triangleq \frac{1}{M_y} \sum_{j=1}^{M_y}\gt_{y} f({\bx},y;\zeta_j),\quad \tilde G_y(\bx, y;\bzt)\triangleq \Big[\prox{\eta_y h} \Big({y}+\eta_y s_y(\bx,y;\bzt)\Big)-{y}\Big]/\eta_y,
\end{align*}}%
are defined for all $( \bx, y)\in\dom f\times \dom g$, and $\bom\triangleq[\omega_j]_{j=1}^{M_x}$, $\bzt\triangleq [\zeta_j]_{j=1}^{M_y}$ such that both $\{\omega_j\}_{j=1}^{M_x}$ and $\{\zeta_j\}_{j=1}^{M_y}$ are i.i.d. \mg{corresponding to the randomness 
of the stochastic oracle} and satisfy \cref{assumption:noise}. 
For notational simplicity, let $\bxi\triangleq [\bom^\top \bzt^\top]^\top$, and we define $s(\bx,y;\bxi)\triangleq [s_{\bx}(\bx,y;\bom)^\top s_{y}(\bx,y;\bzt)^\top ]^\top$ and $\tilde G(\bx,y;\bxi)\triangleq [\tilde G_{\bx}(\bx,y;\bom)^\top \tilde G_{y}(\bx,y;\bzt)^\top ]^\top$. \rv{Moreover, for $M_x,M_y\geq 2$, we define the sample variance function $V_{\bxi}:\dom g\times \dom h\to\reals\times\reals$ such that $V_{\bxi}(\bx,y)=(v^x,v^y)$ and}}\looseness=-1
\rv{\small
\begin{align}
\label{eq:sample_variance}
    v^x=\frac{1}{M_x-1}\sum_{j=1}^{M_x}\norm{\gt_{\bx} f(\bx,y;\omega_j) - s_{\bx}(\bx,y;\bom)}^2,
    \quad
     v^y=\frac{1}{M_y-1}\sum_{j=1}^{M_y}\norm{\gt_{y} f(\bx,y;\zeta_j) - s_{y}(\bx,y;\bzt)}^2.
\end{align}}%

\sa{Furthermore, let $G_{x_i}$ for $i\in\cN$ and $G_y$ denote the deterministic gradient maps, i.e., for any $(\bx,y)\in\dom g\times\dom h$, $G_{{x_i}}(\bx,y)\triangleq \Big[{x}_i - \prox{\eta_x g_i} \Big({x}_i-\eta_x \grad_{x_i} f(\bx,y)\Big)\Big]/\eta_x$ for $i\in\cN$, and $G_{y}(\bx,y)\triangleq \Big[\prox{\eta_y h} \Big(y+\eta_y \grad_{y} f(\bx,y)\Big)-y\Big]/\eta_y$. 
Finally, let $G_{\bx}(\bx,y)\triangleq [G_{x_i}(\bx,y)]_{i=1}^N\in\cX$, and we define $G(\bx,y)\triangleq [G_{\bx}(\bx,y)^\top G_{y}(\bx,y)^\top ]^\top$.}
\end{defn}

\begin{remark} \mg{By the i.i.d. nature of $\{\omega_j\}_{j=1}^{M_x}$ and $\{\zeta_j\}_{j=1}^{M_y}$, it follows from \cref{{assumption:noise}} that 
\begin{equation}
\E{\norm{s_{\bx}(\bx,y;\bom)-\grad_{\bx} f(\bx,y)}^2} \leq \sigma_x^2/M_x, \quad 
\E{\norm{s_{\by}(\bx,y;\bzt)-\grad_{\by} f(\bx,y)}^2} \leq \sigma_y^2/M_y.
\label{ineq-grad-var-bound}
\end{equation}}%
\end{remark}
\vspace*{-9mm}
\begin{defn}\label{def:eps-stationarity}
\sa{Given $\epsilon>0$, $(\bx_\epsilon,y_\epsilon)\in\dom g\times\dom h$} is an $\epsilon$-stationary point if $\norm{G(\bx_\epsilon,y_\epsilon)}\leq \epsilon$.
\end{defn}

\rv{To get tighter high-probability guarantees, we also consider the setting where the stochastic noise is bounded almost surely in addition to \cref{assumption:noise}.} 
\begin{assumption}
\label{as:bounded-oracle}
    There exist constants 
    $B_x,B_y<\infty$ such that $B_x\geq \sup\{\norm{\gt_x f(\bx,y;\bom)-\grad_x f(\bx,y)}:\ \bx\in\dom g,~y\in\dom h\}$ and $B_y\geq \sup\{\norm{\gt_y f(\bx,y;\bzt)-\grad_y f(\bx,y)}:\ \bx\in\dom g,~y\in\dom h\}$ a.s. 
\end{assumption} 
\rv{Assumption~\ref{as:bounded-oracle} will often hold for smooth objectives over compact domains. Indeed, suppose the domain $\dom g\times\dom h$ is compact and $f$ admits a finite-sum representation of the form $f(\bx,y) = \sum_{j=1}^P f_j(\bx,y)$ with each component function $f_j(\bx,y)$ being continuously differentiable for $j\in[P]$ --for this setting, $\nabla f_j(\bx,y)$ is bounded for $j\in[P]$ over the domain. Therefore, in this case, when the stochastic gradient oracle is constructed by mini-batching, i.e., when $f_j$ is sampled uniformly at random with replacement, the randomness generated by the oracle is bounded a.s. and Assumption~\ref{as:bounded-oracle} will naturally hold.} 

\begin{remark}
    In \cref{sec:subGaussian}, we {further relax Assumption~\ref{as:bounded-oracle} and} provide high-probability convergence guarantees for \sgdab{} under the light-tail noise setting where we assume that the stochastic gradient oracle is norm-subGaussian.
\end{remark}
Before we describe the proposed \sgdab{} method, we state a concentration result for $s(\bx,y;\bxi)$ under Assumption~\ref{as:bounded-oracle}.

\begin{theorem}
	\label{thm:concentration-bounded}
    Let $X\in\reals^d$ be a random vector with mean $\mathbf{0}$, variance $\sigma^2$ and covariance $\Sigma$, i.e., $\mathbf{E}[X]=\mathbf{0}$ and $\mathbf{E}[\norm{X}^2]=\sigma^2$. Suppose that there exists $B>0$ such that $\norm{X}\leq B$ a.s. Let $\{X_j\}_{j=1}^M\subset\reals^d$ be i.i.d. random vectors following the same distribution with $X$. Then, the sample mean $\bar X=\frac{1}{M}\sum_{j=1}^M X_j$ satisfies
	\begin{align}
		\label{eq:mean-concentration-bounded}
		\p{\norm{\bar X}>t}\leq (d+1) \exp\Big(-M\frac{\sigma^2}{B^2}\cdot H\Big(\frac{Bt}{\sigma^2}\Big)\Big),\quad\forall~t>0,
	\end{align}
    {where $H:\reals_+\to\reals$ is defined as $H(u)=(1+u)\log(1+u)-u$.}
    
	Moreover, let $v=\frac{1}{M-1}\sum_{j=1}^M\norm{X_j-\bar{X}}^2$ denote the sample variance {for $M\geq 2$}. Then,
	{\small
		\begin{subequations}
        \label{eq:svar-tail-bounds}
			\begin{align}
				\p{\Big(1-\frac{1}{M}\Big)v-\sigma^{2}\leq -t}&\leq \exp \left(-M \frac{\sigma^2}{B^2}H\Big(\frac{t}{2\sigma^2}\Big)\right)
				+(d+1)\exp\Big(-M\frac{\sigma^2}{B^2}\cdot H\Big(\frac{B\sqrt{t/2}}{\sigma^2}\Big)\Big), 
				\label{eq:svar-lower-tail-bounded}\\
				\p{\Big(1-\frac{1}{M}\Big)v-\sigma^{2}\geq t}&\leq \exp \left(-M \frac{\sigma^2}{B^2}\cdot H\Big(\frac{t}{\sigma^2}\Big)\right),\quad\forall~t>0, \label{eq:svar-upper-tail-bounded}
			\end{align}
	\end{subequations}}%
    Finally, for any $\alpha\in (0,1)$, $\bar\sigma\geq\sigma$ and $c>0$, 
    it holds that
	\begin{equation}
		\label{eq:concentration-subGaussian-bounded}
		\p{\Big(1-\frac{1}{M}\Big)v\leq (1+c)\bar\sigma^{2}}\geq 1-\alpha,\qquad \forall~M\in\integers_+:\ M\geq \max\Big\{\frac{1}{H(c)}\cdot\frac{B^2}{\bar\sigma^2}\cdot\log\Big(\frac{1}{\alpha}\Big),~2\Big\}.
	\end{equation}
\end{theorem}
\begin{proof}
{The proof is based on leveraging the concentration properties of independent bounded random variables,} and is provided in~\cref{sec:bounded-proofs}. 
\end{proof}

\begin{algorithm}[bht!]
\caption{SGDA with Backtracking (\sgdab)}
\label{alg:GDA-B}
\small
\begin{algorithmic}[1]
\State{\bf Input:} $\epsilon,\delta,\rv{\sigma^0_x, \sigma^0_y,\mu^0, L^0 > 0}$ \rv{: $L^0 > \mu^0$}, 
$\rv{p,\bar p,}\gamma\in(0,1)$, \rv{$c\in(0,1)$,} $(\bx^0,y^0)\in 
\dom g\times \dom h$, 
\saa{$\rv{F_0},\bar{F}\in\mathbb{R}$}
\State{\bf Initialize:} $\rho\gets\frac{\sqrt{1+{12}/N}-1}{24}$,\quad $T\gets\lceil\log_2(\rv{3/p})\rceil$, \quad 
\rv{$(\underline{\mu}, \bar{\sigma}_x, \bar{\sigma}_y)$ are set as in \eqref{eq:mu-sigma-initial}}
\State \rv{$\widetilde L\gets L^0,\quad \widetilde\mu\gets\mu^0,\quad \widetilde\sigma_x\gets\sigma_x^0,\quad \widetilde\sigma_y\gets\sigma_y^0$}
\For{$\ell=\rv{0},\dots
$}\label{alg:outer_iter}
\State \rv{$\eta_y\gets 1/ \widetilde L$,\quad $\eta_x\gets N\rho\widetilde\mu^2\eta_y^3$} \label{algeq:step-sizes}
\State $K\gets \saa{\lceil\frac{64N}{\epsilon^2\eta_x}\left(
\rv{F_0-\bar{F}+6\rho\delta\widetilde\mu}\right)\rceil}$,\quad 
\rv{$M_x \gets M_x(\epsilon,\widetilde\sigma_x^2,p,\bar p)$,} 
\quad \rv{$M_y \gets M_y(\epsilon,\widetilde\sigma_y^2,\eta_y,\widetilde\mu,p,\bar p)$}

\Comment{$M_x(\cdot)$ and $M_y(\cdot)$ functions will be specified later}
\label{algeq:M}
\For{$t=1,2,\dots,T$} \Comment{\saa{$T$ independent, parallel \rbsgda \; runs}}
    \State $\sa{\Big(\tilde\bx_{(t,\ell)},\tilde y_{(t,\ell)},\tilde G(\tilde\bx_{(t,\ell)},\tilde y_{(t,\ell)};\tilde\bxi_{(t,\ell)}),\rv{v^x_{(t,\ell)}, v^y_{(t,\ell)}}\Big)}\gets {\rm \sa{\rbsgda}}(\eta_x,\eta_y,M_x,M_y,N,K,\bx^0,y^0)$
    \label{algeq:RBSGDA_runs}
\EndFor
\State \sa{$\tilde S_{(t,\ell)}\gets\norm{\tilde G(\tilde\bx_{(t,\ell)},\tilde y_{(t,\ell)};\tilde\bxi_{(t,\ell)})}^2$} \label{algeq:stochastic_S}
\State \sa{$t^*\gets\argmin\{\tilde{S}_{(t,\ell)}:\ t=1,\ldots,T\}$}
\State \rv{$\tilde S_{(\ell)}\gets\tilde S_{(t^*,\ell)}$,\quad $v_{(\ell)}^x\gets v^x_{(t^*,\ell)}$, \quad $v_{(\ell)}^y\gets v^y_{(t^*,\ell)}$,\quad $(\tx,\ty,\txi)\gets (\tilde\bx_{(t^*,\ell)},\tilde y_{(t^*,\ell)},\tilde\bxi_{(t^*,\ell)})$}
\If{$\bar\sigma_x<\infty$ \textbf{or} \rv{$(1-\frac{1}{M_x})v_{(\ell)}^x\leq (1+c)\widetilde\sigma_x^2$}}
\label{eq:sigma-x-test}
\State $\mbox{flag-x}\gets \textbf{True}$
\EndIf
\If{$\bar\sigma_y<\infty$ \textbf{or} \rv{$(1-\frac{1}{M_y})v_{(\ell)}^y\leq (1+c)\widetilde\sigma_y^2$}} \label{eq:sigma-y-test}
\State $\mbox{flag-y}\gets \textbf{True}$
\EndIf
\If{$
        \sa{\rv{\tilde S_{(\ell)}}
        \leq \frac{\epsilon^2}{4}}$ \rv{\textbf{and} \rm{flag-x} \textbf{and} \rm{flag-y}}} \label{algeq:stop}
    \State  \sa{$(\bx_\epsilon,y_\epsilon,\bxi_\epsilon)\gets(\tx,\ty,\txi)$}
    \State \Return \sa{$(\bx_\epsilon,y_\epsilon)$} \label{algeq:output}
\EndIf
\State{$\widetilde{L}\gets \widetilde{L}/\sa{\gamma}$,\quad \rv{$\widetilde \mu \gets \max\{\widetilde \mu \gamma, \underline{\mu}\}$,\quad $\widetilde\sigma_x\gets\min\{\widetilde\sigma_x/\gamma,\bar{\sigma}_x\}$,\quad $\widetilde\sigma_y\gets\min\{\widetilde\sigma_y/\gamma,\bar{\sigma}_y\}$}} \label{eq:L-update} 
\EndFor
\end{algorithmic}
\end{algorithm}
\begin{algorithm}[bht!]
\caption{$(\tilde\bx,\tilde y,\sa{\tilde G},v^x,v^y)=$\sa{\texttt{RB-SGDA}}($\eta_x,\eta_y,M_x,M_y,N,K,\bx^0,y^0$)}
\label{alg:GDA}
\small
\begin{algorithmic}[1]
\State \sa{$\tilde k\gets\cU[0,K-1]$}
\For{$k=0,\dots,\tilde k$}
\State \sa{$i_k\gets\cU[1,N]$ \Comment{$i_k$ is distributed uniformly on $\{1,\ldots,N\}$}}
\State \sa{$\bx^{k+1}\gets\bx^{k}$}
\State \sa{$x_{i_{k}}^{k+1}\gets \prox{\eta_x g_{i_k}}\Big(x_{i_{k}}^k-\eta_x s_{x_{i_k}}(\bx^k,y^k;\bom^k)\Big)$} \label{eq:GDA_x}
\State\sa{$y^{k+1}\gets \prox{\eta_y h}\Big(y^{k}+\eta_y s_y(\bx^k,y^k;\bzt^k)\Big)$} \label{eq:GDA_y}
\EndFor
\State \sa{$(\tilde\bx,\tilde y)\gets(\bx^{\tilde k},y^{\tilde k}),\quad \tilde G\gets \tilde G(\bx^{\tilde k},y^{\tilde k};\bxi^{\tilde k})$},\quad \rv{$(v^x,v^y)\gets V_{\bxi^{\tilde k}}(\bx^{\tilde k},y^{\tilde k})$} \label{algeq:RB-SGDA_N}
\Comment{$V$ is defined in \eqref{eq:sample_variance}}
\State \Return \sa{$(\tilde\bx,\tilde y, \mg{\tilde G}, v^x, v^y)$} 
\end{algorithmic}
\end{algorithm}
\rv{Next, we state Stochastic GDA with Backtracking~(\sgdab) in~\cref{alg:GDA-B}. We first discuss how certain algorithm parameters are initialized: If $\mu>0$ is known, then in \sgdab{} we initialize $\mu^0=\mu$ and similarly in case $\sigma_x$ (or $\sigma_y$) is known, then we set $\sigma_x^0=\sigma_x$ (or $\sigma_y^0=\sigma_y$); furthermore, let}\looseness=-10 
\rv{
\begin{align}
\label{eq:mu-sigma-initial}
    \underline{\mu}\gets \begin{cases}
    \mu,\ \mbox{if $\mu$ is known,}\\
    0,\ \mbox{otherwise;}
\end{cases}
\bar{\sigma}_x\gets \begin{cases}
    \sigma_x,\ \mbox{if $\sigma_x$ is known,}\\
    +\infty,\ \mbox{otherwise;}
\end{cases}
\bar{\sigma}_y\gets \begin{cases}
    \sigma_y,\ \mbox{if $\sigma_y$ is known,}\\
    +\infty,\ \mbox{otherwise.}
\end{cases}
\end{align}}%
\rv{\sgdab{} takes tolerance parameter $\epsilon>0$, the variance bound estimates $\sigma^0_x, \sigma^0_y>0$, concavity modulus and Lipschitz constant estimates $0<\mu^{0}<L^{0}$, algorithm parameters $p,\bar p, {c},\gamma\in (0,1)$ related to the random stopping condition, the backtracking coefficient $\gamma\in (0,1)$, $F_0,\bar F\in\reals$ and $\delta>0$ as input.} We next 
discuss these parameters: $\epsilon$ is the tolerance, i.e., \sgdab{} is guaranteed to stop w.p.1, and when it terminates, it generates an $\cO(\epsilon)$-stationary point $(\bx_\epsilon,y_\epsilon)$ \rv{with probability at least $1-\bar p$} 
\saa{(see~Theorem \ref{ncsc_thm})}; 
\rv{the inputs $\delta$ and $F_0$ should satisfy $F_0\geq F(\bx_0)$ and $\delta\geq \norm{y_0-y_*(\bx_0)}^2$,} and $\bar F$ is a lower bound on $F^*=\inf_{\bx\in\cX} F(\bx)$ --in many ML problems, the primal objective is to minimize a loss function for which $F^*\geq 0$ (see Remark~\ref{rem:barF}); $\gamma\in (0,1)$ is the backtracking parameter, every time the backtracking condition in line~\ref{algeq:stop} fails, Lipschitz constant estimate $\widetilde L$, concavity modulus estimate $\widetilde\mu$, and the variance bound estimates $\widetilde \sigma_x,\widetilde \sigma_y$ are updated according to $\widetilde{L}\gets \widetilde{L}/\sa{\gamma}$,  $\widetilde\mu \gets \max\{\widetilde\mu\gamma,\underline{\mu}\}$, $\widetilde\sigma_x\gets\min\{\widetilde\sigma_x/\gamma,\bar{\sigma}_x\}$, and $\widetilde\sigma_y\gets\min\{\widetilde\sigma_y/\gamma,\bar{\sigma}_y\}$, respectively, as in line~\ref{eq:L-update} of \cref{alg:GDA-B}; finally, the 
parameter $p\in (0,1)$ bounds the number of backtracking iterations, i.e., \sgdab{} requires at most $\cO(\log(\rv{\cR}))$ backtracking iterations with probability at least $1-p$, \rv{where $\cR\triangleq\max\{\mu^{0}/\mu,L/L^{0},\sigma_x/\sigma_x^{0},\sigma_y/\sigma_y^{0},1\}$}, requiring $\cO(\rv{\hat{\kappa}^3 \hat{L}\log^2(1/\min\{p,\bar p\})}\epsilon^{-4})$ stochastic oracle calls for the stochastic WCSC setting, where $\hat\kappa\triangleq\hat L/\hat \mu$ with $\hat{L}\triangleq\cR L^{0}$ and $\hat{\mu}\triangleq\max\{\underline{\mu}, \mu^{0}/\cR\}$ \rv{(see Theorem~\ref{ncsc_thm} and the discussion in Section~\ref{sec:unknown-bounds})}. 
\rv{In each backtracking iteration, given the estimates $\widetilde{L}$, $\widetilde{\mu}$, $\widetilde{\sigma}_x$ and $\widetilde{\sigma}_y$, 
\sgdab{} makes \saa{$T=\lceil\log_2(3/p)\rceil$ parallel calls} to the randomized block stochastic gradient descent ascent method (\rbsgda) which is displayed in~\cref{alg:GDA} and will be elaborated further in \cref{sec-rb-sgda}.} For the deterministic case with $\sigma_x=\sigma_y=0$, both $p$ and $\bar p$ are set to $0$; hence, we initialize 
$T=1$
together with $M_x=M_y=1$.
\begin{remark}
\label{rem:x0-delta-choice}
\rv{Given $\bx^0\in\dom g$, \sgdab{} requires an input $\delta>0$ such that $\delta\geq \norm{y^0-y_*(\bx^0)}^2$. In case the dual domain is bounded, i.e., $\cD_y \triangleq \sup_{y_1,y_2\in \dom h} \norm{y_1 - y_2} < \infty$, one can simply set $\delta=\cD_y^2$. For instance, for the DRO problem discussed in \cref{rem:barF}, $\dom h$ is the unit simplex; hence, $\cD_y=1$. Alternatively, if $\mu$ is known and $\sigma_y=0$, then one can choose $\delta=\norm{s^0-\grad_y f(\bx^0,y^0)}^2/\mu^2$ for any $s^0\in\partial h(y^0)$, and in practice when $\sigma_y>0$ one can estimate $\grad_y f(\bx^0,y^0)$ with $\frac{1}{M}\sum_{j=1}^{M}\gt_{y} f({\bx}^0,y^0;\zeta_j)$ for some large $M$.} \rv{Another practical strategy that works well in practice is to use adaptive methods to reinitialize $y^0$ to some inexact maximizer $y^0_*$ of the initialization problem $\max_y \cL(\bx^0,y)$.
{For $\mu>0$}, given any $\tilde\epsilon>0$, computing $y^0_*\in\dom h$ such that $\max_{y\in\cY}\cL(\bx^0,y)\leq{\cL(\bx^0,y^0_*)}+\tilde\epsilon$ requires $\cO(\sqrt{\kappa}\log(1/\tilde\epsilon))$ gradient calls for the case $\sigma_y=0$ without requiring to know $L$ and/or $\mu$ --see \cite{calatroni2019backtracking,rebegoldi2022scaled} for details.
On the other hand, for the case $\sigma_y>0$, to compute $y^0_*$ such that $\max_{y\in\cY}\cL(\bx^0,y)\leq\E{\cL(\bx^0,y^0_*)}+\tilde\epsilon$ or $\E{\norm{y^0_*-y_*(\bx^0)}}\leq \tilde\epsilon$ where $\mu,L$ and $\sigma_y$ are possibly unknown, $\tilde\cO(\sigma_y^2/(\mu\tilde\epsilon))$ stochastic oracle calls is sufficient for 
 SPG~\citep{rosasco2020convergence}. Furthermore, one can also use online convex optimization algorithms, e.g., Generalized AdaGrad~\citep{junchinest}, FreeRexMomentum~\citep{NIPS2017_6aed000a} and Epoch-GD method~\citep{hazan2014beyond}, to reinitialize $y^0$; indeed, by online-to-batch conversion the regret rate for these methods can be translated into rates for strongly convex stochastic optimization. Assuming $\cD_y<\infty$ and there exists $G>0$ such that $\norm{\gt_y f(\bx_0,y;\zeta)}\leq G$ for all $y\in\dom h$ w.p. 1, \citep[Theorem 3.4.]{junchinest} shows $\cO(T^{1-\alpha})$ regret for any $\alpha\in(0,1)$, \citep[Corollary~11]{NIPS2017_6aed000a} shows $\tilde\cO(G^2/\mu)$ regret, and \citep[Theorem 9]{hazan2014beyond} assuming $\mu$ is known shows $\cO(G^2/\mu)$ regret; thus, the convergence rate for Generalized AdaGrad, FreeRexMomentum and Epoch-GD are $\cO(1/T^{\alpha})$, $\tilde\cO(1/T)$ and $\cO(1/T)$, respectively. To summarize, in practice one can run these adaptive methods on $\max_y \cL(\bx^0,y)$ to produce an inexact maximizer $y^0_*$; indeed, for small $\tilde\epsilon>0$, after conservatively running these methods $\cO(1/\tilde\epsilon^2)$ iterations, one can set $F_0=\cL(\bx^0,y^0_*)+\tilde\epsilon$, $\delta=2\tilde\epsilon/\tilde\mu$ and $y^0=y^0_*$ -- we found this approach very stable and robust for choosing input parameters $F_0$ and $\delta$ in Section~\ref{sec:numerics} for the numerical experiments.}
\end{remark}%
\begin{remark}
\label{rem:mu-L-initialization}
    \rv{Our convergence results hold for all initialization choices of $0<\mu^0\leq L^0$. That being said, one can use the following approach in practice to initialize these parameter values. Given some $M\in\integers_+$, consider $M$ randomly generated points $\{(\bx_i, y_i)\}_{i=1}^{M}\subset\dom g\times\dom h$. For any 
    $\bz_i = (\bx_i, y_i)$ and $\bz_j = (\bx_j, y_j)$, it holds that $L(\bz_i, \bz_j) \triangleq |\langle \grad f(\bz_i) - \grad f(\bz_j), \bz_i - \bz_j\rangle| /\norm{\bz_i - \bz_j}^2 \leq L$. Thus, we can initialize $L^0=\max\{L(\bz_i, \bz_j): 1\leq i\leq M,\ i+1\leq j\leq M\}$. Furthermore, since $\mu \leq \langle \grad_y f(\bx, y) - \grad_y f(\bx, y'), y-y' \rangle / \norm{y - y'}^2$ for all $\bx\in\dom g$ and $y,y'\in\dom h$, we can use a similar approach to initialize $\mu^0$ as well.}
\end{remark}

\sa{\sgdab, using the batch size of $M_x=\cO(\mg{1+}\rv{\widetilde\sigma_x^2}/\epsilon^2)$ and $M_y=\cO(\mg{1+}\rv{\widetilde\sigma_y^2}/\epsilon^2)$, 
terminates w.p.1 
(see Lemma~\ref{lem:stopping_probability}), and at the termination, it generates $(\bx_\epsilon,y_\epsilon)$ such that $\norm{\tilde G(\bx_\epsilon,y_\epsilon;\bxi_\epsilon)}\leq \epsilon$ w.p.1. 
\mg{Next, we show that the \saa{deterministic gradient map $G(\bx_\epsilon,y_\epsilon)$ is close to its stochastic estimate $\tilde G(\bx_\epsilon,y_\epsilon;\bxi_\epsilon)$} in a probabilistic sense 
with this batchsize selection, \saa{implying that the output $(\bx_\epsilon,y_\epsilon)$ has a small deterministic gradient map with high probability for an appropriate choice of $M_x,M_y$}.} 
}
\begin{lemma}
\label{lem:Chebyshev}
    Suppose \cref{ass1,assumption:noise} hold. For any given sample sizes $M_x,M_y\in\integers_+$, there exists $b:\reals_{++}\times\integers_+\times\integers_+\to\reals_+$ such that $b(r,M_x,M_y)$ {is} decreasing in all the arguments, and 
\begin{equation}
\label{eq:diff-bound}
    \p{\norm{\tilde G(\bx,y,\bxi)-G(\bx,y)}>r}\leq b(r,M_x,M_y),\quad \forall~r>0,\quad \forall~(\bx,y)\in\dom g\times \dom h.
\end{equation} 
    Indeed, under \cref{assumption:noise}, \eqref{eq:diff-bound} holds for $b(r,M_x,M_y)=\frac{1}{r^2}\Big(\frac{\sigma_x^2}{M_x}+\frac{\sigma_y^2}{M_y}\Big)$;
moreover, if \cref{as:bounded-oracle} also holds, then 
$b(r,M_x,M_y)=(n_x+1)\exp\Big(-M_x\frac{\sigma_x^2}{B_x^2}\cdot H\Big(\frac{B_x r}{\sqrt{2}\sigma_x^2}\Big)\Big)+(n_y+1)\exp\Big(-M_y\frac{\sigma_y^2}{B_y^2}\cdot H\Big(\frac{B_y r}{\sqrt{2}\sigma_y^2}\Big)\Big)$ satisfies \eqref{eq:diff-bound} as well.
\end{lemma}
\begin{proof}
    Note that for any given $\bx\in\dom g$, $y\in\dom h$ and random realization $\bxi$, we have {\small$\tilde G_\bx(\bx,y;\bxi)-G_\bx(\bx,y)=\left[\prox{\eta_x g}(\bx-\eta_x s_{\bx}(\bx,y;\bom))-\prox{\eta_x g}(\bx-\eta_x \grad_{\bx} f(\bx,y))\right]/\eta_x$}; thus, 
    the non-expansivity of the proximal map implies that $\norm{\tilde G_\bx(\bx,y;\bom)-G_\bx(\bx,y)}\leq \norm{s_{\bx}(\bx,y;\bom)-\grad_{\bx} f(\bx,y)}$. The same argument also gives us $\norm{\tilde G_y(\bx,y;\bzt)-G_y(\bx,y)}\leq \norm{s_{y}(\bx,y;\bzt)-\grad_{y} f(\bx,y)}$; hence, combining the two, we conclude that the following inequality \rv{holds with probability $1$}:
     \begin{align}
        \label{eq:nonexpansive_prox}
        \norm{\tilde G(\bx,y;\bxi)-G(\bx,y)}\leq \norm{s(\bx,y;\bxi)-\grad f(\bx,y)}.
    \end{align}
    Note that we have $\mathbf{E}[s(\bx,y;\bxi)]=\grad f(\bx,y)$ \mg{as well as} 
 $\mathbf{E}[\norm{s(\bx,y;\bxi)-\grad f(\bx,y)}^2]\leq \sigma_x^2/M_x+\sigma_y^2/M_y$ \mg{from \cref{ineq-grad-var-bound}}. Then, using
 the Chebyshev's inequality leads to the desired result. 
Next, suppose that both \cref{assumption:noise,as:bounded-oracle} hold. Since for any $r>0$,
\begin{equation}
\label{eq:G-xy-split}
{\small
\begin{aligned}
    \MoveEqLeft\p{\norm{\tilde G(\bx,y;\bxi)-G(\bx,y)}>r}\\
    &\leq \p{\norm{\tilde G_\bx(\bx,y;\bom)-G_\bx(\bx,y)}>\frac{r}{\sqrt{2}}}+\p{\norm{\tilde G_\by(\bx,y;\bzt)-G_\by(\bx,y)}>\frac{r}{\sqrt{2}}},
\end{aligned}}%
\end{equation}
using \eqref{eq:nonexpansive_prox} and \eqref{eq:mean-concentration-bounded} in \cref{thm:concentration-bounded}, we get the desired result.
\end{proof}
\begin{remark}
\label{rem:small-eps}
\rv{Suppose the variance bounds $\sigma_x$ and $\sigma_y$ are unknown. We assume that \cref{assumption:noise,as:bounded-oracle} hold, and in \sgdab{}, the sample sizes for the $\ell$-th iteration satisfy $M_x\geq \frac{128}{\epsilon^2}\widetilde\sigma_x^2$ and $M_y\geq \frac{128}{\epsilon^2}\widetilde\sigma_y^2$. This choice implies that
     $M_x \geqslant \frac{128}{\epsilon^{2}}\left(\frac{\sigma_x^0}{\gamma^\ell}\right)^{2} \geqslant \frac{128}{\epsilon^{2}}\left(\sigma_x^0\right)^{2}$ for all iterations $\ell\geq 0$ since the estimate $\tilde\sigma_x$ is updated using $\widetilde\sigma_x\gets\min\{\widetilde\sigma_x/\gamma,\bar{\sigma}_x\}$ starting from any given $\sigma_x^{0}>0$ as long as the stopping condition in \emph{\texttt{line}}~\ref{algeq:stop} in \sgdab{} does not hold. Similarly, $M_y\geq \frac{128}{\epsilon^2}\big(\sigma_y^0\big)^2$ for all $\ell\geq 0$.} 
    
    \rv{For any  $(\bx,y)$, let \rv{$(v^x,v^y)\gets V_{\bxi}(\bx,y)$ as defined in \eqref{eq:sample_variance}.} Thus, it follows from \cref{thm:concentration-bounded} that for any $\alpha\in(0,1)$, $\p{(1+c)\sigma_x^2\geq (1-\frac{1}{M_x})v^x}\geq 1-\alpha$ holds whenever
    {\small
    \begin{align*} 
    \frac{128}{\epsilon^{2}}\left(\sigma_x^{0}\right)^{2} \geqslant \frac{1}{H(c)} \frac{B_x^2}{\sigma_x^2} \log \frac{1}{\alpha}\quad
 \mgrev{\iff} \quad  
 8\sqrt{2}\sigma_x^{0} \frac{\sigma_x}{B_x} \frac{\sqrt{H(c)}}{\sqrt{\log(1/{\alpha})}} \geqslant \epsilon>0.
\end{align*}}%
A similar argument is also valid for $\p{(1+c)\sigma_y^2\geq (1-\frac{1}{M_y})v^y}\geq 1-\alpha$.} 

\rv{For $c\in (0,1]$, it holds that $H(c)\geq \frac{3}{8}c^2$; therefore, for all sufficiently small $\epsilon>0$, i.e., 
{\small
$$0<\epsilon\leq 4\sqrt{3}\cdot c\min\Big\{\sigma_x^0~\frac{\sigma_x}{B_x},\sigma_y^0~\frac{\sigma_y}{B_y}\Big\}/\sqrt{\log(1/\alpha)}$$}%
and $c\in (0,1]$, one has $\p{(1+c)\sigma_x^2\geq (1-\frac{1}{M_x})v_{(t,\ell)}^x}\geq 1-\alpha$ and $\p{(1+c)\sigma_y^2\geq (1-\frac{1}{M_y})v^y_{(t,\ell)}}\geq 1-\alpha$ for all $\ell\geq 0$ and $t=1,\ldots,T$ where $\Big(v_{(t,\ell)}^x,v_{(t,\ell)}^y\Big)=V_{\tilde\bxi_{(t,\ell)}}\Big(\tilde\bx_{(t,\ell)},\tilde y_{(t,\ell)}\Big)$ as defined in \emph{\texttt{\cref{algeq:RBSGDA_runs}}} of \sgdab. Thus, to handle a larger range of tolerance values $\epsilon>0$, one can pick larger initial values for the variance bound estimates $\sigma_x^0,\sigma_y^0>0$, which are free design parameters.}
\end{remark}
\vspace*{-5mm}
\section{Weakly \mg{C}onvex-\mg{S}trongly \mg{C}oncave~(WCSC) \mg{S}etting 
}
\label{sec:WCSC}
\saa{In the next section, we consider \rbsgda{} using Jacobi-type updates. In~\cref{sec:GS}, we also consider \rbsgda{} with Gauss-Seidel updates.}
\subsection{\sa{\rbsgda} with Jacobi Updates}\label{sec-rb-sgda}
\sa{Let $\{i_k\}_{k\geq 0}$ be a random sequence with elements i.i.d. following $\cU[1,N]$. For $k\geq 0$, consider the following \mg{
\saa{proximal 
GDA algorithm with Jacobi-type} block updates}:} 
\mg{
\begin{eqnarray*}\bx_{i_k}^{k+1} &=&  \prox{\eta_x g_{i_k}}\Big(\bx_{i_k}^k-\eta_{x} s_{x_{i_k}}(\bx^k,y^k;\bom^k)\Big), \quad\mbox{and}\quad
\bx_{\saa{i}}^{k+1} =\bx_{\saa{i}}^{k} \quad \mbox{if} \quad \saa{i}\neq i_k, \\
y^{k+1} &=& \prox{\eta_y h}\Big( y^{k}+\eta_y s_y(\bx^k,y^k;\bzt^k)\Big),
\end{eqnarray*}
}%
\mg{where the source of randomness ($\bom^k$ and $\bzt^k$) 
is due to the stochastic oracle satisfying \cref{assumption:noise}.  
}%
\mg{
\saa{Here, only the randomly chosen primal block $i_k$, \rv{i.e., $\bx_{i_k}$,} is updated; 
therefore,} we call this method \textit{random-block stochastic} 
\saa{GDA} (\rbsgda)}. 

\mg{We next 
\saa{define} the natural filtration generated by \saa{the iterate sequence}.}
\begin{defn}
\label{def:filtration}
\sa{Let $\cF^k\triangleq\sigma(\{\bz^k\})$
denote the $\sigma$-algebra generated by 
$\bz^k=(\bx^k,y^k)$ for $k\geq 0$.}
\end{defn}
The following lemma, \mg{conditioned on 
\saa{$\cF^k$,} \saa{provides us with the useful identities in \eqref{lemeq:cond-exp-Gik} and \eqref{lemeq:cond-exp-gradPhi_ik},} and it controls the variance of the stochastic gradients \saa{based on sample sizes $M_x$ \mg{and} $M_y$.} 
}
\begin{lemma}
\label{lem:cond-exp}
    \rv{Given arbitrary mini-batch sizes $M_x,M_y\in\integers_{+\mgrev{+}}$}, \sa{it holds for all $k\geq 0$ that
    {\small
    \begin{subequations}
    \label{lemeq:cond-exp}
        \begin{align}
        &\Ek{\|\tilde G_{x_{i_k}}(\bx^k,y^k;\bom^k)\|^2} = \frac{1}{N}\Ek{\norm{\tilde G_{\bx}(\bx^k,y^k;\bom^k)}^2}, \label{lemeq:cond-exp-Gik}\\
        &\Ek{\norm{\grad_{x_{i_k}}f(\bx^k,y^k)-\grad_{x_{i_k}} \Phi(\bx^k)}^2}=\frac{1}{N}\norm{\grad_{\bx}f(\bx^k,y^k)-\grad \Phi(\bx^k)}^2,\label{lemeq:cond-exp-gradPhi_ik}\\
        &\Ek{\norm{s_{x_{i_k}}(\bx^k,y^k;\bom^k)-\grad_{x_{i_k}} f(\bx^k,y^k)}^2}\leq \frac{1}{N}\frac{\sigma_x^2}{M_x},
        \label{lemeq:cond-exp-variance}\\
        &\Ek{\norm{s_y(\bx^k,y^k;\bzt^k)-\grad_{y} f(\bx^k,y^k)}^2}\leq \frac{\sigma_y^2}{M_y}.
        \label{lemeq:cond-exp-variance-y}
    \end{align}
    \end{subequations}}}%
    \vspace*{-0.6cm}
\end{lemma}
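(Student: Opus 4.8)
The plan is to verify each of the four identities/inequalities in \eqref{lemeq:cond-exp} by conditioning on $\cF^k$ and carefully separating the two independent sources of randomness at step $k$: the block index $i_k\sim\cU[1,N]$, and the oracle noise $\bom^k=[\omega_j^k]_{j=1}^{M_x}$, $\bzt^k=[\zeta_j^k]_{j=1}^{M_y}$. Note that $i_k$ is independent of $(\bom^k,\bzt^k)$ and both are independent of $\bz^k=(\bx^k,y^k)$, so conditioned on $\cF^k$ the quantities $\bx^k,y^k$ are fixed and we average only over $i_k$ and the oracle samples.

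\textbf{Identities \eqref{lemeq:cond-exp-Gik} and \eqref{lemeq:cond-exp-gradPhi_ik}.} For \eqref{lemeq:cond-exp-Gik}, I would first condition further on $\bom^k$ (in addition to $\cF^k$) and take expectation over $i_k$ only. Since $\tilde G_{x_i}(\bx^k,y^k;\bom^k)$ for $i\in\cN$ are deterministic given $(\cF^k,\bom^k)$, and $i_k$ is uniform on $\{1,\dots,N\}$, we get $\mathbf{E}[\|\tilde G_{x_{i_k}}(\bx^k,y^k;\bom^k)\|^2\mid\cF^k,\bom^k]=\frac1N\sum_{i=1}^N\|\tilde G_{x_i}(\bx^k,y^k;\bom^k)\|^2=\frac1N\|\tilde G_{\bx}(\bx^k,y^k;\bom^k)\|^2$, using that $\|\tilde G_{\bx}\|^2=\sum_i\|\tilde G_{x_i}\|^2$ by the block structure. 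Taking the outer expectation over $\bom^k$ given $\cF^k$ and using the tower property yields \eqref{lemeq:cond-exp-Gik}. Identity \eqref{lemeq:cond-exp-gradPhi_ik} is the same computation but with no oracle noise involved at all: $\grad_{x_i}f(\bx^k,y^k)-\grad_{x_i}\Phi(\bx^k)$ is $\cF^k$-measurable, so averaging $\|\cdot\|^2$ over the uniform $i_k$ directly gives $\frac1N\sum_i\|\grad_{x_i}f(\bx^k,y^k)-\grad_{x_i}\Phi(\bx^k)\|^2=\frac1N\|\grad_{\bx}f(\bx^k,y^k)-\grad\Phi(\bx^k)\|^2$.

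\textbf{Variance bounds \eqref{lemeq:cond-exp-variance} and \eqref{lemeq:cond-exp-variance-y}.} For \eqref{lemeq:cond-exp-variance}, again condition on $(\cF^k,i_k)$ first. Then $s_{x_{i_k}}(\bx^k,y^k;\bom^k)=\frac{1}{M_x}\sum_{j=1}^{M_x}\gt_{x_{i_k}}f(\bx^k,y^k;\omega_j^k)$ is an average of $M_x$ i.i.d. unbiased estimates of $\grad_{x_{i_k}}f(\bx^k,y^k)$, each with per-sample variance at most $\sigma_x^2/N$ by \cref{assumption:noise}(i),(iii); hence $\mathbf{E}[\|s_{x_{i_k}}(\bx^k,y^k;\bom^k)-\grad_{x_{i_k}}f(\bx^k,y^k)\|^2\mid\cF^k,i_k]\le\frac{1}{M_x}\cdot\frac{\sigma_x^2}{N}$. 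Since this bound is uniform over the value of $i_k$, taking the outer expectation over $i_k$ preserves it, giving \eqref{lemeq:cond-exp-variance}. Inequality \eqref{lemeq:cond-exp-variance-y} is even more direct, as the dual update does not involve $i_k$: $s_y(\bx^k,y^k;\bzt^k)$ is the average of $M_y$ i.i.d. unbiased estimates of $\grad_y f(\bx^k,y^k)$ with per-sample variance at most $\sigma_y^2$ by \cref{assumption:noise}(ii),(iv), so the variance of the average is at most $\sigma_y^2/M_y$, which is exactly \eqref{ineq-grad-var-bound} restricted to the dual block.

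The argument is essentially bookkeeping — the only point requiring care is the order of conditioning, namely that one must first condition on the block index $i_k$ before invoking the oracle-variance bound (because the bound $\sigma_x^2/N$ is the per-block-coordinate variance, not the full variance), and then use that the resulting bound is $i_k$-independent so that averaging over $i_k$ does nothing. I do not anticipate a genuine obstacle; the mild subtlety is simply making explicit the independence structure (block index vs. oracle noise vs. iterate) so the tower-property manipulations are justified.
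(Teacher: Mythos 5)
Your proposal is correct and is essentially the fully written-out version of the paper's one-line proof, which simply invokes the tower law of expectation: you condition on $\cF^k$, average over the uniform block index $i_k$ to pick up the $1/N$ factors in \eqref{lemeq:cond-exp-Gik}--\eqref{lemeq:cond-exp-variance}, and use the i.i.d.\ mini-batch structure together with \cref{assumption:noise} for the two variance bounds. The ordering of the conditioning (block index before oracle-variance bound) that you flag is handled correctly, so there is no gap.
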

\begin{proof}
    \sa{The results trivially follow from the tower law of expectation.}
\end{proof}
\mg{
\mg{We next show that} after a
proximal gradient \saa{block} update \saa{for any given block $i^*\in\cN$,}  
the stochastic gradient map corresponding to \saa{the chosen} block, i.e., \saa{$\widetilde G_{x_{i^*}}$, can be controlled by three terms: (\textit{i}) change in the primal function, (\textit{ii}) the variance of the stochastic gradients, and (\textit{iii}) a term 
\saa{related to the difference between $\grad_{x_{i^*}} f$ and $\grad_{x_{i^*}}\Phi$}.
}
}
\begin{lemma}
\label{lem:x-norm-bound}
    Given $(\bar\bx,\bar{y})\in \dom g \times \dom h$ and \sa{$i^*\in\cN$}, let $\bar\bx^+\in\cX$ be such that \sa{$\bar x^+_i=\bar{x}_i$ for $i\in\cN\setminus\{i^*\}$, and $\bar x^+_{i^*}=\prox{{\eta_x} g_{i^*}}\Big(\bar{x}_{i^*}-\eta_x s_{x_{i^*}}(\bar\bx,\bar y; \bom)\Big)$, where \sa{$ \bom=[ \omega_j]_{j=1}^{M_x}$ and} $\{ \omega_j\}_{j=1}^{M_x}$ are i.i.d. random variables such that $\{\gt_{x_{i^*}}f(\bar\bx,\bar{y};{\omega}_j)\}_{j=1}^{M_x}$ satisfy \cref{assumption:noise}.} Then,
\begin{equation}
{\small
    \begin{aligned}
    \label{eq:x-norm-bound_tmpp_gdab}
          &\Big(\frac{\eta_x}{2}-\eta_x^2 L\kappa\Big)\sa{\mathbf{E}_{\bom}}[\norm{\tilde G_{x_{i^*}}(\bar{\bx},\bar{y};\sa{\bom})}^2]\\
          &\leq   F(\bar\bx) - \sa{\mathbf{E}_{\bom}}\Big[F(\bar\bx^+)-\frac{\eta_x}{2}\norm{s_{x_{i^*}}(\bar\bx,\bar y;\bom)-\grad_{x_{i^*}} f(\bar\bx,\bar{y})}^2\Big]+\frac{\eta_x}{2}\norm{\grad_{x_{i^*}}f(\bar\bx,\bar{y})-\grad_{x_{i^*}} \Phi(\bar\bx)}^2.
    \end{aligned}}%
\end{equation}
\end{lemma}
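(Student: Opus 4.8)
The plan is to read \eqref{eq:x-norm-bound_tmpp_gdab} as a one-step sufficient-decrease estimate for the primal function $F=g+\Phi$ under a \emph{block} proximal-gradient update, while keeping track of the fact that block $i^*$ is updated with the noisy partial gradient $s_{x_{i^*}}$ of the coupling function $f$ rather than with the exact partial gradient of $\Phi$. Throughout I abbreviate $G\triangleq\tilde G_{x_{i^*}}(\bar\bx,\bar y;\bom)$ and $s\triangleq s_{x_{i^*}}(\bar\bx,\bar y;\bom)$, so that $\bar x^+_{i^*}=\bar x_{i^*}-\eta_x G$ and $\bar\bx^+-\bar\bx$ is supported on block $i^*$ with exactly that value. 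Since we are in the WCSC setting ($\mu>0$), I may use the standard fact that $\Phi$ is differentiable with $\grad\Phi(\bar\bx)=\grad_\bx f(\bar\bx,y_*(\bar\bx))$ (with $y_*$ as in Definition~\ref{def-ystar}) and that $\grad\Phi$ is $L(1+\kappa)$-Lipschitz (Danskin's theorem together with the $\kappa$-Lipschitz continuity of $y_*$); in particular $\tfrac12 L(1+\kappa)\le L\kappa$ because $\kappa\ge 1$.

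First I would assemble two elementary inequalities. The descent lemma for $\Phi$ applied along the block step, followed by $\tfrac12 L(1+\kappa)\le L\kappa$, gives $\Phi(\bar\bx^+)\le\Phi(\bar\bx)-\eta_x\langle\grad_{x_{i^*}}\Phi(\bar\bx),G\rangle+L\kappa\,\eta_x^2\norm{G}^2$. Next, the first-order optimality condition of the proximal map defining $\bar x^+_{i^*}$ reads $G-s\in\partial g_{i^*}(\bar x^+_{i^*})$, so convexity of $g_{i^*}$ evaluated at $\bar x_{i^*}$ yields $g_{i^*}(\bar x^+_{i^*})\le g_{i^*}(\bar x_{i^*})-\eta_x\norm{G}^2+\eta_x\langle s,G\rangle$, which equals $g(\bar\bx^+)\le g(\bar\bx)-\eta_x\norm{G}^2+\eta_x\langle s,G\rangle$ since $\bar\bx$ and $\bar\bx^+$ agree outside block $i^*$. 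Adding the two inequalities,
\[
F(\bar\bx^+)\le F(\bar\bx)-\eta_x\norm{G}^2+\eta_x\langle s-\grad_{x_{i^*}}\Phi(\bar\bx),\,G\rangle+L\kappa\,\eta_x^2\norm{G}^2 .
\]

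The crux, and the step I expect to be the main obstacle, is handling the inner-product term with exactly the right bookkeeping so that the variance term ends up with coefficient $\tfrac{\eta_x}{2}$ rather than $\eta_x$. I would apply Young's inequality to the \emph{whole} error $s-\grad_{x_{i^*}}\Phi(\bar\bx)$ in one shot, $\langle s-\grad_{x_{i^*}}\Phi(\bar\bx),G\rangle\le\tfrac12\norm{s-\grad_{x_{i^*}}\Phi(\bar\bx)}^2+\tfrac12\norm{G}^2$, and rearrange to obtain, for every realization of $\bom$,
\[
\Big(\tfrac{\eta_x}{2}-\eta_x^2 L\kappa\Big)\norm{G}^2\le F(\bar\bx)-F(\bar\bx^+)+\tfrac{\eta_x}{2}\norm{s-\grad_{x_{i^*}}\Phi(\bar\bx)}^2 .
\]
Only then do I take $\mathbf{E}_{\bom}$ of both sides. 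Writing $s-\grad_{x_{i^*}}\Phi(\bar\bx)=\big(s-\grad_{x_{i^*}}f(\bar\bx,\bar y)\big)+\big(\grad_{x_{i^*}}f(\bar\bx,\bar y)-\grad_{x_{i^*}}\Phi(\bar\bx)\big)$, the first summand is mean-zero by \cref{assumption:noise} (using the i.i.d.\ averaging that defines $s$) and the second is deterministic, so the cross term vanishes and $\mathbf{E}_{\bom}\norm{s-\grad_{x_{i^*}}\Phi(\bar\bx)}^2=\mathbf{E}_{\bom}\norm{s-\grad_{x_{i^*}}f(\bar\bx,\bar y)}^2+\norm{\grad_{x_{i^*}}f(\bar\bx,\bar y)-\grad_{x_{i^*}}\Phi(\bar\bx)}^2$; substituting gives exactly \eqref{eq:x-norm-bound_tmpp_gdab}. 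It is essential to perform a \emph{single} Young's inequality and only afterwards take the expectation: splitting $s-\grad_{x_{i^*}}\Phi(\bar\bx)$ into its noise and bias parts \emph{before} applying Young's inequality would create two cross terms each contributing $\tfrac{\eta_x}{2}\norm{G}^2$, which together would cancel the $-\eta_x\norm{G}^2$ produced by the prox step and leave a vacuous left-hand coefficient. A minor bookkeeping point: $\bar\bx^+$ is random, so the term $F(\bar\bx^+)$ on the right-hand side of \eqref{eq:x-norm-bound_tmpp_gdab} should be read as $\mathbf{E}_{\bom}[F(\bar\bx^+)]$, whereas $F(\bar\bx)$ and the last term are deterministic.
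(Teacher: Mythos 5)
Your proposal is correct and follows essentially the same route as the paper's proof: the $L(1+\kappa)$-smoothness of $\Phi$ combined with the descent lemma, the prox optimality condition $\tilde G_{x_{i^*}}-s_{x_{i^*}}\in\partial g_{i^*}(\bar x^+_{i^*})$ with convexity of $g_{i^*}$, a single Young's inequality applied to the full error $s_{x_{i^*}}-\grad_{x_{i^*}}\Phi(\bar\bx)$, and the bias--variance split whose cross term vanishes in expectation because $\grad_{x_{i^*}}f(\bar\bx,\bar y)-\grad_{x_{i^*}}\Phi(\bar\bx)$ is deterministic. Your two side remarks are also on point: splitting the error before Young's would indeed produce a vacuous left-hand coefficient, and the term $F(\bar\bx^+)$ in \eqref{eq:x-norm-bound_tmpp_gdab} should indeed be read as $\mathbf{E}_{\bom}[F(\bar\bx^+)]$ since $\bar\bx^+$ depends on $\bom$ (consistent with how the lemma is invoked in the proof of Lemma~\ref{lemma:primal-bound-gda}).
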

\begin{proof}
   Since $\Phi$ is $\sa{L}(1+\kappa)$-smooth from \cite[Lemma A.5]{nouiehed2019solving}, \sa{the definition $\bar\bx^+$ implies that}
\begin{equation}
{\small
   \begin{aligned}
          \label{eq:Phi-smooth_block}
    \Phi(\bar\bx^+)&\leq \Phi(\bar\bx)+\fprod{\grad \Phi(\bar\bx), \bar\bx^+-\bar\bx}+ L\kappa   \|\bar\bx^+-\bar\bx\|^2 \\
    &=\Phi(\bar\bx)+\fprod{\grad_{x_{i^*}} \Phi(\bar\bx), \bar x^+_{i^*}-\bar{x}_{i^*}}+ L\kappa   \|\bar x_{i^*}^+-\bar{x}_{i^*}\|^2,
   \end{aligned}}%
\end{equation}
\sa{where we used \rv{$(\bar\bx^+-\bar\bx)_i=\mathbf{0}$ for $i\in\cN\setminus\{i^*\}$} and \mg{$\kappa\geq 1$}.}
The definition of $\bar\bx^+$ \sa{also} implies that $\frac{1}{\eta_x}(\bar{x}_{i^*}- \bar x^+_{i^*})-s_{x_{i^*}}(\bar\bx,\bar y;\bom)\in\sa{\partial g_{i^*}(\bar x^+_{i^*})}$ and \sa{$g(\bar \bx^+)-g(\bar\bx) = g_{i^*}(\bar x^+_{i^*}) - g_{i^*}(\bar x_{i^*})$;} therefore, we get
\sa{\small
\begin{equation}
\hspace{-0.07in}
    \begin{aligned}
    F(\bar\bx^+)
    &\leq F(\bar\bx)+\fprod{\grad_{x_{i^*}} \Phi(\bar\bx), \bar x^+_{i^*}-\bar x_{i^*}}
    +L\kappa   \| \bar x_{i^*}^+-\bar x_{i^*}\|^2
    +\fprod{\frac{1}{\eta_x}(\bar x_{i^*}- \bar x^+_{i^*})- s_{x_{i^*}}(\bar\bx,\bar y;\bom),\bar x^+_{i^*}-\bar x_{i^*}}\\
    &= F(\bar\bx)\saa{+}\fprod{\tilde G_{x_{i^*}}(\bar\bx,\bar{y};\bom), \bar x^+_{i^*}-\bar x_{i^*}}+L\kappa \| \bar x^+_{i^*}-\bar x_{i^*}\|^2+\fprod{\grad_{x_{i^*}} \Phi(\bar\bx)-s_{x_{i^*}}(\bar\bx,\bar y;\bom), \bar x^+_{i^*}-\bar x_{i^*}}\\
    &= F(\bar\bx)-\Big(\eta_x-\eta_x^2L\kappa\Big)\norm{\tilde G_{x_{i^*}}(\bar\bx,\bar{y};\bom)}^2+\eta_x\fprod{s_{x_{i^*}}(\bar\bx,\bar y;\bom)-\grad_{x_{i^*}} \Phi(\bar\bx), \tilde G_{x_{i^*}}(\bar\bx,\bar{y};\bom)}\\
    &\leq  F(\bar\bx)-\Big(\eta_x-\eta_x^2 L\kappa\Big)\norm{\tilde G_{x_{i^*}}(\bar\bx,\bar{y};\bom)}^2+\frac{\eta_x}{2}\norm{\tilde G_{x_{i^*}}(\bar\bx,\bar{y};\bom)}^2+\frac{\eta_x}{2}\norm{s_{x_{i^*}}(\bar\bx,\bar y;\bom)-\grad \Phi_{x_{i^*}}(\bar\bx)}^2\\
    &=  F(\bar\bx)-\Big(\frac{\eta_x}{2}-\eta_x^2L\kappa\Big)\norm{\tilde G_{x_{i^*}}(\bar\bx,\bar{y};\bom)}^2 +\frac{\eta_x}{2}\norm{s_{x_{i^*}}(\bar\bx,\bar y;\bom)-\grad_{x_{i^*}}f(\bar\bx,\bar{y})}^2\\
    &\quad +\frac{\eta_x}{2}\norm{\grad_{x_{i^*}}f(\bar\bx,\bar{y})-\grad \Phi_{x_{i^*}}(\bar\bx)}^2+\eta_x\fprod{s_{x_{i^*}}(\bar\bx,\bar y;\bom)-\grad_{x_{i^*}}f(\bar\bx,\bar{y}),~\grad_{x_{i^*}}f(\bar\bx,\bar{y})-\grad \Phi_{x_{i^*}}(\bar\bx)},
    \end{aligned}
\end{equation}}%
where the first inequality follows from \eqref{eq:Phi-smooth_block} and 
convexity of $g$, the equalities follow from the definition of \sa{$\tilde G_\bx(\bar\bx,\bar{y};\bom)$}, and the last inequality follows from Young's inequality. Rearranging terms and then taking the expectation of both sides of the inequality give the desired result.
\end{proof}
In the following lemma, we \saa{consider the effect of the proximal gradient ascent step in the dual, and} provide a fundamental inequality for proving the convergence result. 
\begin{theorem}\label{thm_H}
For any given $\bar\bx\in\dom g$, let $H:\cY\to\reals\cup\{+\infty\}$ such that $H(y)\triangleq h(y)-f(\bar\bx,y)$ for $y\in\cY$. Then, for any given $\bar{y}\in\dom h$,
\begin{equation}
    \label{eq:str-cvx-H}
    \begin{split}
    H(y)\geq &H( \bar y^+)-\fprod{\tilde G_y(\bar\bx,\bar{y};\sa{\bzt}), y-\bar{y}}+\fprod{s_y(\bar\bx,\bar y;\bzt)-\grad_y f(\bar \bx,\bar y), y -\bar y^+}\\
    &+\frac{\eta_y}{2}(2-L\eta_y)\norm{\tilde G_y(\bar\bx,\bar{y};\sa{\bzt})}^2+\frac{\mu}{2}\norm{ y-\bar{y}}^2
    \end{split}
\end{equation}
holds for all $y\in\cY$, where $\bar y^+=\prox{\eta_y h}\Big(\bar{y}+\eta_y s_y(\bar\bx,\bar y;\bzt)\Big)$ \sa{for $\bzt=[\zeta_j]_{j=1}^{M_y}$ such that} $\{\zeta_j\}_{j=1}^{M_y}$ are i.i.d. random variables and $\{\gt_y f(\bar\bx,\bar y;{\zeta}_j)\}_{j=1}^{M_y}$ satisfy \cref{assumption:noise}.
\end{theorem}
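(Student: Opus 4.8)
\textbf{Proof proposal for Theorem \ref{thm_H}.}

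The plan is to start from the defining property of $\bar y^+$ as a proximal step and turn it into a subgradient inclusion, then combine this with the strong concavity of $f(\bar\bx,\cdot)$ (equivalently, the strong convexity of $-f(\bar\bx,\cdot)$, hence of $H$) and the $L$-smoothness of $f(\bar\bx,\cdot)$ in $y$. First I would record that $\bar y^+ = \prox{\eta_y h}(\bar y + \eta_y s_y(\bar\bx,\bar y;\bzt))$ is equivalent to the optimality condition
\[
\frac{1}{\eta_y}(\bar y - \bar y^+) + s_y(\bar\bx,\bar y;\bzt) \in \partial h(\bar y^+),
\]
which in terms of the stochastic dual gradient map $\tilde G_y(\bar\bx,\bar y;\bzt) = (\bar y^+ - \bar y)/\eta_y$ reads $s_y(\bar\bx,\bar y;\bzt) - \tilde G_y(\bar\bx,\bar y;\bzt) \in \partial h(\bar y^+)$. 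Then for any $y\in\cY$ the subgradient inequality for the convex function $h$ gives $h(y) \geq h(\bar y^+) + \fprod{s_y(\bar\bx,\bar y;\bzt) - \tilde G_y(\bar\bx,\bar y;\bzt),\, y - \bar y^+}$.

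Next I would handle the $-f(\bar\bx,\cdot)$ part of $H$. Since $f(\bar\bx,\cdot)$ is $\mu$-strongly concave, for all $y$ we have $-f(\bar\bx,y) \geq -f(\bar\bx,\bar y) - \fprod{\grad_y f(\bar\bx,\bar y),\, y-\bar y} + \frac{\mu}{2}\norm{y-\bar y}^2$. I also need a descent-type bound relating $-f(\bar\bx,\bar y^+)$ to $-f(\bar\bx,\bar y)$: by $L$-smoothness of $f(\bar\bx,\cdot)$,
\[
-f(\bar\bx,\bar y^+) \leq -f(\bar\bx,\bar y) - \fprod{\grad_y f(\bar\bx,\bar y),\, \bar y^+ - \bar y} + \frac{L}{2}\norm{\bar y^+ - \bar y}^2.
\]
The idea is to add the $h$-inequality to the smoothness inequality to get a lower bound on $H(y) = h(y) - f(\bar\bx,y)$ in terms of $H(\bar y^+)$, the inner products, and a $\frac{L}{2}\norm{\bar y^+-\bar y}^2$ penalty, then separately use strong concavity to recover the $\frac{\mu}{2}\norm{y-\bar y}^2$ term; one must be careful to combine the first-order terms so that the cross terms assemble into the claimed $-\fprod{\tilde G_y, y - \bar y}$ and $\fprod{s_y - \grad_y f, y - \bar y^+}$ expressions. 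Writing $\bar y^+ - \bar y = \eta_y \tilde G_y$ and substituting $\norm{\bar y^+ - \bar y}^2 = \eta_y^2 \norm{\tilde G_y}^2$ should produce the term $-\frac{L\eta_y^2}{2}\norm{\tilde G_y}^2$, while the $h$-inequality's inner product $\fprod{s_y - \tilde G_y, y - \bar y^+}$, after splitting $y - \bar y^+ = (y-\bar y) - \eta_y\tilde G_y$ and collecting, should yield $\eta_y\norm{\tilde G_y}^2$ from the $-\fprod{\tilde G_y, -\eta_y\tilde G_y}$ piece; together these give the coefficient $\eta_y - \frac{L\eta_y^2}{2} = \frac{\eta_y}{2}(2 - L\eta_y)$ as required.

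The main obstacle I anticipate is the bookkeeping of the first-order (inner-product) terms: one has $\grad_y f(\bar\bx,\bar y)$ appearing from both the strong-concavity inequality (evaluated at $y$) and the smoothness inequality (evaluated at $\bar y^+$), and $s_y(\bar\bx,\bar y;\bzt)$ appearing from the proximal optimality condition, and these must be reconciled into exactly the two inner-product terms $-\fprod{\tilde G_y, y-\bar y}$ and $\fprod{s_y - \grad_y f, y - \bar y^+}$ in the statement — in particular the noise term $s_y - \grad_y f$ must end up paired with $y - \bar y^+$ rather than $y - \bar y$, which dictates precisely how to route the $\grad_y f$ terms. I would organize this by first writing the chain $H(y) - H(\bar y^+) \geq [\,h(y) - h(\bar y^+)\,] + [\,-f(\bar\bx,y) + f(\bar\bx,\bar y^+)\,]$, lower-bounding the first bracket by the $h$-subgradient inequality, lower-bounding $-f(\bar\bx,y) + f(\bar\bx,\bar y)$ by strong concavity and upper-bounding $-f(\bar\bx,\bar y) + f(\bar\bx,\bar y^+)$'s negative via smoothness (so that $-f(\bar\bx,y)+f(\bar\bx,\bar y^+) = [-f(\bar\bx,y)+f(\bar\bx,\bar y)] - [-f(\bar\bx,\bar y)+f(\bar\bx,\bar y^+)]$), then adding $\grad_y f(\bar\bx,\bar y)$ and $-\grad_y f(\bar\bx,\bar y)$ as needed to convert $\fprod{s_y,\cdot}$ terms into $\fprod{s_y - \grad_y f,\cdot} + \fprod{\grad_y f,\cdot}$ and cancel. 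Once the algebra is set up correctly everything collapses to \eqref{eq:str-cvx-H}; this is a routine but delicate computation, so I would present it in a single displayed chain of (in)equalities with each step annotated.
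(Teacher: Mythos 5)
Your proposal is correct and the algebra does close as you claim: combining the subgradient inequality for $h$ at $\bar y^+$, strong concavity of $f(\bar\bx,\cdot)$ between $\bar y$ and $y$, and the descent lemma between $\bar y$ and $\bar y^+$ yields exactly the coefficient $\eta_y-\tfrac{L\eta_y^2}{2}=\tfrac{\eta_y}{2}(2-L\eta_y)$ and routes the noise term onto $y-\bar y^+$ as required. This is essentially the same argument as the paper's, which merely packages the same three ingredients through an auxiliary quadratic model $\widetilde H(y)=h(y)-f(\bar\bx,\bar y)-\fprod{s_y(\bar\bx,\bar y;\bzt),y-\bar y}+\tfrac{1}{2\eta_y}\norm{y-\bar y}^2$ minimized at $\bar y^+$, a purely presentational difference.
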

\begin{proof}
     Define $\widetilde H:\cY\to\reals\cup\{+\infty\}$ such that $\widetilde H(y)\triangleq h(y)-f(\bar\bx,\bar{y})-\fprod{s_y(\bar\bx,\bar y;\bzt), y-\bar{y}}+\frac{1}{2\eta_y}\norm{ y-\bar{y}}^2$ for all $ y\in\cY$. Clearly, $ \bar y^+=\argmin_{ y\in\cY}\widetilde H( y)$. The first-order optimality condition implies that
    \begin{align*}
        s_y(\bar\bx,\bar y;\bzt)-\tilde G_y(\bar\bx,\bar{y};\sa{\bzt})=s_y(\bar\bx,\bar y;\bzt)+\frac{1}{\eta_y}(\bar{y}- \bar y^+)\in\partial h( \bar y^+);
    \end{align*}
    therefore, using the convexity of $h$, \saa{for any $y\in\cY$,} we get
    \begin{align}
    \label{eq:subgradient_h}
    \begin{split}
        h(y)&\geq h( \bar y^+)+\fprod{s_y(\bar\bx,\bar y;\bzt)-\tilde G_y(\bar\bx,\bar{y};\sa{\bzt}), y- \bar y^+}\\
        & = h(\bar y^+)+\fprod{s_y (\bar\bx, \bar y;\bzt), y-\bar y} + \fprod{s_y (\bar \bx,\bar y;\bzt),\bar y-\bar y^+} - \fprod{ \tilde G_y(\bar \bx,\bar y;\bzt), y-\bar y^+}.
    \end{split}
    \end{align}
    Moreover, since $f(\bar\bx,\cdot)$ is $\mu$-strongly concave, \saa{for any $y\in\cY$,} we also get
    \begin{equation}
    \label{eq:str-concavity}
        \begin{split}
            -f(\bar\bx, y)-\frac{\mu}{2}\norm{y-\bar{y}}^2
            &\geq -f(\bar\bx,\bar{y})-\fprod{\grad_yf(\bar\bx,\bar{y}), y-\bar{y}}.\\
        \end{split}
    \end{equation}
   Next, summing \eqref{eq:subgradient_h} and \eqref{eq:str-concavity} gives
   \begin{equation}
   \label{eq:H-bound-1}
   {\small
    \begin{split}
       \MoveEqLeft{H(y)-\frac{\mu}{2}\norm{y-\bar{y}}^2}\\
        &\geq \widetilde H(\bar y^+)-\frac{1}{2\eta_y}\norm{ \bar y^+-\bar{y}}^2-\fprod{\tilde G_y(\bar\bx,\bar{y};\bzt), y- \bar y^+}+\fprod{s_y (\bar \bx,\bar y;\bzt)-\grad_y f(\bar\bx,\bar y), y-\bar y}\\
        & = \widetilde H( \bar y^+)+\frac{\eta_y}{2}\norm{\tilde G_y(\bar\bx,\bar{y};\bzt)}^2-\fprod{\tilde G_y(\bar\bx,\bar{y};\bzt), y-\bar{y}}+\fprod{s_y (\bar \bx,\bar y;\bzt)-\grad_y f(\bar\bx,\bar y), y-\bar y}.
    \end{split}}
   \end{equation}
    Since $\grad f$ is $L$-Lipschitz,
    \begin{equation}
        \label{eq:H-bound-2}
        \begin{split}
        \widetilde H(\bar y^+)
        &=h(\bar y^+)-f(\bar\bx,\bar{y})-\fprod{\grad_y f(\bar\bx,\bar{y}), \bar y^+-\bar{y}}+\frac{L}{2}\norm{\bar y^+-\bar{y}}^2\\
        &\quad+\fprod{s_y(\bar\bx,\bar y;\bzt)-\grad_y f(\bar \bx,\bar y), \bar y -\bar y^+}+\frac{1}{2}\Big(\frac{1}{\eta_y}-L\Big)\norm{\bar y^+-\bar{y}}^2\\
        &\geq H(\bar y^+)+\frac{1}{2}\Big(\frac{1}{\eta_y}-L\Big)\norm{\bar y^+-\bar{y}}^2+\fprod{s_y(\bar\bx,\bar y;\bzt)-\grad_y f(\bar \bx,\bar y), \bar y -\bar y^+}\\
        &= H( \bar y^+)+\frac{\eta_y}{2}(1-L\eta_y)\norm{\tilde G_y(\bar\bx,\bar{y};\bzt)}^2+\fprod{s_y(\bar\bx,\bar y;\bzt)-\grad_y f(\bar \bx,\bar y), \bar y -\bar y^+}.\\
        \end{split}
    \end{equation}
    Thus, \eqref{eq:H-bound-1} and \eqref{eq:H-bound-2} together imply the desired inequality in~\eqref{eq:str-cvx-H}.
\end{proof}
\begin{corollary}
\label{cor:str-concavity}
For any given $(\bar\bx,\bar{y})\in\dom g\times\dom h$ \sa{and $\eta_y>0$,}
\begin{equation}
{\small
\begin{aligned}
    \MoveEqLeft 2\eta_y\fprod{\tilde G_y(\bar\bx,\bar{y};\bzt),\bar{y}-{y}_{*}(\bar\bx)}+\eta_y^2\norm{\tilde G_y(\bar\bx,\bar{y};\bzt)}^2\\
    &\leq-\eta_y\mu\norm{\bar{y}-{y}_{*}(\bar\bx)}^2-\eta_y^2(1-L\eta_y)\norm{\tilde G_y(\bar\bx,\bar{y};\bzt)}^2+2\eta_y \fprod{s_y(\bar\bx,\bar y;\bzt)-\grad_y f(\bar\bx,\bar y),\bar y^+-y_*(\bar\bx)}.
\end{aligned}}
\end{equation}
\end{corollary}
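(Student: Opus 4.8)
The plan is to obtain \cref{cor:str-concavity} as a direct specialization of \cref{thm_H}. Fix $(\bar\bx,\bar y)\in\dom g\times\dom h$ and $\eta_y>0$, and apply the inequality \eqref{eq:str-cvx-H} at the particular point $y=y_*(\bar\bx)$, where $y_*(\bar\bx)$ is as in \cref{def-ystar}. This gives
\[
H(y_*(\bar\bx))\;\geq\; H(\bar y^+)-\fprod{\tilde G_y(\bar\bx,\bar y;\bzt),\,y_*(\bar\bx)-\bar y}+\fprod{s_y(\bar\bx,\bar y;\bzt)-\grad_y f(\bar\bx,\bar y),\,y_*(\bar\bx)-\bar y^+}+\tfrac{\eta_y}{2}(2-L\eta_y)\norm{\tilde G_y(\bar\bx,\bar y;\bzt)}^2+\tfrac{\mu}{2}\norm{y_*(\bar\bx)-\bar y}^2.
\]

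The key observation is that, by \cref{def-ystar}, $y_*(\bar\bx)=\argmax_{y\in\cY}f(\bar\bx,y)-h(y)=\argmin_{y\in\cY}H(y)$, so $H(y_*(\bar\bx))\leq H(\bar y^+)$, i.e., $H(y_*(\bar\bx))-H(\bar y^+)\leq 0$. Substituting this into the displayed inequality eliminates the function-value terms, leaving
\[
0\;\geq\; -\fprod{\tilde G_y(\bar\bx,\bar y;\bzt),\,y_*(\bar\bx)-\bar y}+\fprod{s_y(\bar\bx,\bar y;\bzt)-\grad_y f(\bar\bx,\bar y),\,y_*(\bar\bx)-\bar y^+}+\tfrac{\eta_y}{2}(2-L\eta_y)\norm{\tilde G_y(\bar\bx,\bar y;\bzt)}^2+\tfrac{\mu}{2}\norm{y_*(\bar\bx)-\bar y}^2.
\]

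The rest is algebraic bookkeeping: flip the sign of the first inner product to expose $\fprod{\tilde G_y,\bar y-y_*(\bar\bx)}$, move all remaining terms to the left, and multiply through by $2\eta_y>0$ (which preserves the inequality direction). Finally, write $\eta_y^2(2-L\eta_y)=\eta_y^2+\eta_y^2(1-L\eta_y)$ and keep $\eta_y^2\norm{\tilde G_y(\bar\bx,\bar y;\bzt)}^2$ on the left-hand side; together with the identity $-2\eta_y\fprod{s_y-\grad_y f,\,y_*(\bar\bx)-\bar y^+}=2\eta_y\fprod{s_y-\grad_y f,\,\bar y^+-y_*(\bar\bx)}$, this yields exactly the asserted bound. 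I do not anticipate any real obstacle here—the only point requiring care is the sign and coefficient bookkeeping, in particular the coefficient split $2-L\eta_y=1+(1-L\eta_y)$ and using $\eta_y>0$ so that scaling the inequality by $2\eta_y$ is valid.
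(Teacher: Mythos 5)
Your proposal is correct and follows exactly the paper's own argument: substitute $y=y_*(\bar\bx)$ into \eqref{eq:str-cvx-H}, use $H(y_*(\bar\bx))\leq H(\bar y^+)$ from \cref{def-ystar}, rearrange, and multiply by $2\eta_y$, with the split $\eta_y^2(2-L\eta_y)=\eta_y^2+\eta_y^2(1-L\eta_y)$ accounting for the placement of the gradient-map terms. The sign and coefficient bookkeeping you describe checks out, so nothing further is needed.
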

\begin{proof}
\sa{The definition of $y_*(\cdot)$ 
gives $H(y_*(\bar\bx))\leq H(\bar y^+)$; therefore, substituting $y=y_*(\bar\bx)$ in \eqref{eq:str-cvx-H}, rearranging the terms and multiplying both sides by $2\eta_y$ immediately implies the desired inequality.}
\end{proof}
Next, we bound the average squared norm of stochastic gradient maps over past iterations in expectation by a sum of three terms: 
\textit{(i)} expected primal 
\rv{descent}, i.e.,  $
\E{F(\bx^0)-F(\bx^K)}$, \textit{(ii)} 
\rv{cumulative} dual {suboptimality},
i.e., $
\sum_{k=0}^{K-1}\E{\|y_*(\bx^k) - y^k\|^2}$
, and \textit{(iii)} a constant noise term, i.e., $\delta^2_x/M_x+\delta^2_y/M_y$, \rv{for any given arbitrary mini-batch sizes $M_x,M_y\in\integers_{+\mgrev{+}}$.}
\begin{lemma}\label{lemma:primal-bound-gda}
For any $K\geq 1$ \mg{and $\eta_x>0$}, the \rbsgda{}
iterate sequence $\{\bx^{k},y^{k}\}_{k\geq 0}$ satisfies
\sa{
\begin{equation}\label{eq:partial_sum_grad_delta_tmpp_jac}
    \begin{aligned}
        &\Big(\frac{1}{2} - \eta_xL\kappa\Big)\sum_{k=0}^{K-1}\mathbf{E}\Big[\norm{\tilde G_{\bx}(\bx^{k},y^{k};\bom^k)}^2\Big]+\frac{1}{4} \sum_{k=0}^{K-1}\mathbf{E}\Big[\norm{\tilde G_y(\bx^{k},y^{k};\bzt^k)}^2\Big]\\
        &\leq \mathbf{E}\Big[\frac{N}{\eta_x}\Big(F(\bx^0)-F(\bx^K)\Big)
        +\frac{3}{2} \Big(L^2+\frac{1}{\eta_y^2}\Big)
        \sum_{k=0}^{K-1} \delta^{k}\Big]+K\Big(\frac{\sigma_x^2}{M_x}+ \frac{\sigma_y^2}{M_y}\Big),
    \end{aligned}
\end{equation}}%
where $\sa{\delta^k\triangleq}\|{y}^k_{*}-{y}^k\|^2$ and ${y}^k_{*}\triangleq{y}_{*}(\bx^k)$ \sa{for $k\geq 0$}.
\end{lemma}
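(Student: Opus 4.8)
The plan is to obtain \eqref{eq:partial_sum_grad_delta_tmpp_jac} by adding a per-iteration \emph{primal} estimate and a per-iteration \emph{dual} estimate, taking conditional expectations given $\cF^k$ to kill the noise and the block-index randomness, and then summing over $k=0,\dots,K-1$ with the primal suboptimality telescoped. For the primal part I would apply \cref{lem:x-norm-bound} at each $k$ with $(\bar\bx,\bar y)=(\bx^k,y^k)$, $i^*=i_k$, and $\bar\bx^+=\bx^{k+1}$, so that $\tilde G_{x_{i^*}}(\bar\bx,\bar y;\bom)$ becomes $\tilde G_{x_{i_k}}(\bx^k,y^k;\bom^k)$. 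Applying $\Ek{\cdot}$ to the resulting inequality and using the identities \eqref{lemeq:cond-exp-Gik}, \eqref{lemeq:cond-exp-gradPhi_ik} and the variance bound \eqref{lemeq:cond-exp-variance} of \cref{lem:cond-exp} replaces $\|\tilde G_{x_{i_k}}\|^2$ by $\tfrac1N\|\tilde G_{\bx}\|^2$, the noise term by $\tfrac1N\sigma_x^2/M_x$, and $\|\grad_{x_{i_k}}f-\grad_{x_{i_k}}\Phi\|^2$ by $\tfrac1N\|\grad_{\bx}f(\bx^k,y^k)-\grad\Phi(\bx^k)\|^2$; multiplying through by $N/\eta_x$ then produces a bound of the shape $(\tfrac12-\eta_xL\kappa)\Ek{\|\tilde G_{\bx}(\bx^k,y^k;\bom^k)\|^2}\le \tfrac{N}{\eta_x}\Ek{F(\bx^k)-F(\bx^{k+1})}+\tfrac12\tfrac{\sigma_x^2}{M_x}+\tfrac12\|\grad_{\bx}f(\bx^k,y^k)-\grad\Phi(\bx^k)\|^2$. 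Finally, by Danskin's theorem $\grad\Phi(\bx^k)=\grad_{\bx}f(\bx^k,y_*(\bx^k))$, so $L$-Lipschitzness of $\grad f$ gives $\|\grad_{\bx}f(\bx^k,y^k)-\grad\Phi(\bx^k)\|^2\le L^2\delta^k$.

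For the dual part I would start from \cref{cor:str-concavity} (which is \cref{thm_H} evaluated at $y=y_*(\bx^k)$), applied with $(\bar\bx,\bar y)=(\bx^k,y^k)$ and $\bar y^+=y^{k+1}$. Since $y^{k+1}-y^k=\eta_y\tilde G_y(\bx^k,y^k;\bzt^k)$, the left-hand side of that corollary equals exactly $\|y^{k+1}-y_*(\bx^k)\|^2-\|y^k-y_*(\bx^k)\|^2$, so the corollary rearranges into a one-step estimate $\|y^{k+1}-y_*(\bx^k)\|^2\le(1-\eta_y\mu)\delta^k-\eta_y^2(1-L\eta_y)\|\tilde G_y(\bx^k,y^k;\bzt^k)\|^2+2\eta_y\langle s_y(\bx^k,y^k;\bzt^k)-\grad_yf(\bx^k,y^k),\,y^{k+1}-y_*(\bx^k)\rangle$. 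Dropping the nonpositive term $-\|y^{k+1}-y_*(\bx^k)\|^2$ and isolating $\|\tilde G_y\|^2$, I would split $y^{k+1}-y_*(\bx^k)=\eta_y\tilde G_y(\bx^k,y^k;\bzt^k)+(y^k-y_*(\bx^k))$ inside the cross term: the $(y^k-y_*(\bx^k))$-component is $\cF^k$-measurable and vanishes under $\Ek{\cdot}$ by \cref{assumption:noise}, while the $\tilde G_y$-component is absorbed via Young's inequality at the expense of a residual variance term controlled by \eqref{lemeq:cond-exp-variance-y}. This yields, after $\Ek{\cdot}$, an inequality of the form $\tfrac14\Ek{\|\tilde G_y(\bx^k,y^k;\bzt^k)\|^2}\le \tfrac{3}{2\eta_y^2}\delta^k+\tfrac{\sigma_y^2}{M_y}$, where the exact coefficients lean on the stepsize range ($\eta_y$ compatible with the backtracking condition), the scaling $\eta_x=N\rho\mu^2\eta_y^3$, and the sample-size choice in line~\ref{algeq:M} (the factor multiplying $\sigma_y^2$ in $M_y$ is precisely what is needed to close the cross-term estimate). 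Adding the primal and dual per-iteration inequalities, taking total expectations, summing $k=0,\dots,K-1$, and telescoping $\sum_k\E{F(\bx^k)-F(\bx^{k+1})}=\E{F(\bx^0)-F(\bx^K)}$ gives \eqref{eq:partial_sum_grad_delta_tmpp_jac}, with the two $L^2\delta^k$- and $\eta_y^{-2}\delta^k$-contributions combining into $\tfrac32(L^2+\eta_y^{-2})\delta^k$.

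The main obstacle I anticipate is the bookkeeping for the stochastic dual step: because the proximal map is applied \emph{after} the noisy gradient is added, the iterate $y^{k+1}$ itself depends on $\bzt^k$, so the cross term $\langle s_y-\grad_yf,\,y^{k+1}-y_*(\bx^k)\rangle$ is not mean-zero and must be decomposed carefully; keeping its $\tilde G_y$-component absorbed without destroying the net negative coefficient on $\|\tilde G_y\|^2$ (so that $\tfrac14\|\tilde G_y\|^2$ can be moved to the left) is what dictates both the $\eta_y$-dependence of the constants and the particular form of $M_y$. A secondary, purely technical, point is the order of expectations: each per-iteration estimate must first be conditioned on $\cF^k$ — to use \cref{lem:cond-exp} and annihilate the noise expectations — and only then passed to total expectation before summing, so that the telescoping of $F$ and the collection of the $\delta^k$ terms into the stated coefficient are legitimate.
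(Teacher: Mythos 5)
Your primal half is exactly the paper's argument: \cref{lem:x-norm-bound} at $(\bx^k,y^k)$ with $i^*=i_k$, conditional expectation via \cref{lem:cond-exp}, the identity $\grad\Phi(\bx^k)=\grad_{\bx}f(\bx^k,y_*^k)$ plus Lipschitzness to get $L^2\delta^k$, then multiplication by $N/\eta_x$ and telescoping. The dual half, however, has a genuine gap. You extract the bound on $\Ek{\|\tilde G_y(\bx^k,y^k;\bzt^k)\|^2}$ from \cref{cor:str-concavity}, whose only control on $\|\tilde G_y\|^2$ enters through the term $-\eta_y^2(1-L\eta_y)\|\tilde G_y\|^2$. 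After you drop $-\|y^{k+1}-y_*(\bx^k)\|^2$ and absorb the $\tilde G_y$-component of the cross term by Young's inequality with parameter $\alpha$, the surviving left-hand coefficient of $\Ek{\|\tilde G_y\|^2}$ is $1-L\eta_y-\alpha$, which must be at least $1/4$ for your conclusion; this forces $L\eta_y\le 3/4-\alpha<1$. But \cref{lemma:primal-bound-gda} carries no restriction on $\eta_y$, and it cannot: it is invoked through \cref{linesearch_scratch_stoc} at every backtracking iteration, where $\eta_y=\gamma^{\ell}/\mu$ starts near $1/\mu=\kappa/L\gg 1/L$. For such $\eta_y$ the coefficient $\eta_y^2(1-L\eta_y)$ in \cref{cor:str-concavity} is negative and the corollary yields no upper bound on $\|\tilde G_y\|^2$ at all, so your route collapses precisely in the regime the backtracking analysis needs. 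Relatedly, your remark that the form of $M_y$ in line~\ref{algeq:M} is what closes the cross-term estimate is off: $M_y$ plays no role in this lemma beyond supplying the variance bound $\sigma_y^2/M_y$.

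The paper avoids this entirely by never using a one-step descent inequality for the dual. It exploits the fixed-point identity $y_*^k=\prox{\eta_y h}\big(y_*^k+\eta_y\grad_yf(\bx^k,y_*^k)\big)$, i.e.\ $G_y(\bx^k,y_*^k)=\mathbf{0}$, writes $\tilde G_y(\bx^k,y^k;\bzt^k)$ as a difference of two proximal points plus $(y_*^k-y^k)/\eta_y$, and bounds it using only non-expansiveness of the proximal map, $\|a+b\|^2\le 2\|a\|^2+2\|b\|^2$, monotonicity of $-\grad_yf(\bx^k,\cdot)$ (concavity), and $L$-Lipschitzness, arriving at $\frac14\Ek{\|\tilde G_y(\bx^k,y^k;\bzt^k)\|^2}\le\frac{\sigma_y^2}{M_y}+\big(L^2+\frac{3}{2\eta_y^2}\big)\delta^k$ with no condition on $\eta_y$ whatsoever (see \eqref{eq:yt-norm-bound_tmpp}--\eqref{eq:expect-gy-bound}). \cref{cor:str-concavity} is instead reserved for \cref{lemma:deltat_tmpp}, where the sign-indefinite $-C_3\|\tilde G_y\|^2$ terms are carried along symbolically and only discarded once $\eta_y\le 1/L$ is finally imposed in \cref{lem:linesearch}.
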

\begin{proof}
Fix $k\geq 1$, since $(\bx^{k}, y^{k})\in\dom g\times \dom h$, we can invoke \sa{\rv{Lemma}~\ref{lem:x-norm-bound} with $(\bar\bx,\bar{y})=(\bx^{k}, y^{k})$ and $i^*=i_k$}, implying $\saa{\bar\bx^+}=\sa{\bx^{k+1}}$; thus, it follows from \eqref{eq:x-norm-bound_tmpp_gdab} and \rv{Lemma}~\ref{lem:cond-exp} that
\begin{align*}
\begin{split}
    &\frac{1}{N}\Big(\frac{\eta_x}{2}-\eta_x^2 L\kappa\Big)\Ek{\norm{\tilde G_{\bx}(\bx^k,y^k;\bom^k)}^2}\\
    & \leq   F(\bx^k) - \Ek{F(\bx^{k+1})}+\frac{\eta_x}{\sa{2}M_x N}\sigma_x^2+\frac{\eta_x}{\sa{2}N}\norm{\grad\Phi(\bx^k)-\grad_{\bx} f(\bx^k,y^k)}^2\\
    &\leq F(\bx^{k}) - \Ek{F(\bx^{k+1})}+\frac{\eta_x}{2M_x N}\sigma_x^2+\frac{\eta_x}{2N}L^2\delta^k,\\
\end{split}
\end{align*}
where in the second inequality we use $L$-Lipschitz continuity of $\grad_{\bx} f(\bx^{k},\cdot)$, the definition of \saa{$\delta^{k}$}, and the identity $\grad\Phi(\bx^{k})=\grad_{\bx} f(\bx^{k},{y_*^k})$. Thus, \saa{dividing both sides by $\frac{\eta_x}{N}$ and taking the total expectation,} we get
\sa{\begin{align}
\label{eq:xt-norm-bound_tmpp}
\begin{split}
    \mg{\Big(\frac{1}{2}-\eta_x L\kappa\Big)}\E{\norm{\tilde G_{\bx}(\bx^k,y^k;\bom^k)}^2}\leq \E{\frac{\mg{N}}{\eta_x}\Big(F(\bx^{k}) - F(\bx^{k+1})\Big)+\frac{1}{\mg{2}}L^2\delta^k}+\frac{1}{\mg{2}}\frac{\sigma_x^2}{M_x}.
\end{split}
\end{align}}%
Furthermore, 
${y}_*^k\triangleq {y}_{*}(\bx^{k})$ implies that \saa{$G_y(\bx^{k},{y}_*^k)=\mathbf{0}$, i.e., ${y}_*^k=\prox{\sa{\eta_y} h}\Big({y}_*^k+\eta_y
    \grad_y f(\bx^{k},{y}_*^k)\Big)$.}
Therefore, 
\begin{equation}
{\small
\begin{aligned}
\MoveEqLeft{\frac{1}{4}\norm{\tilde G_y(\bx^{k},y^{k};\bzt^k)}^2}\\ 
    &=\frac{1}{4}\norm{\tilde G_y(\bx^{k},y^{k};\bzt^k)\rv{+}\Big({y}_*^k -\prox{\sa{\eta_y} h}\Big({y}_*^k+\eta_y
    \grad_y f(\bx^{k},{y}_*^k)\Big)\Big)/\eta_y}^2\\ 
    &=\frac{1}{4\eta_y^2}\norm{\prox{\eta_y h} \Big(y^{k}+\eta_y s_y(\bx^k,y^k;\bzt^k)\Big)-\prox{\eta_y h} \Big({y^{k}_*}+\eta_y \grad_y f(\bx^{k},{y}_*^k)\Big)+{y}_*^k-y^{k}}^2\\ 
    &\leq\frac{1}{4\eta_y^2}\Big(2\norm{y^{k}-{y^{k}_*}+\eta_y s_y(\bx^k,y^k;\bzt^k)-\eta_y \grad_y f(\bx^{k},{y}_*^k)}^2+2\norm{{y}_*^k-y^{k}}^2\Big)\\ 
    &\leq\frac{1}{4\eta_y^2}\Big(4\norm{y^{k}-{y^{k}_*}+\eta_y \grad_y f(\bx^{k},{y}^k)-\eta_y \grad_y f(\bx^{k},{y}_*^k)}^2\\ 
    &\qquad\qquad{+4\eta_y^2\norm{s_y(\bx^k,y^k;\bzt^k)-\grad_y f(\bx^{k},{y}^k)}^2+2\norm{{y}_*^k-y^{k}}^2\Big)}\\ 
    &\leq \norm{ s_y ( \bx^k,y^k; \bzt^k)-\grad_y f(\bx^{k},y^{k})}^2
    +\Big(L^2 + \frac{3}{2\eta_y^2}\Big) \delta^{k},
\end{aligned}}%
\label{eq:yt-norm-bound_tmpp}
\end{equation}
where we used non-expansivity of the proximal map, \sa{concavity of $f(\bx_{k},\cdot)$ (which implies that $\norm{y^{k}-{y^{k}_*}+\eta_y \grad_y f(\bx^{k},{y}^k)-\eta_y \grad_y f(\bx^{k},{y}_*^k)}^2\leq \norm{y^k-y_*^k}^2+\eta_y^2\norm{\grad_y f(\bx^{k},{y}^k)-\grad_y f(\bx^{k},{y}_*^k)}^2$),} $L$-Lipschitz continuity of $\grad_y f(\bx_{k},\cdot)$, the definition of $\delta_{k}$, and the triangular inequality. Then by taking the \sa{conditional} expectation of both sides, we can conclude that
\begin{equation}\label{eq:expect-gy-bound}
    \begin{aligned}
        &\frac{1}{4}\Ek{\norm{\tilde G_y (\bx^{k},y^{k};\bzt^k)}^2 }\\
        &\leq\Ek{
         {\norm{ s_y ( \bx^k,y^k; \bzt^k)-\grad_y f(\bx^{k},y^{k})}^2}
        } +(L^2 + \frac{3}{2\eta_y^2}) \sa{\delta^{k}}
        \leq \frac{\sigma_y^2}{M_y} + \Big(L^2 + \frac{3}{2\eta_y^2}\Big) \sa{\delta^{k}}.
    \end{aligned}
\end{equation}
\sa{Next, combining \eqref{eq:xt-norm-bound_tmpp} and \sa{the total expectation of} \eqref{eq:expect-gy-bound}, then summing} \saa{the resulting inequality from $k=0$ to $K-1$, will lead} to the desired inequality.
\end{proof}
\mg{
Next, we investigate the decay properties of the dual error \saa{$\delta^k\triangleq \|y_*^k - y^k\|^2$} in expectation.}
\begin{lemma}\label{lemma:deltat_tmpp}
    Suppose \rv{$\eta_y \in(0, 1/\widetilde\mu)$ for some $0<\widetilde\mu\leq\mu$}, and let $a\triangleq \frac{1}{2}\frac{\rv{\widetilde\mu}\eta_y}{1-\rv{\widetilde\mu}\eta_y}$. Then, 
    \begin{equation*}
    {\small
    \begin{aligned}
        \mathbf{E}\Big[\sa{\delta^k}\Big]\leq& \mathbf{E}\Big[\sa{\delta^0} \sa{C_1^k}+ \sum_{i=1}^k C_1^{k-i}\Big(C_2\|\tilde G_{\bx}(\bx^{i-1},y^{i-1};\bom^{i-1})\|^2/N -C_3 \|\tilde G_y(\bx^{i-1},y^{i-1};\bzt^{i-1})\|^2 \Big)\Big] + \sa{C_4}\sa{\frac{\sigma_y^2}{M_y}}\sum_{i=0}^{k-1}C_1^{i},
    \end{aligned}}%
    \end{equation*}
     for $k\geq 1$, where $\sa{C_1}\triangleq 
     \saa{(1+a)(1-\rv{\widetilde\mu}\eta_y)} \mg{< 1}$, $\sa{C_2}\triangleq \frac{2 \kappa^2}{\rv{\widetilde\mu}}\frac{\eta_x^2}{\eta_y}$, $\sa{C_3}\triangleq 
     \saa{(1+a)\eta_y^2(1-L\eta_y)}$, \saa{$C_4\triangleq 
     2(1+a)\eta_y^2$.}
\end{lemma}
\begin{proof}
Indeed, \sa{for all} $a>0$, 
\begin{equation}\label{eq:bound-deltat-1_tmpp}
    \begin{aligned}
    \delta^k =&  \|{y}_*(\bx^k) - {y}_*(\bx^{k-1}) + {y}_*(\bx^{k-1}) - y^k\|^2\\
        \leq &
        (1+a)\| {y}_*(\bx^{k-1}) - y^k \|^2
        +
         \left(1+\frac{1}{a}\right)\|{y}_*(\bx^k) - {y}_*(\bx^{k-1}) \|^2
         \\
        \leq &
        (1+a)\| {y}_*(\bx^{k-1}) - y^k \|^2
        +
         \left(1+\frac{1}{a}\right)\kappa^2\|\bx^k - \bx^{k-1} \|^2,
    \end{aligned}
\end{equation}
where the second inequality is 
\sa{due to ${y}_{*}(\cdot)$ being $\kappa$-Lipschitz from \cite[Lemma A.3]{nouiehed2019solving}.}
Next, using the fact \saa{that} $\|\bx^k-\bx^{k-1}\|^2=\eta_x^2\| \tilde G_{x_{i_{k-1}}}(\bx^{k-1},y^{k-1};\bom^{k-1})\|^2$ for $k\geq 1$, we obtain
\begin{equation} \label{eq:bound-deltat-2_tmpp}
    \begin{aligned}
          \delta^k
        \leq &(1+a)\| {y}_*(\bx^{k-1}) - y^k \|^2
         +
         \left(1+\frac{1}{a}\right)\kappa^2\eta_x^2\| \tilde G_{x_{i_{k-1}}}(\bx^{k-1},y^{k-1};\bom^{k-1})\|^2,\quad\forall~k\geq 1.
    \end{aligned}
\end{equation}
\sa{Moreover,} since $y^k=
\sa{y^{k-1}+\eta_y \tilde G_y(\bx^{k-1},y^{k-1};\bzt^{k-1})}$ for $k\geq 1$, \sa{we also have}
\begin{equation*}
    \begin{aligned}
        \|{y}_{*}(\bx^{k-1})-y^k\|^2&= \|y^{k-1}+\eta_y \tilde G_y(\bx^{k-1},y^{k-1};\bzt^{k-1})-{y}_{*}(\bx^{k-1})\|^2\\
        &=
        \delta^{k-1}+2\eta_y \fprod{ \tilde G_y(\bx^{k-1},y^{k-1};\bzt^{k-1}),~y^{k-1}-{y}_{*}(\bx^{k-1})}+\eta_y^2\|\tilde G_y(\bx^{k-1},y^{k-1};\bzt^{k-1})\|^2.
    \end{aligned}
\end{equation*}
Now, invoking Corollary~\ref{cor:str-concavity} with $(\bar\bx,\bar{y})=(\bx^{k-1},y^{k-1})$ implies that
\begin{equation*}
    \begin{aligned}
        \|{y}_{*}(\bx^{k-1})-y^k\|^2&\leq (1-\mu \eta_y)\delta^{k-1} - \eta_y^2(1- \eta_y L)\|\tilde G_y(\bx^{k-1},y^{k-1};\bzt^{k-1})\|^2\\
        &\quad +2\eta_y \fprod{s_y(\bx^{k-1}, y^{k-1};\bzt^{k-1})-\grad_y f(\bx^{k-1}, y^{k-1}),y^{k}-y_*(\bx^{k-1})}.
    \end{aligned}
\end{equation*}
\saa{Therefore,} \rv{since $\widetilde\mu\leq \mu$,} we have
\begin{equation}\label{eq14_tmpp}
    \begin{aligned}
        \delta^k&\leq (1+a)\left(1-\rv{\widetilde\mu} \eta_y\right)\delta^{k-1} -(1+a)\eta_y^2(1- \eta_y L)\|\tilde G_y(\bx^{k-1},y^{k-1};\bzt^{k-1})\|^2
        \\
        &\quad + 2(1+a)\eta_y \fprod{s_y(\bx^{k-1},y^{k-1};\bzt^{k-1})-\grad_y f(\bx^{k-1}, y^{k-1}),y^k-y_*(\bx^{k-1})}\\
        &\quad + \left(1+\frac{1}{a}\right)\kappa^2\eta_x^2\| \tilde G_{x_{i_{k-1}}}(\bx^{k-1},y^{k-1};\bom^{k-1})\|^2.
    \end{aligned}
\end{equation}
Let \rv{$a = \frac{1}{2}\frac{\widetilde\mu\eta_y}{1-\widetilde\mu \eta_y}$ \sa{--we have $a> 0$ since \rv{$\eta_y< 1/\widetilde\mu$}}.
This choice} implies that
\begin{equation}\label{eq_delta_one_tmpp}
{\small
    \begin{aligned}
        \mathbf{E}[\delta^k~|~\cF^{k-1}]
        &\leq C_1\delta^{k-1} + \mathbf{E}\Big[C_2\| \tilde G_{x_{i_{k-1}}}(\bx^{k-1},y^{k-1};\bom^{k-1})\|^2 - C_3\|\tilde G_y(\bx^{k-1},y^{k-1};\bzt^{k-1})\|^2~\Big|~\cF^{k-1}\Big]\\
        &\quad 
        +\saa{\frac{1}{\eta_y}C_4}\mathbf{E}\Big[\fprod{s_y(\bx^{k-1}, y^{k-1};\bzt^{k-1})-\grad_y f(\bx^{k-1}, y^{k-1}),~y^k-y_*(\bx^{k-1})}~\Big|~\cF^{k-1}\Big].
    \end{aligned}}%
\end{equation}
{Let $\hat{y}^k\triangleq\prox{\sa{\eta_y} h}\Big(y^{k-1}+\eta_y \grad_y f(\bx^{k-1},y^{k-1})\Big)$; 
the inner product 
in \cref{eq_delta_one_tmpp} can be bounded as
\begin{equation}\label{eq:cond-bound-y-bias}
{\small
    \begin{aligned}
        \MoveEqLeft\mathbf{E}\Big[
            \fprod{
                s_y(\bx^{k-1}, y^{k-1};\bzt^{k-1})-\grad_y f(\bx^{k-1}, y^{k-1}),~y^k-y_*(\bx^{k-1})}~\Big|~\sa{\cF^{k-1}}
        \Big]
        \\
        =& \mathbf{E}\Big[
            \fprod{
                s_y(\bx^{k-1}, y^{k-1};\bzt^{k-1})-\grad_y f(\bx^{k-1}, y^{k-1}),~y^k-\hat{y}^k)}~\Big|~\sa{\cF^{k-1}}
        \Big]
        \\
        \leq
        & \eta_y \mathbf{E}\Big[
            \|s_y(\bx^{k-1}, y^{k-1};\bzt^{k-1})-\grad_y f(\bx^{k-1}, y^{k-1})\|^2~\Big|~\sa{\cF^{k-1}}
        \Big]
        \leq \eta_y\frac{\sigma_y^2}{M_y},
    \end{aligned}}%
\end{equation}}%
\sa{where in the first equality we used the fact that $\mathbf{E}\Big[s_y(\bx^{k-1}, y^{k-1};\bzt^{k-1})~\Big|~\cF^{k-1}\Big]=\grad_y f(\bx^{k-1},y^{k-1})$ and $\saa{\hat y^k}-y_*(\bx^{k-1})$ is $\cF^{k-1}$-measurable; in the first inequality, we used Cauchy-Schwarz inequality and the nonexpansivity of the proximal map; and the final inequality follows from \cref{assumption:noise}.
Therefore, \cref{eq_delta_one_tmpp} and \cref{eq:cond-bound-y-bias} together with \cref{lemeq:cond-exp-Gik} imply that}
\begin{equation}\label{eq_delta_one_tmpp_exp}
{\small
    \begin{aligned}
        \mathbf{E}\Big[ \delta^k \Big]\leq  \mathbf{E} \Big[C_1\delta^{k-1}+ C_2 \norm{\tilde G_{\sa{\bx}} (\bx^{k-1},y^{k-1};\bom^{k-1})}^2/N-C_3\norm{\tilde G_y (\bx^{k-1},y^{k-1};\bzt^{k-1})}^2\Big] + \saa{C_4\frac{\sigma_y^2}{M_y}.}
    \end{aligned}}%
\end{equation}
\sa{Thus, \cref{eq_delta_one_tmpp_exp} implies the desired result 
for all $k\geq 1$.}
\end{proof}


\begin{lemma}\label{linesearch_scratch_stoc}
 \sa{Let $\eta_x>0$ and $\eta_y\in (0,1/\rv{\widetilde\mu})$ \rv{for some $0<\widetilde\mu\leq\mu$}.} 
 Then, \sa{for any \saa{$K>1$,} 
 }%
 \begin{equation}\label{eq:line-search-sketch_tmpp}
 {\small
    \begin{aligned} 
        &\mathbf{E}\Big[\sa{\|\tilde G_y(\bx^{K-1},y^{K-1};\bzt^{K-1})\|^2+\Big(2 - 4\eta_x\kappa L\Big)\|\tilde G_{\bx}(\bx^{K-1},y^{K-1};\bom^{K-1})\|^2}\\
        &\quad+\sum_{k=0}^{K-2} \Big(1+ 6\sa{C_3}\Big(L^2+\frac{1}{\eta_y^2}\Big) \sum_{i=k}^{K-2}\sa{C_1^{i-k}}\Big)\|\tilde G_y(\bx^{k},y^{k};\bzt^k)\|^2\\
        &\quad+\saa{4}\sum_{k=0}^{K-\sa{2}}\Big(\frac{1}{2} - \eta_x\kappa L-\frac{3}{2N}C_2\Big(L^2+\frac{1}{\eta_y^2}\Big) \sum_{i=k}^{K-2}\sa{C_1^{i-k}}\Big)\|\tilde G_{\bx}(\bx^{k},y^{k};\bom^k)\|^2\Big]\\
        &\leq 4\mathbf{E}\Big[\frac{N}{\eta_x}\Big(F(\bx^0)-F(\bx^K)\Big)+\delta_0{\sum_{k=0}^{K-1} \frac{3}{2}\Big(L^2+\frac{1}{\eta_y^2}\Big) \sa{C_1^k}}\Big]\\
        &\quad+4K\Big(\frac{\sigma_x^2}{M_x}+\Big(1 + \frac{3\eta_y}{\rv{\widetilde\mu}}\Big(L^2+\frac{1}{\eta_y^2}\Big)\frac{2-\eta_y\rv{\widetilde\mu}}{1-\eta_y\rv{\widetilde\mu}}\Big)\frac{\sigma_y^2}{M_y}\Big).
    \end{aligned}}%
\end{equation}
\end{lemma}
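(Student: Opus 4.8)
The plan is to chain the two preceding lemmas. \cref{lemma:primal-bound-gda} bounds the partial sum of squared stochastic gradient-map norms by the expected primal decrease, a weighted sum of the dual errors $\delta^k$, and a noise term; \cref{lemma:deltat_tmpp} in turn bounds each $\delta^k$ by a geometric combination of those same squared gradient-map norms at earlier iterates, plus noise. So I would substitute the second bound into the first, interchange the resulting nested finite sums, move the gradient-map terms to the left-hand side, and finally multiply through by $4$; the point is that this produces a \emph{recursion-free} inequality of exactly the claimed shape. Throughout I write $\tilde G_{\bx}^k$ for $\tilde G_{\bx}(\bx^k,y^k;\bom^k)$ and $\tilde G_y^k$ for $\tilde G_y(\bx^k,y^k;\bzt^k)$.

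Concretely, I would start from \eqref{eq:partial_sum_grad_delta_tmpp_jac} and replace every $\mathbf{E}[\delta^k]$, $0\le k\le K-1$, by the right-hand side of \eqref{delta_bound_tmpp} (which holds trivially at $k=0$, all of its sums being empty). Since the factor $\tfrac32(L^2+\tfrac1{\eta_y^2})$ multiplying $\sum_k\delta^k$ is positive, the substitution preserves the inequality and creates the double sum $\sum_{k=0}^{K-1}\sum_{i=1}^{k}C_1^{k-i}\big(\tfrac{C_2}{N}\|\tilde G_{\bx}^{i-1}\|^2-C_3\|\tilde G_y^{i-1}\|^2\big)$. I would then swap the order of summation via $\sum_{k=0}^{K-1}\sum_{i=1}^{k}C_1^{k-i}b_{i-1}=\sum_{j=0}^{K-2}\big(\sum_{i=j}^{K-2}C_1^{i-j}\big)b_j$, so the coefficient of $\|\tilde G_\bx^{j}\|^2$ becomes $+\tfrac{3}{2N}C_2(L^2+\tfrac1{\eta_y^2})\sum_{i=j}^{K-2}C_1^{i-j}$ and that of $\|\tilde G_y^{j}\|^2$ becomes $-\tfrac32 C_3(L^2+\tfrac1{\eta_y^2})\sum_{i=j}^{K-2}C_1^{i-j}$ for $j\le K-2$, with the index $j=K-1$ receiving no contribution. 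Moving these gradient-map terms to the left-hand side strengthens the $\|\tilde G_y^{j}\|^2$ coefficient (from $\tfrac14$) and weakens the $\|\tilde G_\bx^{j}\|^2$ coefficient (from $\tfrac12-\eta_x L\kappa$) exactly as in \eqref{eq:line-search-sketch_tmpp}, while $\|\tilde G_\bx^{K-1}\|^2$ and $\|\tilde G_y^{K-1}\|^2$ keep the original coefficients $\tfrac12-\eta_x L\kappa$ and $\tfrac14$; multiplying the whole inequality by $4$ turns these into $2-4\eta_x\kappa L$ and $1$, and turns $\tfrac32$ into $6$, matching the statement.

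It then remains to simplify the scalar terms on the right. The $\delta^0$-contribution is kept as $\delta^0\sum_{k=0}^{K-1}C_1^{k}$ weighted by $\tfrac32(L^2+\tfrac1{\eta_y^2})$, and after the factor $4$ this is the $\delta_0$ term in \eqref{eq:line-search-sketch_tmpp}. For the noise I would use the choice $a=\tfrac12\tfrac{\mu\eta_y}{1-\mu\eta_y}$ from \cref{lemma:deltat_tmpp}, which (using $\eta_y<1/\mu$) gives $C_1=(1+a)(1-\mu\eta_y)=1-\tfrac{\mu\eta_y}{2}\in(0,1)$, hence $1-C_1=\tfrac{\mu\eta_y}{2}$ and $\sum_{k=0}^{K-1}\sum_{i=0}^{k-1}C_1^{i}\le\tfrac{K}{1-C_1}=\tfrac{2K}{\mu\eta_y}$; combining this with $1+a=\tfrac{2-\mu\eta_y}{2(1-\mu\eta_y)}$, i.e. $C_4=2(1+a)\eta_y^2=\tfrac{(2-\mu\eta_y)\eta_y^2}{1-\mu\eta_y}$, the $\delta^k$-noise term $\tfrac32(L^2+\tfrac1{\eta_y^2})\cdot C_4\tfrac{\sigma_y^2}{M_y}\cdot\tfrac{2K}{\mu\eta_y}$ collapses to $K\cdot\tfrac{3\eta_y}{\mu}(L^2+\tfrac1{\eta_y^2})\tfrac{2-\eta_y\mu}{1-\eta_y\mu}\tfrac{\sigma_y^2}{M_y}$. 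Adding the original $K(\tfrac{\sigma_x^2}{M_x}+\tfrac{\sigma_y^2}{M_y})$ and multiplying by $4$ reproduces the last line of \eqref{eq:line-search-sketch_tmpp}.

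The main obstacle is purely the bookkeeping: correctly interchanging the two nested finite sums and matching the geometric tails $\sum_{i=k}^{K-2}C_1^{i-k}$, making sure the index $k=K-1$ is isolated (since it gets no contribution from the $\delta^k$-substitution, its coefficients are just $4$ times those of \cref{lemma:primal-bound-gda}), and verifying that the specific $a$ produces the advertised closed forms for $C_1$, $C_4$ and $1-C_1$. No new probabilistic ingredient is required beyond \cref{lemma:primal-bound-gda,lemma:deltat_tmpp}; everything else is routine algebra.
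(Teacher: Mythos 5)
Your proposal is correct and follows essentially the same route as the paper's proof: substitute the bound of \cref{lemma:deltat_tmpp} into \eqref{eq:partial_sum_grad_delta_tmpp_jac}, interchange the nested finite sums so that each $\|\tilde G^j\|^2$ picks up the geometric tail $\sum_{i=j}^{K-2}C_1^{i-j}$, move those terms to the left, bound the noise contribution via $\sum_k C_1^k\le (1-C_1)^{-1}=\tfrac{2}{\mu\eta_y}$ together with the closed form of $C_4$, and multiply by $4$. The index bookkeeping, the isolation of the $k=K-1$ terms, and the scalar simplifications all match the paper's computations.
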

\begin{proof}
\sa{By using \cref{eq:partial_sum_grad_delta_tmpp_jac}} and \rv{Lemma}~\ref{lemma:deltat_tmpp}, we can obtain 
\begin{equation}\label{eq_sum_grad_original_tmpp}
{\small
    \begin{aligned}
        &\Big(\frac{1}{2} - \eta_x\kappa L\Big)\sum_{k=0}^{K-1}\mathbf{E}\Big[\|\tilde G_{\bx}(\bx^{k},y^{k};\bom^k)\|^2\Big]
        { + \frac{1}{4}\sum_{k=0}^{K-1}\mathbf{E}\Big[\|\tilde G_y(\bx^{k},{y}^{k};\bzt^k)\|^2\Big]}\\
        \leq &\mathbf{E}\Big[\frac{\sa{N}}{\eta_x}\Big(F(\bx^0)-F(\bx^K)\Big)+\frac{3}{2}\Big(L^2+\frac{1}{\eta_y^2}\Big) \sum_{k=0}^{K-1} \delta^{k}\Big]+K\Big(\frac{\sigma_x^2}{M_x}+\frac{\sigma_y^2}{M_y}\Big)\\
        \leq & \mathbf{E}\Big[\frac{\sa{N}}{\eta_x}\Big(F(\bx^0)-F(\bx^K)\Big)+\delta^0\frac{3}{2}\Big(L^2+\frac{1}{\eta_y^2}\Big)\sum_{k=\sa{0}}^{K-1} C_1^k\\
        &\quad {+C_2\frac{3}{2N}\Big(L^2+\frac{1}{\eta_y^2}\Big)\sum_{k=\sa{1}}^{K-1} \sum_{i=1}^{k} \left( \|\tilde G_{\bx}(\bx^{i-1},y^{i-1};\bom^{i-1})\|^2 C_1^{k-i}\right)} \\
        &\quad {- \sa{C_3}\frac{3}{2}\Big(L^2+\frac{1}{\eta_y^2}\Big)\sum_{k=\sa{1}}^{K-1}\sum_{i=1}^{k} \left( \|\tilde G_y(\bx^{i-1},y^{i-1};\bzt^{i-1})\|^2 C_1^{k-i}\right)}\Big]\\
        &\quad +K\frac{\sigma_x^2}{M_x}+
        K\left(
        1 + \frac{3}{2}\Big(L^2+\frac{1}{\eta_y^2}\Big)\sa{C_4}\sum_{k=0}^{K-1}C_1^k
        \right)\frac{\sigma_y^2}{M_y},
    \end{aligned}}%
\end{equation}%
{where $C_1,C_2,C_3$ \sa{and $C_4$} are defined in \rv{Lemma}~\ref{lemma:deltat_tmpp}; and we use the fact $\sum_{k=\sa{1}}^{K-1}\sum_{i=0}^{k-1}C_1^i\leq K\sum_{k=0}^{K-1}C_1^k$.}
Furthermore, by rearranging terms, we have
\begin{equation}\label{eq_delta_gradx_tmpp}
    \begin{aligned}
        \sum_{k=\sa{1}}^{K-1} \sum_{i=1}^{k} \|\tilde G_{\bx}(\bx^{i-1},y^{i-1};\bom^{i-1})\|^2 C_1^{k-i}= \sum_{k=0}^{K-2}\|\tilde G_{\bx}(\bx^{k},y^{k};\bom^k)\|^2 \sum_{i=k}^{K-2} C_1^{i-k},
    \end{aligned}
\end{equation}
\begin{equation}\label{eq_delta_grady_tmpp}
    \begin{aligned}
        &\sum_{k=\sa{1}}^{K-1}\sum_{i=1}^{k} \|\tilde G_y(\bx^{i-1},y^{i-1};\bzt^{i-1})\|^2  C_1^{k-i}= \sum_{k=0}^{K-2}\|\tilde G_y(\bx^{k},y^{k};\bzt^{k})\|^2 \sum_{i=k}^{K-2} C_1^{i-k}.
    \end{aligned}
\end{equation}
\sa{Next, using the bound $\sum_{k=0}^{K-1}C_1^k\leq \frac{1}{1-C_1}$ and substituting the values 
\saa{of $C_1$ and $C_4$,} we get}
\begin{equation}\label{sigmay_bd}
    \begin{aligned}
        1 + \frac{3}{2}\Big(L^2+\frac{1}{\eta_y^2}\Big)\sa{C_4}\sum_{k=0}^{K-1}C_1^k
        \leq 1 + \frac{3\eta_y}{\rv{\widetilde\mu}}\Big(L^2+\frac{1}{\eta_y^2}\Big)\frac{2-\eta_y\rv{\widetilde\mu}}{1-\eta_y\rv{\widetilde\mu}}.
    \end{aligned}
\end{equation}
By plugging \eqref{eq_delta_gradx_tmpp}, \eqref{eq_delta_grady_tmpp} and \eqref{sigmay_bd} into \eqref{eq_sum_grad_original_tmpp}, \saa{and then multiplying both sides by $4$,} we obtain the desired inequality.
\end{proof}
\saa{Having established a bound on the stochastic gradient map for \rbsgda~iterate sequence in \rv{Lemma}~\ref{linesearch_scratch_stoc}, we are now ready to study \sgdab. Both sides of error bound in \eqref{eq:line-search-sketch_tmpp} depend on $\eta_x,\eta_y$ and \rv{possibly} unknown problem constants \rv{$L,\kappa,\sigma_x,\sigma_y$} in a very complicated way; therefore, in the next section we focus on designing a practical backtracking condition that allows us to select $\eta_x$ and $\eta_y$ without knowing $L$ \rv{and/or $\mu$} (hence, $\kappa$) and that will lead to desirable theoretical guarantees.}
\subsection{\rv{Stopping guarantees for \rbsgda}}
\rv{Our aim in this section is to design a practical backtracking condition.}
\saa{In the next lemma below, we show that \rv{when problem parameter estimates satisfy $\widetilde\mu\leq \mu$ and $\widetilde L\geq L$ (hence, $\eta_y=1/\widetilde L$ is sufficiently small, i.e., $\eta_y\leq \frac{1}{L}$) and when the batchsizes $M_x,M_y$ are sufficiently large,} we can control the stochastic gradient map in expectation. This is one of our main results helping us to derive a backtracking condition 
that is guaranteed to eventually hold due to our use of backtracking on $\eta_y$. Another important 
\rv{implication} of this lemma is that it prescribes us how to select $\eta_x$ in practice depending on $\eta_y$ 
\rv{and the concavity modulus estimate $\widetilde\mu$} while still ensuring the necessary \textit{time-scale separation}.}

\begin{lemma}\label{lem:linesearch}
    Given some $\gamma\in(0,1)$, \rv{suppose $\eta_y=1/\widetilde L$ and $\eta_x = N\rho\cdot\eta_y^3\widetilde\mu^2$ for some $\widetilde L, \widetilde\mu>0$ such that $\widetilde\mu \leq \mu\leq L\leq \widetilde L$,} and $\rho = \frac{\sqrt{1+\frac{12}{N}}-1}{24}\in (0,1)$. Let $\{\bx^k,y^k\}_{k\geq 0}$ be generated by \rbsgda. Then,
    \begin{equation}\label{eq:line-search-condition}
    {\small
    \begin{aligned}
        \saa{\frac{1}{K}}\sum_{k=0}^{K-1}\mathbf{E}\Big[\|\tilde G(\bx^{k},{y}^{k};\bxi^k)\|^2\Big]
        \leq \frac{4N}{\eta_x K}\mathbf{E}\Big[F(\bx^0)-F(\bx^K)+\rv{6 \rho\delta^0\widetilde\mu}
          \Big]+\frac{4\sigma_x^2}{M_x}+ \Big(1+ \frac{6}{\eta_y \rv{\widetilde\mu}}\cdot\frac{ 2 - \eta_y \rv{\widetilde\mu}}{ 1-\eta_y \rv{\widetilde\mu}}\Big)\frac{4\sigma_y^2}{M_y}
    \end{aligned}}%
\end{equation}
     holds for all $K\geq 1$, where $\tilde G(\bar\bx,\bar y;\bar \bxi)\triangleq [\tilde G_{\bx}(\bar\bx,\bar y;\bar \bom)^\top \tilde G_y(\bar \bx,\bar y; \bar \bzt)^\top]^\top$ for all $(\bar\bx, \bar y)\in\dom g\times \dom h$.
\end{lemma}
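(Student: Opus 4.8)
The plan is to start from the master inequality \eqref{eq:line-search-sketch_tmpp} of \cref{linesearch_scratch_stoc} and argue that, under the prescribed choices $\eta_y\le 1/L$ and $\eta_x=N\rho\,\eta_y^3\mu^2$, \emph{every} coefficient multiplying a stochastic gradient–map norm on its left-hand side is at least $1$, so that the left-hand side dominates $\sum_{k=0}^{K-1}\mathbf{E}\big[\|\tilde G(\bx^k,y^k;\bxi^k)\|^2\big]$, while its right-hand side can be bounded by the quantities appearing in \eqref{eq:line-search-condition}. Dividing by $K$ then gives the claim. The case $K=1$ follows by the same reasoning with the sums over $k\le K-2$ empty.

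First I would simplify the geometric factor: with $a=\tfrac12\tfrac{\mu\eta_y}{1-\mu\eta_y}$ one computes $1+a=\tfrac{2-\mu\eta_y}{2(1-\mu\eta_y)}$, hence $C_1=(1+a)(1-\mu\eta_y)=1-\tfrac{\mu\eta_y}{2}<1$, so $\sum_{i=k}^{K-2}C_1^{i-k}\le\sum_{j\ge 0}C_1^{\,j}=\tfrac{2}{\mu\eta_y}$ and likewise $\sum_{k=0}^{K-1}C_1^k\le\tfrac{2}{\mu\eta_y}$. I would also record the elementary facts that $\mu\le L$ (the strong-concavity modulus is at most the gradient-Lipschitz constant), whence $\eta_y\mu\le\eta_yL\le 1$ and $\kappa\mu=L$, and that $L^2+\eta_y^{-2}\le 2\eta_y^{-2}$ since $L\le\eta_y^{-1}$.

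The heart of the argument is the coefficient bound. For the $\|\tilde G_{\bx}(\bx^k,y^k;\bom^k)\|^2$ terms with $k\le K-2$ I would show $\eta_x\kappa L+\tfrac{3}{2N}C_2\big(L^2+\eta_y^{-2}\big)\sum_{i=k}^{K-2}C_1^{i-k}\le\tfrac14$, which makes that coefficient $4\big(\tfrac12-\eta_x\kappa L-\cdots\big)\ge 1$. Substituting $C_2=\tfrac{2\kappa^2}{\mu}\tfrac{\eta_x^2}{\eta_y}$, $\sum C_1^{i-k}\le\tfrac{2}{\mu\eta_y}$, $L^2+\eta_y^{-2}\le2\eta_y^{-2}$ and $\eta_x=N\rho\eta_y^3\mu^2$ reduces the second term to at most $12N\rho^2(L\eta_y)^2\le 12N\rho^2$, while $\eta_x\kappa L=N\rho\,(\eta_y\mu)\,(L\eta_y)^2\le N\rho$; thus the sum is at most $N\rho+12N\rho^2$. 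The defining relation $\rho=\tfrac{\sqrt{1+12/N}-1}{24}$ is precisely equivalent to $4N\rho(1+12\rho)=1$, i.e. $N\rho+12N\rho^2=\tfrac14$, so the bound holds with equality in the worst case. The same estimate $\eta_x\kappa L\le N\rho<\tfrac14$ gives $2-4\eta_x\kappa L\ge 1$ for the $\|\tilde G_{\bx}(\bx^{K-1},y^{K-1};\bom^{K-1})\|^2$ coefficient, and the $\|\tilde G_y\|^2$ coefficients are $\ge 1$ automatically since $C_3=(1+a)\eta_y^2(1-L\eta_y)\ge 0$ when $\eta_y\le 1/L$. Collecting these, the left-hand side of \eqref{eq:line-search-sketch_tmpp} is at least $\sum_{k=0}^{K-1}\mathbf{E}\big[\|\tilde G_{\bx}(\bx^k,y^k;\bom^k)\|^2+\|\tilde G_y(\bx^k,y^k;\bzt^k)\|^2\big]=\sum_{k=0}^{K-1}\mathbf{E}\big[\|\tilde G(\bx^k,y^k;\bxi^k)\|^2\big]$.

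It remains to bound the right-hand side of \eqref{eq:line-search-sketch_tmpp}. The only delicate piece is $\mathbf{E}\big[\delta^0\cdot\tfrac32(L^2+\eta_y^{-2})\sum_{k=0}^{K-1}C_1^k\big]$: here I would invoke the warm-start guarantee of \cref{algline:subproblem}. Since $g(\bx^0)+f(\bx^0,\cdot)-h$ is $\mu$-strongly concave and $y_*(\bx^0)$ is its (unconstrained) maximizer, $\tfrac{\mu}{2}\delta^0\le\max_{y\in\cY}\cL(\bx^0,y)-\cL(\bx^0,y^0)$, and taking expectations gives $\mathbf{E}[\delta^0]\le 2\tilde\epsilon/\mu$. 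Using $\sum_k C_1^k\le\tfrac{2}{\mu\eta_y}$ and $L^2+\eta_y^{-2}\le 2\eta_y^{-2}$ bounds the constant factor by $6/(\mu\eta_y^3)$, so this term is at most $\tfrac{12\tilde\epsilon}{\mu^2\eta_y^3}=\tfrac{N}{\eta_x}\cdot 12\rho\tilde\epsilon$, which merges with the $\tfrac{N}{\eta_x}(F(\bx^0)-F(\bx^K))$ term into $\tfrac{4N}{\eta_x}\mathbf{E}[F(\bx^0)-F(\bx^K)+12\rho\tilde\epsilon]$. For the noise term, $\tfrac{3\eta_y}{\mu}(L^2+\eta_y^{-2})\le 6/(\mu\eta_y)$ converts the $\sigma_y^2/M_y$ coefficient into $1+\tfrac{6}{\eta_y\mu}\tfrac{2-\eta_y\mu}{1-\eta_y\mu}$. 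Plugging these into \eqref{eq:line-search-sketch_tmpp} and dividing by $K$ produces \eqref{eq:line-search-condition}. The main obstacle is the coefficient step — forcing all primal and dual map coefficients to be $\ge 1$ simultaneously — which is exactly why $\rho$ is defined through $4N\rho(1+12\rho)=1$ and why the time-scale separation is fixed at $\eta_x=N\rho\eta_y^3\mu^2$.
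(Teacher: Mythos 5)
Your proposal is correct and follows essentially the same route as the paper's proof: lower-bounding every gradient-map coefficient on the left of \eqref{eq:line-search-sketch_tmpp} by $1$ via $\sum_{i\ge 0}C_1^i=\tfrac{2}{\mu\eta_y}$, $L\le 1/\eta_y$, and the identity $N\rho+12N\rho^2=\tfrac14$ encoded in the definition of $\rho$, then bounding the $\delta^0$ term through $\mathbf{E}[\delta^0]\le 2\tilde\epsilon/\mu$ and the $\sigma_y^2$ coefficient through $L^2+\eta_y^{-2}\le 2\eta_y^{-2}$. The only cosmetic difference is that you explicitly simplify $C_1$ to $1-\mu\eta_y/2$ before summing the geometric series, which the paper leaves implicit.
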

\begin{proof}
\sa{
Note $\eta_y\in(0,\frac{1}{L}]$ implies that \saa{$C_1,C_3\geq 0$}, \rv{which are defined in Lemma~\ref{lemma:deltat_tmpp}}. Thus, the sum of all terms related to $\tilde G_y(\cdot,\cdot;\cdot)$ on the left-hand side of \cref{eq:line-search-sketch_tmpp} can be lower bounded by \saa{$
\sum_{k=0}^{K-1}\norm{\tilde G_y(\bx^k,y^k;\bzt^k)}^2$}.
Next, we provide a lower bound for the sum of all terms related to $\tilde G_{\bx}(\cdot,\cdot;\cdot)$ on the left-hand side of \cref{eq:line-search-sketch_tmpp}. 
\saa{Since $C_1\geq 0$, 
we have $\sum_{i=k}^{K-2} C_1^{i-k}\leq \sum_{k=0}^\infty C_1^k = \frac{2}{\rv{\widetilde\mu} \eta_y}$.} Therefore, using $\eta_x = N\rho\cdot\eta_y^3 \rv{\widetilde\mu^2}$, we obtain that
\begin{equation}\label{eq:long-LS-leftx2}
    \begin{aligned}
        &\frac{1}{2} - \eta_x \kappa L-\frac{3}{2N}C_2\Big(L^2+\frac{1}{\eta_y^2}\Big) \sum_{i=k}^{K-2}\sa{C_1^{i-k}}\\
        &\geq \frac{1}{2}- \eta_x \kappa L - \frac{3}{N}\frac{\kappa^2}{\rv{\widetilde\mu}} \frac{\eta_x^2}{\eta_y}\Big(L^2 + \frac{1}{\eta_y^2}\Big) \frac{2}{\eta_y\rv{\widetilde\mu}}= \frac{1}{2}- N\rho \cdot  L^2\eta_y^3 \rv{\frac{\widetilde\mu^2}{\mu}} - 6 N \rho^2 L^2  \eta_y^4 \rv{\frac{\widetilde\mu^2}{\mu^2}} \Big(L^2+\frac{1}{\eta_y^2}\Big)\\
        &\geq \frac{1}{2}- N\rho \cdot \eta_y \rv{\widetilde\mu} - 6 N \rho^2 \eta_y^2 \Big(\frac{1}{\eta_y^2}+\frac{1}{\eta_y^2}\Big)\geq \frac{1}{2}- N \rho - 12 N \rho^2= \frac{1}{4},
    \end{aligned}
\end{equation}
where the first inequality follows from $\sum_{i=k}^{K-2}C_1^{i-k}\leq \frac{2}{\eta_y\rv{\widetilde\mu}}$; the second inequality follows from $L\leq \frac{1}{\eta_y}$ \rv{and $0<\widetilde\mu\leq\mu$}; in the third inequality we used the fact that \rv{$\eta_y\widetilde\mu\leq 1$}; and in the last equality follows from our choice of $\rho$, i.e.,
$ \rho = \frac{\sqrt{1+\frac{12}{N}}-1}{24}\in (0,1)$. \saa{The definition of $\rho$ implies that $N\rho=\frac{1}{4}-12N\rho^2\leq\frac{1}{4}$; hence, using $\eta_y\leq\frac{1}{L}$ and \rv{$\widetilde\mu \leq \mu$}, we get $\eta_x=N\rho\cdot\eta_y^3\rv{\widetilde\mu^2}<\frac{1}{4L\kappa^2}\leq\frac{1}{4L\kappa}$ since $\kappa\geq 1$.}
Thus, the sum of all terms related to $\tilde G_{\bx}(\cdot,\cdot;\cdot)$ on the left-hand side of \cref{eq:line-search-sketch_tmpp} can be lower bounded by \saa{$
\sum_{k=0}^{K-1}\norm{\tilde G_{\bx}(\bx^k,y^k;\bom^k)}^2$.}}

\sa{Next, we provide \saa{a bound for the noise term on the right-hand side of \eqref{eq:line-search-sketch_tmpp} related to $\sigma_y^2$,} i.e.,
\begin{equation}
    \begin{aligned}
        &1 + \frac{3\eta_y}{\rv{\widetilde\mu}} \Big(L^2 + \frac{1}{\eta_y ^2}\Big)\frac{2-\eta_y \rv{\widetilde\mu}}{1-\eta_y \rv{\widetilde\mu}}\rv{\leq  1+ \frac{6}{\eta_y\widetilde\mu} \cdot\frac{2-\eta_y \widetilde\mu}{1-\eta_y \widetilde\mu}},
    \end{aligned}
\end{equation}
\saa{which follows from $L\leq \frac{1}{\eta_y}$.} 
Using the same arguments we have used so far for bounding the other terms, we can bound the term on the right-hand side of \cref{eq:line-search-sketch_tmpp} related to $\delta^0$ \rv{multiplied by $\eta_x$}, i.e.,
\begin{equation}\label{eq:long-LS-right}
    \begin{aligned}
        &\delta^0 \frac{3\eta_x}{2}\Big(L^2 + \frac{1}{\eta_y^2}\Big) \sum_{k=0}^{K-1} C_1^k\leq \delta^0 \frac{3 N \rho \eta_y^3 \rv{\widetilde\mu^2}}{2} \frac{2}{\eta_y^2}\frac{2}{\rv{\widetilde\mu} \eta_y} \rv{ \leq 6 \delta^0 N \rho \widetilde\mu}.
    \end{aligned}
\end{equation}
The desired inequality follows from combining all these bounds with \cref{eq:line-search-sketch_tmpp}, and then dividing both sides by 
\saa{$K$}.}
\end{proof}
\saa{
Next, we argue that given any failure probability $p\in(0,1)$ and tolerance $\epsilon>0$, \rv{whenever $\tilde L\geq L$, $\tilde \mu\leq \mu$, $\widetilde\sigma_x^2\geq\sigma_x^2$ and $\widetilde\sigma_y^2\geq\sigma_y^2$ hold,} by executing $\log_2(1/p)$ \rbsgda~runs in parallel one can generate $(\tilde\bx,\tilde y)$ such that $\norm{\widetilde G(\tilde\bx,\tilde y;\tilde\bxi)}\leq\epsilon$ w.p. at least $1-p$.}
\begin{lemma} 
\label{lem:small-eta}
    \sa{Suppose \cref{ass1,assumption:noise} hold. Given any $(\bx^0,y^0)\in\dom g\times\dom h$, $\epsilon>0$ and failure probability $p\in (0,1)$, let $\{\bz^k_t\}_{k=0}^K$ with $\bz^k_t\triangleq (\bx^k_t,y^k_t)$ for $t=1,\ldots,\rv{T}$ be the iterate sequences corresponding to
    \rv{$T\geq T_p\triangleq\lceil\log_2(1/p)\rceil$} independent \rbsgda~runs all starting from $(\bx^0,y^0)\in\dom g\times\dom h$ for some parameters $\eta_y,\eta_x, K$ and $M_x,M_y$ such that \rv{$\eta_y=1/\tilde L$, and $\eta_x=N\rho\widetilde\mu^2\eta_y^3$ for some $\tilde L,\tilde\mu$ satisfying $0<\widetilde\mu\leq\mu$ and $L\leq \tilde L$,} \rv{$M_x\geq  \mg{\lceil} \frac{128}{\epsilon^2}\rv{\widetilde\sigma_x^2}\saa{+1} \mg{\rceil}$ and $M_y \geq \mg{\lceil} (1+\frac{6}{\rv{\widetilde\mu} \eta_y} \frac{2-\rv{\widetilde\mu} \eta_y}{1-\rv{\widetilde\mu} \eta_y})\frac{128}{\epsilon^2}\rv{\widetilde\sigma_y^2}\saa{+1}\mg{\rceil}$ for some $\widetilde\sigma_x^2,\widetilde\sigma_y^2>0$ such that $\widetilde\sigma_x^2\geq \sigma_x^2$ and $\widetilde\sigma_y^2\geq \sigma_y^2$} and $K=\lceil\frac{64N}{\epsilon^2\eta_x}\left(\rv{F_0-\bar{F}+6\rho\delta\widetilde\mu}\right)\rceil$ \rv{for some given $F_0$ and $\delta$ such that $F_0\geq F(\bx_0)$ and $\delta\geq \delta^0$}. \mg{Let $\tilde k_t$ for $t=1,2,\dots, \rv{T}$ be i.i.d. with a uniform distribution on $\{0,\ldots,K-1\}$.}
    Then
    $\saa{\min_{t=1,\ldots,\rv{T}}}\tilde S_t\leq \frac{\epsilon^2}{4}$ holds with probability at least \rv{$1-p$}, \saa{where
    $\tilde S_t\triangleq\|\tilde G(\bx_t^{\mg{\tilde k}},y_t^{\mg{\tilde k}};\bxi_t^{\mg{\tilde k}})\|^2$ for $t=1,\ldots,\rv{T}$  --for 
    simplicity, we dropped the subscript in $\tilde k_t$}}.
\end{lemma}
\begin{proof} \mg{For every $k \in \{0,1,\dots, K-1\}$ fixed, }
    \rv{$\{\bx_t^k,y_t^k\}_{t=1}^{T}$} 
    are i.i.d. 
    \mg{Consequently,} we also have $\{\tilde S_t\}_{t=1}^{T}$ are i.i.d. \rv{since $\{\tilde k_t\}_{t=1}^{T}$ are i.i.d.}; hence,
\begin{align}
\mathbf{P}\Big(\min_{t=1,\ldots,T}\tilde S_t\leq\frac{\epsilon^2}{\sa{4}}\Big)= 1-\mathbf{P}\Big(\min_{t=1,\ldots,T}\tilde S_t>\frac{\epsilon^2}{\sa{4}}\Big)=1-\prod_{t=1}^{T}\mathbf{P}\Big(\tilde S_t>\frac{\epsilon^2}{\sa{4}}\Big).
\end{align}
For any $t=1,\ldots,T$, we have
{\small
\begin{align*}
   \mathbf{P}\Big(\tilde S_t>\frac{\epsilon^2}{\sa{4}}\Big)\leq \frac{4}{\epsilon^2}\mathbf{E}[\tilde S_t]\leq \frac{16}{\epsilon^2}\Big(\frac{N}{K\eta_x}\mathbf{E}\Big[F(\bx^0)-F(\bx_t^K)+\rv{6 \rho \delta^0\widetilde\mu}
          \Big]+\frac{\sigma_x^2}{M_x}+ \Big(1+ \frac{6}{\eta_y \rv{\widetilde\mu}}\frac{2 - \eta_y \rv{\widetilde\mu}}{1-\eta_y \rv{\widetilde\mu}}\Big)\frac{\sigma_y^2}{M_y}\Big),
\end{align*}}%
where in the first inequality we use Markov's inequality, and the second inequality follows from \rv{Lemma}~\ref{lem:linesearch} and \saa{$\mathbf{E}[\tilde S_t]\leq \frac{1}{K}\sum_{k=0}^{K-1}\mathbf{E}\Big[\|\tilde G(\bx_t^{k},{y}_t^{k};\bxi_t^k)\|^2\Big]$ --which is implied by $\tilde k_t\sim\cU[0,K-1]$.} 
Furthermore, since $F\Big(\bx_t^K\Big)\geq F^*\geq \bar{F}$, \rv{$F_0\geq F(\bx^0)$, $\delta\geq \delta^0$, $\widetilde\sigma_x^2\geq \sigma_x^2$ and $\widetilde\sigma_y^2\geq \sigma_y^2$}, the particular choice of $K$, $T\geq T_p$ 
and the condition on $M_x,M_y$ given in the hypothesis imply that
\begin{equation*}
\small
\mathbf{P}\Big(\tilde S_t>\frac{\epsilon^2}{\sa{4}}\Big)\leq \frac{1}{2}\implies \mathbf{P}\Big(\min_{t=1,\ldots,T}\tilde S_t\leq\frac{\epsilon^2}{\sa{4}}\Big)\geq 1-\Big(\frac{1}{2}\Big)^{T}\geq \rv{1-p}.\vspace*{-9mm}
\end{equation*}
\end{proof}
\vspace*{-6mm}
\begin{remark}
    \rv{For 
    $N=1$, rather than running \rbsgda{}(\cref{alg:GDA}) for $\tilde k\sim U[0,K-1]$ iterations and 
    returning $(\bx^{\tilde k},y^{\tilde k})$; instead, one can run it for $K$ iterations and 
    the output in Line~\ref{algeq:RB-SGDA_N} of \rbsgda{} is set to $(\tilde\bx,\tilde y)\gets (\tilde \bx^{k},\tilde y^{k})$ for
    $\tilde k\gets\argmin\{\|\tilde G(\bx^{k},y^{k};\bxi^k)\|:\ k=0,\ldots,K-1\}$ and $\tilde G\gets\big(\frac{1}{K}\sum_{k=0}^{K-1}\|\tilde G(\bx^{k},y^{k};\bxi^k)\|^2\big)^{1/2}$. Lemma~\ref{lem:small-eta} continues to hold for this variant of \rbsgda{} as well.} 
\end{remark}
All the discussion in this section until this point was about various properties of \rbsgda. In the next section, we establish the theoretical guarantees of \sgdab{} based on these results.
\subsection{\rv{Convergence guarantees for \sgdab{}}}
We should emphasize that \rv{in case a true value for either $\mu$, $L$, $\sigma_x$ or $\sigma_y$ is known, then the corresponding algorithm parameter, i.e., $\mu^0$, $L^0$, $\sigma^0_x$ or $\sigma^0_y$, is set to the known parameter value; otherwise, in the worst case scenario, when none of these true values are known, \sgdab{} is still guaranteed to converge for \textit{arbitrary} initialization of $\mu^0,L^0,\sigma^0_x,\sigma^0_y,\epsilon,\gamma,p,\bar p,c$ such that $0<\mu^0\leq L^0$, $\sigma^0_x,\sigma^0_y>0$ and $\gamma,p, \bar p,c\in (0,1)$ and $\epsilon>0$ sufficiently small, i.e., $\epsilon=\cO\Big(\min\{\sigma_x^{(0)}\frac{\sigma_x}{B_x},~\sigma_y^{(0)}\frac{\sigma_y}{B_y}\}\frac{c}{\sqrt{\log(1/p)}}\Big)$.}

\begin{defn}
\label{def:sigma-xy}
    Let $\sigma^2\triangleq \sigma_x^2+\sigma_y^2$. Under \cref{assumption:noise}, let $\gt f$ denote the stochastic oracle for $\grad f$. \rv{For any $(\bx,y)\in\dom g\times\dom h$, let $\sigma_x^2(\bx,y)$ and $\sigma_y^2(\bx,y)$ denote the variances of the oracles $\gt_{\bx} f(\bx,y;\bom)$ and $\gt_{y}f(\bx,y;\bzt)$ evaluated at $(\bx,y)$, respectively; and we define $\sigma^2(\bx,y)\triangleq \sigma_x^2(\bx,y)+\sigma_y^2(\bx,y)$; hence, $\sigma^2(\bx,y)\leq \sigma^2$.} 
\end{defn}
For $\bxi\triangleq [\bom^\top \bzt^\top]^\top$, since $\mathbf{E}_{\bxi}[\gt f(\bx,y;\bxi)]=\grad f(\bx,y)$ for any $(\bx,y)\in\dom g\times\dom h$, \cref{assumption:noise} and Definition~\ref{def:sigma-xy} together imply for all $(\bx,y)\in\dom g\times\dom h$ that 
    \begin{equation}
    \label{eq:simple-v-bound}
    \begin{aligned}
        \mathbf{E}_{\bom}[\norm{\gt_{\bx} f(\bx,y;\bom)-\grad_{\bx} f(\bx,y)}^2]&=\sigma_x^2(\bx,y)\leq \sigma_x^2,\\
        \mathbf{E}_{\bzt}[\norm{\gt_y f(\bx,y;\bzt)-\grad_y f(\bx,y)}^2]&=\sigma_y^2(\bx,y)\leq \sigma_y^2.
    \end{aligned}
    \end{equation}
In 
this section, 
we 
consider the stochastic estimates for $\grad f(\bx,y)$ and $G(\bx,y)$, namely $s(\bx,y;\bxi)\triangleq [s_{\bx}(\bx,y;\bom)^\top s_{y}(\bx,y;\bzt)^\top ]^\top$ and $\tilde G(\bx,y;\bxi)\triangleq [\tilde G_{\bx}(\bx,y;\bom)^\top \tilde G_{y}(\bx,y;\bzt)^\top ]^\top$ as defined in \rv{Definition}~\ref{def:s-gradmapping}, 
where $\bxi\triangleq [\bom^\top \bzt^\top]^\top$.

\rv{In the rest, we provide convergence guarantees 
assuming the bounded noise setting in \cref{as:bounded-oracle} -- in \cref{sec:subGaussian}, we also investigate the light tail noise setting under \cref{ass3}. More precisely, we consider the following scenarios.\footnote{In \cref{sec:subGaussian}, we investigate the same questions by replacing \cref{as:bounded-oracle} with a subGaussian assumption.}
\begin{enumerate}
    \item [(i)] \textbf{Known bounds:} We consider the setting with \textit{known} $\sigma_x^2,\sigma_y^2$ under \cref{assumption:noise,as:bounded-oracle}.
    \item [(ii)] \textbf{Unknown bounds:} We consider the case with \textit{unknown} $\sigma_x^2,\sigma_y^2$ under \cref{assumption:noise,as:bounded-oracle}.
\end{enumerate}}%
We also compare our bounds with the crude bounds implied by only \cref{assumption:noise} without resorting to \cref{as:bounded-oracle} or the subGaussian type light tail assumption considered in \cref{sec:subGaussian}.
\subsubsection{\rv{Convergence guarantees for \sgdab{} with known bounds}}
\label{sec:known-bounds}
Suppose $\sigma_x^2,\sigma_y^2$ satisfying \eqref{eq:simple-v-bound} are known--see Definition~\ref{def:sigma-xy}. We first argue that \sgdab{} stops w.p. 1 and that the number of backtracking iterations is $\cO(\log(\rv{\cR}))$ w.p. at least $1-p$, \rv{where 
\begin{align}
\label{eq:R}
   \cR\triangleq\max\Big\{\mu^{0}/\mu,~L/L^{0},~1\Big\}. 
\end{align}}%
\mg{After presenting this lemma,} 
\rv{in \cref{thm:p-bound} and Corollary~\ref{cor:sample-complexity-bounded},} we show that the output $(\bx_\epsilon,y_\epsilon)$ is $\cO(\epsilon)$-stationary according to Definition~\ref{def:eps-stationarity}, i.e., $\norm{G(\bx_\epsilon,y_\epsilon)}=\cO(\epsilon)$, with high probability. Finally, \cref{ncsc_thm} is our main result on \sgdab, establishing its oracle complexity.
\begin{lemma}
\label{lem:stopping_probability}
    \sa{Suppose \cref{ass1,assumption:noise} hold. \rv{Given an arbitrary failure probability $p\in (0,1)$, for any given tolerance $\epsilon>0$,} suppose we execute \sgdab{} with $K=\lceil\frac{64N}{\epsilon^2\eta_x}\left(\rv{F_0-\bar{F}+6\rho\delta\widetilde\mu}\right)\rceil$ \rv{for some given $F_0$ and $\delta$ such that $F_0\geq F(\bx_0)$ and $\delta\geq \delta^0$}, \rv{$M_x\geq  \mg{\lceil} \frac{128}{\epsilon^2}\rv{\sigma_x^2}\saa{+1} \mg{\rceil}$ and $M_y \geq \mg{\lceil} (1+\frac{6}{\rv{\widetilde\mu} \eta_y} \frac{2-\rv{\widetilde\mu} \eta_y}{1-\rv{\widetilde\mu} \eta_y})\frac{128}{\epsilon^2}\rv{\sigma_y^2}\saa{+1}\mg{\rceil}$, and \rv{$T=\lceil\log_2(1/p)\rceil$}. Then,} \sgdab{} stops w.p. 1, and the probability that \rv{the stopping condition $\tilde S_{(t^*,\ell)}
        \leq \frac{\epsilon^2}{4}$} holds within at most $\bar{\ell}\triangleq\lceil\log_{\frac{1}{\gamma}}(\rv{\cR})\rceil$ backtracking iterations, i.e., for some $\ell\leq \lceil\log_{\frac{1}{\gamma}}(\rv{\cR})\rceil$, is at least $1-p$, \rv{where $\cR$ is defined in \eqref{eq:R}}, $\tilde S_{(t^*,\ell)}=\min_{t=1,\ldots,T}\tilde S_{(t,\ell)}$ and $\tilde S_{(t,\ell)}=\norm{\tilde G(\tilde\bx_{(t,\ell)},\tilde y_{(t,\ell)};\tilde\bxi_{(t,\ell)})}^2$ for $t\in[T]$. Furthermore, $\p{\tilde S_{(t^*,\ell)}
        \leq \frac{\epsilon^2}{4}}\geq 1-p$ for all $\ell\geq \bar\ell$.}
\end{lemma}
\begin{proof}
\rv{Consider the $\ell$-th backtracking iteration for some $\ell\geq 0$. Recall that for the $\ell$-th backtracking iteration of \sgdab, the parameter estimates are set to $\widetilde L=L^0/\gamma^\ell$ and $\widetilde\mu=\max\{\mu^{0} \gamma^\ell,\underline{\mu}\}$; hence, during the $\ell$-th backtracking iteration, the step sizes $\eta_y=\gamma^\ell/L^0$ and $\eta_x = N\rho\Big(\max\{\mu^{0} \gamma^\ell,\underline{\mu}\}\Big)^2\eta_y^3$ are adopted.} 
\saa{Next we define the probability of \sgdab~stopping at \rv{the} $\ell$-th backtracking iteration --with initial point $(\bx^0,y^0)$ fixed, \rv{this probability is a function of 
$\ell\geq 0$ when algorithm parameters are set as described above.}
For any $\ell\in\integers_+$, 
 consider 
 \rv{$t^*\triangleq\argmin_{t=1,\ldots,T}\tilde S_{(t,\ell)}$,} and define
 \rv{
 \begin{align}
 \label{eq:ql}
     q(\ell)\triangleq\mathbf{P}\Big(A_{(\ell)}\Big),\ \mbox{where}\ A_{(\ell)}\triangleq\Big\{\tilde S_{(t^*,\ell)}\leq\frac{\epsilon^2}{4}\Big\}=\Big\{\norm{\tilde G(\tilde\bx_{(t^*,\ell)},\tilde y_{(t^*,\ell)};\tilde\bxi_{(t^*,\ell)})}\leq\frac{\epsilon}{2}\Big\},\quad\forall~\ell\geq 0.
 \end{align}}}
 Let \saa{$I\in\integers_{+}$} denote the random stopping time of \sgdab,
    i.e., \rv{$\mathbf{P}(I=0)=q(0)$} and $\mathbf{P}(I=\ell)=q(\ell)\Pi_{i=1}^{\ell-1}(1-q(i))$ for \rv{$\ell\geq 1$. Note that 
    for all $\ell\geq \bar{\ell}=\lceil\log_{\frac{1}{\gamma}}(\rv{\cR})\rceil$, we have $\widetilde L\geq \mu$ and $\widetilde\mu\leq \mu$; therefore, Lemma~\ref{lem:small-eta} implies that $\p{A_{(\ell)}}\geq 1-p$ for all $\ell\geq \bar{\ell}$. Since $\{A_{(\ell)}\}_{\ell\geq 0}$ are independent events} and $\rv{\p{A_{(\ell)}}}=q(\ell)\geq 1-p$ for $\ell\geq \bar{\ell}$, we can conclude that $\mathbf{P}(I<\infty)=1$, i.e., \sgdab~stops with probability 1. Moreover, we also have
    {\small
    \begin{align*}
        \mathbf{P}(I\leq \bar{\ell})&=\rv{q(0)}+\sum_{\ell=\rv{1}}^{\bar{\ell}}q(\ell)\Pi_{i=\rv{0}}^{\ell-1}(1-q(i))\\
        &\geq \rv{\inf\{\alpha_0+\sum_{\ell=1}^{\bar{\ell}-1}\alpha_\ell\Pi_{i=0}^{\ell-1}(1-\alpha_i)+(1-p)\Pi_{i=0}^{\bar{\ell}-1} (1-\alpha_i):\ \alpha_i\in[0,1],~i=0,\ldots,\bar{\ell}-1\}}\geq 1-p,
    \end{align*}}%
    \saa{which immediately follows from a simple induction argument.}
\end{proof}
\vspace*{-3mm}
\begin{theorem}
\label{thm:p-bound}
    Let $\bar\ell\triangleq\lceil\log_{\frac{1}{\gamma}}(\rv{\cR})\rceil$. Under the premise of Lemma~\ref{lem:stopping_probability}, for any $(\bx^0,y^0)\in\dom g\times\dom h$, $\epsilon>0$, $p\in (0,1)$ and all other parameters chosen as in the input line of \cref{alg:GDA-B} with \rv{$T\triangleq\lceil\log_2(1/p)\rceil$}, \sgdab{} stops w.p.1 returning $(\bx_\epsilon,y_\epsilon)$ such that
{\small
\begin{equation*}
\begin{aligned}
    \p{\norm{G\left(\bx_\epsilon,y_\epsilon\right)}\leq\epsilon}\geq 1-\sum_{l=0}^{\bar\ell-1} b_\ell\Big(\frac{\epsilon}{2}\Big)-\sum_{l\geq \bar\ell} b_\ell\Big(\frac{\epsilon}{2}\Big) p^{\ell-\bar\ell},
\end{aligned}
\end{equation*}}%
where $b_\ell(\epsilon)\triangleq b(\epsilon,M_x,M_y)$ for $M_x,M_y$ values corresponding to the backtracking iteration $\ell\geq 0$ and $b(\epsilon,M_x,M_y)$ is defined in Lemma~\ref{lem:Chebyshev}, \rv{depending on whether \cref{assumption:noise} holds alone or together with \cref{as:bounded-oracle}.}
\end{theorem}
\begin{proof}
    \rv{For \sgdab, within the backtracking iteration $\ell\geq \rv{0}$, suppose $M_x,M_y\in\integers_{+\mgrev{+}}$ satisfying the condition in Lemma~\ref{lem:stopping_probability} are chosen depending only on $\epsilon,\sigma_x^2,\sigma_y^2,\eta_y,\widetilde\mu$ and $\ell$. Moreover, for the backtracking iteration $\ell\geq \rv{0}$, the parameter estimates are set to $\widetilde L=L^0/\gamma^\ell$ and $\widetilde\mu=\max\{\mu^{0} \gamma^\ell,\underline{\mu}\}$; hence, the step sizes $\eta_y,\eta_x$ chosen as in line~\ref{algeq:step-sizes} of \sgdab, and the sample sizes $M_x,M_y$ together with the iteration budget $K$ for \rbsgda{} chosen as in line~\ref{algeq:M} of \sgdab{}
are all functions of $\ell$ and also of other fixed problem parameters, i.e., $\bar{F},N,\sigma_x^2,\sigma_y^2$, and algorithm parameters, i.e., $\mu^0,L^0,\bx^0,y^0,F_0,\delta,\epsilon,\gamma,p,\bar p$.}  \saa{In the rest, for the ease of notation, we suppress the dependency of $\eta_y$, $\eta_x$, $K$, $M_x$, $M_y$ on $\ell$ and other parameters.} Let $\{\big(\bx^k_{(t,\ell)},y^k_{(t,\ell)}\big)\}_{k=0}^{K-1}$ for $t=1,\ldots,T$ denote the \rbsgda~iterate sequences for $T$ independent runs, all initialized from $(\bx^0,y^0)$ and using $\eta_y,\eta_x,K,M_x,M_y$ corresponding to given backtracking iteration counter $\ell\geq 0$.

For any given $\ell\geq 0$, suppose the sample sizes $M_x,M_y$ corresponding to the given backtracking iteration $\ell$ are deterministic integers satisfying the condition in Lemma~\ref{lem:stopping_probability}, and let $b_\ell(\cdot)\triangleq b(\cdot,M_x,M_y)$ as $M_x$ and $M_y$ depend on $\ell$. Next, for any given \rv{$\ell\geq 0$} and $t\in\{1,\ldots,T$\}, let $\tilde k\sim\cU[0,K-1]$. \saa{According to \sgdab, for \rv{any fixed $\ell\geq 0$}, we have} $\tilde S_{(t,\ell)}=\norm{\tilde
    G\big(\tilde\bx_{(t,\ell)},\tilde y_{(t,\ell)};\tilde\bxi_{(t,\ell)}\big)}^2$, where $(\tilde\bx_{(t,\ell)},\tilde y_{(t,\ell)};\tilde\bxi_{(t,\ell)})=(\bx^{\tilde k}_{(t,\ell)},y^{\tilde k}_{(t,\ell)};\bxi^{\tilde k}_{(t,\ell)})$ for all $t=1,\ldots, T$ \rv{--here, $\tilde k$ is sampled for each 
    $t$ \mg{in an i.i.d.~fashion}}. Moreover, for any given $\ell\geq 0$, define $t^*=\argmin\{\tilde S_{(t,\ell)}: t=1,\ldots,T\}$ and set $(\tilde\bx_{(\ell)},\tilde y_{(\ell)},\tilde \bxi_{(\ell)})=(\tilde\bx_{(t^*,\ell)},\tilde y_{(t^*,\ell)};\tilde\bxi_{(t^*,\ell)})$.
    Due to symmetry, $t^*$ is uniformly distributed on the support $\{1,\ldots,T\}$ since $\{\tilde S_{(t,\ell)}\}_{t=1}^T$ are i.i.d. random variables for each fixed $\ell\geq 0$.

Recall the stopping time $I\in\integers_+$ defined in the proof of \rv{Lemma}~\ref{lem:stopping_probability}, i.e., 
\begin{align*}
     I\triangleq\min\Big\{\ell\in\integers_+:\ \norm{\tilde G\big(\tilde\bx_{(\ell)},\tilde y_{(\ell)};\tilde \bxi_{(\ell)}\big)}\leq \frac{\epsilon}{2}\Big\};
 \end{align*}
 hence, $(\bx_\epsilon,y_\epsilon)=(\tilde\bx_{(I)},\tilde y_{(I)})$, i.e., $(\bx_\epsilon,y_\epsilon,\bxi_\epsilon)=(\tilde\bx_{(\ell)},\tilde y_{(\ell)},\tilde\bxi_{(\ell)})$ when $I=\ell$, and we have \rv{$\mathbf{P}(I=0)=q(0)$} and $\mathbf{P}(I=\ell)=q(\ell)\Pi_{i=0}^{\ell-1}(1-q(i))$ for \rv{$\ell\geq 1$, where $q(\ell)$ is defined in \eqref{eq:ql} for $\ell\geq 0$.} In the rest, let $p(\ell)\triangleq \mathbf{P}(I=\ell)$ for $\ell\geq 0$, i.e., $p(\ell)$ denotes the probability of \sgdab{} stopping at the $\ell$-th backtracking iteration for $\ell\geq 0$; hence, \rv{the output $(\bx_\epsilon,y_\epsilon)$ of the algorithm (see line~\ref{algeq:output} of \sgdab{}) satisfies} $(\bx_\epsilon,y_\epsilon)=(\tilde\bx_{(\ell)},\tilde y_{(\ell)})$ with probability $p(\ell)$. Therefore, we have
\begin{equation}
\label{eq:det-gmap}
{\small
\begin{aligned}
    \MoveEqLeft\p{\norm{G\left(\bx_\epsilon,y_\epsilon\right)}\leq \epsilon}\\
    & = \sum_{\ell\geq 0} \p{\norm{G\left(\tx,\ty\right)}\leq \epsilon,\ I=\ell}\\
    & = \sum_{\ell\geq 0} \p{\norm{G\left(\tx,\ty\right)}\leq \epsilon,\ \norm{\tilde G\big(\tx,\ty;\txi\big)}\leq \frac{\epsilon}{2},\ \norm{\tilde G\big(\tilde\bx_{(\ell')},\tilde y_{(\ell')};\tilde \bxi_{(\ell')}\big)}>\frac{\epsilon}{2},\ \ell'=0,\ldots,\ell-1}\\
    & = \sum_{\ell\geq 0} \p{\norm{G\left(\tx,\ty\right)}\leq \epsilon,\ \norm{\tilde G\big(\tx,\ty;\txi\big)}\leq \frac{\epsilon}{2}}\Pi_{\ell'=0}^{\ell-1}(1-q(\ell'))\\
    & = \sum_{\ell\geq 0} \p{\norm{G\left(\tx,\ty\right)}\leq \epsilon\ \bigg|\ \norm{\tilde G\big(\tx,\ty;\txi\big)}\leq \frac{\epsilon}{2}}q(\ell)\Pi_{\ell'=0}^{\ell-1}(1-q(\ell'))=\sum_{\ell\geq 0} w(\ell) p(\ell),
\end{aligned}}%
\end{equation}
where $w(\ell)\triangleq \p{\norm{G\left(\tx,\ty\right)}\leq \epsilon\ \bigg|\ \norm{\tilde G\big(\tx,\ty;\txi\big)}\leq \frac{\epsilon}{2}}$ for $\ell\geq 0$, and the third equality follows from $\tilde \bxi_{(\ell')}$ and $\tilde \bxi_{(\ell'')}$ being independent for all $\ell',\ell''\in\integers_+$ such that $\ell'\neq\ell''$. Thus, to show that the event $\{\norm{G\left(\bx_\epsilon,y_\epsilon\right)}\leq \epsilon\}$ holds with high probability, we will lower bound $w(\ell)$ for all $\ell\in\integers_+$. Towards this goal, we define another important quantity of interest: $w_0(\ell)\triangleq \p{\norm{\tilde G\big(\tx,\ty;\txi\big)}\leq \frac{\epsilon}{2}\ \bigg|\ \norm{G\left(\tx,\ty\right)}> \epsilon}$ for $\ell\geq 0$. Clearly, for all $\ell\in\integers_+$, the following relation between $w(\ell)$ and $w_0(\ell)$ holds:
\begin{equation}
\label{eq:w}
    w(\ell)= 1- w_0(\ell)\frac{\p{\norm{G\left(\tx,\ty\right)}> \epsilon}}{\p{\norm{\tilde G\big(\tx,\ty;\txi\big)}\leq \frac{\epsilon}{2}}}\geq 1- w_0(\ell)/q(\ell),
\end{equation}
where the inequality follows from the definition of $q(\ell)$ and 
$
\rv{\p{\norm{G\left(\tx,\ty\right)}> \epsilon}\in [0,1]}$. Let $\tz\triangleq (\tx,\ty)$ for $\ell\in\integers_+$. For any $(\bx,y)\in\dom g\times\dom h$, let $\bz=(\by,y)$ and $G(\bz)=G(\bx,y)$ as given in Definition~\ref{def:s-gradmapping} with step sizes $\eta_x$ and $\eta_y$ corresponding to $\ell$-th backtracking iteration. Then, for all $\ell\geq 0$, we trivially have
\begin{equation}
\label{eq:w0}
    w_0(\ell)=\E{\p{\big\|\tilde G\big(\tz;\txi\big)\big\|\leq \frac{\epsilon}{2}~\big|~\tz}\ \bigg|\ \norm{G\left(\tz\right)}> \epsilon}.
\end{equation}
For any $\bz$, $\big\|\tilde G\big(\bz;\txi\big)\big\|\leq \frac{\epsilon}{2}$ implies $\big\|\tilde G\big(\bz;\txi\big)-G(\bz)\big\|\geq \norm{G(\bz)}-\frac{\epsilon}{2}$. Therefore, \eqref{eq:w0} implies that
\begin{equation}
\label{eq:w0-bound}
    \begin{aligned}
        w_0(\ell)
        &\leq \E{\p{\big\|\tilde G\big(\tz;\txi\big)-G(\tz)\big\|\geq \norm{G(\tz)}-\frac{\epsilon}{2}~\big|~\tz}\ \bigg|\ \norm{G\left(\tz\right)}> \epsilon}\\
        & \leq \E{b_\ell\Big(\norm{G(\tz)}-\frac{\epsilon}{2}\Big)~\bigg|\ \norm{G\left(\tz\right)}> \epsilon}\leq b_\ell\Big(\frac{\epsilon}{2}\Big)
    \end{aligned}
\end{equation}
where in the second inequality we used the fact that \eqref{eq:diff-bound} from Lemma~\ref{lem:Chebyshev} holds for any $(\bx,y)\in\dom g\times \dom h$, and the third inequality follows from the fact that $b_\ell(r)$ is a deterministic function that is decreasing in $r$ for all $\ell\geq 0$. Thus, combining \eqref{eq:det-gmap}, \eqref{eq:w} and \eqref{eq:w0-bound}, we obtain
\begin{equation*}
\begin{aligned}
    \MoveEqLeft\p{\norm{G\left(\bx_\epsilon,y_\epsilon\right)}\leq \epsilon}\\
    &=\sum_{\ell\geq 0} w(\ell) p(\ell)\geq \sum_{l\geq 0}\Big(1-b_\ell\Big(\frac{\epsilon}{2}\Big)/q(\ell)\Big)p(\ell)=1-\sum_{l\geq 0}b_\ell\Big(\frac{\epsilon}{2}\Big)\Pi_{\ell'=0}^{\ell-1}(1-q(\ell')),
\end{aligned}
\end{equation*}
where we used $\sum_{\ell\geq 0}p(\ell)=1$ and $p(\ell)=q(\ell)\Pi_{\ell'=0}^{\ell-1}(1-q(\ell'))$ for the equality. Note that Lemma~\ref{lem:stopping_probability} shows that $q(\ell)\geq 1-p$ for all $\ell\geq \bar\ell\triangleq\lceil\log_{\frac{1}{\gamma}}(\rv{\cR})\rceil$; therefore, $\Pi_{\ell'=0}^{\ell-1}(1-q(\ell'))\leq p^{\ell-\bar\ell}$ for $\ell\geq\bar\ell$. Thus, we get the desired result. 
\end{proof}
\begin{corollary}
\label{cor:sample-complexity-bounded}
Under \cref{ass1}, for any given $\bar p\in(0,1)$, $\p{\norm{G\left(\bx_\epsilon,y_\epsilon\right)}\leq \epsilon}\geq 1-\bar p$ holds when $M_x$ and $M_y$ are chosen as follows: for any $C\geq \frac{\pi^2}{3\bar p}$ and any $\epsilon\in(0,~\bar\epsilon)$,
\begin{enumerate}
    \item[(i)] under \cref{assumption:noise,as:bounded-oracle}, let $M_x=\frac{32}{\epsilon^2}\sigma_x^2\Big(\log(n_x+1)+\log\Big(C(\ell+1)^2\Big)\Big)$ and $M_y=\frac{32}{\epsilon^2}\sigma_y^2\Big(\log(n_y+1)+ \Big(1+\frac{6}{\rv{\widetilde\mu} \eta_y} \frac{2-\rv{\widetilde\mu} \eta_y}{1-\rv{\widetilde\mu} \eta_y}\Big)\log\Big(C(\ell+1)^2\Big)\Big)$ for $\bar\epsilon=6\sqrt{2}\cdot\min\Big\{\frac{\sigma_x^2}{B_x},\frac{\sigma_y^2}{B_y}\Big\}$;
    \item[(ii)] under \cref{assumption:noise,as:bounded-oracle}, let $M_x=\frac{16}{\epsilon^2}\Big(\sigma_x^2+\frac{B_x\epsilon}{6\sqrt{2}}\Big)\Big(\log(n_x+1)+\log\Big(C(\ell+1)^2\Big)\Big)$ and $M_y=\frac{16}{\epsilon^2}\Big(\sigma_y^2+\frac{B_y\epsilon}{6\sqrt{2}}\Big)\Big(\log(n_y+1)+ \Big(1+\frac{6}{\rv{\widetilde\mu} \eta_y} \frac{2-\rv{\widetilde\mu} \eta_y}{1-\rv{\widetilde\mu} \eta_y}\Big)\log\Big(C(\ell+1)^2\Big)\Big)$ for $\bar\epsilon=\infty$;
    \item[(iii)] under \cref{assumption:noise}, let $M_x=\frac{4\sigma_x^2}{\epsilon^2}C(\ell+1)^2$ and $M_y=\frac{4\sigma_y^2}{\epsilon^2}C\Big(1+\frac{6}{\rv{\widetilde\mu} \eta_y} \frac{2-\rv{\widetilde\mu} \eta_y}{1-\rv{\widetilde\mu} \eta_y}\Big)(\ell+1)^2$ for $\bar\epsilon=\infty$.
\end{enumerate}
\end{corollary}
\begin{proof}
Recall that under \cref{assumption:noise,as:bounded-oracle}, according to Lemma~\ref{lem:Chebyshev}, we get
{\small
    \begin{align*}
        b_\ell\Big(\frac{\epsilon}{2}\Big)
        &=(n_x+1)\exp\Big(-M_x\frac{\sigma_x^2}{B_x^2}\cdot H\Big(\frac{B_x \epsilon}{2\sqrt{2}\sigma_x^2}\Big)\Big)+(n_y+1)\exp\Big(-M_y\frac{\sigma_y^2}{B_y^2}\cdot H\Big(\frac{B_y \epsilon}{2\sqrt{2}\sigma_y^2}\Big)\Big)\\
        &\leq (n_x+1)\exp\Big(-\frac{M_x\epsilon^2/16}{\sigma_x^2+\frac{B_x\epsilon}{6\sqrt{2}}}\Big)+(n_y+1)\exp\Big(-\frac{M_y\epsilon^2/16}{\sigma_y^2+\frac{B_y\epsilon}{6\sqrt{2}}}\Big).
    \end{align*}}%
    For the scenario \textit{(i)}, in case the variance bounds $(\sigma_x^2,\sigma_y^2)$ are known, for $\epsilon\in\Big(0,~6\sqrt{2}\cdot\min\Big\{\frac{\sigma_x^2}{B_x},\frac{\sigma_y^2}{B_y}\Big\}\Big)$, our choice of $M_x$ and $M_y$ immediately implies that
        $b_\ell\Big(\frac{\epsilon}{2}\Big) \leq \frac{2}{C}\cdot\frac{1}{(\ell+1)^2}$;
    similarly, for the scenario \textit{(ii)}, in case 
    $(\sigma_x^2,\sigma_y^2)$ together with stochastic gradient norm bounds $(B_x,B_y)$ are known, our choice of $M_x$ and $M_y$ for this scenario also implies that $b_\ell\Big(\frac{\epsilon}{2}\Big) \leq \frac{2}{C}\cdot\frac{1}{(\ell+1)^2}$. The same same bound also holds for the scenario \textit{(iii)}; indeed, Lemma~\ref{lem:Chebyshev} implies that $b_\ell(\epsilon/2)=\frac{4}{\epsilon^2}\Big(\frac{\sigma_x^2}{M_x}+\frac{\sigma_y^2}{M_y}\Big)$, which leads to the bound above. Therefore, for all three scenarios, from \cref{thm:p-bound}, for $C\geq \frac{\pi^2}{3\bar p}$, we get
    {\small
\begin{equation*}
\p{\norm{G\left(\bx_\epsilon,y_\epsilon\right)}\leq\epsilon}\geq 1-\frac{2}{C}\sum_{\ell\geq 0} \frac{1}{(\ell+1)^2}=1-\frac{\pi^2}{3C}\geq 1-\bar p.
\vspace*{-7mm}
\end{equation*}}%
\vspace*{-7mm}
\end{proof}
\begin{remark}
    When compared to scenario \textit{(iii)}, which only requires \cref{assumption:noise}, scenarios \textit{(ii)} and \textit{(iii)} has a better $\bar p$ dependence, i.e., $M_x,M_y=\cO(1/\bar p)$ for scenario \textit{(iii)} improves to $M_x,M_y=\cO(\log(1/\bar p))$ under the additional \cref{as:bounded-oracle}. Furthermore, for all three scenarios sample sizes $M_x$ and $M_y$ are increasing functions of $\ell$. That being said, setting $\bar p=p\in(0,1)$, Lemma~\ref{lem:stopping_probability} shows that w.p. at least $1-\bar p$, \sgdab{} stops within $\bar{\ell}\triangleq\lceil\log_{\frac{1}{\gamma}}(\rv{\cR})\rceil$ backtracking iterations. Therefore, it holds with probability at least $1-\bar p$ that for scenarios \textit{(i)} and \textit{(ii)}, $M_x,M_y=\cO(\log\log(\cR))$ while for scenario \textit{(iii)} we have $M_x,M_y=\cO(\log(\cR))$.
\end{remark}
\saa{Finally, in~\cref{ncsc_thm} we state our main result on the oracle complexity of \sgdab. In~\cref{sec:GS}, we consider a variant of \sgdab{} calling \rbsgda{} with Gauss-Seidel~(GS) updates, and we show that all the results shown for Jacobi-type updates continue to hold for the GS variant as well.}
\begin{theorem}\label{ncsc_thm}
Suppose \cref{ass1,assumption:noise,as:bounded-oracle} hold \rv{with known variance bounds\footnote{If one knows $B_x$ and $B_y$ rather than $\sigma_x^2$ and $\sigma_y^2$, since $B_x^2\geq \sigma_x^2$ and $B_y^2\geq \sigma_y^2$, replacing $\sigma_x^2$ and $\sigma_y^2$ with $B_x^2$ and $B_y^2$ in our choice of $M_x$ and $M_y$ as in (i) of Corollary~\ref{cor:sample-complexity-bounded}, we can allow for a wider range of tolerance values $\epsilon>0$, i.e., $\bar\epsilon\geq 6\sqrt{2}\min\{B_x,B_y\}$. On the other hand, if $B_x,B_y,\sigma_x^2,\sigma_y^2$ are all known, then choosing $M_x$ and $M_y$ as in (ii) of Corollary~\ref{cor:sample-complexity-bounded} implies that our result holds for all $\epsilon>0$, i.e., $\bar\epsilon=+\infty$.}} $\sigma_x^2$ and $\sigma_y^2$.
Given any 
$(\bx^0,y^0)\in\dom g\times\dom h$, $\epsilon, \mu^{0},L^{0}>0$ and $\gamma,p,\rv{\bar p}\in (0,1)$ such that $L^{0}>\mu^{0}$ and $\epsilon\in(0,\bar\epsilon)$ for $\bar\epsilon$ as defined in Corollary~\ref{cor:sample-complexity-bounded}, and let $\hat{L}\triangleq\cR L^{0}$ and $\hat{\mu}\triangleq\max\{\underline{\mu}/\gamma, \mu^{0}/\cR\}$ and $\cR\triangleq\max\{\mu^{0}/\mu,L/L^{0},1\}$.
Then, \sgdab, displayed in \cref{alg:GDA-B}, \rv{with $M_x$ and $M_y$ chosen as in (i) of Corollary~\ref{cor:sample-complexity-bounded},} stops w.p.1 returning
$(\bx_\epsilon,y_\epsilon)$ satisfying \rv{$\p{\norm{G\left(\bx_\epsilon,y_\epsilon\right)}\leq \epsilon}\geq 1-\bar p$.} 
{Moreover, with probability at least $1-p$, \sgdab~stops within $\bar{\ell}\triangleq\lceil\log_{\frac{1}{\gamma}}(\cR)\rceil$ 
backtracking iterations, which require $\cO\Big(\hat{L}\hat{\kappa}^2\frac{1}{\epsilon^2}\Big)$ \rbsgda{} iterations and the corresponding oracle complexity is 
$$\cO\Big({\frac{\hat{L}\hat{\kappa}^2}{\epsilon^4}\Big(
F_0-\bar{F}+
\delta\Big)}\Big[\Big(\log(n_x)+\log\Big(\frac{1}{\bar p}\Big)\Big)\rv{\sigma_x^2}+\Big(\rv{\log(n_y)+\hat{\kappa}\log\Big(\frac{1}{\bar p}\Big)}\Big)\rv{\sigma_y^2}\Big]\log(\cR)\log\Big(\frac{1}{\rv{p}}\Big)\Big).$$
For the special case of $\sigma_x^2,\sigma_y^2=0$ and $N=1$, i.e., deterministic case, 
setting $M_x,M_y=1$ and $T=1$, \sgdab{} can generate $(\bx_\epsilon,y_\epsilon)$ such that $\norm{ G(\bx_\epsilon,y_\epsilon)} \leq \epsilon$ with 
gradient complexity of $\cO\Big(\hat{L}\hat{\kappa}^2\frac{1}{\epsilon^2}\Big)$.}
\end{theorem}
\begin{proof}
\rv{For $\ell\geq 0$, given the initial values $\mu^0,L^0$ such that $0<\mu^0<L^0$, suppose $\mu^{(\ell)}$ and $L^{(\ell)}$ denote the $\widetilde \mu$ and $\widetilde L$ values generated by \sgdab{} for the $\ell$-th backtracking iteration such that $\mu^{(0)}=\mu^0$ and $L^{(0)}=L^0$; hence, $L^{(\ell)}\leq \hat{L}/\gamma$ and $\mu^{(\ell)}\geq\gamma\hat{\mu}$ for all $\ell\geq 0$.} Since $\eta_y$ is initialized at ${1/L^{0}}$, and $\eta_y$ is decreasing monotonically every time 
\sa{the backtracking} condition in line~\ref{algeq:stop} of \sgdab{} does not hold, we have $\eta_y<{1/\mu^{0}}$ throughout the algorithm. Now suppose the condition in line~\ref{algeq:stop} holds for some 
\sa{$\ell\geq 0$}, i.e.,
\begin{equation}
\sa{\tilde S_{(t^*,\ell)}\triangleq\min_{t=1,\ldots,T} \tilde S_{(t,\ell)}
        \leq \epsilon^2/4,}\label{eq:stop-cond}
\end{equation}
where $t^*\in\{1,\ldots,T\}$ is the index achieving the minimum and \sa{$\tilde S_{(t,\ell)}\triangleq \|\tilde G(\bx_{(t,\ell)}^{\tilde k},y_{(t,\ell)}^{\tilde k};\bxi_{(t,\ell)}^{\tilde k})\|^2
$ for $\tilde k$ chosen uniformly at random\footnote{The random index $\tilde k$ is sampled for each $t\in\{1,\ldots, T\}$; that said, for the sake of notational simplicity, we do not explicitly state dependence of $\tilde k$ on $t$.} from $\{0,\ldots, K-1\}$ for each $t=1,\ldots,T$. Thus, for $(\bx_\epsilon,y_\epsilon,\bxi_\epsilon)\triangleq (\bx_{(t^*,\ell)}^{\tilde k},y_{(t^*,\ell)}^{\tilde k};\bxi_{(t^*,\ell)}^{\tilde k})$, we have $\|\tilde G(\bx_\epsilon,y_\epsilon;\bxi_\epsilon)\|\leq \frac{\epsilon}{2}$.}
\sa{According to \cref{lem:stopping_probability}, \sgdab{} stops within {$\bar{\ell}\triangleq \lceil\log_\frac{1}{\gamma}(\cR)\rceil$} backtracking iterations with probability at least $1-p$, i.e., $\mathbf{P}(I\leq \bar{\ell})\geq 1-p$, \saa{where $I\in\integers_{++}$ denotes the random stopping time of \sgdab~--for details, see \cref{lem:stopping_probability}}. Under the 
event $I\leq \bar{\ell}$, 
\saa{consider} the worst\mg{-}case scenario in terms of the oracle complexity, i.e., the backtracking condition in line~\ref{algeq:stop} of \sgdab{} does not hold for $\ell<\bar{\ell}$ and it holds when $\ell=\bar{\ell}$. Note that for $\ell=\bar{\ell}$, 
we would have $\rv{\widetilde L}={L^{0}}/\gamma^{\bar{\ell}}\geq L$ {and $\rv{\widetilde\mu} = \max\{\mu^{0}\gamma^{\bar{\ell}},\underline{\mu}\}\leq\mu$}; hence, $\eta_y=\rv{1/\widetilde L}\leq 1/ L$. \rv{Moreover, for all $\ell\in\integers$ such that $0\leq \ell\leq\bar \ell$,} {we can also conclude that $\rv{\widetilde L=L^{(\ell)}}\leq L^{0}/\gamma^{\lceil\log_\frac{1}{\gamma}(\cR)\rceil}\leq \cR L^{0}/\gamma = \hat{L}/\gamma$ and $\rv{\widetilde \mu = \mu^{(\ell)}} \geq \max\{\mu^{0}\gamma^{\lceil\log_\frac{1}{\gamma}(\cR)\rceil}, \underline{\mu}\}\geq \max\{\gamma\mu^{0}/\cR, \underline{\mu}\} = \gamma\hat{\mu}$.} 
Thus, under the event $I\leq \bar{\ell}$, which holds with probability at least $1-p$, \sgdab{} can generate $(\bx_\epsilon,y_\epsilon)$ such that
$\norm{\tilde G(\bx_\epsilon,y_\epsilon;\bxi_\epsilon)} \leq \epsilon/2$, \saa{and the worst-case oracle complexity bound for the \sgdab{} is given by $C_\epsilon\triangleq T\sum_{\ell=\rv{0}}^{\bar{\ell}}K^{(\ell)}(M_x^{(\ell)}+M_y^{(\ell)})$, where $M_x^{(\ell)}$ and $M_y^{(\ell)}$ are the sample size values $M_x$ and $M_y$ for the $\ell$-th backtracking iteration that are chosen as in (i) of Corollary~\ref{cor:sample-complexity-bounded}, i.e., under the event $I\leq\bar\ell$, \sgdab{} requires at most $\bar\ell$ backtracking iterations and for each $\ell\in\{\rv{0},\ldots,\bar\ell\}$, $T\triangleq\lceil\log_2(1/p)\rceil$ independent \rbsgda{} runs are executed in parallel for \rv{at most} $K^{(\ell)}\triangleq \lceil\frac{64}{\epsilon^2}\big(\big(\eta_y^{(\ell)}\big)^3{(\mu^{(\ell)})^2}\rho\big)^{-1}\left(
\rv{F_0}-\bar{F}+
\rv{6\rho\delta\mu^{(\ell)}}\right)\rceil$ \rv{many \rbsgda{} iterations} with dual step size $\eta_{y}^{(\ell)}\triangleq\frac{1}{L^{(\ell)}}=\frac{1}{{L^{0}}}\gamma^{\ell}$ {and $\mu^{(\ell)} = \max\{\mu^{0}\gamma^\ell,\underline{\mu}\}$}, and in each one of these \rbsgda{} iterations $M_x^{(\ell)}+M_y^{(\ell)}$ stochastic oracles are called,
where \rv{$M_x^{(\ell)}$ and $M_y^{(\ell)}$ are chosen as in (i) of Corollary~\ref{cor:sample-complexity-bounded} with $C=\frac{\pi^2}{3\bar p}$; hence, under the event $I\leq \bar\ell$, we have} $M_x^{(\ell)} \leq \frac{32}{\epsilon^2} \sigma_x^2 \Big(\log(n_x+1)+\log(\rv{4}(\bar\ell+1)^2/\bar p)\Big)\rv{\triangleq\bar M_x}$ and $M_y^{(\ell)} \leq \frac{32}{\epsilon^2} \sigma_y^2 \Big(\log(n_y+1)+\Big(1+\frac{6}{{\mu^{(\ell)}} \eta_y^{(\ell)}} \frac{2-{\mu^{(\ell)}} \eta_y^{(\ell)}}{1-{\mu^{(\ell)}} \eta_y^{(\ell)}}\Big)\log(\rv{4}(\bar\ell+1)^2/\bar p)\Big)\rv{\triangleq\bar M_y^{(\ell)}}$ as $\pi^2/3\leq 4$. Therefore, 
the worst case complexity $C_\epsilon$ under the 
event $I\leq\bar{\ell}$ 
can be further bounded above as follows:}
{\small
\begin{align*}
    C_\epsilon \leq T\cdot\sum_{\ell=\rv{0}}^{\lceil \log_{\frac{1}{\gamma}}(\rv{\cR})\rceil} \frac{64}{\epsilon^2}\frac{1}{\saa{\big(\eta_{y}^{(\ell)}\big)^3}(\rv{\mu^{(\ell)}})^2\rho}\left(
    F_0-\bar{F}+
    6\rho\delta\mu^{(\ell)}\right)\cdot\rv{\Big(\bar M_x+\bar M_y^{(\ell)}\Big).}
\end{align*}}%
\saa{For each $\ell\in\{\rv{0},\ldots,\bar\ell\}$, since {$\mu^{(\ell)}\eta_{y}^{(\ell)}\leq \frac{\mu^{0}}{L^{0}}$}, we have \rv{$\frac{2-\mu^{(\ell)}\eta_{y}^{(\ell)}}{1-\mu^{(\ell)}\eta_{y}^{(\ell)}}\leq 1+\frac{L^{0}}{L^{0}-\mu^{0}}$.}} Therefore, using the lower bound {$\saa{\eta_{y}^{(\ell)}}\geq {\gamma}/{\hat{L}}$, $\mu^{(\ell)}\geq\gamma\hat{\mu}$ and \xz{the fact $T = \lceil \log_2(\frac{1}{p})\rceil$}} {and $\hat{\kappa}\triangleq \hat{L}/\hat{\mu}$}, we can bound the complexity as follows:
{{\small
\begin{align*}
    C_\epsilon
    =\cO\Bigg( 
    &\frac{1}{\gamma^5}\log_2\Big(\frac{1}{p}\Big)\log_{\frac{1}{\gamma}}(\cR)\frac{\hat{L}\hat{\kappa}^2}{\epsilon^4}\left(\frac{F_0-\bar{F}}{\rho}+
    6\rho\delta\mu^{0}\right)\cdot\\
    &\left[\sigma_x^2\left(\rv{\log(n_x)+\log\Big(\frac{1}{\bar p}\Big)}\right)+\sigma_y^2\left(\rv{\log(n_y)}+\hat{\kappa}\frac{1}{\gamma^2} \rv{\frac{L^{0}}{L^{0}-\mu^{0}}}\log\Big(\frac{1}{\bar p}\Big)\right)\right]\Bigg),
\end{align*}}}%
which completes the proof.}
\end{proof}
\vspace*{-4mm}
\begin{remark}
    For simplicity suppose $p=\bar p\in(0,1)$. \saa{For the case $\sigma_x,\sigma_y>0$, the oracle complexity of {$\cO(\hat{\kappa}^3 \hat{L}\log^2(1/p)\epsilon^{-4})$} for \sgdab{} is established in \cref{ncsc_thm}. 
    Moreover, for the case $\sigma_x>0$ and $\sigma_y=0$, the worst-case complexity improves from {$\cO\Big(\hat{L}\hat{\kappa}^3\log^2(\frac{1}{p}){\epsilon^{-4}}\Big)$ to $\cO\Big(\hat{L}\hat{\kappa}^2\log^2(\frac{1}{p}){\epsilon^{-4}}\Big)$, i.e., a {$\hat{\kappa}$}-factor improvement.}}
\end{remark}
{\begin{remark}
\label{rem:known-mu}
For simplicity suppose $p=\bar p\in(0,1)$. When $\mu$ is known, setting $\mu^{0} = \mu = \underline{\mu}$ and $L^{0} = \mu/\gamma$ implies that $\cR = \max\{1,\gamma\kappa\}$; hence, $\hat{L} = \cR L^{0}=\max\{\mu/\gamma, L\}$ and $\hat{\mu}\triangleq\max\{\underline{\mu}/\gamma, \mu^{0}/\cR\}=\mu/\gamma$. Therefore, $\hat\kappa=\hat L/\hat \mu=\kappa\gamma$, and the oracle complexity can be bounded by $\cO(\kappa^3 L\log^2(1/p)\epsilon^{-4})$.
\end{remark}}

\subsubsection{\rv{Convergence guarantees for \sgdab{} with unknown bounds}}
\label{sec:unknown-bounds}
In this section, we consider the setting with \rv{unknown $\sigma_x^2,\sigma_y^2$ under Assumptions~\ref{assumption:noise} and~\ref{as:bounded-oracle}.} Similar to \cref{sec:known-bounds}, we first argue that \sgdab{} stops w.p. 1 and that the number of backtracking iterations is $\cO(\log(\rv{\cR}))$ w.p. at least $1-p$, \rv{where 
\begin{align}
\label{eq:R-unknown}
   \cR\triangleq\max\Big\{\mu^{0}/\mu,~L/L^{0},~\sigma_x/\sigma_x^{0},~\sigma_y/\sigma_y^{0},~1\Big\}.
\end{align}}%
For the sake of notational simplicity, in this section we will consider a slightly different backtracking condition, without loss of generality. Indeed, according to~\cref{alg:GDA-B}, \sgdab{} stops at the $\ell$-th iteration if $(1-\frac{1}{M_x})v_{(t^*,\ell)}^x\leq (1+c)\widetilde\sigma_x^2$, $(1-\frac{1}{M_y})v_{(t^*,\ell)}^y\leq (1+c)\widetilde\sigma_y^2$ and $\tilde S_{(t^*,\ell)}
        \leq \frac{\epsilon^2}{4}$ hold simultaneously, where $\Big(v_{(t,\ell)}^x,v_{(t,\ell)}^y\Big)=V_{\tilde\bxi_{(t,\ell)}}\Big(\tilde\bx_{(t,\ell)},\tilde y_{(t,\ell)}\Big)$ for $t\in[T]$ as defined in {{\cref{algeq:RBSGDA_runs}}} of \sgdab, and $t^*=\argmin_{t\in[T]}\tilde S_{(t,\ell)}$. In the analysis below, we replace this condition with a slightly simpler one as given below:
        \begin{equation}
        \label{eq:stop-cond-simple}
        {\small
        \begin{aligned}
            \mathrm{StopCond}(\ell):\quad \tilde S_{(t^*,\ell)}
        \leq \frac{\epsilon^2}{4}\quad\mbox{\textbf{and}}\quad \Big(1-\frac{1}{M_x}\Big)v_{(t^*,\ell)}^x+\Big(1-\frac{1}{M_y}\Big)v_{(t^*,\ell)}^y\leq (1+c)(\widetilde\sigma_x^2+\widetilde\sigma_y^2),
        \end{aligned}}%
        \end{equation}
        where $t^*$ is defined as before, i.e., $t^*=\argmin_{t\in[T]}\tilde S_{(t,\ell)}$.
\begin{lemma}
\label{lem:stopping_probability-unknown}
    \sa{\rv{Given an arbitrary failure probability $p\in (0,1)$ and the sample variance test parameter $c\in(0,1)$, for any given tolerance $\epsilon>0$ sufficiently small, i.e., $\epsilon=\cO\Big(\min\{\sigma_x^{0}\frac{\sigma_x}{B_x},~\sigma_y^{0}\frac{\sigma_y}{B_y}\}\frac{c}{\sqrt{\log(1/p)}}\Big)$}, suppose we execute \sgdab{} with $K=\lceil\frac{64N}{\epsilon^2\eta_x}\left(\rv{F_0-\bar{F}+6\rho\delta\widetilde\mu}\right)\rceil$ \rv{for some given $F_0$ and $\delta$ such that $F_0\geq F(\bx^0)$ and $\delta\geq \delta^0$}, \rv{$M_x\geq  \mg{\lceil} \frac{128}{\epsilon^2}\rv{\widetilde\sigma_x^2}\saa{+1} \mg{\rceil}$ and $M_y \geq \mg{\lceil} (1+\frac{6}{\rv{\widetilde\mu} \eta_y} \frac{2-\rv{\widetilde\mu} \eta_y}{1-\rv{\widetilde\mu} \eta_y})\frac{128}{\epsilon^2}\rv{\widetilde\sigma_y^2}\saa{+1}\mg{\rceil}$, and \rv{$T=\lceil\log_2(2/p)\rceil$}. Then,} \sgdab{} stops w.p. 1, and the probability that the stopping condition $\mathrm{StopCond}(\ell)$ in \eqref{eq:stop-cond-simple} holds within at most $\bar{\ell}\triangleq\lceil\log_{\frac{1}{\gamma}}(\rv{\cR})\rceil$ backtracking iterations, i.e., for some $\ell\leq \lceil\log_{\frac{1}{\gamma}}(\rv{\cR})\rceil$, is at least $1-p$, \rv{where $\cR$ is defined in \eqref{eq:R-unknown}}. Furthermore, the probability that $\mathrm{StopCond}(\ell)$ holds is at least $1-p$ for all $\ell\geq \bar \ell$.}
\end{lemma}
\begin{proof}
\rv{Consider the $\ell$-th backtracking iteration for some $\ell\geq 0$. Recall that for the $\ell$-th backtracking iteration of \sgdab, the parameter estimates are set to $\widetilde L=L^{0}/\gamma^\ell$, $\widetilde\mu=\max\{\mu^{0} \gamma^\ell,\underline{\mu}\}$, $\widetilde\sigma_x=\max\{\sigma_x^{0}/ \gamma^\ell,\bar{\sigma}_x\}$ and $\widetilde\sigma_y=\max\{\sigma_y^{0}/ \gamma^\ell,\bar{\sigma}_y\}$; hence, during the $\ell$-th backtracking iteration, the step sizes $\eta_y=\gamma^\ell/L^{0}$ and $\eta_x = N\rho\Big(\max\{\mu^{0} \gamma^\ell,\underline{\mu}\}\Big)^2\eta_y^3$ are adopted.} 

Next we define the conditional probability of \sgdab~stopping at \rv{the} $\ell$-th backtracking iteration given that it did not stop earlier--with initial point $(\bx^0,y^0)$ fixed, \rv{this probability is a function of 
$\ell\geq 0$ when algorithm parameters are set as described above.}
For any $\ell\in\integers_+$, let $\big(\tilde\bx_{(t,\ell)},\tilde y_{(t,\ell)}\big)$ for $t=1,\ldots,T$ denote the random \rbsgda~output for $T$ independent runs, all initialized from $(\bx^0,y^0)$ and using $\eta_y,\eta_x,K,M_x,M_y$ corresponding to given backtracking 
counter $\ell$.
Moreover, let $\Big(v_{(t,\ell)}^x,v_{(t,\ell)}^y\Big)=V_{\tilde\bxi_{(t,\ell)}}\Big(\tilde\bx_{(t,\ell)},\tilde y_{(t,\ell)}\Big)$ for $t=1,\ldots,T$ denote the sample variances as defined in {\texttt{\cref{algeq:RBSGDA_runs}}} of \sgdab, using $V$ function given in~\eqref{eq:sample_variance}.
 Consider \rv{$v_{(t^*,\ell)}^x$, $v_{(t^*,\ell)}^y$ and} $\tilde S_{(t^*,\ell)}$, where $t^*\triangleq\argmin_{t=1,\ldots,T}\tilde S_{(t,\ell)}$, and let $q(\ell)\triangleq\mathbf{P}\Big(A_{(\ell)}^s\cap A_{(\ell)}^v\Big)$ for the events $A_{(\ell)}^s$ and $A_{(\ell)}^v$ defined as 
 \rv{
 \begin{align}
 \label{eq:ql-unknown}
     A_{(\ell)}^s=\Big\{\tilde S_{(t^*,\ell)}\leq\frac{\epsilon^2}{4}\Big\},\quad A_{(\ell)}^v=\Big\{(1-\frac{1}{M_x})v_{(t^*,\ell)}^x+(1-\frac{1}{M_y})v_{(t^*,\ell)}^y\leq (1+c)(\widetilde\sigma_x^2+\widetilde\sigma_y^2)\Big\}.
 \end{align}}%

\rv{For any failure probability $p\in(0,1)$, 
\cref{thm:concentration-bounded} and Remark~\ref{rem:small-eps} imply that for all sufficiently small tolerance $\epsilon>0$, i.e., $0<\epsilon=\cO\big(c/\sqrt{\log(1/p)}\big)$, we can conclude that \begin{equation}
\label{eq:p-sandwich}
{\small
    \begin{aligned}
    \p{\Big(1-\frac{1}{M_x}\Big)v_{(t^*,\ell)}^x+\Big(1-\frac{1}{M_y}\Big)v_{(t^*,\ell)}^y\leq (1+c)(\sigma_x^2+\sigma_y^2)}\geq 1-p/2.
    \end{aligned}}%
\end{equation}}%
 
 Let \saa{$I\in\integers_{+}$} denote the random stopping time of \sgdab,
    i.e., \rv{$\mathbf{P}(I=0)=q(0)$} and $\mathbf{P}(I=\ell)=q(\ell)\Pi_{i=\rv{0}}^{\ell-1}(1-q(i))$ for \rv{$\ell\geq 1$. Note that 
    for all $\ell\geq \bar{\ell}=\lceil\log_{\frac{1}{\gamma}}(\rv{\cR})\rceil$, we have $\widetilde L\geq \mu$, $\widetilde\mu\leq \mu$, $\widetilde\sigma_x^2\geq\sigma_x^2$ and $\widetilde\sigma_y^2\geq\sigma_y^2$; therefore, Lemma~\ref{lem:small-eta} implies that $\p{A_{(\ell)}^s}\geq 1-p/2$ for all $\ell\geq \bar{\ell}$. Moreover, since $\widetilde\sigma_x^2\geq\sigma_x^2$ and $\widetilde\sigma_y^2\geq\sigma_y^2$ for $\ell\geq \bar\ell$,
    \eqref{eq:p-sandwich} implies that $\p{A_{(\ell)}^v}\geq 1-p/2$ for all $\ell\geq \bar\ell$. Thus, $q(\ell)\geq \p{A_{(\ell)}^s}+\p{A_{(\ell)}^v}-1\geq 1-p$ for $\ell\geq \bar{\ell}$. Let $A_{(\ell)}\triangleq A_{(\ell)}^s\cap A_{(\ell)}^v$ for $\ell\geq 0$. Since $\{A_{(\ell)}\}_{\ell\geq 0}$ are independent events} and $\rv{\p{A_{(\ell)}}}=q(\ell)\geq 1-p$ for $\ell\geq \bar{\ell}$, we can conclude that $\mathbf{P}(I<\infty)=1$, i.e., \sgdab~stops with probability 1. Moreover, it follows from the same induction argument in the proof of Lemma~\ref{lem:stopping_probability} that $\mathbf{P}(I\leq \bar{\ell})\geq 1-p$.
\end{proof}
Next, we show that \sgdab{} output $(\bx_\epsilon,y_\epsilon)$ is $\cO(\epsilon)$-stationary with high probability.
\begin{theorem}
\label{thm:p-bound-unknown}
    Let $\bar\ell\triangleq\lceil\log_{\frac{1}{\gamma}}(\rv{\cR})\rceil$ for $\cR$ as in \eqref{eq:R-unknown}. Under the premise of Lemma~\ref{lem:stopping_probability-unknown}, for any $(\bx^0,y^0)\in\dom g\times\dom h$ and $p\in (0,1)$, 
   \sgdab{} with \rv{$T=\lceil\log_2(2/p)\rceil$} and employing $\mathrm{StopCond}(\ell)$ in \eqref{eq:stop-cond-simple} stops w.p.1 returning $(\bx_\epsilon,y_\epsilon)$ such that
\begin{equation*}
\begin{aligned}
\p{\norm{G\left(\bx_\epsilon,y_\epsilon\right)}\leq\epsilon}\geq 1-\sum_{\ell=0}^{\bar\ell-1} \tilde b_\ell(\epsilon)-\sum_{\ell\geq \bar\ell} \tilde b_\ell(\epsilon) p^{\ell-\bar\ell},
\end{aligned}
\end{equation*}
where
$\tilde b_\ell(\epsilon)\triangleq \tilde b^x_\ell(\epsilon)+\tilde b^y_\ell(\epsilon)$ and under Assumption~\ref{as:bounded-oracle}, $\tilde b^x_\ell(\epsilon),\tilde b^y_\ell(\epsilon)$ take the form:
{\small
\begin{align*}
    \tilde b^x_\ell(\epsilon)=(n_x+1)\Big[\exp\Big(-\frac{M_x\epsilon^2/16}{2(1+c)\tilde\sigma^2+\frac{B_x\epsilon}{6\sqrt{2}}}\Big)+\exp\left(\frac{-M_x(1+c)\tilde\sigma_x^2/4}{\sigma_x^2+B_x\sigma/6}\right)\Big]+\exp \left(-M_x \frac{(1+c)^2}{10}\frac{\tilde\sigma_x^4}{B_x^2\sigma_x^2}\right),\\
    \tilde b^y_\ell(\epsilon)=(n_y+1)\Big[\exp\Big(-\frac{M_y\epsilon^2/16}{2(1+c)\tilde\sigma^2+\frac{B_y\epsilon}{6\sqrt{2}}}\Big)+\exp\left(\frac{-M_y(1+c)\tilde\sigma_y^2/4}{\sigma_y^2+B_x\sigma/6}\right)\Big]+\exp \left(-M_y \frac{(1+c)^2}{10}\frac{\tilde\sigma_y^4}{B_y^2\sigma_y^2}\right),
\end{align*}}%
for $M_x,M_y,\tilde\sigma_x,\tilde\sigma_y$ denoting values for the backtracking iteration $\ell\geq 0$, and $\tilde\sigma^2=\tilde\sigma_x^2+\tilde\sigma_y^2$.
\end{theorem}
\begin{proof}
\rv{Since we consider the setting with both $\sigma_x^2$ and $\sigma_y^2$ are unknown, we set $\bar\sigma_x=\bar\sigma_y=+\infty$ in \cref{alg:GDA-B}. Thus, for \sgdab, within the backtracking iteration $\ell\geq \rv{0}$, the parameter estimates are set to $\widetilde L=L^0/\gamma^\ell$, $\widetilde\mu=\max\{\mu^{0} \gamma^\ell,\underline{\mu}\}$, $\widetilde\sigma_x=\sigma_x^{0}/ \gamma^\ell$ and $\widetilde\sigma_y=\sigma_y^{0}/ \gamma^\ell$; hence, the step sizes $\eta_y,\eta_x$ chosen as in line~\ref{algeq:step-sizes} of \sgdab, and the sample sizes $M_x,M_y$ together with the iteration budget $K$ for \rbsgda{} chosen as in Lemma~\ref{lem:stopping_probability-unknown}
are all functions of $\ell$ and also of other fixed problem parameters, i.e., $\bar{F},N,\sigma_x^2,\sigma_y^2$, and algorithm parameters, i.e., $\mu^0,L^0,\bx^0,y^0,F_0,\delta,\epsilon,\gamma,p,\bar p$.}

\saa{In the rest, for the ease of notation, we suppress the dependency of $\eta_y$, $\eta_x$, $K$, $M_x$, $M_y$ on $\ell$ and other parameters.} Let $\{\big(\bx^k_{(t,\ell)},y^k_{(t,\ell)}\big)\}_{k=0}^{K-1}$ for $t=1,\ldots,T$ denote the \rbsgda~iterate sequences for $T$ independent runs, all initialized from $(\bx^0,y^0)$ and using $\eta_y,\eta_x,K,M_x,M_y$ corresponding to given backtracking iteration counter $\ell\geq 0$. Fix an arbitrary $\ell\geq 0$, suppose the sample sizes $M_x,M_y$ corresponding to the given backtracking iteration $\ell$ are deterministic integers satisfying the condition in Lemma~\ref{lem:stopping_probability-unknown}. 
Next, for any given $\ell\geq 0$ and $t\in\{1,\ldots,T$\}, let $\tilde k_t\sim\cU[0,K-1]$. \saa{According to \sgdab, for \rv{any fixed $\ell\geq 0$}, we have} $\tilde S_{(t,\ell)}=\norm{\tilde
    G\big(\bx^{\tilde k_t}_{(t,\ell)},y^{\tilde k_t}_{(t,\ell)};\bxi^{\tilde k_t}_{(t,\ell)}\big)}^2$ for all $t=1,\ldots, T$ \rv{--here, $\tilde k_t$ is sampled for each 
    $t$ \mg{in an i.i.d.~fashion}}. Moreover, for any given $\ell\geq 0$, define $t^*=\argmin\{\tilde S_{(t,\ell)}: t=1,\ldots,T\}$ and set $(\tilde\bx_{(\ell)},\tilde y_{(\ell)};\tilde \bxi_{(\ell)})=(\bx^{\tilde k_t}_{(t,\ell)},y^{\tilde k_t}_{(t,\ell)};\bxi^{\tilde k_t}_{(t,\ell)})$ for $t=t^*$. 
    Due to symmetry, $t^*$ is uniformly distributed on the support $\{1,\ldots,T\}$ since $\{\tilde S_{(t,\ell)}\}_{t=1}^T$ are i.i.d. random variables for each 
    $\ell\geq 0$. Moreover, in \sgdab, we set $v_{(\ell)}^x=v_{(t^*,\ell)}^x$ and $v_{(\ell)}^y=v_{(t^*,\ell)}^y$ for all $\ell\in\integers_+$.

Recall the stopping time $I\in\integers_+$ defined in the proof of \rv{Lemma}~\ref{lem:stopping_probability-unknown}, i.e.,
{\small
\begin{align*}
     I\triangleq\min\Big\{\ell\in\integers_+:\ \norm{\tilde G\big(\tilde\bx_{(\ell)},\tilde y_{(\ell)};\tilde \bxi_{(\ell)}\big)}\leq \frac{\epsilon}{2},\ \Big(1-\frac{1}{M_x}\Big)v_{(\ell)}^x+\Big(1-\frac{1}{M_y}\Big)v_{(\ell)}^y\leq (1+c)(\widetilde\sigma_x^2+\widetilde\sigma_y^2)\Big\};
 \end{align*}}%
 hence, $(\bx_\epsilon,y_\epsilon)=(\tilde\bx_{(I)},\tilde y_{(I)})$, i.e., $(\bx_\epsilon,y_\epsilon;\bxi_\epsilon)=(\tilde\bx_{(\ell)},\tilde y_{(\ell)};\tilde\bxi_{(\ell)})$ when $I=\ell$, and we have \rv{$\mathbf{P}(I=0)=q(0)$} and $\mathbf{P}(I=\ell)=q(\ell)\Pi_{i=0}^{\ell-1}(1-q(i))$ for \rv{$\ell\geq 1$, where $q(\ell)\triangleq\mathbf{P}\Big(A_{(\ell)}^s\cap A_{(\ell)}^v\Big)$ for $A_{(\ell)}^s$ and $A_{(\ell)}^v$ as defined in \eqref{eq:ql-unknown} for $\ell\geq 0$, i.e., $q(0)=\p{I=0}$ and $q(\ell)=\p{I=\ell~\mid~I>\ell-1}$ for $\ell\geq 1$.} In the rest, let $p(\ell)\triangleq \mathbf{P}(I=\ell)$ for $\ell\geq 0$, i.e., $p(\ell)$ denotes the probability of \sgdab{} stopping at the $\ell$-th backtracking iteration for $\ell\geq 0$; hence, \rv{the output $(\bx_\epsilon,y_\epsilon)$ of the algorithm (see line~\ref{algeq:output} of \sgdab{}) satisfies} $(\bx_\epsilon,y_\epsilon)=(\tilde\bx_{(\ell)},\tilde y_{(\ell)})$ with probability $p(\ell)$. Therefore, the same arguments we used in the proof of \cref{thm:p-bound} lead to
\begin{equation}
\label{eq:det-gmap-unknown}
{\small
\begin{aligned}
    \p{\norm{G\left(\bx_\epsilon,y_\epsilon\right)}\leq \epsilon} = \sum_{\ell\geq 0} \p{\norm{G\left(\tx,\ty\right)}\leq \epsilon\ \bigg|\ A_{(\ell)}}q(\ell)\Pi_{\ell'=0}^{\ell-1}(1-q(\ell'))=\sum_{\ell\geq 0} w(\ell) p(\ell),
\end{aligned}}%
\end{equation}
where $A_{(\ell)}\triangleq A_{(\ell)}^s\cap A_{(\ell)}^v$ as defined in \eqref{eq:ql-unknown}, $w(\ell)\triangleq \p{\norm{G\left(\tx,\ty\right)}\leq \epsilon\ \bigg|\ A_{(\ell)}}$ for $\ell\geq 0$. Let
\rv{
\begin{equation}
    \label{eq:C1C2-sets}
 \begin{aligned}
     C_{(\ell)}^1 &\triangleq \Big\{\norm{G\left(\tx,\ty\right)}>\epsilon,\quad \sigma^2\big(\tx,\ty\big)\leq 2(1+c)\widetilde\sigma^2\Big\},\\
     C_{(\ell)}^2 &\triangleq \Big\{\norm{G\left(\tx,\ty\right)}>\epsilon,\quad \sigma^2\big(\tx,\ty\big)> 2(1+c)\widetilde\sigma^2\Big\},
 \end{aligned}
 \end{equation}}%
 for $\ell\geq 0$, where $\sigma^2\big(\tx,\ty\big)\triangleq \sigma_x^2\big(\tx,\ty\big)+\sigma_y^2\big(\tx,\ty\big)$ and $\widetilde\sigma^2\triangleq \widetilde\sigma_x^2+\widetilde\sigma_y^2$. Thus,
\begin{equation}
\label{eq:w0-C1C2-sets}
{\small
\begin{aligned}
w(\ell)=1-\p{A_{(\ell)}~\big|~C_{(\ell)}^1}\cdot\frac{\p{C_{(\ell)}^1}}{\p{A_{(\ell)}}}-\p{A_{(\ell)}~\big|~C_{(\ell)}^2}\cdot\frac{\p{C_{(\ell)}^2}}{\p{A_{(\ell)}}}. 
\end{aligned}}%
 \end{equation}
In the rest, we adopt the following notation: for any $(\bx,y)\in\dom g\times\dom h$, we define $\bz=(\by,y)$ and $G(\bz)=G(\bx,y)$ as given in Definition~\ref{def:s-gradmapping} with step sizes $\eta_x$ and $\eta_y$ corresponding to $\ell$-th backtracking iteration.
 
Similar to the proof of \cref{thm:p-bound}, we define two other important quantities of interest. Let $w_1(\ell)\triangleq \p{A_{(\ell)}~\big|~C_{(\ell)}^1}$ and $w_2(\ell)\triangleq \p{A_{(\ell)}~\big|~C_{(\ell)}^2}$, where $\tz=(\tx,\ty)$ for $\ell\in\integers_+$. Note that
\begin{subequations}
\label{eq:w-bounds}
\begin{align}
   w_1(\ell) &= \E{\p{A_{(\ell)}~\big|~\tz\,}~\big|~C_{(\ell)}^1}\leq \E{\p{A^s_{(\ell)}~\big|~\tz\,}~\big|~C_{(\ell)}^1},
   \label{eq:w1-bound-0}\\
   w_2(\ell) &= \E{\p{A_{(\ell)}~\big|~\tz\,}~\big|~C_{(\ell)}^2}\leq \E{\p{A^v_{(\ell)}~\big|~\tz\,}~\big|~C_{(\ell)}^2},
   \label{eq:w2-bound-0}
\end{align}
\end{subequations}
where the inequalities follow from $A_{(\ell)}\triangleq A_{(\ell)}^s\cap A_{(\ell)}^v$ for $\ell\geq 0$. Next, we upper bound both $w_1(\ell)$ and $w_2(\ell)$ in order to obtain a lower bound for $w(\ell)$ for $\ell\geq 0$. 

First, to bound $w_1(\ell)$, we observe that $A_{(\ell)}^s=\Big\{\tilde S_{(t^*,\ell)}\leq\frac{\epsilon^2}{4}\Big\}=\{\norm{\tilde G(\tz;\txi)}\leq \frac{\epsilon}{2}\}$.
Moreover, note that for any $(\bx,y)\in\dom g\times\dom h$, $\big\|\tilde G\big(\bx,y;\txi\big)\big\|\leq \frac{\epsilon}{2}$ implies $\big\|\tilde G\big(\bx,y;\txi\big)-G(\bx,y)\big\|\geq \norm{G(\bx,y)}-\frac{\epsilon}{2}$; thus,
{\small
\begin{align*}
    \p{A^s_{(\ell)}~\big|~\tz\,}\leq \p{\big\|\tilde G\big(\tz;\txi\big)-G(\tz)\big\|\geq \norm{G(\tz)}-\frac{\epsilon}{2}~\Big|~\tz\,}.
\end{align*}}%
Therefore, \eqref{eq:w1-bound-0} and Lemma~\ref{lem:Chebyshev} together imply that
{\small
\begin{align*}
    w_1(\ell) 
    &\leq \E{(n_x+1) \exp\left(-M_x\frac{\sigma_x^2(\tz)}{B_x^2}H\left(\frac{B_x}{\sqrt{2}}\cdot\frac{\norm{G(\tz)}-\frac{\epsilon}{2}}{ \sigma_x^2(\tz)}\right)\right)~\Big|~C_{(\ell)}^1}\\
    &\quad +\E{(n_y+1) \exp\left(-M_y\frac{\sigma_y^2(\tz)}{B_y^2}H\left(\frac{B_y}{\sqrt{2}}\cdot\frac{\norm{G(\tz)}-\frac{\epsilon}{2}}{ \sigma_x^2(\tz)}\right)\right)~\Big|~C_{(\ell)}^1}\\
    & \leq \sup_{t_1,t_2\geq 0}\quad (n_x+1)\exp\Big(-M_x\frac{t_1}{B_x^2}\cdot H\Big(\frac{B_x (t_0-\frac{\epsilon}{2})}{\sqrt{2}t_1}\Big)\Big)+(n_y+1)\exp\Big(-M_y\frac{t_2}{B_y^2}\cdot H\Big(\frac{B_y (t_0-\frac{\epsilon}{2})}{\sqrt{2}t_2}\Big)\Big)\\
    & \qquad \mbox{s.t.}\qquad t_0>\epsilon,~ t_1,t_2\leq 2(1+c)\tilde\sigma^2,
\end{align*}}%
where the second inequality follows from the conditional event $C_{(\ell)}^1$, which implies that $\norm{G(\tz)}>\epsilon$ and $\sigma_x^2(\tz)+\sigma_y^2(\tz)\leq 2(1+c)\tilde\sigma^2$; hence, replacing $\norm{G(\tz)}$ with $t_0$, $\sigma_x^2(\tz)$ and $\sigma_y^2(\tz)$ with $t_1$ and $t_2$, respectively, and taking the supremum over $t_0, t_1, t_2$, we end up with a deterministic bound with the supremum achieved at $t_0=\epsilon$, and $t_1=t_2=2(1+c)\tilde\sigma^2$, and finally using $H(u)\geq \frac{u^2/2}{1+u/3}$ for all $u>0$ (see~Lemma~\ref{lem:Bennett-helper} in \cref{sec:bounded-proofs}), we get
{\small
\begin{align}
\label{eq:w1-bound}
    w_1(\ell) 
    & \leq (n_x+1)\exp\Big(-\frac{M_x\epsilon^2/16}{2(1+c)\tilde\sigma^2+\frac{B_x\epsilon}{6\sqrt{2}}}\Big)+(n_y+1)\exp\Big(-\frac{M_y\epsilon^2/16}{2(1+c)\tilde\sigma^2+\frac{B_y\epsilon}{6\sqrt{2}}}\Big)\triangleq\bar w_1(\ell).
\end{align}}%
Next, we bound $w_2(\ell)$. Under the event $C_{(\ell)}^2$, we have $\sigma_x^2\big(\tz\big)+\sigma_y^2\big(\tz\big)> 2(1+c)\widetilde\sigma^2$; hence,
{\small
\begin{align*}
\p{A^v_{(\ell)}~\big|~C_{(\ell)}^2\,}
&\leq \p{(1-\frac{1}{M_x})v_{(t^*,\ell)}^x-\sigma_x^2(\tz)+(1-\frac{1}{M_y})v_{(t^*,\ell)}^y-\sigma_y^2(\tz)\leq -(1+c)\widetilde\sigma^2~\Big|~C_{(\ell)}^2\,}\\
&\leq \p{(1-\frac{1}{M_x})v_{(t^*,\ell)}^x-\sigma_x^2(\tz)\leq -(1+c)\tilde\sigma_x^2~\Big|~C_{(\ell)}^2\,}\\
&\quad +\p{(1-\frac{1}{M_y})v_{(t^*,\ell)}^y-\sigma_y^2(\tz)\leq -(1+c)~\tilde\sigma_y^2\Big|~C_{(\ell)}^2\,},
\end{align*}}%
\rv{which follows from the definition of $A^v_{(\ell)}$ in \eqref{eq:ql-unknown}.} Therefore, it follows from Theorem~\ref{thm:concentration-bounded} that
{\footnotesize
\begin{equation*}
\begin{aligned}
&\E{\p{A^s_{(\ell)}~\big|~\tz\,}\big|~C^2_{(\ell)}}\\
&\leq \E{\exp \left(\frac{-M_x\sigma_x^2(\tz)}{B_x^2}H\left(\frac{(1+c)\tilde\sigma_x^2}{2\sigma_x^2(\tz)}\right)\right)
				+(n_x+1)\exp\left(\frac{-M_x\sigma_x^2(\tz)}{B_x^2}\cdot H\left(\frac{B_x\sqrt{(1+c)\tilde\sigma_x^2/2}}{\sigma_x(\tz)}\right)\right)\big|~C^2_{(\ell)}}\\
& \ + \E{\exp \left(\frac{-M_y\sigma_y^2(\tz)}{B_y^2}H\left(\frac{(1+c)\tilde\sigma_y^2}{2\sigma_y^2(\tz)}\right)\right)
				+(n_y+1)\exp\left(\frac{-M_y\sigma_y^2(\tz)}{B_y^2}\cdot H\left(\frac{B_y\sqrt{(1+c)\tilde\sigma_y^2/2}}{\sigma_y(\tz)}\right)\right)\big|~C^2_{(\ell)}}.
\end{aligned}
\end{equation*}}%
Moreover, since $\sigma_x^2(\tz)\leq \sigma^2_x$ and $\sigma_y^2(\tz)\leq \sigma_y^2$, we first use the fact that for any fixed $C>0$, $u\mapsto uH(C/u)$ is decreasing in $u$ over $\{u\in\reals:~u>0\}$ in order to further bound the first and the third terms in the above inequality, and we also use $H(u)\geq\frac{u^2/2}{1+u/3}$ for $u>0$ together with $(1+c)\tilde\sigma^2/2<\sigma^2(\tz)/4\leq\sigma^2/4$ to bound the second and the fourth terms on the right hand side of the above inequality; hence, \eqref{eq:w2-bound-0} implies that
{\small
\begin{align*}
w_2(\ell)
&\leq \exp \left(-M_x\frac{\sigma_x^2}{B_x^2}H\left(\frac{(1+c)\tilde\sigma_x^2}{2\sigma_x^2}\right)\right)
				+(n_x+1)\exp\left(\frac{-M_x(1+c)\tilde\sigma_x^2/4}{\sigma_x^2+B_x\sigma/6}\right)\\
& \quad + \exp \left(-M_y\frac{\sigma_y^2}{B_y^2}H\left(\frac{(1+c)\tilde\sigma_y^2}{2\sigma_y^2}\right)\right)
				+(n_y+1)\exp\left(\frac{-M_y(1+c)\tilde\sigma_y^2/4}{\sigma_y^2+B_y\sigma/6}\right).
\end{align*}}%
Finally, under the event $C^2_{(\ell)}$, we have $\frac{(1+c)\tilde\sigma_x^2}{2\sigma_x^2}\leq 1/4$; moreover, since $H(u)\geq \frac{2}{5}u^2$ for $u\in(0,1/4)$, 
\begin{equation}
    \label{eq:w2-bound}
{\small
\begin{aligned}
w_2(\ell)
&\leq \exp \left(-M_x \frac{(1+c)^2}{10}\frac{\tilde\sigma_x^4}{B_x^2\sigma_x^2}\right)+(n_x+1) \exp\left(\frac{-M_x(1+c)\tilde\sigma_x^2/4}{\sigma_x^2+B_x\sigma/6}\right)\\
& \quad + \exp \left(-M_y \frac{(1+c)^2}{10}\frac{\tilde\sigma_y^4}{B_y^2\sigma_y^2}\right)+(n_y+1) \exp\left(\frac{-M_y(1+c)\tilde\sigma_y^2/4}{\sigma_y^2+B_y\sigma/6}\right)\triangleq \bar w_2(\ell).
\end{aligned}}%
\end{equation}
Clearly, for all $\ell\in\integers_+$, due to \eqref{eq:w0-C1C2-sets}, the following relation among $w_1(\ell)$, $w_2(\ell)$ and $w(\ell)$ holds:
\begin{equation}
\label{eq:w-unknown}
{\small
\begin{aligned}
    w(\ell)= 1- w_1(\ell)/q(\ell)\cdot \p{C_{(\ell)}^1}-w_2(\ell)/q(\ell)\cdot \p{C_{(\ell)}^2}\geq 1- \Big(w_1(\ell)+w_2(\ell)\Big)/q(\ell),
\end{aligned}}%
\end{equation}
where the inequality follows from the definition of $q(\ell)$ and $\p{C_{(\ell)}^1},\p{C_{(\ell)}^2}\in [0,1]$.
Thus, combining \eqref{eq:det-gmap-unknown}, \eqref{eq:w0-C1C2-sets}, \eqref{eq:w1-bound} and \eqref{eq:w2-bound}, we obtain
\begin{equation}
\label{eq:final-G-bound}
\begin{aligned}
    \p{\norm{G\left(\bx_\epsilon,y_\epsilon\right)}\leq \epsilon}=\sum_{\ell\geq 0} w(\ell) p(\ell)\geq 1-\sum_{\ell\geq 0}\Big(\bar w_1(\ell)+\bar w_2(\ell)\Big)\Pi_{\ell'=0}^{\ell-1}(1-q(\ell')),
\end{aligned}
\end{equation}
where we used $\sum_{\ell\geq 0}p(\ell)=1$ and $p(\ell)=q(\ell)\Pi_{\ell'=0}^{\ell-1}(1-q(\ell'))$. Note that Lemma~\ref{lem:stopping_probability-unknown} shows that $q(\ell)\geq 1-p$ for all $\ell\geq \bar\ell\triangleq\lceil\log_{\frac{1}{\gamma}}(\rv{\cR})\rceil$; therefore, $\Pi_{\ell'=0}^{\ell-1}(1-q(\ell'))\leq p^{\ell-\bar\ell}$ for $\ell\geq\bar\ell$. Thus, we get the desired result. 
\end{proof}
\rv{Before we prescribe how $M_x,M_y$ should be chosen for this setting to achieve the desired probability bound $\p{\norm{G\left(\bx_\epsilon,y_\epsilon\right)}\leq \epsilon}\geq 1-\bar p$ for any given $\bar p\in (0,1)$, we first state a technical result to bound infinite series arising in the proof of our high probability bound stated in Corollary~\ref{cor:sample-complexity-bounded-unkown}.}
\begin{lemma}
\label{lem:simple-bound}
    For $a,b>1$ and $C>0$, $\sum_{\ell=0}^\infty \frac{1}{a^{C b^\ell}}\leq \frac{b}{C\log a \log b}\cdot a^{-C/b}$. Moreover, for any $\bar p\in(0,1)$, $\frac{a b}{C\log a \log b}\cdot a^{-C/b}\leq \bar p$ holds for all $C>b+\frac{b}{\log a}\Big(\log\Big(\frac{1}{\bar p\log a\log b}\Big)+\log b\Big)$. Finally, for any $\bar p\in(0,1)$, $C>1$ and $b>1$, it holds that $\sum_{\ell=0}^\infty\exp\Big(-C\log\Big(1+\frac{4}{\bar p}\Big) b^\ell\Big)\leq \frac{\bar p}{\rv{8}}$ whenever $C\geq \frac{1}{\log(1+4/\bar p)}\max\{\log(16/\bar p),~\log(2)/\log(b)\}$.
\end{lemma}
\begin{corollary}
\label{cor:sample-complexity-bounded-unkown}
Under \cref{ass1,assumption:noise,as:bounded-oracle}, for any given $\bar p\in(0,1)$, $\p{\norm{G\left(\bx_\epsilon,y_\epsilon\right)}\leq \epsilon}\geq 1-\bar p$ holds when $M_x$ and $M_y$ are chosen such that $M_x=\frac{C_0}{\epsilon^2}(C_1^x\log(n_x+1)\tilde\sigma_x^2+C_2 (\sigma^0)^2/\bar\gamma^{\rv{2\ell}})$ and $M_y=\frac{C_0}{\epsilon^2}\Big(C_1^y\log(n_y+1)\tilde\sigma_y^2+ C_2 (\sigma^0)^2 \Big(1+\frac{6}{\rv{\widetilde\mu} \eta_y} \frac{2-\rv{\widetilde\mu} \eta_y}{1-\rv{\widetilde\mu} \eta_y}\Big)/\bar\gamma^{\rv{2\ell}}\Big)$ for \rv{$C_0\geq 48(1+c)$}, $C_1^x\geq (\sigma^0)^2/(\sigma^0_x)^2$, $C_1^y\geq (\sigma^0)^2/(\sigma^0_y)^2$, $C_2\geq \log\Big(1+\frac{4}{\bar p}\Big)$, $\bar\gamma\in(0,\gamma)$ and $\epsilon\in(0,\bar\epsilon)$, where $(\sigma^0)^2=(\sigma_x^0)^2+(\sigma_y^0)^2$ and
\begin{align*}
    \bar\epsilon^2=\cO\left((\underline{\sigma}^0)^2\min\left\{\varsigma_1\cdot \min\Big\{1,\frac{\log(\underline{n})}{\log(1/\bar p)+\log(1/\gamma)}\Big\},~\varsigma_2,~\varsigma_3\cdot\min\left\{1,~\frac{\log(1/\bar p)}{\log(1/p)}\right\}\right\}\right),
\end{align*}
for $\varsigma_1=\frac{\gamma^2}{\sigma^2+B\sigma}$, $\varsigma_2=\frac{(\underline{\sigma}^0)^4}{B^2\sigma^2}$, $\varsigma_3=c^2\min\{\sigma_x^2,~\sigma_y^2\}/B^2$,  $\underline{\sigma}^0=\min\{\sigma_x^0,\sigma_y^0\}$, $\sigma^2=\sigma_x^2+\sigma_y^2$, $B=\max\{B_x,B_y\}$ and $\underline{n}=\min\{n_x,n_y\}$.
\end{corollary}
\begin{proof}
Without loss of generality, suppose $\gamma/\bar\gamma= \sqrt{2}$. Given an arbitrary $p\in(0,1)$, for any $\ell\geq \bar\ell$, since $\tilde\sigma_x^2\geq\sigma_x^2$ and $\tilde\sigma_y^2\geq\sigma_y^2$, the event $A_{(\ell)}^v$ defined in \eqref{eq:ql-unknown} holds 
w.p. at least $1-p/2$ whenever
\begin{equation}
\label{eq:test-condition-unknown-bounded}
{\small
\begin{aligned}
    \p{\Big(1-\frac{1}{M_x}\Big)v^x_{(t^*,\ell)}\leq (1+c)\sigma_x^{2}}\geq 1-\frac{p}{4},\quad \p{\Big(1-\frac{1}{M_y}\Big)v^y_{(t^*,\ell)}\leq (1+c)\sigma_y^{2}}\geq 1-\frac{p}{4},
\end{aligned}}%
\end{equation}
and according to \cref{thm:concentration-bounded}, \eqref{eq:concentration-subGaussian-bounded} implies that \eqref{eq:test-condition-unknown-bounded} holds under \cref{assumption:noise,as:bounded-oracle} whenever $M_x,M_y\in\integers_+$ satisfy $M_x\geq \frac{1}{H(c)}\cdot\frac{B_x^2}{\sigma_x^2}\cdot\log\Big(\frac{4}{p}\Big)$ and $M_y\geq \frac{1}{H(c)}\cdot\frac{B_y^2}{\sigma_y^2}\cdot\log\Big(\frac{4}{p}\Big)$.

Our choice of $M_x,M_y$ immediately implies a trivial upper bound on $\bar w_1(\ell)$; indeed, it follows from \eqref{eq:w1-bound} and $\tilde\sigma\geq\sigma^0$ for all $\ell\geq 0$ that
{\small
\begin{align}
\label{eq:w1-bound2}
    \bar w_1(\ell) 
    & \leq (n_x+1)\exp\Big(-\frac{M_x\epsilon^2}{48(1+c)\tilde\sigma^2}\Big)+(n_y+1)\exp\Big(-\frac{M_y\epsilon^2}{48(1+c)\tilde\sigma^2}\Big)\leq \rv{2\exp\left(-C_2 \frac{\gamma^{2\ell}}{\bar\gamma^{2\ell}}\right)},
\end{align}}%
holds for all $\ell\geq 0$ and $\epsilon>0$ such that $\epsilon\leq 6\sqrt{2}\frac{(1+c)(\sigma^0)^2}{\max\{B_x,B_y\}}$. Therefore, given $\bar p\in (0,1)$, since $(\gamma/\bar\gamma)^2=2$, we get
    $\sum_{\ell\geq 0}\bar w_1(\ell)\leq\sum_{\ell\geq 0} 2\exp\left(-C_2 \frac{\bar\gamma^{2\ell}}{\gamma^{2\ell}}\right)\leq \bar p/2$ holds for all $C_2\geq \log\Big(1+\frac{4}{\bar p}\Big)$ since $\sum_{\ell\geq 1} \frac{2}{e^{C_2 \ell}}=\rv{2/(e^{C_2}-1)}\leq \bar p/2$ holds for such $C_2>0$.
    Next, we derive an upper bound on $\bar w_2(\ell)$ defined in \eqref{eq:w2-bound}. First, let $\zeta_x\triangleq \sigma_x^2+B_x\sigma/6$ and $\zeta_y\triangleq \sigma_y^2+B_y\sigma/6$, and consider
    \begin{align*}
        \MoveEqLeft \bar w_3(\ell)\triangleq (n_x+1) \exp\left(\frac{-M_x(1+c)\tilde\sigma_x^2/4}{\sigma_x^2+B_x\sigma/6}\right)+(n_y+1) \exp\left(\frac{-M_y(1+c)\tilde\sigma_y^2/4}{\sigma_y^2+B_x\sigma/6}\right)\\
        & \leq (n_x+1)\exp\Big(\frac{-12(1+c)^2}{\zeta_x\epsilon^2}\log(n_x+1)\tilde\sigma_x^2\Big)+(n_y+1)\exp\Big(\frac{-12(1+c)^2}{\zeta_y\epsilon^2}\log(n_y+1)\tilde\sigma_y^2\Big)\\
        & = \frac{n_x+1}{(n_x+1)^{\frac{12(1+c)^2(\sigma_x^0)^2}{\zeta_x\epsilon^2}\cdot \frac{1}{\gamma^{2\ell}}}}+\frac{n_y+1}{(n_y+1)^{\frac{12(1+c)^2(\sigma_y^0)^2}{\zeta_y\epsilon^2}\cdot \frac{1}{\gamma^{2\ell}}}};
    \end{align*}
    therefore, Lemma~\ref{lem:simple-bound} implies that 
    $\sum_{\ell=0}^\infty\bar w_3(\ell)\leq \bar p/4$ if
    \begin{align*}
        \frac{12(1+c)^2(\sigma_x^0)^2}{\zeta_x\epsilon^2}\gamma^2
        &>
        1+\frac{1}{\log(n_x+1)}\Big(\log\Big(\frac{
        \rv{2}}{\bar p\log(n_x+1)\log(1/\gamma)}\Big)+2\log(1/\gamma)\Big)
        \triangleq B_1(\bar p)\\
        \frac{12(1+c)^2(\sigma_y^0)^2}{\zeta_y\epsilon^2}\gamma^2
        &>
        1+\frac{1}{\log(n_y+1)}\Big(\log\Big(\frac{
        \rv{2}}{\bar p\log(n_y+1)\log(1/\gamma)}\Big)+2\log(1/\gamma)\Big)
        \triangleq B_2(\bar p).
    \end{align*}
    Thus, $\sum_{\ell=0}^\infty\bar w_3(\ell)\leq \bar p/4$ holds for all $\epsilon>0$ such that $\epsilon^2 < 12(1+c)^2\gamma^2 \min\Big\{\frac{(\sigma_x^0)^2}{\zeta_x B_1(\bar p)},~\frac{(\sigma_y^0)^2}{\zeta_y B_2(\bar p)}\Big\}$.

    Next, consider $\bar w_4(\ell)$ defined below such that $\bar w_2(\ell)=\bar w_3(\ell)+\bar w_4(\ell)$, i.e.,
    {\footnotesize
    \begin{align*}
        \MoveEqLeft \bar w_4(\ell)\triangleq 
        \exp \left(-M_x \frac{(1+c)^2}{10}\frac{\tilde\sigma_x^4}{B_x^2\sigma_x^2}\right)
        +\exp \left(-M_y \frac{(1+c)^2}{10}\frac{\tilde\sigma_y^4}{B_y^2\sigma_y^2}\right)\\
        & \leq \exp\Big(-4.8\frac{(1+c)^3}{\epsilon^2}\frac{(\sigma^0)^2(\sigma_x^0)^4}{B_x^2\sigma_x^2} \log\Big(1+\frac{4}{\bar p}\Big)\frac{1}{\bar\gamma^{\rv{2\ell}}\gamma^{4\ell}}\Big)+\exp\Big(-4.8\frac{(1+c)^3}{\epsilon^2}\frac{(\sigma^0)^2(\sigma_y^0)^4}{B_y^2\sigma_y^2} \log\Big(1+\frac{4}{\bar p}\Big)\frac{1}{\bar\gamma^{\rv{2\ell}}\gamma^{4\ell}}\Big),
    \end{align*}}%
    where the inequality follows from our choice of $M_x$, $M_y$, and from $C_2\geq \log\Big(1+\frac{4}{\bar p}\Big)$. Thus, Lemma~\ref{lem:simple-bound} implies that $\sum_{\ell=0}^\infty\bar w_4(\ell)\leq \bar p/4$ holds whenever
    {\small
    \begin{align*}
        &4.8\frac{(1+c)^3(\sigma^0)^2}{\epsilon^2}\cdot\min\left\{\frac{(\sigma_x^0)^4}{B_x^2\sigma_x^2},~\frac{(\sigma_y^0)^4}{B_y^2\sigma_y^2}\right\}\geq \frac{1}{\log(1+4/\bar p)}\max\Big\{\log(16/\bar p),~\frac{\log(2)}{2}\frac{1}{\log(1/(\bar\gamma\gamma^2))}\Big\}\triangleq B_3(\bar p),
    \end{align*}}%
    i.e., for all $\epsilon>0$ such that $\epsilon^2< 4\frac{(1+c)^3(\sigma^0)^2}{B_3(\bar p)}\cdot\min\left\{\frac{(\sigma_x^0)^4}{B_x^2\sigma_x^2},~\frac{(\sigma_y^0)^4}{B_y^2\sigma_y^2}\right\}$.

    Finally, we observed that \eqref{eq:test-condition-unknown-bounded} holds whenever $\min\{M_x,M_y\}\geq \max\left\{\frac{B_x^2}{\sigma_x^2},\frac{B_y^2}{\sigma_y^2}\right\}\cdot\frac{1}{H(c)}\cdot\log\Big(\frac{4}{p}\Big)$; hence, a sufficient condition on $M_x,M_y$ is
    \begin{align*}
        \frac{48(1+c)}{\epsilon^2} (\sigma^0)^2 \log\Big(1+\frac{4}{\bar p}\Big)\geq \max\left\{\frac{B_x^2}{\sigma_x^2},\frac{B_y^2}{\sigma_y^2}\right\}\cdot\frac{1}{H(c)}\cdot\log\Big(\frac{4}{p}\Big),
    \end{align*}
    which follows from $C_0\geq 48(1+c)$ and $C_2\geq \log\Big(1+\frac{4}{\bar p}\Big)$. Thus, for given $p,\bar p\in (0,1)$, \eqref{eq:test-condition-unknown-bounded} holds for all $\epsilon>0$ such that
    {\small
    \begin{align*}
        \epsilon^2 < 48(1+c)H(c)(\sigma^0)^2\cdot\min\left\{\frac{\sigma_x^2}{B_x^2},\frac{\sigma_y^2}{B_y^2}\right\}\cdot \log\Big(1+\frac{4}{\bar p}\Big)/\log\Big(\frac{4}{p}\Big);
    \end{align*}}%
    therefore, by simplifying all three bounds we derived above, we can conclude that $$\bar\epsilon^2=\cO\left((\underline{\sigma}^0)^2\min\left\{\Gamma_1,~\Gamma_2,~\Gamma_3\right\}\right),$$ where
    $\Gamma_1=\frac{\gamma^2}{\sigma^2+ B \sigma}\min\Big\{1,\frac{\log(\underline{n})}{\log(1/\bar p)+\log(1/\gamma)}\Big\}$, $\Gamma_2=\frac{(\underline{\sigma}^0)^4}{ B^2\sigma^2}\min\{1,\log(1/\bar p)\log(1/(\bar\gamma\gamma))\}$, and $\Gamma_3=H(c)\min\left\{\frac{\sigma_x^2}{B_x^2},\frac{\sigma_y^2}{B_y^2}\right\}\min\Big\{1,\frac{\log(1/\bar p)}{\log(1/p)}\Big\}$.
\end{proof}
\rv{The same sample complexity bound in Theorem~\ref{ncsc_thm} directly extends to the setting considered in this section, i.e., when $\sigma_x^2$ and $\sigma_y^2$ are unknown, using the same arguments; hence, we skip its proof in the interest of conciseness.}
\section{Weakly \mg{C}onvex-\saa{Merely} \mg{C}oncave~(WCMC) \mg{S}etting}
\label{sec:WCMC}
In this section, we 
\saa{consider the scenario with $\mu=0$, i.e., $f(\bx,\cdot)$} is merely concave for all \saa{$\bx\in \dom g$, under the bounded dual domain assumption.} 
\begin{assumption} \label{assp_ydomain}
{For the case $\mu=0$, we assume that} $\cD_y \triangleq \sup_{y_1,y_2\in \dom h} \norm{y_1 - y_2} \sa{<} \infty$.
\end{assumption}
Let \sa{$\rv{{\mu}_r}=\frac{\epsilon}{\cD_y}$}. \saa{To give a complexity result for the WCMC case, we} 
approximate \eqref{eq:main-problem} with
\begin{equation} \label{q:wcmc}
    \begin{aligned}
        \min_{\bx \in \cX} \max_{y\in \cY} \hat{\cL} (\bx,y) \triangleq 
        g(\bx) + \hat{f}(\bx,y) - h(y), \quad {\rm where} \quad \hat{f}(\bx,y) \triangleq f(\bx,y) - \frac{{\mu}_r}{2} \norm{y-\hat{y}}^2,
    \end{aligned}
\end{equation}
where $\hat{y}\in \dom g$ is an arbitrary given point. Note that \eqref{q:wcmc} is a WCSC minimax problem.
\begin{theorem}
\label{thm:mcwc}
Suppose $\mu=0$ and \cref{ass1,assumption:noise,as:bounded-oracle,assp_ydomain} hold \rv{with known variance bounds $\sigma_x^2$ and $\sigma_y^2$}.
Given any $(\bx^0,y^0)\in\dom g\times\dom h$, 
$\epsilon,L^{0}>0$ and $\gamma,p,\rv{\bar p}\in (0,1)$ such that $L^{0}>\mu^{0}=\mu_r$,
\sgdab, displayed in \cref{alg:GDA-B}, \rv{applied to \eqref{q:wcmc} with ${\mu}_r=\frac{\epsilon}{\cD_y}$ and sample sizes $M_x$ and $M_y$ chosen as in (i) of Corollary~\ref{cor:sample-complexity-bounded},} stops w.p.1 returning
$(\bx_\epsilon,y_\epsilon)$ satisfying \rv{$\p{\norm{G\left(\bx_\epsilon,y_\epsilon\right)}\leq 2\epsilon}\geq 1-\bar p$.} Moreover, 
with probability 
at least $1-p$, \sgdab{} stops within $\bar{\ell}\triangleq\lceil\log_{\frac{1}{\gamma}}(2L\cD_y/\epsilon)\rceil$ 
backtracking iterations which require $\cO\Big(L^3\cD_y^2\frac{1}{\epsilon^4}\Big)$ \rbsgda{} iterations 
with oracle complexity 
{\small
$$\cO\Big(\frac{L^3\cD_y^2}{\epsilon^6}\Big(F(\bx^0)-\bar{F}+\rv{\delta}\Big)\big(\sigma_x^2+\frac{L\cD_y}{\epsilon}\sigma_y^2\big)\log\Big(\frac{1}{\bar p}\Big)\log\Big(\frac{1}{p}\Big)\log\Big(\frac{L}{L^0}\Big)\Big)=\tilde{O}\Big(L^4 \cD_y^3{\epsilon^{-7}}\Big).$$}%
For the special case with $\sigma_x^2,\sigma_y^2=0$ and $N=1$, after setting $M_x,M_y=1$ and $T=1$, \sgdab{} can generate $(\bx_\epsilon,y_\epsilon)$ such that $\norm{G(\bx_\epsilon,y_\epsilon)} \leq \epsilon$ with 
\saa{gradient} complexity of $\cO\Big(L^3\cD_y^2\frac{1}{\epsilon^4}\Big)$.
\end{theorem}
\begin{proof}
\saa{Consider the 
gradient map 
of the regularized function $\hat f$, i.e.,  
${G}^r(\cdot)\triangleq [{G}_{\bx} (\cdot)^\top {G}_y^r(\cdot)^\top]^\top$ such that ${G}^r_y (\bx,y) \triangleq \Big[\prox{\eta_y h}\Big(y+\eta_y \grad_y \hat f(\bx,y)\Big)-y\Big]/ \eta_y$ for all $(\bx,y)\in \sa{\dom g\times \dom h}$. From \cref{ncsc_thm}, it follows that \sgdab, displayed in \cref{alg:GDA-B}, when applied to \eqref{q:wcmc}, stops w.p.~1 returning
$(\bx_\epsilon,y_\epsilon)$ such that
\rv{$\p{\norm{G^r\left(\bx_\epsilon,y_\epsilon\right)}\leq \epsilon}\geq 1-\bar p$.} Therefore, \saa{it follows from {${\mu}_r = \frac{\epsilon}{\cD_y}$} that} $(\bx_\epsilon,y_\epsilon)$ also satisfies the following bound \mg{with probability at least 
\rv{$1-\bar p$}}:} 
\begin{equation}
\label{eq:Gr-G}
{\small
    \begin{aligned}
        \norm{G(\bx_\epsilon,y_\epsilon)}&\leq \norm{G(\bx_\epsilon, y_\epsilon) - {{G}}^r(\bx_\epsilon,y_\epsilon)}+ \norm{{{G}}^r(\bx_\epsilon, y_\epsilon)}
         = \norm{G_y(\bx_\epsilon, y_\epsilon) - {{G}}^r_y(\bx_\epsilon,y_\epsilon)} + \norm{{{G}}^r(\bx_\epsilon, y_\epsilon)}\\
        &\leq \frac{1}{\eta_y}\norm{\prox{\eta_y h}(y_\epsilon + \eta_y \grad_y f(\bx_\epsilon,y_\epsilon))- \prox{\eta_y h}(y_\epsilon + \eta_y \grad_y\hat f(\bx_\epsilon,y_\epsilon)) } + \epsilon\\
        & \leq \|\grad_y f(\bx_\epsilon,y_\epsilon) - \grad_y \hat{f}(\bx_\epsilon,y_\epsilon)\| + \epsilon
        \leq {\mu}_r\cD_y + \epsilon
        \leq \epsilon + \epsilon = 2\epsilon,
    \end{aligned}}%
\end{equation}
where the third inequality is due to $\grad \hat{f} (\bx_\epsilon, y_\epsilon) = \grad_y f(\bx_\epsilon, y_\epsilon) - \hat{\mu} (y_\epsilon - \hat{y})$. Hence, we have $\mathbf{P}\Big(\norm{G(\bx_\epsilon,y_\epsilon)}\leq 
\rv{2\epsilon}\Big)\geq 
\rv{1-\bar p}$. 
Moreover, letting \rv{$L_r=L+\mu_r$ and $\kappa_r\triangleq\frac{L_r}{\mu_r}$,} 
\rv{the results of \cref{ncsc_thm} and Remark~\ref{rem:known-mu} yield} that with probability at least $1-p$, \sgdab{} stops within 
$\bar{\ell}\triangleq\lceil\log_{\frac{1}{\gamma}}(\rv{\cR})\rceil=\cO(\log(L/L^0))$
 backtracking iterations which require \rv{$\cO\Big(L_r{\kappa_r}^2\frac{1}{\epsilon^2}\Big)$} \rbsgda
~iterations and the oracle complexity is 
\rv{$\cO\Big({\frac{{L_r}{\kappa_r}^2}{\epsilon^4}\Big(
F_0-\bar{F}+
\delta\Big)}(\rv{\sigma_x^2}+{\kappa_r}\rv{\sigma_y^2})\log(\cR)\log\Big(\frac{1}{\bar p}\Big)\log\Big(\frac{1}{\rv{p}}\Big)\Big)$, where $\cR=\max\{1,~\frac{L_r}{L^0}\}=\cO(L/L^0)$ for $L^0>\mu_r$.}
Moreover, for the special case with $\sigma_x^2,\sigma_y^2=0$ and $N=1$, after setting $M_x,M_y=1$ and $T=1$, \sgdab{} can generate $(\bx_\epsilon,y_\epsilon)$ such that $\norm{ G^r(\bx_\epsilon,y_\epsilon)} \leq 2\epsilon$ with oracle complexity of $\cO\Big({L_r}\kappa_r^2\frac{1}{\epsilon^2}\Big)$. Substituting $\hat{\mu}=\frac{\epsilon}{\cD_y}$ and ${\kappa_r}\triangleq\frac{{L_r}}{{\mu_r}}=1+\cD_y\frac{L}{\epsilon}$ in the bounds above completes the proof. 
\end{proof}
\rv{The same sample complexity bound in Theorem~\ref{thm:mcwc} directly extends to the setting with \textit{unknown} $\sigma_x^2$ and $\sigma_y^2$ using the same arguments in the proof of Theorem~\ref{ncsc_thm}; hence, we skip its proof in the interest of conciseness.}
\section{Numerical Experiments}
\label{sec:numerics}

\saa{In our 
tests, we compare \sgdab{}, displayed in~\cref{alg:GDA-B}, with the adaptive method \tiada{}~{\citep{li2022tiada}} and also with other state-of-the-art algorithms for solving  WSCS minimax problems that are not agnostic to \rv{the problem parameters $L$, $\mu$ and $\sigma^2$:} \texttt{GDA}~{\citep{lin2020gradient}},  \texttt{AGDA}~{\citep{boct2020alternating}}, \texttt{sm-AGDA}~{\citep{yang2022faster}} and \vrlm{}~{\citep{mancino2023variance}} as benchmark. In our tests, 
\tiada{} has constantly performed better than \neada{} –similar to the comparison results in~\citep{li2022tiada}; this is why we only report results for \tiada{}, which is the main competitor of \sgdab{} as a parameter agnostic algorithm for stochastic WCSC
minimax problems.} 
The experiments with synthetic data are conducted \rv{on a MacBook Air equipped with an Apple M5 CPU and 24 GB
of unified memory while the DRO experiments with real data are run on Nvidia A100 80G}.\footnote{Our code is publicly available at \url{https://github.com/Qiushui-Xu/SGDA-B-WCSC}.}
\paragraph{\saa{Parameter settings.}}
To have a fair comparison, \saa{for each method we test, we adopt step sizes with theoretical guarantees for that method.} Indeed,
according to \cite{lin2020gradient,boct2020alternating}, 
we set $\tau=\Theta(1/(\kappa^2L))$, $\sigma=\Theta(1/L)$ for both \texttt{GDA} and \texttt{AGDA}. For \tiada{}, we set $\alpha=0.6$ and $\beta = 0.4$ as recommended in~\citep{li2022tiada}, 
and tune the initial step sizes $\tau_0$ and $\sigma_0$ from $\{100, 10, 1, 0.1, 0.01\}$ and set $v^x_0=v^y_0=1$. \saa{Indeed,} the step size rule for \tiada{} is 
\begin{equation}
\label{eq:tiada_step}
\sigma_t=\sigma_0/(v_{t+1}^y)^\beta,\quad \tau_t=\tau_0/(\max\{v_{t+1}^x, v_{t+1}^y\})^\alpha,
\end{equation}
where \saa{$v^x_{t+1}=v^x_t+\norm{\grad_x \tilde f(\bx_t,y_t;\omega)}^2$} and $v^y_{t+1}=v^y_t+\norm{\tilde G_y(\bx_t,y_t;\omega)}^2$ --see Definition~\ref{def:s-gradmapping} for 
$\tilde G_y$. 
For \texttt{sm-AGDA}, we set their parameters according to in~\cite[Theorem 4.1]{yang2022faster}. For \vrlm{}, we set their parameters according to  
~\cite[Lemma 3.10]{mancino2023variance} and \saa{use it with STROM type variance reduction, i.e., we set} \texttt{VR}-tag=STORM.

\rv{For \sgdab{}, we set $p=\bar p=0.1$ (hence, $T=\lceil\log_2(3/p)\rceil = 5$).
In our \mgrev{experiments} we assumed that $\mu$ is known and we initialize $\mu^0=\mu$, $L^0=\mu/\gamma$ --if both $L$ and $\mu$ are unknown, then one can initialize $L^0$ and $\mu^0$ based on Remark~\ref{rem:mu-L-initialization} using $M=100$ randomly generated points.} Given $\bx^0$, we used the approach in Remark~\ref{rem:x0-delta-choice} to set $F_0$, i.e., we ran the {Generalized AdaGrad~(\texttt{G-AdaGrad})} algorithm~\citep[Algorithm 4]{junchinest} for $T=10^4$ 
iterations to approximate $y_*(\bx^0)$ and set it to $y^0$; \rv{indeed, when implemented on $\max_{y\in Y}f(\bx^0,y)$, 
\texttt{G-AdaGrad} updates can be written as follows: given $y_0\in Y$ and $v_0>0$, for $v_{t+1}=v_t+\norm{\grad_y f(\bx^0,y_t)}^2$, $y_{t+1}=\Pi_{Y}\big(y_t+\frac{\eta}{v_{t+1}^\alpha}\grad_y f(\bx^0,y_t)\big)$ for $t=0,\ldots,T$, where $\Pi_{Y}(\cdot)$ denotes the Euclidean projection onto $Y$. 
We initialize \texttt{G-AdaGrad} using $\alpha=0.75$, $\eta=0.01 v_0$ and $v_0=\norm{\grad_y f(\bx^0,y_0)}^2$ for all the experiments in this section.} 
Since $F(\bx^0)=\max_{y\in Y}f(\bx^0,y)$ is a strongly concave optimization problem, after $T=10^4$ iterations of \texttt{G-AdaGrad}, $\norm{y_T-y_*(\bx^0)}/F(\bx^0)$ is relatively very small compared to $\norm{y_0-y_*(\bx^0)}/F(\bx^0)$ --indeed, for the synthetic data problems we tested below, $\norm{y_T-y_*(\bx^0)}/F(\bx^0)\approx 10^{-6}$; 
therefore, for all practical reasons, we set $F_0=\cL(\bx^0,y^0)$, $y^0=y_T\approx y_*(\bx^0)$ and $\delta=0$.
The target accuracy $\epsilon$ is 
chosen based on the initial primal gap
$\Delta_0 \triangleq F_0 - \bar{F}$, and we set $\epsilon = \sqrt{c_{\rm tol}~ \Delta_0}$ for some $c_{\rm tol}\in (0,1)$
-- note that for our test problems, $g(\cdot)=0$; hence, $F(\cdot)=\Phi(\cdot)$ is smooth, which implies that for any $\bx$, we have $\norm{\grad \Phi(\bx)}^2=\cO(F(\bx)-F^*)$, i.e., $\norm{\grad \Phi(\bx^0)}^2=\cO(\Delta_0)$. That is to say $c_{\rm tol}$ in our choice of $\epsilon$ can be seen as a relative accuracy value.
The inner iteration bound $K$ is then set depending on $\epsilon$ as in \sgdab{}, i.e.,
$K = \lceil \frac{64 N}{\epsilon^2 \eta_x}\Delta_0\rceil=\lceil 
\frac{64 \widetilde L^3}{ \rho \mu^2}\cdot \Delta_0/\epsilon^2 \rceil$.
%
We initialize 
the variance parameters $\sigma_x^{0}$ and $\sigma_y^{0}$ by estimating the variances at $(\bx^0,y^0)$
for both $\bx$- and $\by$-components of $\widetilde\nabla f$. Indeed, for each of the randomly generated $M=100$ points, we compute the sample variance using $20$  stochastic gradients, and we set $\sigma_x^{0}$ and $\sigma_y^{0}$ based on the largest sample variance observed among $M=100$ random points, and set $(\sigma^0)^2 = (\tilde\sigma_x^{0})^2 + (\tilde\sigma_y^{0})^2$.
{For $\ell\geq 0$, the 
mini-batch sizes $M_x$ and $M_y$ for the $\ell$-the backtracking iteration are set according to Corollary~\ref{cor:sample-complexity-bounded-unkown}, i.e.,
\begin{subequations}\label{eq:batchsize}
    \begin{align}
        M_x&\gets C_x\cdot\frac{C_0}{\epsilon^2}(C_1^x\log(n_x+1){\tilde\sigma_x}^2+C_2 \sigma_0^2/\bar\gamma^{\rv{2\ell}}),\\
        M_y&\gets C_y\cdot\frac{C_0}{\epsilon^2}\Big(C_1^y\log(n_y+1)\tilde\sigma_y^2+ C_2 \sigma_0^2 \Big(1+\frac{6}{{\mu} \eta_y} \frac{2-{\mu} \eta_y}{1-{\mu} \eta_y}\Big)/\bar\gamma^{\rv{2\ell}}\Big),
    \end{align}
\end{subequations}
where $C_0= 48(1+c)$, $C_1^x=(\sigma^0)^2/(\sigma^0_x)^2$, $C_1^y= (\sigma^0)^2/(\sigma^0_y)^2$, $C_2=\log\Big(1+\frac{4}{\bar p}\Big)$ and $C_x,C_y$ are some hyper-parameters {one can tune to the problem at hand --we fixed $C_x\approx 1e-4$ and $C_y\approx 1e-5$} throughout all the experiments conducted.}
%
In our \mgrev{experiments}, we employ a two-phase stopping criterion as described in \sgdab; indeed, for some $\ell\geq 0$, if the backtracking condition
$\tilde S_{(t^*,\ell)}\leq \frac{\epsilon^2}{4}$ for $t^*=\argmin\{\tilde S_{(t,\ell)}: t=1,\ldots,T\}$ is satisfied,
we perform a sample variance test:
after sampling $M_x$ and $M_y$ stochastic gradients at $(\tilde\bx_{(\ell)},\tilde y_{(\ell)})=(\bx^{\tilde k_t}_{(t,\ell)},y^{\tilde k_t}_{(t,\ell)})$ for $k_t\sim U[0,K-1]$ and $t=t^*$, we compute the sample variance
$v_x$ and $v_y$ for the $\bx$- and $\by$-components of $\gt f(\tilde\bx_{(\ell)},\tilde y_{(\ell)})$ separately.
If $(1-1/M_x)\,v_x \leq (1+c)\, (\widetilde\sigma_x)^2$
and $(1-1/M_y)\,v_y \leq (1+c)\, (\widetilde\sigma_y)^2$,
we stop the algorithm and output the current point, where we set $c=0.5$.
If either phase fails, we 
update the parameters as follows:
$\widetilde\mu \gets \mu$, $\widetilde L \gets \widetilde L/\gamma$,
$\widetilde\sigma_x \gets \widetilde\sigma_x/\gamma$,
$\widetilde\sigma_y \gets \widetilde\sigma_y/\gamma$,
and return to \cref{alg:outer_iter} of \sgdab. In the experiments, in case $\sigma_x^2$ and $\sigma_y^2$ are known, then we only check whether $\tilde S_{(t^*,\ell)}\leq \frac{\epsilon^2}{4}$ is satisfied. If it is satisfied then we stop; otherwise, we update the parameters as follows:
$\widetilde\mu \gets \mu$, $\widetilde L \gets \widetilde L/\gamma$,
$\widetilde\sigma_x \gets \sigma_x$,
$\widetilde\sigma_y \gets \sigma_y$.

{Next, we present two different experiment settings: one for a regularized bilinear problem with synthetic data and the other for a distributionally robust optimization problem with real data.}
\paragraph{Regularized Bilinear Problem with Synthetic Data.}
\label{sec:random-data}
\sa{We first test
on the regularized WCSC bilinear SP problem  of the form $\min_{\bx\in\reals^n}\max_{y\in\reals^m}f(\bx,y)$ such that
\begin{equation} \label{eq:bilinear-problem}
{\small
    f(\bx,y)=\bx^\top Q\bx
    +
    \bx^\top A y - \frac{\mu_y}{2}\|y\|^2,}%
    \vspace{-2mm}
\end{equation}
where 
$A\in\reals^{n\times m}$, $Q\in\mathbb{R}^{n\times n}$,  and $\mu_y>0$; hence, $\cL(\bx,y)=f(\bx,y)$ and $\mu=\mu_y$.
This class of problems include \emph{Polyak-Lojasiewicz game} {\citep{chen2022faster,zhang2023jointly}}, \emph{image processing}~{\citep{chambolle2011first}}, and \emph{robust regression}~{\citep{xu2008robust}}.}

\sa{In our experiments, we set $m=n=30$, $\mu_y=1$ and randomly generate $A$ and $Q$ such that $A = V\Lambda_A V^{-1}$ and $Q = V\Lambda_Q V^{-1}$, where $V\in\mathbb{R}^{30\times30}$ is an 
orthogonal matrix\footnote{\mg{We first generate a random matrix $\mathcal{M}$ with entries i.i.d. 
with $U[0,1]$, and then set $V$ as the orthogonal matrix in the QR decomposition of $\mathcal{M}$}.}, 
$\Lambda_A$ and $\Lambda_Q$ are diagonal matrices.
We set $\Lambda_Q = \frac{\Lambda^0_Q}{\|\Lambda^0_Q\|}\cdot L$ for $L\in\{5,10,50\}$, where $\Lambda^0_Q$ is a random diagonal matrix with diagonal elements being 
sampled uniformly at random from the interval $[-1, 1]$ --recall that for matrices $\norm{\cdot}$ denotes the spectral norm.} 
We choose $\Lambda_A$ such that $\Lambda_Q + \frac{1}{\mu_y}\Lambda_A^2 \succeq 0$ and $\norm{\Lambda_Q}\geq\norm{\Lambda_A}$. Those conditions imply that $\cL$ is 
WCSC and $L$-smooth. 
Moreover, given  $\bx$, we can compute $y_*(\bx) = \frac{1}{\mu_y}A^\top \bx$; consequently, the primal function at $\bx$ has the closed form: $F(\bx) = \bx^\top (Q+\frac{1}{\mu_y} A A^\top) \bx$ and $F(\cdot)$ is lower bounded by zero, \saa{i.e., $\bar F=0$}. 

We consider the additive noise setting, the unbiased stochastic oracle $\widetilde\grad \cL$ is given by $\grad \cL + \varsigma$ where $\varsigma\sim N(0,\sigma^2\mathbf{I})$ is Gaussian with mean $\mathbf{0}$ and $\sigma=1$.
We set \xz{$M=1$ for \vrlm{} \saa{with STORM-type variance reduction} and the special batchsize $M_0$ of \vrlm{} at iteration $0$ is set according to \cite[Corollary 2]{mancino2023variance}, and for all the other algorithms tested in this experiment, except for \sgdab{}, we fixed $M = 10$.} 
We initialize all the methods from the same randomly generated initial point $\bx^0$ such that all the entries of $\bx^0$ are i.i.d. with $U[90,100]$. Given $\bx^0$, to initialize $F_0$, $\delta$ and $y^0$, we use \texttt{G-AdaGrad} as described above at the beginning of this section. In Table~\ref{tab:adaptive_solver}, we list both the average and sample variance of some important metrics related to initialization of \sgdab: $\|y_{T+1} - y_{T}\| / \|y_{T}\|$, $\|y_T - y_*(\bx^0)\| / \|y_*(\bx^0)\|$, $\norm{y_T-y_*(\bx^0)}$,  $\cL(\bx^0,y_T)$ and $\cL(\bx^0,y_*(\bx^0))$; hence, for all practical reasons, we set $F_0=\cL(\bx^0,y^0)$ for $y^0=y_T$ and $\delta=0$. The accuracy parameter is set to $\epsilon = \sqrt{c_{\rm tol}~ \Delta_0}$ for $c_{\rm tol}=0.1$, where $\Delta_0 \triangleq F_0 - \bar{F}$. The contraction parameters are set to $\gamma={0.8}$ and $\bar\gamma=0.75$, and we fixed $C_x=2e-3$, and $C_y=7e-5$ for the batchsize update rule in~\eqref{eq:batchsize}.
\begin{table}[htbp]
\centering
\resizebox{\textwidth}{!}{%
\begin{tabular}{c|cc|cc|cc|cc|c}
\toprule
\multirow{2}{*}{$\kappa$} 
& \multicolumn{2}{c|}{$\|y_{T+1}-y_T\| / \|y_T\|$} 
& \multicolumn{2}{c|}{$\|y_T-y_*(\bx^0)\| / \|y_*(\bx^0)\|$} 
& \multicolumn{2}{c|}{$\|y_T-y_*(\bx^0)\|$} 
& \multicolumn{2}{c|}{$\cL(\bx^0,y_T)$} 
& $\cL(\bx^0,y_*(\bx^0))$ \\
& Mean & Var & Mean & Var & Mean & Var & Mean & Var &  \\
\midrule
5  
& $9.7 \times 10^{-5}$ & $5.3 \times 10^{-11}$ 
& $2.2 \times 10^{-4}$ & $9.3 \times 10^{-10}$ 
& $2.1 \times 10^{-1}$ & $8.7 \times 10^{-4}$ 
& $4.8 \times 10^{5}$ & $4.3 \times 10^{-5}$ 
& $4.8 \times 10^{5}$ \\
10 
& $9.4 \times 10^{-5}$ & $5.1 \times 10^{-11}$ 
& $1.9 \times 10^{-4}$ & $7.3 \times 10^{-10}$ 
& $2.4 \times 10^{-1}$ & $1.2 \times 10^{-3}$ 
& $6.7 \times 10^{5}$ & $7.6 \times 10^{-5}$ 
& $6.7 \times 10^{5}$ \\
50 
& $8.6 \times 10^{-5}$ & $3.8 \times 10^{-11}$ 
& $1.3 \times 10^{-4}$ & $3.3 \times 10^{-10}$ 
& $3.4 \times 10^{-1}$ & $2.1 \times 10^{-3}$ 
& $3.7 \times 10^{6}$ & $2.4 \times 10^{-4}$ 
& $3.7 \times 10^{6}$ \\
\bottomrule
\end{tabular}%
}
\caption{Performance metrics of the \texttt{G-AdaGrad}~\citep{junchinest} on 10 random simulations for different $\kappa$ values.}
\label{tab:adaptive_solver}
\end{table}

In both Figures~\ref{fig:Q} and ~\ref{fig:tildeQ}, we plot $\|\grad \cL\|^2$ on the $y$-axis against the number of 
\saa{stochastic oracle calls in the}
$x$-axis. \rv{For the results plotted, the solid lines show the average over 10 randomly generated instances, all starting from the same initial point, with shaded region indicating the range statistic.} We observe that \sgdab~
\saa{performs well on this test as it can effectively take larger steps with convergence guarantees.}

\begin{figure}[htb!]
     \centering
     \begin{subfigure}[b]{0.32\textwidth}
         \centering
        \includegraphics[width=\textwidth]{./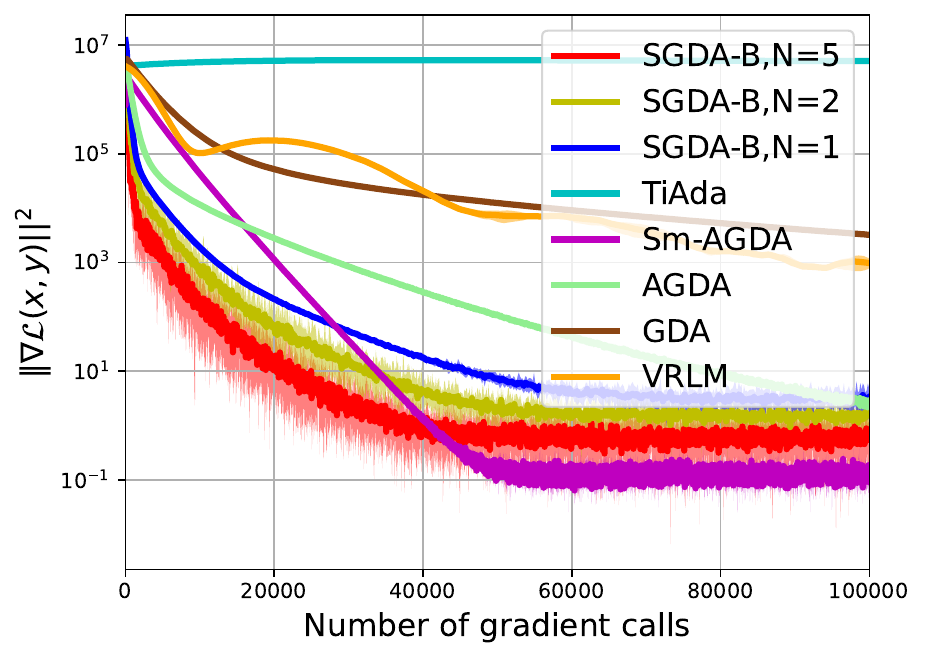}
         \caption{$\kappa=5$}
         \label{fig:Q-mu1}
     \end{subfigure}
     \begin{subfigure}[b]{0.32\textwidth}
         \centering
\includegraphics[width = \textwidth]{./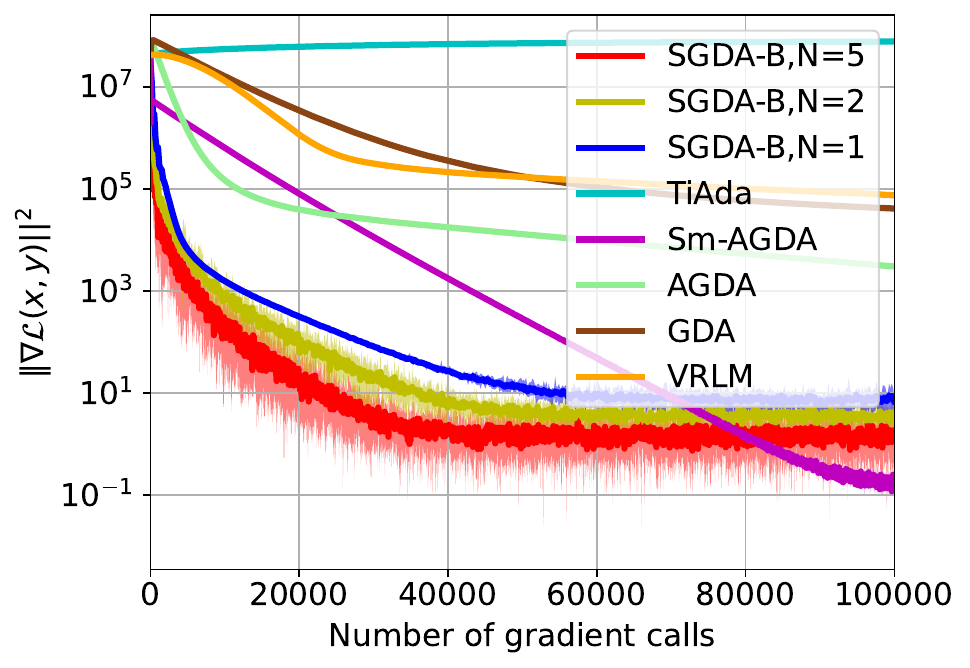}
         \caption{$\kappa=10$}
         \label{fig:Q-mu1e-1}
     \end{subfigure}
     \begin{subfigure}[b]{0.32\textwidth}
         \centering
\includegraphics[width = \textwidth]{./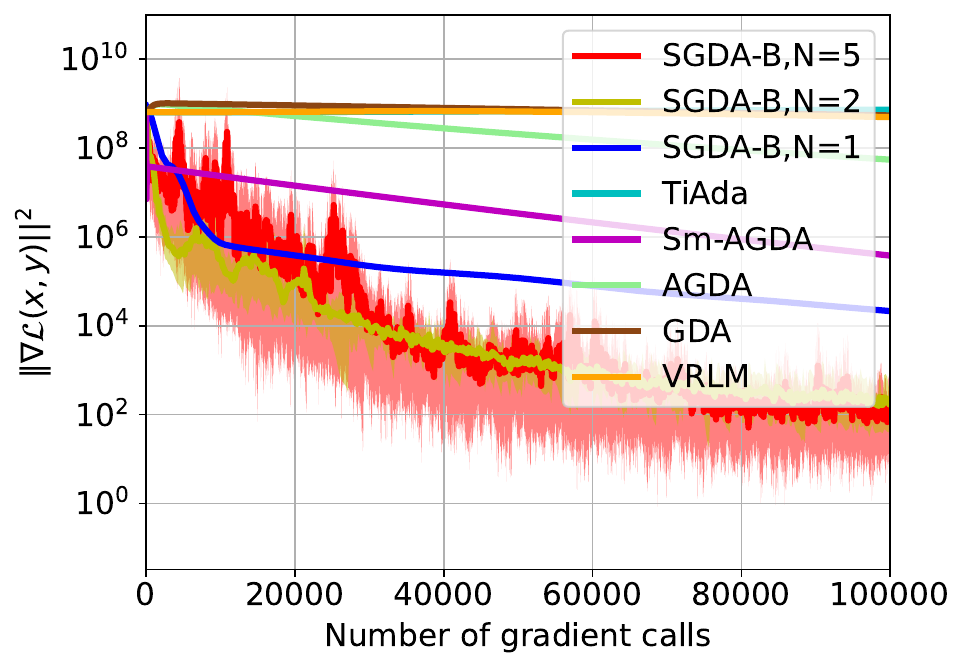}
         \caption{$\kappa=50$}
         \label{fig:Q-mu1e-2}
     \end{subfigure}
\caption{
Comparison of \sgdab~against 
\tiada{} {\citep{li2022tiada}}, \smagda{} {\citep{yang2022faster}}, \agda{} {\citep{boct2020alternating}}, \gda{} {\citep{lin2020gradient}}, and \vrlm{} {\citep{mancino2023variance}} for solving regularized bilinear minimax problem in \cref{eq:bilinear-problem} with synthetic data over $10$ randomly generated instances when $\sigma_x^2$ and $\sigma_y^2$ are \textit{known}.
}
\label{fig:Q}
\centering
\end{figure}

\begin{figure}[htb!]
     \centering
     \begin{subfigure}[b]{0.32\textwidth}
         \centering
        \includegraphics[width=\textwidth]{./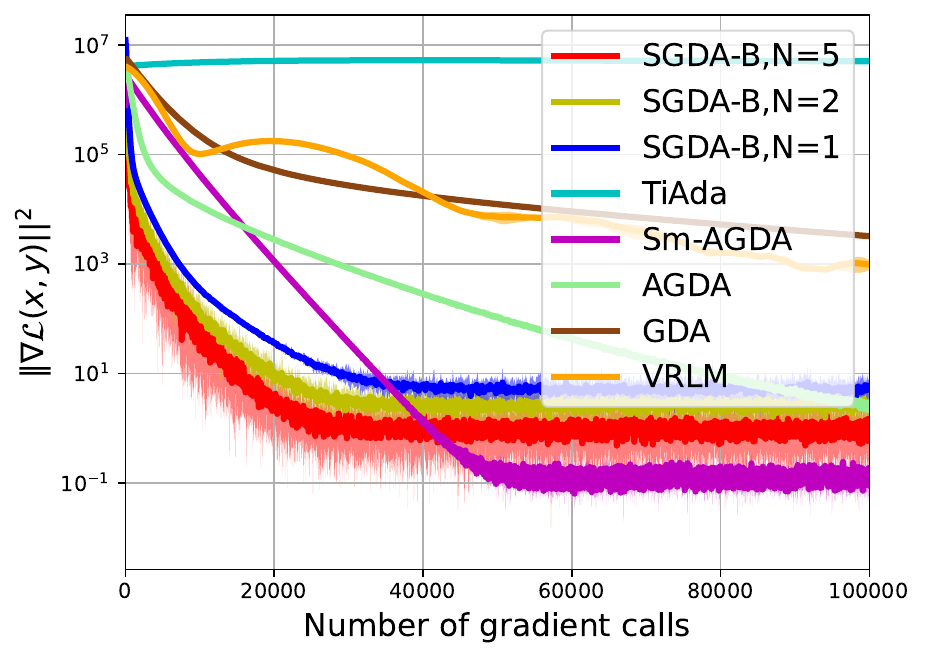}
         \caption{$\kappa=5$}
         \label{fig:tildeQ-mu1}
     \end{subfigure}
     \begin{subfigure}[b]{0.32\textwidth}
         \centering
\includegraphics[width = \textwidth]{./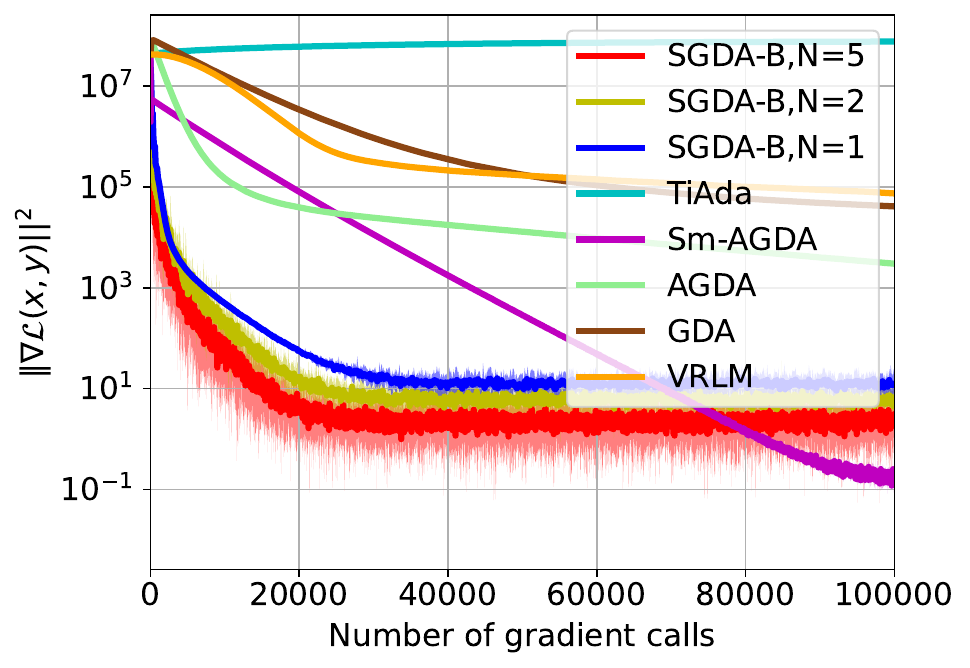}
         \caption{$\kappa=10$}
         \label{fig:tildeQ-mu1e-1}
     \end{subfigure}
     \begin{subfigure}[b]{0.32\textwidth}
         \centering
\includegraphics[width = \textwidth]{./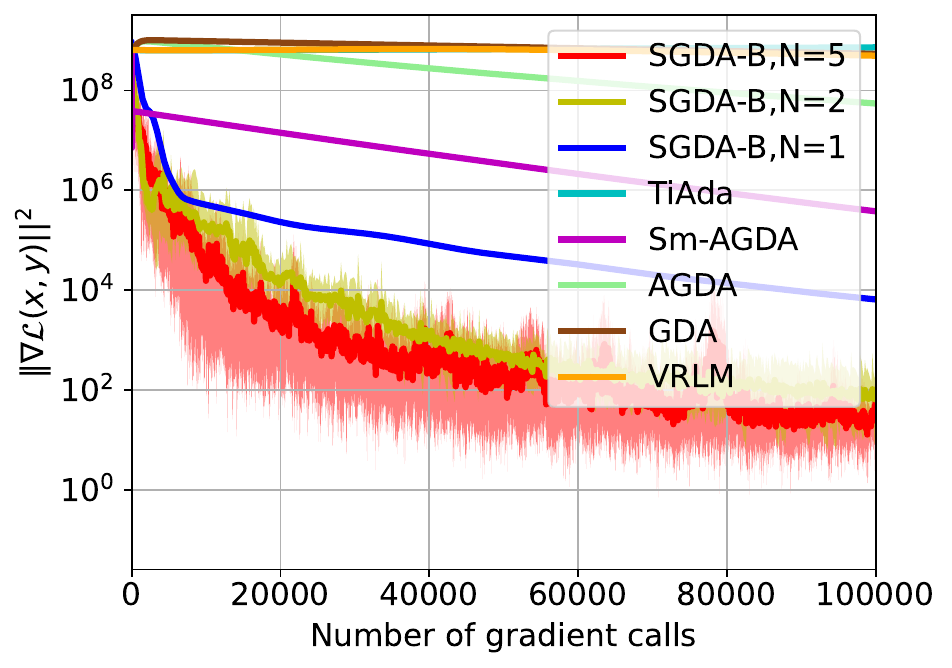}
         \caption{$\kappa=50$}
         \label{fig:tildeQ-mu1e-2}
     \end{subfigure}
\caption{
Comparison of \sgdab~against 
\tiada{} {\citep{li2022tiada}}, \smagda{} {\citep{yang2022faster}}, \agda{} {\citep{boct2020alternating}}, \gda{} {\citep{lin2020gradient}}, and \vrlm{} {\citep{mancino2023variance}} for solving regularized bilinear minimax problem in \cref{eq:bilinear-problem} with synthetic data over $10$ randomly generated instances when $\sigma_x^2$ and $\sigma_y^2$ are \textit{unknown}.
}
\label{fig:tildeQ}
\centering
\end{figure}

\paragraph{Distributed Robust Optimization with Neural Network.}
Next, we test \sgdab{} on the distributionally robust optimization problem from~\citep{namkoong2016stochastic}, i.e.,
\begin{equation}
    \label{eq:dro-problmm}
        \min_{\bx\in\mathbb{R}^n} \max_{y\in\mathbb{R}^m}\Big\{\sum_{i=1}^{m}y_i \ell_i(\saa{q}(\ba_i;\bx))- 
        \rv{r(y)}:\quad 
        \sum_{i=1}^{m}y_i=1,\;y_i\geq 0,\; \forall\;i=1,\ldots,m\Big\},
\end{equation}
where $\ba_i\in\reals^d$ and $b_i\in\{1,-1\}$ \saa{denote the feature vector and \mg{the} label corresponding to data point $i\in\{1,\ldots,m\}$ belonging to training data for binary classification;} the function \saa{$q:\mathbb{R}^d\rightarrow \reals$ represents three-layer perceptron} neural network \saa{with $\bx\in\reals^n$ denoting the parameters of the network with $n=610,369$}; and in this experiment we use the binary logistic loss function, i.e.,
$\saa{{\ell_i}(z)} 
=\ln(1+\exp(-b_i z))$ for all $i\in [m]$. The regularizer $\rv{r(y)} = \frac{\mu_y}{2}\|y - \mathbf{1}/m\|^2$ makes sure that the worst\mg{-}case distribution will not be too far from the uniform distribution –here, $\mathbf{1}$ denotes the vector with all entries equal to one, and we set \rv{$\mu_y = 0.01$}.
\saa{Since $\ell_i(\cdot)\geq 0$ for all $i\in[m]$, we have $\bar F=0$ for this experiment as well.} 
We conducted the experiment on the data set
\verb+gisette+ with $m = 6000$ and $d = 5000$, 
which can be downloaded from LIBSVM repository \footnote{https://www.csie.ntu.edu.tw/~cjlin/libsvmtools/datasets/binary.html}. 
We 
rescaled the dataset by \rv{$\ba_i \gets (\ba_i - \underline{a}_i\cdot\mathbf{1})/(\bar{a}_i-\underline{a}_i)$, where $\ba_i = [a_{ij}]_{j=1}^d$ is the feature vector of the $i$-th data point, $\underline{a}_i=\min\{a_{ij}:~j\in[d]\}$ and $\bar{a}_i=\max\{a_{ij}:~j\in[d]\}$.}

\begin{figure}[h!]
\centering
\includegraphics[width = 0.32\textwidth]{./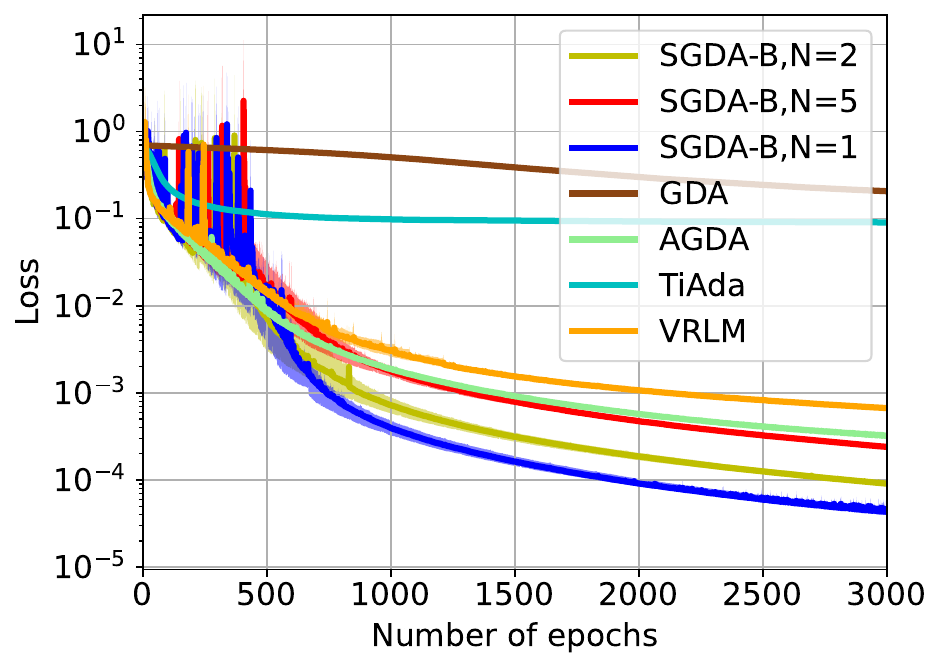}
\includegraphics[width = 0.32\textwidth]{./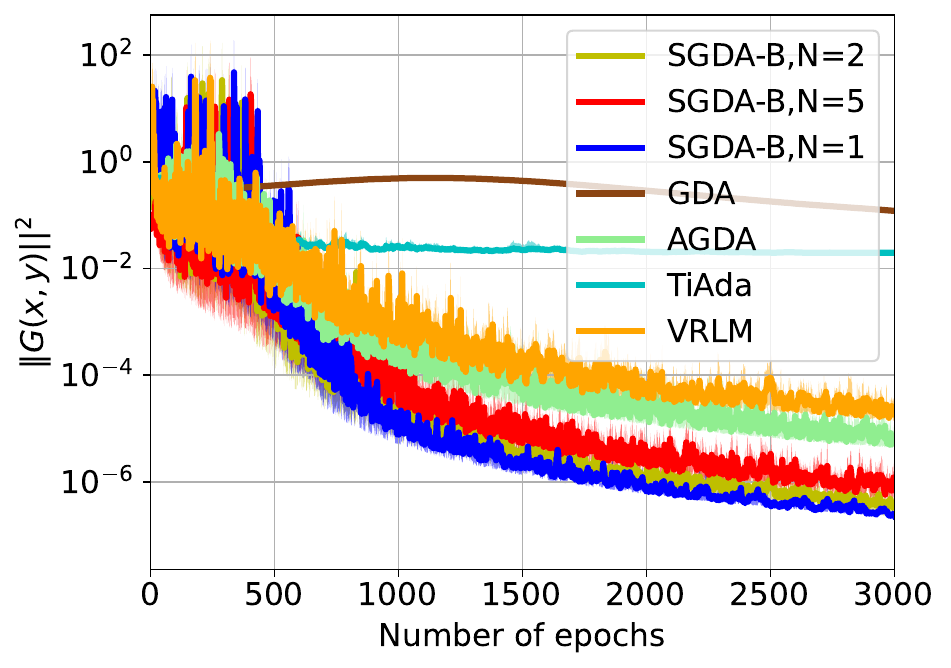}
\includegraphics[width = 0.32\textwidth]{./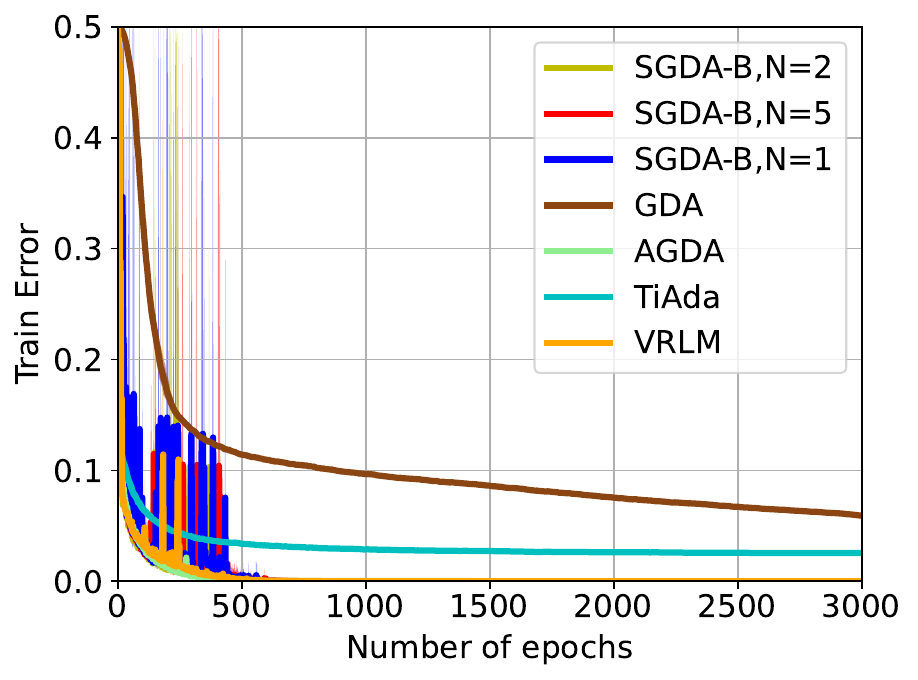} 
\caption{
Comparison of \sgdab{} against \tiada{} {\citep{li2022tiada}}, \agda{} {\citep{boct2020alternating}},
\gda{} {\citep{lin2020gradient}}, and \vrlm{} {\citep{mancino2023variance}} for solving the DRO problem in~\eqref{eq:dro-problmm} with real data {when $\sigma_x^2$ and $\sigma_y^2$ are unknown}. For each method, the solid line indicates the average behavior over 10 sample path with shaded region indicating the range statistic. ``Train error" denotes the fraction of wrong prediction, and ``loss" denotes $F(\bx)=\max_{y\in\cY}\cL(\bx,y)$.  One epoch means one complete pass over the whole data set.}
\label{figs:tilde LG Regression}
\end{figure}

We initialize all the methods from the same randomly generated initial point $\bx^0$  generated by the \mg{Xavier} method for initializing deep neural networks \citep{glorot2010understanding}.  Given $\bx^0$, to initialize $F_0$, $\delta$ and $y^0$, as described above at the beginning of this section, we ran \texttt{G-AdaGrad} for $T=10^4$ iterations starting from $y_0=\bold{1}/m$, and we set $F_0=\cL(\bx^0,y^0)$ for $y^0=y_T$ and $\delta=0$. The accuracy parameter is set to $\epsilon = \sqrt{c_{\rm tol}~ \Delta_0}$ for $c_{\rm tol}=0.005$, where $\Delta_0 \triangleq F_0 - \bar{F}$. The contraction parameters are set to $\gamma=0.9$ and $\bar\gamma = 0.85$, and we fixed $C_x=4e-4$, and $C_y=2e-5$ for the batchsize update rule in~\eqref{eq:batchsize}. 
Since $L$ is unknown, 
we estimated $L$ 
deriving a lower bound for it; indeed, since $\frac{\partial \cL}{\partial \by\partial \bx} =
 [-\frac{b_i\exp(-b_i q(\ba_i;\bx))}{1+\exp(-b_i q(\ba_i;\bx))}\frac{\partial q(\ba_i;\bx)}{\partial \bx}]_{i=1}^{m}$, 
 we have $L\geq \tilde{L}=\|\frac{\partial \cL(\bx,y)}{\partial \bx\partial y}\mid_{(\bx,y)=(\bx^0,y^0)}\|$. We used $\widetilde L$ to set the step sizes for \gda, \agda, \smagda, and \vrlm{}.
 In our tests, $\widetilde L\approx 0.0532$; thus, $\kappa\approx 5.32$. The competing methods \gda, \agda, and \vrlm{} worked well with $\widetilde L$; however, \smagda~sequence diverged so we removed it from the comparison. Moreover, using batch size of $100$, we estimated $\sigma_x,\sigma_y$ by random sampling $\gt f(\bx,y)$ at $(\bx,y)$ such that we fixed $y=\bold{1}/m$ and for $\bx$ we used $100$ random points generated by 
 \mg{Xavier}
 method, which resulted in the estimate values $\sigma_x\approx 0.44$, $\sigma_y\approx 53$. Except for \sgdab{}, for all the algorithms tested we fixed the batch size at $M = 200$. We observe that \sgdab{} is competitive against \tiada{} and others benchmark methods
on this test with real data as well.

\section{Conclusion}
In this paper, we considered 
nonconvex-(strongly) concave minimax problems in the form of \mg{$\min_{\bx} \max_y \sum_{i=1}^N \sa{g_i(x_i)}+f(\bx,y)-h(y)$, where $h$ and $g_i$ for $i=1,\cdots,N$ are closed convex, $f$ is smooth with $L$-Lipschitz gradients and $\mu$-strongly concave in $y$} \sa{for some $\mu\geq 0$}. 
We proposed a new method \texttt{SGDA-B} with backtracking that supports random block-coordinate updates in the primal variable. {To our knowledge, \texttt{SGDA-B} achieves the best computational complexity for WCSC problems} among the existing methods \mg{that are} agnostic to Lipschitz constant $L$, concavity modulus $\mu$ and variance bound $\sigma^2$ for the stochastic oracle. {More specifically,} we show that \texttt{SGDA-B} {attains computational complexities of} $\cO(\kappa^2 L \epsilon^{-2})$ and $\cO(\kappa^3 L \log^2(p^{-1}) \epsilon^{-4})$ in the deterministic and stochastic settings, respectively. 
We also showed that our method \mg{can support Gauss-Seidel-type updates and can handle WCMC problems.}


{\small
\bibliography{reference}}%

\appendix
\section{Bounded variables}
\label{sec:bounded-proofs}
We first recall an important concentration result for independent bounded random variables.
\begin{lemma}[Bennett's Inequality]
\label{lem:Bennett}
    Let $\{Z_j\}_{j\in[M]}$ be independent random variables such that for all $j\in[M]$, $\mathbf{E}Z_j=0$, $\mathbf{E}Z_j^2<\infty$ and $Z_j \leq B$ a.s. for some $B>0$. Let $H:\reals_+\to\reals$ such that $H(u)=(1+u)\log(1+u)-u$. 
    Then, for all $t>0$, it holds that 
    \begin{align*}
        \p{\frac{1}{M}\sum_{j\in [M]}Z_j \geq t}\leq \exp\left(-\frac{\sum_{j\in[M]}\mathbf{E} Z_j^2}{B^2}\cdot H\left(\frac{MBt}{\sum_{j\in[M]}\mathbf{E}Z_j^2}\right)\right).
    \end{align*}
\end{lemma}
For proof one can refer to \cite[Theorem~2.9]{boucheron2013concentration}. Moreover, the same tail bound continues to hold for $\p{\frac{1}{M}\sum_{j\in [M]}Z_j \leq -t}$ in case there exists $B>0$ such that $Z_j \geq -B$ a.s. holds for all $j\in[M]$. Before we give the proof of Theorem~\ref{thm:concentration-bounded}, we next present an important technical lemma related to certain monotonicity properties of $H(\cdot)$.
\begin{lemma}
\label{lem:Bennett-helper}
Let $H:\reals_+\to\reals$ such that $H(u)=(1+u)\log(1+u)-u$ for $u\geq 0$. $H(\cdot)$ is an increasing function; moreover, $H(u)\geq \frac{u^2/2}{1+u/3}$ for all $u\geq 0$. Next, given some arbitrary $C>0$, define $W:\reals_+\to\reals$ such that $W(u)=u H(C/u)$. Then, $W(\cdot)$ is monotonically decreasing function.
\end{lemma}
\subsection{Proof of Theorem~\ref{thm:concentration-bounded}}
\begin{proof}
Let $\E{XX^\top}=\Sigma$, and
$(\sigma_{ii'})_{i,i'\in[d]}$ denote the entries of $\Sigma$, i.e., $\Sigma_{ii'}=\sigma_{ii'}$ for all $i,i'\in[d]$. Thus, $\mathbf{E}[\norm{X}^2]=\sigma^2$ implies that 
$\sigma^2=\sum_{i\in[d]}\sigma_{ii}$; moreover, since $\Sigma\succeq 0$, we have $\norm{\Sigma}=\lambda_{\rm max}(\Sigma)\leq\Tr(\Sigma)=\sigma^2$. 
Let $\{X_j\}_{j\in[M]}$ be i.i.d. copies of $X$. 
Therefore,
\begin{align*}
    \max\left\{\left\|\sum_{j\in[M]}\E{X_j^\top X_j}\right\|,
    ~\left\|\sum_{j\in[M]}\E{X_jX_j^\top}\right\|\right\}
    =M\max\{\sigma^2,\norm{\Sigma}\}
    =M\sigma^2,
\end{align*}
and \cite[Remark 6.3]
{tropp2012user} 
implies the desired inequality in \eqref{eq:mean-concentration-bounded}. Next, we show the tail bounds in~\eqref{eq:svar-tail-bounds}. Let $Y\triangleq \norm{X}^2-\sigma^2$; trivially, $\E{Y}=0$. Note that $\sigma^2=\E{\norm{X}^2}\leq B^2$ a.s. Thus, $Y\geq -\sigma^2\geq -B^2$ and $Y\leq\norm{X}^2\leq B^2$, which imply that $|Y|\leq B^2$ a.s. Moreover, 
\begin{align*}
    \E{Y^2}=\E{(\norm{X}^2-\sigma^2)^2}=\E{\norm{X}^4-2\sigma^2\norm{X}^2+\sigma^4}=\E{\norm{X}^4}-\sigma^4\leq \sigma^2(B^2-\sigma^2),
\end{align*} 
where the last inequality follows from $\norm{X}^2\leq B^2$ a.s. Hence, we can conclude that $\E{Y^2}\leq B^2\sigma^2$. Now, let $Y_j=\norm{X_j}^2-\sigma^2$ for $j\in[M]$, and set $\bar Y=\frac{1}{M}\sum_{j\in[M]}Y_j$. Thus, using $|Y_j|\leq B^2$ and $\bE Y_j^2\leq B^2\sigma^2$, Lemma~\ref{lem:Bennett} implies that for all $t>0$,
\begin{align}
\label{eq:barY-bound}
    \p{\bar Y\geq t}\vee\p{\bar Y\leq -t}\leq \exp\left(-M\frac{\bE Y^2}{B^4}\cdot H\left(\frac{B^2 t}{\bE Y^2}\right)\right)\leq \exp\left(-M\frac{\sigma^2}{B^2}\cdot H\left(\frac{t}{\sigma^2}\right)\right),
\end{align}
where in the second inequality we used Lemma~\ref{lem:Bennett-helper}. Recall that $\bar{X}\triangleq\frac{1}{M} \sum_{j=1}^{M} X_{j}$ and $\rv{v}\triangleq\frac{1}{M-1} \sum_{j=1}^{M} \| X_{j}-\bar{X} \|^{2}$, which imply that
\begin{align*}
 \rv{v} 
=\frac{1}{M-1} \sum_{j=1}^{M} \| X_{j}\|^{2}-\frac{M}{M-1}\| \bar{X} \|^{2}.
\end{align*}
Since $\{X_j\}_{j=1}^M$ are independent, we have $\E{\norm{\bar X}^2}=\frac{1}{M}\sigma^2$; hence, $\E{v}=\sigma^2$ and $\Big(1-\frac{1}{M}\Big)v-\sigma^2=\frac{1}{M}\sum_{j\in[M]}\norm{X_j}^2-\sigma^2-\norm{\bar X}^2$. Therefore, we have
    $\Big(1-\frac{1}{M}\Big)v-\sigma^2=\bar Y-\norm{\bar X}^2$, which implies that
\begin{align*}
    \p{\Big(1-\frac{1}{M}\Big)v-\sigma^2\leq-t}\leq\p{\{\bar Y\leq -t/2\}\cup\{\norm{\bar X}^2\geq t/2\}},\quad\forall~t>0.
\end{align*}
We get the desired inequality in \eqref{eq:svar-lower-tail-bounded} by combining \eqref{eq:mean-concentration-bounded} and \eqref{eq:barY-bound}. On the other hand, \eqref{eq:svar-upper-tail-bounded} follows from $\p{\Big(1-\frac{1}{M}\Big)v-\sigma^2\geq t}\leq \p{\bar Y\geq t}$ and the tail bound in \eqref{eq:barY-bound}. Finally, given any $\alpha\in(0,1)$, $\bar\sigma\geq\sigma$ and $c>0$, note that \eqref{eq:barY-bound} and $\bar\sigma\geq\sigma$ imply
{\small
\begin{align*}
    \p{\Big(1-\frac{1}{M}\Big)v \leq (1+c)\bar\sigma^2} 
    \geq 1-\p{\Big(1-\frac{1}{M}\Big)v-\sigma^2>c\bar\sigma^2}
    \geq 1-\exp\Big(-M\frac{\sigma^2}{B^2}\cdot H\Big(c \frac{\bar\sigma^2}{\sigma^2}\Big)\Big);
\end{align*}}%
hence, it follows from Lemma~\ref{lem:Bennett-helper} that $\p{\Big(1-\frac{1}{M}\Big)v \leq (1+c)\bar\sigma^2} 
    \geq 1-\exp\Big(-M H(c)\frac{\bar\sigma^2}{B^2}\Big)$, and this immediately leads to the desired result in \eqref{eq:concentration-subGaussian-bounded}.
\end{proof}
\section{Convergence Guarantees Under subGaussian Noise Setting}
\label{sec:subGaussian}
In this section, we relax the bounded noise assumption, i.e., \cref{as:bounded-oracle}, and we consider {the} light-tail setting where we assume that the stochastic gradient oracle is norm-subGaussian.
\begin{defn}
    The random vector $X\in\reals^n$ is norm-subGaussian with variance proxy $\sigma_p^2>0$ 
    if 
    $\mathbf{P}(\norm{X-\mathbf{E}[X]}>t)\leq 2\exp(-\frac{t^2}{2\sigma_p^2})$ for all $t>0$.
\end{defn}
In the rest, we argue that the guarantees for \sgdab{} continue to hold when \cref{as:bounded-oracle} is replaced by the following assumptions.
\begin{assumption}
\label{ass3}
    \rv{For any given $(\bx,y)\in\dom g\times\dom h$, the stochastic gradient $\gt_{\bx}f(\bx,y;\bom)$ and $\gt_{y}f(\bx,y;\bzt)$ are norm-subGaussian with proxy $p_x^2(\bx,y)$ and $p_y^2(\bx,y)$, respectively. 
    {We also assume that} there exist proxy bounds $p_x^2$ and $p_y^2$ such that $p_x^2(\bx,y)\leq p_x^2$ and $p_y^2(\bx,y)\leq p_y^2$ for all $(\bx,y)\in\dom g\times\dom h$.}
\end{assumption}
\begin{assumption}
\label{as:bounded-proxy}
    There exists a known constant $r_\sigma>0$ such that $r_\sigma\geq \max\{p_x^2(\bz)/\sigma_x^2(\bz),~p_y^2(\bz)/\sigma_y^2(\bz)\}$ for all $\bz\in\dom g\times\dom h$, where $\sigma_x^2(\bz)$ and $\sigma_y^2(\bz)$ denote variances of the stochastic oracles at $\bz$, i.e., $\sigma_x^2(\bz)=\E{\norm{\gt_{\bx}f(\bz;\bom)-\grad_\bx f(\bz)}^2}$ and $\sigma_y^2(\bz)=\E{\norm{\gt_{y}f(\bz;\bzt)-\grad_y f(\bz)}^2}$.
\end{assumption}
\rv{There are particular scenarios arising in practice for which $r_\sigma$ can be known in advance. Let $X\in\reals^d$ be a random vector. For instance, if $X~\sim\cN(\mu,\Sigma)$, then $X$ is norm-subGaussian with proxy $p^2=2\sigma^2$ where $\sigma^2=\E{\norm{X-\mu}^2}=\Tr(\Sigma)$; hence, whenever $\gt f(\bx,y;\bxi)$ is Gaussian, then $r_\sigma=2$. Now consider a more general scenario where $X\in\reals^d$ such that $\E{X}=0$ is sub-Gaussian in every direction, with proxy proportional to its directional variance, i.e., let $\Sigma\triangleq \E{X X^\top}$, and suppose for some $K\ge 1$,
\begin{equation}
\label{eq:directional-subGaussian}
\E{\exp\!\big(\theta \langle u,X\rangle\big)}
\;\le\;
\exp\!\left(\frac{K^2\theta^2}{2}\,u^\top \Sigma u\right)
\quad
\forall u\in\mathbb R^d,\ \forall \theta\in\mathbb R.
\end{equation}
Under this assumption, it follows from \cite[Eq.(2.2)]{spokoiny2023concentration} that $X$ is norm-subGaussian with proxy $p^2\leq 8K^2\sigma^2$ where $\sigma^2=\Tr(\Sigma)$; thus, $r_\sigma=8 K^2$. Next, we discuss some scenarios for which we know a bound on $K$ for $X\in\reals^d$ satisfying \eqref{eq:directional-subGaussian} such that $\E{X}=0$ and $\Sigma\succeq \sigma_{\rm \min}I$ for some $\sigma_{\rm \min}>0$. Note that when $X$ is Gaussian, we have $K=1$; moreover, in case $\norm{X}\leq B$ a.s. for some $B>0$, since $-G\norm{u}\leq \langle u,X\rangle\leq G\norm{u}$ using Hoeffding’s lemma, we get $\E{\exp\!\big(\theta \langle u,X\rangle\big)}
\;\le\;
\exp\!\left(\frac{B^2\theta^2}{2}\,\norm{u}^2\right)$
for all $u\in\mathbb R^d$ and $\theta\in\mathbb R$;
hence, we observe that \eqref{eq:directional-subGaussian} holds for $X$ with $K=B/\sqrt{\sigma_{\rm \min}}$. Finally, suppose $\{X_j\}_{j=1}^M$ are i.i.d and we set $X=\frac{1}{M}\sum_{j=1}^MX_j$; in this mini-batch setting, by the central limit theorem, the distribution of $X$ becomes close to Gaussian when $M$ is moderately large. Hence, we can conclude that \eqref{eq:directional-subGaussian} holds for $X$ with $K\approx 1$ and to be conservative for $K=2$, we get $r_\sigma=32$.}

\rv{
Next, under Assumption~\ref{ass3} we state a concentration result for $s(\bx,y;\bxi)$ defined in Definition~\ref{def:s-gradmapping}. Note that this result is similar to \cref{thm:concentration-bounded}.
\begin{theorem}
\label{thm:concentration}
    Let $\{X_j\}_{j=1}^M\subset\reals^d$ be i.i.d. norm-subGaussian random vectors with variance proxy $\sigma_p^2$, mean $\mu$ and variance $\sigma^2$, i.e., $X_j$ satisfies Assumption~\ref{ass3}, $\mathbf{E}[X_j]=\mu$ and $\mathbf{E}[\norm{X_j-\mu}^2]=\sigma^2$ for all $j=1,\ldots, M$. Then, the sample mean $\bar X=\frac{1}{M}\sum_{j=1}^M X_j$ satisfies
    \begin{align}
    \label{eq:mean-concentration}
        \p{\norm{\bar X-\mu}>t}\leq (d+1) \exp\Big(-\frac{Mt^2}{16\sigma_p^2}\Big),\quad\forall~t>0.
    \end{align}
    Moreover, let $v=\frac{1}{M-1}\sum_{j=1}^M\norm{X_j-\bar{X}}^2$ denote the sample variance {for $M\geq 2$}. Then,
    {\small
    \begin{subequations}
    \begin{align}
        \p{\Big(1-\frac{1}{M}\Big)v-\sigma^{2}\leq -t}&\leq \exp \left(-M \min \left\{\left(\frac{t}{32 \sigma_p^2}\right)^2,~\frac{t}{32 \sigma_p^2}\right\}\right)+(d+1) \exp\Big(-M\cdot\frac{t}{32 \sigma_p^2}\Big),\label{eq:svar-lower-tail}\\
        \p{\Big(1-\frac{1}{M}\Big)v-\sigma^{2}\geq t}&\leq \exp \left(-M \min \left\{\left(\frac{t}{16 \sigma_p^2}\right)^2,~\frac{t}{16 \sigma_p^2}\right\}\right),\quad\forall~t>0. \label{eq:svar-upper-tail}
    \end{align}
    \end{subequations}}%
    Finally, for any $\alpha\in (0,1)$, $\bar\sigma\geq\sigma$ and 
    $c>0$, it follows that \mgrev{for $M\geq 2$ we have}
    \begin{equation}
    \label{eq:concentration-subGaussian}
    {\small
    \begin{aligned}
    \p{\Big(1-\frac{1}{M}\Big)v\leq (1+c)\bar\sigma^{2}}\geq 1-\alpha,\quad 
    \mgrev{\mbox{whenever}}\quad
    M\geq \max\Big\{\Big(\frac{16}{c}\cdot\frac{\sigma_p^2}{\bar\sigma^2}\Big)^2,\Big(\frac{16}{c}\cdot\frac{\sigma_p^2}{\bar\sigma^2}\Big)\Big\}\cdot\log\Big(\frac{1}{\alpha}\Big). 
    \end{aligned}}%
    \end{equation}
\end{theorem}
The proof of \cref{thm:concentration} is provided in  \cref{sec:concentration-proof}.}
\sa{\sgdab, using the batch size of $M_x=\cO(\mg{1+}\rv{\widetilde\sigma_x^2}/\epsilon^2)$ and $M_y=\cO(\mg{1+}\rv{\widetilde\sigma_y^2}/\epsilon^2)$, 
terminates w.p.1 
(see Lemma~\ref{lem:stopping_probability}), and at the termination, it generates $(\bx_\epsilon,y_\epsilon)$ such that $\norm{\tilde G(\bx_\epsilon,y_\epsilon;\bxi)}\leq \epsilon$ w.p.1. 
\mg{Next, we show that under Assumption~\ref{ass3}, for batchsizes $M_x$ and $M_y$ as given above, the \saa{deterministic gradient map $G(\bx_\epsilon,y_\epsilon)$ is close to its stochastic estimate $\tilde G(\bx_\epsilon,y_\epsilon;\bxi)$} in a probabilistic sense 
\saa{implying that the output $(\bx_\epsilon,y_\epsilon)$ has a small deterministic gradient map}.} 
}
\begin{lemma}
\label{lem:Chebyshev-subG}
    Suppose \cref{ass1,assumption:noise,ass3} hold. For any given sample sizes $M_x,M_y\in\integers_+$, there exists $b:\reals_+\times\integers_+\times\integers_+\to\reals_+$ such that $b(r,M_x,M_y)$ decreasing in all the arguments, and
\begin{equation}
\label{eq:diff-bound-subG}
    \p{\norm{\tilde G(\bx,y,\bxi)-G(\bx,y)}>r}\leq b(r,M_x,M_y),\quad \forall~r>0,\quad \forall~(\bx,y)\in\dom g\times \dom h;
\end{equation} 
{in particular,} $b(r,M_x,M_y)=(n_x+1)e^{-\frac{M_x r^2}{32p_x^2}}+(n_y+1)e^{-\frac{M_y r^2}{32p_y^2}}$ satisfies \eqref{eq:diff-bound-subG}.
\end{lemma}
\begin{proof}
The proof follows from the same arguments used in the proof of Lemma~\ref{lem:Chebyshev}. Indeed, since \eqref{eq:G-xy-split} holds for any $r>0$, using \eqref{eq:nonexpansive_prox} and \eqref{eq:mean-concentration} in \cref{thm:concentration}, we get the desired result.
\end{proof}
\begin{remark}
\label{rem:small-eps-subG}
\rv{Suppose the variance bounds $\sigma_x$ and $\sigma_y$ are unknown. We assume that \cref{assumption:noise,ass3} hold, and in \sgdab{}, the sample sizes for the $\ell$-th iteration satisfy $M_x\geq \frac{128}{\epsilon^2}\widetilde\sigma_x^2$ and $M_y\geq \frac{128}{\epsilon^2}\widetilde\sigma_y^2$. This choice implies that
     $M_x \geqslant \frac{128}{\epsilon^{2}}\left(\frac{\sigma_x^0}{\gamma^\ell}\right)^{2} \geqslant \frac{128}{\epsilon^{2}}\left(\sigma_x^0\right)^{2}$ for all iterations $\ell\geq 0$ since the estimate $\tilde\sigma_x$ is updated using $\widetilde\sigma_x\gets\min\{\widetilde\sigma_x/\gamma,\bar{\sigma}_x\}$ starting from any given $\sigma_x^{0}>0$ as long as the stopping condition in \emph{\texttt{line}}~\ref{algeq:stop} in \sgdab{} does not hold. Similarly, $M_y\geq \frac{128}{\epsilon^2}\big(\sigma_y^0\big)^2$ for all $\ell\geq 0$.} 
    
    \rv{For any  $(\bx,y)$, let \rv{$(v^x,v^y)\gets V_{\bxi}(\bx,y)$ as defined in \eqref{eq:sample_variance}.} Thus, it follows from \cref{thm:concentration} that for any $\alpha\in(0,1)$, $\p{(1+c)\sigma_x^2\geq (1-\frac{1}{M_x})v^x}\geq 1-\alpha$ holds whenever
    \begin{align*} 
    \frac{128}{\epsilon^{2}}\left(\sigma_x^{0}\right)^{2} \geqslant\left(\frac{16}{c} \frac{p_x^2}{\sigma_x^2}\right)^2 \log \frac{1}{\alpha}
 \mgrev{\iff}
 \frac{\sigma_x^{0}}{\sqrt{2}} \frac{\sigma_x^2}{p_x^2} \frac{c}{\sqrt{\log(1/{\alpha})}} \geqslant \epsilon>0.
\end{align*}
A similar argument is also valid for $\p{(1+c)\sigma_y^2\geq (1-\frac{1}{M_y})v^y}\geq 1-\alpha$.} 

\rv{Therefore, for all sufficiently small $\epsilon>0$, i.e., $0<\epsilon=\cO\big(c\min\{\sigma_x^0,\sigma_y^0\}/\sqrt{\log(1/\alpha)}\big)$, one has $\p{(1+c)\sigma_x^2\geq (1-\frac{1}{M_x})v_{(t,\ell)}^x}\geq 1-\alpha$ and $\p{(1+c)\sigma_y^2\geq (1-\frac{1}{M_y})v^y_{(t,\ell)}}\geq 1-\alpha$ for all $\ell\geq 0$ and $t=1,\ldots,T$ where $\Big(v_{(t,\ell)}^x,v_{(t,\ell)}^y\Big)=V_{\tilde\bxi_{(t,\ell)}}\Big(\tilde\bx_{(t,\ell)},\tilde y_{(t,\ell)}\Big)$ as defined in \emph{\texttt{\cref{algeq:RBSGDA_runs}}} of \sgdab. Thus, to handle a larger range of tolerance values $\epsilon>0$, one can pick larger initial values for the variance bound estimates $\sigma_x^0,\sigma_y^0>0$, which are free design parameters.}
\end{remark}
Next, since \cref{thm:p-bound} continues to hold in this setting with $b(r,M_x,M_y)$ as defined in Lemma~\ref{lem:Chebyshev-subG}, we state a result similar to Corollary~\ref{cor:sample-complexity-bounded}, describing how sample sizes $M_x$ and $M_y$ should be selected when proxy bounds $p_x^2$ and $p_y^2$ are known.
\begin{corollary}
\label{cor:sample-complexity}
Under \cref{ass1}, for any given $\bar p\in(0,1)$, $\p{\norm{G\left(\bx_\epsilon,y_\epsilon\right)}\leq \epsilon}\geq 1-\bar p$ holds when $M_x$ and $M_y$ are chosen as follows:
\begin{enumerate}
    \item[(i)] under \cref{assumption:noise,ass3}, for any $C\geq \log\Big(\frac{2}{\bar p}\Big(\bar \ell+\frac{1}{1-p}\Big)\Big)$, let $M_x=\frac{128}{\epsilon^2}p_x^2(\log(n_x+1)+C)$ and $M_y=\frac{128}{\epsilon^2}p_y^2\Big(\log(n_y+1)+ \Big(1+\frac{6}{\rv{\widetilde\mu} \eta_y} \frac{2-\rv{\widetilde\mu} \eta_y}{1-\rv{\widetilde\mu} \eta_y}\Big)C\Big)$;
    \item[(ii)] under \cref{assumption:noise,ass3}, for any $C\geq \frac{\pi^2}{3\bar p}$, let $M_x=\frac{128}{\epsilon^2}p_x^2\Big(\log(n_x+1)+\log\Big(C(\ell+1)^2\Big)\Big)$ and $M_y=\frac{128}{\epsilon^2}p_y^2\Big(\log(n_y+1)+ \Big(1+\frac{6}{\rv{\widetilde\mu} \eta_y} \frac{2-\rv{\widetilde\mu} \eta_y}{1-\rv{\widetilde\mu} \eta_y}\Big)\log\Big(C(\ell+1)^2\Big)\Big)$.
\end{enumerate}
\end{corollary}
\begin{proof}
Recall that under \cref{assumption:noise,ass3}, according to Lemma~\ref{lem:Chebyshev-subG}, we get
    \begin{equation*}
        b_\ell\Big(\frac{\epsilon}{2}\Big) 
        =(n_x+1)e^{-\frac{M_x \epsilon^2}{128 p_x^2}}+(n_y+1)e^{-\frac{M_y \epsilon^2}{128 p_y^2}},
    \end{equation*}
    {where $b_\ell(\epsilon)\triangleq b(\epsilon,M_x,M_y)$ for $M_x,M_y$ values corresponding to the backtracking iteration $\ell\geq 0$ and $b(\epsilon,M_x,M_y)$ is defined in Lemma~\ref{lem:Chebyshev-subG}}. For the scenario \textit{(i)}, our choice of $M_x$ and $M_y$ immediately implies that
    \begin{equation*}
        b_\ell\Big(\frac{\epsilon}{2}\Big) \leq 2e^{-C};
    \end{equation*}
    therefore, from \cref{thm:p-bound}, for $C\geq \log\Big(\frac{2}{\bar p}\Big(\bar \ell+\frac{1}{1-p}\Big)\Big)$, we get
    \begin{equation*}
\begin{aligned}
\p{\norm{G\left(\bx_\epsilon,y_\epsilon\right)}\leq\epsilon}\geq 1-2\bar\ell e^{-C}-\sum_{k\geq 0} 2e^{-C} p^{k}=1-2e^{-C}\Big(\bar\ell+\frac{1}{1-p}\Big)\geq 1-\bar p.
\end{aligned}
\end{equation*}
On the other hand, for the scenario \textit{(ii)}, our choice of $M_x$ and $M_y$ immediately implies that
        $b_\ell\Big(\frac{\epsilon}{2}\Big) \leq \frac{2}{C}\cdot\frac{1}{(\ell+1)^2}$;
    therefore, from \cref{thm:p-bound}, for $C\geq \frac{\pi^2}{3\bar p}$, we get
    \begin{equation*}
\begin{aligned}
\p{\norm{G\left(\bx_\epsilon,y_\epsilon\right)}\leq\epsilon}\geq 1-\frac{2}{C}\sum_{\ell\geq 0} \frac{1}{(\ell+1)^2}=1-\frac{\pi^2}{3C}\geq 1-\bar p.
\end{aligned}
\end{equation*}
\end{proof}
Furthermore, due to \cref{thm:concentration} and Remark~\ref{rem:small-eps-subG}, the result of Lemma~\ref{lem:stopping_probability-unknown} continues to hold for this setting for $\epsilon=\cO\Big(\min\{\sigma_x^{(0)}\frac{\sigma_x^2}{p_x^2},~\sigma_y^{(0)}\frac{\sigma_y^2}{p_y^2}\}\frac{c}{\log(1/p)}\Big)$. Therefore, we are now ready to show a result similar to \cref{thm:p-bound-unknown}. 
\begin{theorem}
\label{thm:p-bound-unknown-bproxy}
    Let $\bar\ell\triangleq\lceil\log_{\frac{1}{\gamma}}(\rv{\cR})\rceil$ for $\cR$ as in \eqref{eq:R-unknown}. Under the premise of Lemma~\ref{lem:stopping_probability-unknown} with \cref{as:bounded-oracle} replaced by \cref{ass3}, for any $(\bx^0,y^0)\in\dom g\times\dom h$, $\epsilon>0$, $p\in (0,1)$ and all other parameters chosen as in the input line of \cref{alg:GDA-B} with \rv{$T\triangleq\lceil\log_2(2/p)\rceil$}, \sgdab{} employing $\mathrm{StopCond}(\ell)$ in \eqref{eq:stop-cond-simple} stops w.p.1 returning $(\bx_\epsilon,y_\epsilon)$ such that
\begin{equation*}
\begin{aligned}
\p{\norm{G\left(\bx_\epsilon,y_\epsilon\right)}\leq\epsilon}\geq 1-\sum_{\ell=0}^{\bar\ell-1} \tilde b_\ell(\epsilon)-\sum_{\ell\geq \bar\ell} \tilde b_\ell(\epsilon) p^{\ell-\bar\ell},
\end{aligned}
\end{equation*}
where
$\tilde b_\ell(\epsilon)\triangleq \tilde b^x_\ell(\epsilon)+\tilde b^y_\ell(\epsilon)$ and under Assumption~\ref{as:bounded-proxy}, $\tilde b^x_\ell(\epsilon),\tilde b^y_\ell(\epsilon)$ take the form:
\begin{align*}
    \tilde b^x_\ell(\epsilon)=(n_x+1)\Big[\exp\Big(\frac{-M_x^{(\ell)} \epsilon^2}{256 r_\sigma (1+c)\tilde\sigma^2}\Big)+\exp\Big(-M_x^{(\ell)}\cdot\frac{1}{64 r_\sigma}\Big)\Big]+\exp \left(-M_x^{(\ell)} \left(\frac{1}{64 r_\sigma}\right)^2\right)\\
    \tilde b^y_\ell(\epsilon)=(n_y+1) \Big[\exp\Big(\frac{-M_y^{(\ell)} \epsilon^2}{256 r_\sigma (1+c)\tilde\sigma^2}\Big)+\exp\Big(-M_y^{(\ell)}\cdot\frac{1}{64 r_\sigma}\Big)\Big]+ \exp \left(-M_y^{(\ell)} \left(\frac{1}{64 r_\sigma}\right)^2\right),
\end{align*}
for $M_x^{(\ell)},M_y^{(\ell)}$ denoting $M_x,M_y$ values corresponding to the backtracking iteration $\ell\geq 0$.
\end{theorem}
\begin{proof}
    The proof of this result follows from the same arguments used in the proof of Theorem~\ref{thm:p-bound-unknown} except for the bounds we derive for $w_1(\ell)$ and $w_2(\ell)$ defined in \eqref{eq:w-bounds}. More precisely, under Assumption~\ref{as:bounded-proxy}, which is a weaker requirement than Assumption~\ref{as:bounded-oracle} we can bound $w_1(\ell)$ in \eqref{eq:w1-bound-0} as follows. Indeed, \eqref{eq:w1-bound-0} and Lemma~\ref{lem:Chebyshev-subG} together imply that
\begin{align*}
    w_1(\ell) 
    &\leq \E{(n_x+1) \exp\Big(\frac{-M_x \big(\norm{G(\tz)}-\frac{\epsilon}{2}\big)^2}{32 p_x^2(\tz)}\Big)+(n_y+1) \exp\Big(\frac{-M_y \big(\norm{G(\tz)}-\frac{\epsilon}{2}\big)^2}{32p_y^2(\tz)}\Big)~\Big|~C_{(\ell)}^1}\\
    & \leq \sup_{t_1,t_2\geq 0}\quad (n_x+1) \exp\Big(\frac{-M_x \big(t_0-\frac{\epsilon}{2}\big)^2}{32 r_\sigma t_1}\Big)+(n_y+1) \exp\Big(\frac{-M_y \big(t_0-\frac{\epsilon}{2}\big)^2}{32 r_\sigma t_2}\Big)\\
    & \qquad \mbox{s.t.}\qquad t_0>\epsilon,~ t_1,t_2\leq 2(1+c)\tilde\sigma^2,
\end{align*}
where the second inequality follows from the conditional event $C_{(\ell)}^1$, for which it holds that $\norm{G(\tz)}>\epsilon$ and $\sigma_x^2(\tz)+\sigma_y^2(\tz)\leq 2(1+c)\tilde\sigma^2$; hence, after upper bounding $p_x^2(\tz)$ and $p_y^2(\tz)$ with $r_\sigma \sigma_x^2(\tz)$ and $r_\sigma \sigma_y^2(\tz)$, then replacing $\norm{G(\tz)}$ with $t_0$, $\sigma_x^2(\tz)$ and $\sigma_y^2(\tz)$ with $t_1$ and $t_2$, respectively, and taking the supremum over $t_0, t_1, t_2$, we end up with a deterministic bound with the supremum achieved at $t_0=\epsilon$, and $t_1=t_2=2(1+c)\tilde\sigma^2$, i.e.,
\begin{align*}
\label{eq:w1-bound-bproxy}
    w_1(\ell) 
    & \leq (n_x+1) \exp\Big(\frac{-M_x \epsilon^2}{256 r_\sigma (1+c)\tilde\sigma^2}\Big)+(n_y+1) \exp\Big(\frac{-M_y \epsilon^2}{256 r_\sigma (1+c)\tilde\sigma^2}\Big)\triangleq\bar w_1(\ell).
\end{align*}
Next, we bound $w_2(\ell)$. Under the event $C_{(\ell)}^2$, we have $\sigma_x^2\big(\tz\big)+\sigma_y^2\big(\tz\big)> 2(1+c)\widetilde\sigma^2$; hence, 
\begin{align*}
\p{A^v_{(\ell)}~\big|~\tz\,}
&\leq \p{(1-\frac{1}{M_x})v_{(t^*,\ell)}^x+(1-\frac{1}{M_y})v_{(t^*,\ell)}^y\leq \frac{1}{2}\sigma_x^2(\tz)+\frac{1}{2}\sigma_y^2(\tz)~\Big|~\tz\,}\\
&\leq \p{(1-\frac{1}{M_x})v_{(t^*,\ell)}^x\leq \frac{1}{2}\sigma_x^2(\tz)~\Big|~\tz\,}+\p{(1-\frac{1}{M_y})v_{(t^*,\ell)}^y\leq \frac{1}{2}\sigma_y^2(\tz)~\Big|~\tz\,},
\end{align*}
\rv{which follows from the definition of $A^v_{(\ell)}$ in \eqref{eq:ql-unknown}.} Therefore, it follows from Theorem~\ref{thm:concentration} that
{\small
\begin{align*}
\p{A^v_{(\ell)}~\big|~\tz\,}
&\leq \exp \left(-M_x \min \left\{\left(\frac{\sigma_x^2(\tz)}{64 p_x^2(\tz)}\right)^2,~\frac{\sigma_x^2(\tz)}{64 p_x^2(\tz)}\right\}\right)+(n_x+1) \exp\left(-M_x\cdot\frac{\sigma_x^2(\tz)}{64 p_x^2(\tz)}\right)\\
& \quad + \exp \left(-M_y \min \left\{\left(\frac{\sigma_y^2(\tz)}{64 p_y^2(\tz)}\right)^2,~\frac{\sigma_y^2(\tz)}{64 p_y^2(\tz)}\right\}\right)+(n_y+1) \exp\left(-M_y\cdot\frac{\sigma_y^2(\tz)}{64 p_y^2(\tz)}\right).
\end{align*}}%
Moreover, since $\sigma_x^2(\tz)\leq 4 p_x^2(\tz)$ and $\sigma_y^2(\tz)\leq 4 p_y^2(\tz)$, using \eqref{eq:w2-bound-0} and the bounds $p_x^2(\tz)\leq r_\sigma \sigma_x^2(\tz)$ and $p_y^2(\tz)\leq r_\sigma \sigma_y^2(\tz)$, we get 
{\small
\begin{equation*}
    \label{eq:w2-bound-bproxy}
\begin{aligned}
w_2(\ell)\leq \E{\p{A^s_{(\ell)}~\big|~\tz\,}\big|~C^2_{(\ell)}}
&\leq \exp \left(-M_x \left(\frac{1}{64 r_\sigma}\right)^2\right)+(n_x+1) \exp\Big(-M_x\cdot\frac{1}{64 r_\sigma}\Big)\\
& \quad + \exp \left(-M_y \left(\frac{1}{64 r_\sigma}\right)^2\right)+(n_y+1) \exp\Big(-M_y\cdot\frac{1}{64 r_\sigma}\Big)\triangleq \bar w_2(\ell).
\end{aligned}
\end{equation*}}%
Finally, the desired inequality follows from \eqref{eq:final-G-bound}.
\end{proof}
Finally, we are ready to state a result similar to Corollary~\ref{cor:sample-complexity-bounded-unkown}, describing how sample sizes $M_x$ and $M_y$ should be selected when proxy bounds $p_x^2$ and $p_y^2$ are unknown. 
\begin{corollary}
Under \cref{ass1,assumption:noise,ass3,as:bounded-proxy}, for any given $\bar p\in(0,1)$, $\p{\norm{G\left(\bx_\epsilon,y_\epsilon\right)}\leq \epsilon}\geq 1-\bar p$ holds when $M_x$ and $M_y$ are chosen such that $M_x=\frac{C_0}{\epsilon^2}(C_1^x\log(n_x+1)\tilde\sigma_x^2+C_2 (\sigma^0)^2/\bar\gamma^{\rv{2\ell}})$ and $M_y=\frac{C_0}{\epsilon^2}\Big(C_1^y\log(n_y+1)\tilde\sigma_y^2+ C_2 (\sigma^0)^2 \Big(1+\frac{6}{\rv{\widetilde\mu} \eta_y} \frac{2-\rv{\widetilde\mu} \eta_y}{1-\rv{\widetilde\mu} \eta_y}\Big)/\bar\gamma^{\rv{2\ell}}\Big)$ for $C_0\geq 256 r_\sigma (1+c)$, $C_1^x\geq (\sigma^0)^2/(\sigma^0_x)^2$, $C_1^y\geq (\sigma^0)^2/(\sigma^0_y)^2$, $C_2\geq \log\Big(1+\frac{4}{\bar p}\Big)$, $\bar\gamma\in(0,\gamma)$ and $\epsilon\in(0,\bar\epsilon)$, where $(\sigma^0)^2=(\sigma_x^0)^2+(\sigma_y^0)^2$ and
\begin{align*}
    \bar\epsilon^2=\cO\left((\underline{\sigma}^0)^2\min\left\{\gamma^2\min\Big\{1,\frac{\log(\underline{n})}{\log(1/\bar p)+\log(1/\gamma)}\Big\},~\frac{1}{r_\sigma},~\frac{c^2}{r_\sigma}\frac{\log(1/\bar p)}{\log(1/p)}\right\}\right).
\end{align*}
for $\underline{\sigma}^0=\min\{\sigma_x^0,\sigma_y^0\}$, and $\underline{n}=\min\{n_x,n_y\}$. Thus, fixing $p,\bar p\in(0,1)$ such that $p=\bar p$, for $n_x,n_y> 1/\bar p$, a simpler bound can be written $\bar\epsilon=\cO\Big(\underline{\sigma}^0\min\Big\{\gamma, \frac{c}{\sqrt{r_\sigma}}\Big\}\Big)$.
\end{corollary}
\begin{proof}
Without loss of generality, suppose $\gamma/\bar\gamma= \sqrt{2}$. Given an arbitrary $p\in(0,1)$, for any $\ell\geq \bar\ell$, since $\tilde\sigma_x^2\geq\sigma_x^2$ and $\tilde\sigma_y^2\geq\sigma_y^2$, the event $A_{(\ell)}^v$ defined in \eqref{eq:ql-unknown} holds with probability at least $1-p/2$ whenever
\begin{equation}
\label{eq:test-condition-unknown}
    \p{\Big(1-\frac{1}{M_x}\Big)v^x_{(t^*,\ell)}\leq (1+c)\sigma_x^{2}}\geq 1-\frac{p}{4},\quad \p{\Big(1-\frac{1}{M_y}\Big)v^y_{(t^*,\ell)}\leq (1+c)\sigma_y^{2}}\geq 1-\frac{p}{4},
    \end{equation}
and according to \cref{thm:concentration}, \eqref{eq:concentration-subGaussian} implies that \eqref{eq:test-condition-unknown} holds under \cref{assumption:noise,ass3} whenever $M_x,M_y\in\integers_+$ satisfy $\min\{M_x,M_y\}\geq \Big(\frac{16}{c}\cdot r_\sigma\Big)^2\cdot\log\Big(\frac{4}{p}\Big)$.

Our choice of $M_x,M_y$ immediately implies a trivial upper bound on $\bar w_1(\ell)$ defined in \eqref{eq:w1-bound}, i.e.,
    \begin{equation*}
        \bar w_1(\ell)=(n_x+1)\exp\Big(\frac{-M_x \epsilon^2}{256 r_\sigma (1+c)\tilde\sigma^2}\Big)+(n_y+1)\exp\Big(\frac{-M_y \epsilon^2}{256 r_\sigma (1+c)\tilde\sigma^2}\Big)\leq \rv{2\exp\left(-C_2 \frac{\gamma^{2\ell}}{\bar\gamma^{2\ell}}\right)};
    \end{equation*}
    therefore, given $\bar p\in (0,1)$, since $(\gamma/\bar\gamma)^2=2$, we get
    $\sum_{\ell\geq 0}\bar w_1(\ell)\leq\sum_{\ell\geq 0} 2\exp\left(-C_2 \frac{\bar\gamma^{2\ell}}{\gamma^{2\ell}}\right)\leq \bar p/2$ holds for all $C_2\geq \log\Big(1+\frac{4}{\bar p}\Big)$ since $\sum_{\ell\geq 1} \frac{2}{e^{C_2 \ell}}=\rv{2/(e^{C_2}-1)}\leq \bar p/2$ holds for such $C_2>0$.
    Next, we derive an upper bound on $\bar w_2(\ell)$ defined in \eqref{eq:w2-bound}. First, we consider
    \begin{align*}
        \MoveEqLeft \bar w_3(\ell)\triangleq (n_x+1) \exp\Big(-M_x\cdot\frac{1}{64 r_\sigma}\Big)+(n_y+1) \exp\Big(-M_y\cdot\frac{1}{64 r_\sigma}\Big)\\
        & \leq (n_x+1)\exp\Big(\frac{-4(1+c)}{\epsilon^2}\log(n_x+1)\tilde\sigma_x^2\Big)+(n_y+1)\exp\Big(\frac{-4(1+c)}{\epsilon^2}\log(n_y+1)\tilde\sigma_y^2\Big)\\
        & = \frac{n_x+1}{(n_x+1)^{\frac{4(1+c)(\sigma_x^0)^2}{\epsilon^2}\cdot \frac{1}{\gamma^{2\ell}}}}+\frac{n_y+1}{(n_y+1)^{\frac{4(1+c)(\sigma_y^0)^2}{\epsilon^2}\cdot \frac{1}{\gamma^{2\ell}}}};
    \end{align*}
    therefore, Lemma~\ref{lem:simple-bound} implies that 
    $\sum_{\ell=0}^\infty\bar w_3(\ell)\leq \bar p/4$ if
    \begin{align*}
        \frac{4(1+c)(\sigma_x^0)^2}{\epsilon^2}\gamma^2
        &>
        1+\frac{1}{\log(n_x+1)}\Big(\log\Big(\frac{
        \rv{2}}{\bar p\log(n_x+1)\log(1/\gamma)}\Big)+2\log(1/\gamma)\Big)
        \triangleq B_1(\bar p)\\
        \frac{4(1+c)(\sigma_y^0)^2}{\epsilon^2}\gamma^2
        &>
        1+\frac{1}{\log(n_y+1)}\Big(\log\Big(\frac{
        \rv{2}}{\bar p\log(n_y+1)\log(1/\gamma)}\Big)+2\log(1/\gamma)\Big)
        \triangleq B_2(\bar p).
    \end{align*}
    Thus, $\sum_{\ell=0}^\infty\bar w_3(\ell)\leq \bar p/4$ holds for all $\epsilon>0$ such that $\epsilon^2 < 4(1+c)\gamma^2 \min\Big\{\frac{(\sigma_x^0)^2}{B_1(\bar p)},~\frac{(\sigma_y^0)^2}{B_2(\bar p)}\Big\}$.

    Next, consider $\bar w_4(\ell)$ defined below such that $\bar w_2(\ell)=\bar w_3(\ell)+\bar w_4(\ell)$, i.e.,
    \begin{align*}
        \MoveEqLeft \bar w_4(\ell)\triangleq 
        \exp \left(-M_x \left(\frac{1}{64 r_\sigma}\right)^2\right)
        +\exp \left(-M_y \left(\frac{1}{64 r_\sigma}\right)^2\right)\\
        & \leq 2\exp\Big(\frac{-(1+c)(\sigma^0)^2}{16 r_\sigma \epsilon^2} \log\Big(1+\frac{4}{\bar p}\Big)\frac{1}{\bar\gamma^{\rv{2\ell}}}\Big),
    \end{align*}
    where the inequality follows from our choice of $M_x$, $M_y$, and from $C_2\geq \log\Big(1+\frac{4}{\bar p}\Big)$. Thus, Lemma~\ref{lem:simple-bound} implies that $\sum_{\ell=0}^\infty\bar w_4(\ell)\leq \bar p/4$ holds whenever
    \begin{align*}
        \frac{(1+c)(\sigma^0)^2}{16 r_\sigma \epsilon^2}\geq \frac{1}{\log(1+4/\bar p)}\max\Big\{\log(16/\bar p),~\frac{\log(2)}{2}\frac{1}{\log(1/\bar\gamma)}\Big\}\triangleq B_3(\bar p), 
    \end{align*}
    i.e., for all $\epsilon>0$ such that $\epsilon^2< \frac{(1+c)(\sigma^0)^2}{16 r_\sigma}\cdot\frac{1}{B_3(\bar p)}$.

    Finally, we observed that \eqref{eq:test-condition-unknown} holds whenever $\min\{M_x,M_y\}\geq \Big(\frac{16}{c}\cdot r_\sigma\Big)^2\cdot\log\Big(\frac{4}{p}\Big)$; hence, a sufficient condition on $M_x,M_y$ is
    \begin{align*}
        \frac{256 r_\sigma (1+c)}{\epsilon^2} (\sigma^0)^2 \log\Big(1+\frac{4}{\bar p}\Big)\geq \Big(\frac{16}{c}\cdot r_\sigma\Big)^2\cdot\log\Big(\frac{4}{p}\Big),
    \end{align*}
    which follows from $C_0\geq 256 r_\sigma (1+c)$ and $C_2\geq \log\Big(1+\frac{4}{\bar p}\Big)$. Thus, for given $p,\bar p\in (0,1)$, \eqref{eq:test-condition-unknown} holds for all $\epsilon>0$ such that
    \begin{align*}
        \epsilon^2 < (1+c)c^2 \frac{(\sigma^0)^2}{r_\sigma} \log\Big(1+\frac{4}{\bar p}\Big)/\log\Big(\frac{4}{p}\Big);
    \end{align*}
    therefore, by simplifying all three bounds we derived above, we can conclude that
    {\small
    \begin{align*}
    \bar\epsilon=\cO\left((\underline{\sigma}^0)^2\min\left\{\gamma^2\min\Big\{1,\frac{\log(\underline{n})}{\log(1/\bar p)+\log(1/\gamma)}\Big\},~\frac{1}{r_\sigma}\min\{1,\log(1/\bar p)\log(1/\bar\gamma)\},~\frac{c^2}{r_\sigma}\frac{\log(1/\bar p)}{\log(1/p)}\right\}\right).
\end{align*}}%
\end{proof}
\vspace*{-7.5mm}
\section{Proof of \cref{thm:concentration}}
\label{sec:concentration-proof}
In this section, we show the following concentration result. \rv{Given the sample size $M\geq 2$,} let $\{X_j\}_{j=1}^M$ be i.i.d. norm-subGaussian random vectors with \rv{mean $\mu$}, variance proxy \rv{$\sigma_p^2$} and variance $\sigma^2$, i.e., $X_j$ satisfies Assumption~\ref{ass3}, \rv{$\E{X_j}=\mu$ and $\sigma^2=\mathbf{E}[\norm{X_j-\mu}^2]$} for all $j=1,\ldots, M$. Let $\bar X=\frac{1}{M}\sum_{j=1}^M X_j$ denotes the sample mean, and $v^2$ denote the sample variance, i.e., $v^2=\frac{1}{M-1}\sum_{j=1}^M\norm{X_j-\bar{X}}^2$. 
We first state some standard results on subGaussian and subexponential random variables.
\begin{lemma}
\label[lemma]{lem:moment-bound}
    Let $Z$ be a 
    random variable such that for some \rv{$\sigma_p>0$}, 
    $\p{|Z| \geq t} \leq 2 e^{-t^2 / (2 \sigma_p^2)}$ holds for all $t>0$. Then, $\bE|Z|^k\leq (2 \sigma_p^2)^{k/2} k\Gamma\left(\frac{k}{2}\right)$ for any $k \geq 1$, and it holds that $(\bE|Z|^k)^{1/k} \leq e^{\frac{1}{e}}\sigma_p\sqrt{k}$ for any $k\geq 2$. 
\end{lemma}
\begin{proof}
Given any $k\geq 1$, it holds that
\begin{equation}
    \begin{aligned}
      \bE|Z|^k &= \int_0^\infty \p{|Z|^k > t} dt = \int_0^\infty \p{|Z| > t^{1/k}} dt\\
      & \leq k (2 \sigma_p^2)^{k/2}\int_0^\infty  e^{-u} u^{\frac{k}{2}-1} du= k (2 \sigma_p^2)^{k/2} \Gamma\left(\frac{k}{2}\right),
    \end{aligned}
\end{equation}
where $u = \frac{t^{2/k}}{2 \sigma_p^2}$ and $dt = \frac{k}{2u} (2 \sigma_p^2 u)^{k/2} du$. \rv{Therefore, for $k\geq2$, it holds that $\bE|Z|^k\leq  k (2 \sigma_p^2)^{k/2} \left(\frac{k}{2}\right)^{\frac{k}{2}}$ since $\Gamma\left(\frac{k}{2}\right)\leq (\frac{k}{2})^{\frac{k}{2}}$.}
Hence, we have
\begin{equation}
    (\bE|Z|^k)^{1/k} \leq\sqrt{2\sigma_p^2} k^{\frac{1}{k}} \sqrt{\frac{k}{2}} \leq e^{\frac{1}{e}}\sigma_p\sqrt{k}, \quad \forall k \geq 2,
\end{equation}
where the last inequality is due to $k^{\frac{1}{k}} \leq e^{\frac{1}{e}}$ for all $k\geq 2$.
\end{proof}
\begin{remark}
\label{rem:sigma-p-relation}
    \rv{Under the premise of \cref{lem:moment-bound}, for $Z$ such that $\bE[Z]=0$, the variance of $Z$ can be bounded as follows: $\sigma^2=\bE|Z|^2\leq 4\sigma_p^2$.}
\end{remark}
\begin{defn}
    A random variable $Y$ is subexponential with parameters $(\lambda,b)$ if $\E{Y}=\mathbf{0}$ and $\E{e^{\theta Y}}\leq \exp\Big(\frac{\lambda^2\theta^2}{2}\Big)$ holds for all $\theta\in\reals$ such that $|\theta|\leq\frac{1}{b}$.
\end{defn}
\begin{lemma}
\label{lem:subexponential-constants}
Consider a random vector  $X \in \mathbb{R}^n$ such that 
$\bE X=\mathbf{0}$ and $\p{\|X\| > t} \leq 2\exp\Big(\frac{-t^2}{2\sigma_p^2}\Big)$ for all  $t>0$, for some $\sigma_p > 0$. Then $Y \triangleq \|X\|^2 - \E{\|X\|^2}$ satisfies $\E{e^{\theta Y}} \leq e^{\sa{64} \sigma_p^4 \theta^2}$ for all $\theta:\ |\theta| \leq \frac{1}{8\sigma_p^2}$, i.e., $Y$ is subexponential with $(\lambda, b) = \left(\rv{8\sqrt{2}} \sigma_p^2, 8\sigma_p^2\right)$ and $\E{Y}=0$.
\end{lemma}
\begin{proof}
Note that $\bE Y=0$; hence, from the dominated convergence theorem, we have
\begin{equation}
    \begin{aligned}
        \bE e^{\theta Y} 
        &= 1 + \sum_{k=2}^{\infty} \frac{\theta^k \E{\left( \|X\|^2 - \bE \|X\|^2 \right)^k}}{k!} 
        \leq 1 + \sum_{k=2}^{\infty} \frac{\theta^k}{k!} 2^{k-1}\E{\|X\|^{2k} + \left(\bE \|X\|^2\right)^k }\\
        &\leq 1 + \sum_{k=2}^{\infty} \frac{\theta^k}{k!} 2^{k} \bE\|X\|^{2k}\leq 1 + \sum_{k=2}^{\infty} \frac{\theta^k}{k!} 2^k (2\sigma_p^2)^k 2k \Gamma(k)\\
        &= 1 + 2(4 \sigma_p^2\theta)^2 \sum_{k=0}^{\infty} (4 \sigma_p^2 \theta)^k
= 1 + 2 \frac{(4 \sigma_p^2 \theta)^2}{1 - 4 \sigma_p^2 \theta}\\
&\leq 1 + 4 (4 \sigma_p^2 \theta)^2 \leq e^{\rv{64} \sigma_p^4 \theta^2} ,
    \end{aligned}
\end{equation}
where the first inequality is due to $(a - b)^k \leq (|a| + |b|)^k \leq 2^{k-1} (|a|^k + |b|^k)$, the second inequality is due to $(\bE|Z|^r)^{1/r} {\leq}(\bE|Z|^s)^{1/s},\forall 1 < r < s < +\infty$, the third inequality is due to Lemma~\ref{lem:moment-bound} and the last two inequalities {are} 
due to $|\theta| \leq \frac{1}{8 \sigma_p^2}$, and \sa{we also use $1+a\leq e^a$ for all $a\in\reals$.} 
\end{proof}
The following is a standard Bernstein inequality result for subexponential random variables.
\rv{
\begin{theorem}
\label{thm:subexponential-bernstein}
 Let $\{Y_{j}\}_{j=1}^M$ be i.i.d. subexponential random variables with parameters $(\lambda, b)$. Then $\bar Y\triangleq \frac{1}{M}\sum_{j=1} Y_{j}$ is also subexponential random variable with parameters $(\lambda,b)$, and
\begin{align}
\label{eq:bernstein-probability-bound}
    \p{
    \bar Y\leqslant -t}\vee \p{
    \bar Y \geqslant t} \leq \exp \left(-\frac{M}{2} \min \left\{t^{2}/\lambda^{2}, t/b\right\}\right),\qquad\forall~t>0.
\end{align} 
\end{theorem}
\begin{proof}
    From Chernoff bound, we get $\p{\bar Y\geq t}\leq \Pi_{j=1}^M\E{e^{\theta Y_j}}e^{-\theta M t}$ for any $\theta>0$. Moreover, for $|\theta|\leq \frac{1}{b}$, we also have $\E{e^{\theta Y_j}}\leq e^{\frac{\lambda^2\theta^2}{2}}$ for all $j\in[M]$; hence, $\p{\bar Y\geq t}\leq e^{\frac{M\lambda^2\theta^2}{2}-\theta M t}$ and choosing $\theta=\min\{\frac{t}{\lambda^2},~\frac{1}{b}\}\in[0,\frac{1}{b}]$, we get the desired inequality. The bound on $\p{\bar Y\leqslant -t}$ follows from the same arguments.
\end{proof}}%

The following concentration inequality is shown in \cite{jin2019short} without specifying the universal constant $C>0$; moreover, although the result is correct, there is a mistake in their proof\footnote{\cite{jin2019short} argues that $\bE Y^{2k+1}=\mathbf{0}$ for all $k\geq 1$; however, this is not necessarily true when it is assumed that $X$ is norm-subGaussian with $\bE X=\mathbf{0}$. When $X\in\reals$ is a continuous random variable, one sufficient condition for their claim to be true is that $X$ has a symmetric density with respect to $\mathbf{0}$.} --which affects the value of the universal constant $C>0$. Therefore, for the sake of completeness, we provide the proof of this result here and a prescribe a particular value for the constant $C>0$.

\begin{lemma}[Corollary~7 in~\cite{jin2019short}]
\label{lem:subGaussian-concentration-bound}
Consider a random vector  $X \in \mathbb{R}^n$ such that $\bE X=\mathbf{0}$ and $\p{\|X\| > t} \leq 2\exp\Big(\frac{-t^2}{2\sigma_p^2}\Big)$ for all  $t>0$, for some $\sigma_p > 0$. Let $\{X_j\}_{j=1}^M$ be i.i.d. random vectors following the same distribution with $X$, and \rv{$\bar X\triangleq \frac{1}{M}\sum_{j=1}^M X_j$}. Then, 
\begin{equation}
    \label{eq:barx-concentration}
    \rv{\p{\norm{\bar X}^2 \geq t} \leq (d+1) \exp\Big(\frac{-Mt}{C^2\sigma_p^2}\Big),\quad \mbox{where}\quad C\triangleq 4.}
\end{equation}
\end{lemma}
\begin{proof}
Let $Y(X) \triangleq 
\begin{bmatrix}
    0 & X^\top \\ X & \mathbf{0}
\end{bmatrix}\in \mathbb{S}^{d+1}$. Note that $\E{Y(X)}=\mathbf{0}$ and $\Rank(Y(X))=2$. It can be checked that $u_+=[\norm{X},~ X^\top]^\top$ and $u_-=[-\norm{X},~ X^\top]^\top$ are eigenvectors of $Y(X)$ corresponding to the two non-zero eigenvalues $\lambda_+=\norm{X}$ and $\lambda_-=-\norm{X}$, respectively. In the rest, for the simplicity of the notation, let $Y=Y(X)$. The non-zero eigenvalues corresponding to the powers of $Y$ have the following form: $\lambda(Y^{2k})=\norm{X}^{2k}$ and $\lambda(Y^{2k+1})=\pm\norm{X}^{2k+1}$ for all $k\geq 0$. Thus, $\lambda_{\max}(Y^k)=\norm{X}^k$ for $k\geq 1$, which implies that $\norm{X}^k \bI\succeq Y^k$ for $k\geq 1$. Then, since $\bE Y=\mathbf{0}$ and $X$ is norm-subGaussian, for any $\theta\in\reals$ we have
\begin{equation}
    \bE e^{\theta Y} = \bI + \sum_{k=2}^\infty \frac{\theta^k \bE Y^k}{k!} \preceq \bI \left( 1 + \sum_{k=2}^\infty \frac{\theta^{k} \bE \norm{X}^{k}}{k!}\right)\preceq \bI \left( 1 + \sum_{k=2}^\infty \frac{\theta^{k} (2\sigma_p^2)^{k/2}k\Gamma(k/2)}{k!}\right),
\end{equation}
where the last inequality follows from invoking Lemma~\ref{lem:moment-bound} for $Z=\norm{X}$, which is subGaussian; thus,
\begin{equation}
    \begin{aligned}
        \bE e^{\theta Y} 
        &\preceq 
        \bI \left(1 + \sum_{k=1}^\infty \frac{(2\theta^2\sigma_p^2)^k2k\Gamma(k)}{(2k)!}
        + \sum_{k=1}^\infty \frac{(2\theta^2\sigma_p^2)^{k+1/2}(2k+1)\Gamma(k+1/2)}{(2k+1)!}\right) \\
        & \preceq 
        \bI \left(1 + \Big(2+\sqrt{2\theta^2\sigma_p^2}\Big) \sum_{k=1}^\infty \frac{(2\theta^2\sigma_p^2)^k k!}{(2k)!}
        \right)\preceq 
        \bI \left(1 + \Big(1+\sqrt{\theta^2\sigma_p^2/2}\Big) \sum_{k=1}^\infty \frac{(2\theta^2\sigma_p^2)^k}{k!}
        \right)\\
        &=\bI\left(\exp(2\theta^2\sigma_p^2)+\sqrt{\theta^2\sigma_p^2/2}(\exp(2\theta^2\sigma_p^2)-1)\right)\preceq \exp(4\theta^2\sigma_p^2)\bI,
    \end{aligned}
\end{equation}
where the second inequality is due to $\Gamma(k+1/2)<\Gamma(k+1)$ for $k\geq 1$, the third inequality follows from $2(k!)^2\leq (2k)!$ for $k\geq 1$, and the final inequality holds because of $e^{2x^2}+\frac{|x|}{\sqrt{2}}(e^{2x^2}-1)\leq e^{4x^2}$ for all $x\in\reals$. 

Define $\cF_k \triangleq \sigma\Big(\{X_j\}_{j=1}^k\Big)$ and $Y_j\triangleq Y(X_j)$ for $j=1,\ldots, M$. Thus, using the tower law of expectation, we get
\begin{equation}
\label{eq:mgf-Y}
    \begin{aligned}
         &\E{\Tr\left( \exp\Big(-4M\theta^2 \sigma_p^2 \bI + \theta\sum_{j=1}^M Y_j\Big) \right)} \\
= &\E{\bE\left[\Tr \left( \exp\left( -4M\theta^2  \sigma_p^2 \bI + \theta \sum_{j=1}^M Y_j \right) \right)\middle| \mathcal{F}_{M-1} \right]}\\
\leq & \E{ \Tr \left( \exp\left( -4M\theta^2 \sigma_p^2 \bI + \theta \sum_{j=1}^{M-1} Y_j + \log \E{e^{\theta Y_M}| \mathcal{F}_{M-1}}\right) \right) } \\
\leq & \E{\Tr \left( \exp\left( -4(M-1)\theta^2 \sigma_p^2 \bI + \theta \sum_{j=1}^{M-1} Y_j \right) \right)}\\
\leq &\ldots \leq \Tr \exp(0\bI) = d+1,
    \end{aligned}
\end{equation}
where the first inequality follows from $\bE \Tr \exp(A+B)\leq \Tr \exp(A+\log(\bE\exp(B)))$ for any fixed $A$ and random $B$, and the second inequality 
follows from
the fact that $\{Y_j\}_{j=1}^M$ are independent and from the inequality $e^{A+B}\leq e^{A+C}$ when $B\preceq C$. 

As discussed earlier, we have $\lambda(\sum_{j=1}^MY_j)=\pm\norm{\sum_{j=1}^M X_j}$, which implies that for any $t,\theta>0$,
\begin{equation}
\label{eq:unoptimized-Xbound}
\begin{aligned}
\bP\left(\left\| \sum_{j=1}^M  X_j \right\| \geq 4M \sigma_p^2 \theta + \frac{t}{\theta} \right)
= & \bP \left( \lambda_{\rm max}\left( \sum_{j=1}^M  Y_j \right) \geq 4M\theta \sigma_p^2 + \frac{t}{\theta}  \right)\\
=& \bP \left( \lambda_{\rm max}\left( \exp\left(\theta\sum_{j=1}^M  Y_j\right) \right) \geq \exp(4M\theta^2 \sigma_p^2 + t)  \right)\\
\leq & \bP \left( \Tr\left( e^{-4M\theta^2 \sigma_p^2} \exp\left(\theta\sum_{j=1}^M  Y_j\right) \right) \geq e^t  \right)\\
=& \bP \left( \Tr\left( e^{- 4M\theta^2 \sigma_p^2 \bI+\theta\sum_{j=1}^M  Y_j} \right) \geq e^t  \right)\\
\leq & \E{\Tr\left( e^{- 4M\theta^2 p^2 \bI+\theta\sum_{j=1}^M  Y_j} \right)} e^{-t} \leq (d+1)e^{-t}, 
\end{aligned}
\end{equation}
where in the first inequality we used $e^A\succ 0$ for any symmetric $A$; hence, $\Tr(e^A)>\lambda_{\max}(e^A)$, and the last inequality follows from Markov's inequality and \eqref{eq:mgf-Y}.

\rv{This result in \eqref{eq:unoptimized-Xbound} holds for any $\theta>0$; therefore, for any given $t>0$, choosing $\theta=\sqrt{\frac{t}{4M\sigma_p^2}}$ leads to $\p{\norm{\sum_{j=1}^M X_j}\geq 4\sigma_p\sqrt{Mt}}\leq (d+1)e^{-t}$, which implies the desired result.}
\end{proof}

Now we are ready to prove \cref{thm:concentration}. Let $\{X_j\}_{j=1}^M \in \reals^d$ be i.i.d. random vectors such that for some $\mu\in\reals^d$, $\sigma, \sigma_p>0$, we have
\begin{equation}
\E{X_j} = \mu, \quad \E{\|X_{j}-\mu\|^{2}}=\sigma^{2}, \quad \p{\left\|X_{j}-\mu\right\|>t} \leq 2 e^{-t^{2} / (2 \sigma_p^{2})},\quad \forall~t>0,
\end{equation}
for all $j=1,\ldots,M$. Let $\bar{X}\triangleq\frac{1}{M} \sum_{j=1}^{M} X_{j}$ and $\rv{v}\triangleq\frac{1}{M-1} \sum_{j=1}^{M} \| X_{j}-\bar{X} \|^{2}$. Note that
\begin{align}
\label{eq:sample-variance}
 \rv{v} 
=\frac{1}{M-1} \sum_{j=1}^{M} \| X_{j}-\mu\|^{2}-\frac{M}{M-1}\| \bar{X}-\mu \|^{2}.
\end{align}
Since $\{X_j\}_{j=1}^M$ are independent, we have
\begin{align}
\label{eq:mu-barx}
\bE \| \bar{X}-\mu \|^{2} =\frac{1}{M^{2}} \E{  \| \sum_{j=1}^{M}\left(X_{j}-\mu\right) \|^{2}}
=\frac{1}{M} \sigma^{2};
\end{align}
hence, using \eqref{eq:sample-variance} and \eqref{eq:mu-barx}, we immediately get $\bE v = \sigma^2$.
\rv{Let $Y_j\triangleq\norm{X_j-\mu}^2-\sigma^2$ for $j=1,\ldots,M$, and $\bar Y\triangleq \frac{1}{M}\sum_{j=1}^M Y_j$.} Note that from \eqref{eq:sample-variance}, we get 
\begin{equation}
\label{eq:sample-variance-decomposition}
\frac{M-1}{M}v-\sigma^{2}=\frac{1}{M} \sum_{j=1}^{M} \| X_{j}-\mu \|^{2}-\sigma^{2}-\| \bar{X}-\mu \|^{2}=\rv{\bar Y - \| \bar{X}-\mu \|^{2}};
\end{equation} 
thus, \rv{we provide tail bounds for $\frac{M-1}{M}v-\sigma^{2}$ by using the tail bounds for $\bar Y$ and $\| \bar{X}-\mu \|^{2}$. Indeed,} from \eqref{eq:barx-concentration}, we have
\begin{equation}
\label{eq:subGaussian-bound}
\rv{\p{\|\bar{X}-\mu\|^{2} \geqslant \frac{t}{2}} \leqslant (d+1) \exp\Big(\frac{-Mt}{32 \sigma_p^2}\Big),\quad\forall~t>0,\quad \forall~M\geq 1.}
\end{equation}
Moreover, according to Lemma~\ref{lem:subexponential-constants}, $Y_j=\norm{X_j-\mu}^2-\sigma^2$ is subexponential with $(\lambda,b)=(\rv{8\sqrt{2}}\sigma_p^2,8\sigma_p^2)$ 
for all $j=1,\ldots,M$. Thus, Theorem~\ref{thm:subexponential-bernstein} implies that \rv{$\bar Y=\frac{1}{M}\sum_{j=1}^MY_j$ is subexponential with parameters $(\lambda, b)=(\rv{8\sqrt{2}}p^2, 8p^2)$ 
with the tail bounds as in \eqref{eq:bernstein-probability-bound}, i.e.,
\begin{align}
\label{eq:subExponential-bound}
    \p{
    \bar Y\leqslant -t}\vee \p{
    \bar Y \geqslant t} \leq \exp \left(-\frac{M}{2} \min \left\{\frac{1}{2}\left(\frac{t}{8p^2}\right)^2,~\frac{t}{8p^2}\right\}\right),\quad\forall~t>0,\quad \forall~M\geq 1.
\end{align}
Thus, for any $t>0$ and $M\in\integers_+$, using \eqref{eq:subGaussian-bound} and \eqref{eq:subExponential-bound}, we obtain \eqref{eq:svar-lower-tail} as follows:
\begin{equation}
\label{eq:Bernstein-lower-tail}
\begin{aligned}
    \MoveEqLeft \p{\frac{M-1}{M}v-\sigma^{2}\leq -t}\\
    &\leq \p{\{\bar Y\leq -t/2\}\cup\{\norm{\bar X-\mu}^2\geq t/2\}}\\
    &\leq \p{\bar Y\leq -t/2}+\p{\norm{\bar X-\mu}^2\geq t/2}\\
    &\leq \exp \left(-M \min \left\{\left(\frac{t}{32 \sigma_p^2}\right)^2,~\frac{t}{32 \sigma_p^2}\right\}\right)+(d+1) \exp\Big(-M\cdot\frac{t}{32 \sigma_p^2}\Big).
\end{aligned}
\end{equation}
On the other hand, for any $t>0$ and $M\in\integers_+$, using \eqref{eq:subExponential-bound} and \eqref{eq:sample-variance-decomposition}, we also get \eqref{eq:svar-upper-tail}, i.e.,
\begin{equation}
\label{eq:Bernstein-upper-tail}
\begin{aligned}
    \p{\frac{M-1}{M}v-\sigma^{2}\geq t} \leq \p{\bar Y\geq t}\leq \exp \left(-M \min \left\{\left(\frac{t}{16 \sigma_p^2}\right)^2,~\frac{t}{16 \sigma_p^2}\right\}\right).
\end{aligned}
\end{equation}
Now, consider the event $\{\frac{M-1}{M}v\leq (1+c)\sigma^{2}\}$ for some $c\in[0,4]$. Note that
\begin{equation}
\label{eq:stop1-tail}
\begin{aligned}
    \MoveEqLeft[4]\p{\frac{M-1}{M}v\leq (1+c)\bar\sigma^{2}}\\ 
    &\geq 1 - \p{\frac{M-1}{M}v-\sigma^2\geq c\bar\sigma^{2}}\geq 1-\exp \left(-M \min \left\{\left(\frac{c\bar\sigma^2}{16 \sigma_p^2}\right)^2,~\frac{c\bar\sigma^2}{16 \sigma_p^2}\right\}\right).
\end{aligned}
\end{equation}
Thus, for any given 
$c>0$ and $\alpha\in(0,1)$, we obtain the desired bound in \eqref{eq:concentration-subGaussian}, i.e.,
{\small
\begin{equation*}
    \p{\left(1-\frac{1}{M}\right)v\leq (1+c)\bar\sigma^{2}}\geq 1-\alpha,\quad \forall~M\in\integers_+:\ M\geq \max\Big\{\Big(\frac{16}{c}\cdot\frac{\sigma_p^2}{\bar\sigma^2}\Big)^2,~\Big(\frac{16}{c}\cdot\frac{\sigma_p^2}{\bar\sigma^2}\Big)\Big\}\cdot\log\Big(\frac{1}{\alpha}\Big),
\end{equation*}}}%
{which completes the proof.}
\section{\sgdab{} with Gauss-Seidel Updates}
\label{sec:GS}
In this section, we discuss how our results extend to \sgdab{} with Gauss-Seidel updates, \saa{i.e., in~\texttt{\cref{algeq:RBSGDA_runs}} of \sgdab, displayed in \cref{alg:GDA-B}, instead of calling \rbsgda{} in \cref{alg:GDA}, we call \mgb{randomized block stochastic alternating GDA} (\texttt{RB-SAGDA}), which is stated below in \cref{alg:AGDA}.
In this Gauss-Seidel variant,} the analysis for the primal 
\saa{updates} follows the same 
\saa{arguments used for analyzing the Jacobi updates, and for the new variant of \sgdab{} that employs \texttt{RB-SAGDA}, we only need to analyze the dual updates, i.e., to update $y^{k+1}$ instead of using $\bx^k$ as in~\rbsgda, we now use $\bx^{k+1}$ --see \texttt{\cref{eq:AGDA_y}} of \texttt{RB-SAGDA}.}  
\begin{algorithm}[bht!]
\caption{$(\tilde\bx,\tilde y,\sa{\tilde G},v^x,v^y)=$\sa{\texttt{RB-SAGDA}}($\eta_x,\eta_y,M_x,M_y,N,K,\bx^0,y^0$)}
\label{alg:AGDA}
\small
\begin{algorithmic}[1]
\State \sa{$\tilde k\gets\cU[0,K-1]$}
\For{$k=0,\dots,\tilde k$}
\State \sa{$i_k\gets\cU[1,N]$ \Comment{$i_k$ is distributed uniformly on $\{1,\ldots,N\}$}}
\State \sa{$\bx^{k+1}\gets\bx^{k}$}
\State \sa{$x_{i_{k}}^{k+1}\gets \prox{\eta_x g_{i_k}}\Big(x_{i_{k}}^k-\eta_x s_{x_{i_k}}(\bx^k,y^k;\bom^k)\Big)$} \label{eq:AGDA_x}
\State\sa{$y^{k+1}\gets \prox{\eta_y h}\Big(y^{k}+\eta_y s_y(\bx^{k+1},y^k;\bzt^k)\Big)$} \label{eq:AGDA_y}
\State \saa{$\widetilde G_\bx^k \gets \tilde G_\bx(\bx^{k},y^{k};\bom^{k})$} \label{algeq:Gx}
\State \saa{$\widetilde G_y^k\gets \Big[\prox{\eta_y h}\Big(y^{k}+\eta_y s_y(\bx^{k},y^{k};\boldsymbol{\psi}^{k})\Big)-y^{k}\Big]/\eta_y$}  \Comment{$\boldsymbol{\psi}^{k}\sim\bzt$ --for distribution of $\bzt$, see Def.~\ref{def:s-gradmapping}} \label{algeq:Gy}
\EndFor
\State $(\tilde\bx,\tilde y)\gets(\bx^{\tilde k},y^{\tilde k}),\quad \saa{\tilde G\gets [\widetilde G_\bx^{\tilde k}, \widetilde G_y^{\tilde k}]}$,\quad \rv{$(v^x,v^y)\gets V(\bx^{\tilde k},y^{\tilde k};\bxi^{\tilde k})$} \label{algeq:RB-SAGDA_N}
\Comment{$V$ is defined in \eqref{eq:sample_variance}, and $\bxi^{\tilde k} = (\bom^{\tilde k}, \boldsymbol{\psi}^{\tilde k})$}
\State \Return \sa{$(\tilde\bx,\tilde y, \mg{\tilde G}, v^x, v^y)$}
\State \saa{\texttt{\# In practice \cref{algeq:Gx} and \cref{algeq:Gy} are not computed, this is done once in \cref{algeq:RB-SAGDA_N} \#}}
\end{algorithmic}
\end{algorithm}

\saa{To analyze \rbagda{}, due to its alternating updates, we need to define the filtration slightly different from \rbsgda.} 
\begin{defn}
\rv{For all $k\geq 0$, let $\cF^k=\sigma(\bx^k,y^k)$ denote the $\sigma$-algebra generated by the \rbagda{} iterate {$(\bx^{k},y^k)$} and let $\cF^{k+\frac{1}{2}}$ be a sub-$\sigma$-algebra of $\cF^{k+1}$ such that $\cF^{k+\frac{1}{2}}=\sigma(\{\bx^{k+1},y^k\})$, where $\bx^{k+1}$ follows the update rule in \texttt{\cref{eq:AGDA_x}} of the \texttt{RB-SAGDA} algorithm.} 
\end{defn}
\saa{In the next result, we bound the average squared norm of stochastic gradient maps over past iterations in expectation. This result is a slight modification of \cref{lemma:primal-bound-gda}.}
\begin{lemma}\label{lemma:primal-bound-agda}
For any $K\geq 1$ \mg{and $\eta_x>0$}, the \rbagda{}
iterate sequence $\{\bx^{k},y^{k}\}_{k\geq 0}$ satisfies
\sa{
\begin{equation}\label{eq:partial_sum_grad_delta_tmpp_jac-agda}
    \begin{aligned}
        &\Big(\frac{1}{2} - \eta_xL\kappa\Big)\sum_{k=0}^{K-1}\mathbf{E}\Big[\norm{\tilde G_{\bx}(\bx^{k},y^{k};\bom^k)}^2\Big]+\frac{1}{4} \sum_{k=0}^{K-1}\mathbf{E}\Big[\norm{\tilde G_y(\bx^{k},y^{k};\boldsymbol{\psi}^k)}^2\Big]\\
        &\leq \mathbf{E}\Big[\frac{N}{\eta_x}\Big(F(\bx^0)-F(\bx^K)\Big)
        +\frac{3}{2} \Big(L^2+\frac{1}{\eta_y^2}\Big)
        \sum_{k=0}^{K-1} \delta^{k}\Big]+K\Big(\frac{\sigma_x^2}{M_x}+ \frac{\sigma_y^2}{M_y}\Big),
    \end{aligned}
\end{equation}}%
where $\sa{\delta^k\triangleq}\|{y}^k_{*}-{y}^k\|^2$ and ${y}^k_{*}\triangleq{y}_{*}(\bx^k)$ \sa{for $k\geq 0$}.
\end{lemma}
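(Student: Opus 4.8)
\textbf{Proof proposal for Lemma~\ref{lemma:primal-bound-agda}.}

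The plan is to essentially replay the proof of \cref{lemma:primal-bound-gda}, replacing the Jacobi dual update by the Gauss--Seidel one, and tracking where the change of conditioning $\sigma$-algebra from $\cF^k$ to $\cF^{k+\frac12}$ enters. First I would handle the primal term: since the primal block update $x_{i_k}^{k+1}\gets \prox{\eta_x g_{i_k}}(x_{i_k}^k-\eta_x s_{x_{i_k}}(\bx^k,y^k;\bom^k))$ is \emph{identical} to the one in \rbsgda{}, \cref{lem:x-norm-bound} applies verbatim with $(\bar\bx,\bar y)=(\bx^k,y^k)$ and $i^*=i_k$, giving exactly \eqref{eq:x-norm-bound_tmpp_gdab-xkyk}; then \cref{lem:cond-exp} (which only concerns the randomness of $i_k$ and of $\bom^k$, not of the dual step) yields \eqref{eq:xt-norm-bound_tmpp} unchanged. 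So the $(\tfrac12-\eta_x L\kappa)\sum\E{\|\tilde G_\bx(\bx^k,y^k;\bom^k)\|^2}$ term on the left and the $\tfrac{N}{\eta_x}\E{F(\bx^k)-F(\bx^{k+1})}+\tfrac12 L^2\delta^k+\tfrac12\sigma_x^2/M_x$ on the right carry over with no modification.

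Next I would redo the dual estimate. The key observation is that the \emph{evaluation point} of the stochastic gradient map $\tilde G_y$ appearing in the statement is $\boldsymbol{\psi}^k$, i.e.\ the auxiliary sample drawn in \cref{algeq:Gy} of \rbagda{} which uses the base iterate $(\bx^k,y^k)$, not the actual dual step in \cref{eq:AGDA_y} which uses $(\bx^{k+1},y^k)$. So the chain of inequalities \eqref{eq:yt-norm-bound_tmpp} should be rerun with $\tilde G_y(\bx^k,y^k;\boldsymbol{\psi}^k)=\big[\prox{\eta_y h}(y^k+\eta_y s_y(\bx^k,y^k;\boldsymbol{\psi}^k))-y^k\big]/\eta_y$; this is literally the same object that was analyzed in the Jacobi case (the symbol $\bzt^k$ there plays the role of $\boldsymbol{\psi}^k$ here), and it compares to $y_*^k=\prox{\eta_y h}(y_*^k+\eta_y\grad_y f(\bx^k,y_*^k))$ by exactly the nonexpansiveness / concavity / Lipschitz argument, giving $\tfrac14\|\tilde G_y(\bx^k,y^k;\boldsymbol{\psi}^k)\|^2\le \|s_y(\bx^k,y^k;\boldsymbol{\psi}^k)-\grad_y f(\bx^k,y^k)\|^2+(L^2+\tfrac{3}{2\eta_y^2})\delta^k$. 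Taking conditional expectation given $\cF^k$ and using that $\boldsymbol{\psi}^k\sim\bzt$ is independent of $\cF^k$ with variance bound $\sigma_y^2/M_y$ (as in \eqref{lemeq:cond-exp-variance-y}) yields the analog of \eqref{eq:expect-gy-bound}: $\tfrac14\E{\|\tilde G_y(\bx^k,y^k;\boldsymbol{\psi}^k)\|^2}\le \sigma_y^2/M_y+(L^2+\tfrac{3}{2\eta_y^2})\delta^k$. Finally, combining this with \eqref{eq:xt-norm-bound_tmpp}, summing $k=0,\dots,K-1$, and telescoping $\sum\E{F(\bx^k)-F(\bx^{k+1})}=\E{F(\bx^0)-F(\bx^K)}$ gives \eqref{eq:partial_sum_grad_delta_tmpp_jac-agda}.

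The main subtlety — and the only place where one must be careful rather than just copying — is the bookkeeping of which $\sigma$-algebra conditioning is valid at each step. Because $\boldsymbol{\psi}^k$ is a \emph{fresh} sample evaluated at $(\bx^k,y^k)$ and drawn independently of everything up to $\cF^{k+\frac12}$, conditioning on $\cF^k$ is legitimate for the dual-map bound and no new cross terms (of the type $\fprod{s_y-\grad_y f,\ \cdot}$ with an $\cF^{k+\frac12}$-measurable but not $\cF^k$-measurable vector) appear; in particular the $\delta^k$ in the statement is still $\|y_*(\bx^k)-y^k\|^2$, measurable w.r.t.\ $\cF^k$. The actual Gauss--Seidel dual step (with $\bx^{k+1}$) does \emph{not} enter this particular lemma at all — it only matters later, in the Gauss--Seidel analog of \cref{lemma:deltat_tmpp} where the recursion for $\delta^{k+1}$ in terms of $\delta^k$ must use the alternating structure (and this is why the prescribed $M_y$ in \eqref{eq:GS-parameters} carries the constant $1+\tfrac{12}{\mu\eta_y}$ rather than $1+\tfrac{6}{\mu\eta_y}\tfrac{2-\mu\eta_y}{1-\mu\eta_y}$). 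So for this lemma alone the work is genuinely a transcription of \cref{lemma:primal-bound-gda}; I would write it compactly by invoking \cref{lem:x-norm-bound}, \cref{lem:cond-exp}, and the displayed estimate \eqref{eq:yt-norm-bound_tmpp} as black boxes, noting the substitution $\bzt^k\leftrightarrow\boldsymbol{\psi}^k$ and the independence of $\boldsymbol{\psi}^k$ from $\cF^k$.
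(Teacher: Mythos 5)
Your proposal is correct and follows exactly the route the paper takes: the paper's own proof of this lemma is a one-line remark that the argument is identical to that of Lemma~\ref{lemma:primal-bound-gda} with $\tilde G_y(\bx^k,y^k;\bzt^k)$ replaced by $\tilde G_y(\bx^k,y^k;\boldsymbol{\psi}^k)$, and your expansion—observing that the primal block update is unchanged, that the dual bound \eqref{eq:yt-norm-bound_tmpp} only concerns the stochastic gradient map evaluated at the base point $(\bx^k,y^k)$ with the fresh sample $\boldsymbol{\psi}^k$, and that the actual Gauss--Seidel step with $\bx^{k+1}$ only enters the later $\delta^k$ recursion—is precisely the justification the paper leaves implicit. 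Your bookkeeping of the conditioning $\sigma$-algebras is also consistent with the paper's treatment.
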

\begin{proof}
    \saa{The proof is the same with the proof of \rv{Lemma}~\ref{lemma:primal-bound-gda} with the exception that we use $\tilde G_y(\bx^{k},y^{k};\boldsymbol{\psi}^k)$ for \rbagda{} rather than $\tilde G_y(\bx^{k},y^{k};\bzt^k)$ we used for \rbsgda.}
\end{proof}
\saa{For \rbsgda~dual iterate sequence $\{y^k\}$, the bound on $\delta^k\triangleq\norm{y_*^k-y^k}^2$ given in \rv{Lemma}~\ref{lemma:deltat_tmpp} does not hold for \rbagda, where ${y}^k_{*}\triangleq{y}_{*}(\bx^k)$ and $y_*(\cdot)$ is given in Definition~\ref{def-ystar}. Next, we provide a bound on $\delta^k$ for $\{y^k\}$ generated by \rbagda.}  
\begin{lemma}\label{lemma_delta_bd_sagda}
 Suppose \rv{$\eta_y \in(0, 1/\widetilde\mu)$ for some $0<\widetilde\mu\leq\mu$} and \saa{\mg{c}onsider $\{(\bx^k,y^k)\}_k$ generated by \rbagda, displayed in \cref{alg:AGDA}.} Then, \mg{$\delta^k=\|{y}^k_{*}-{y}^k\|^2$ admits \saa{the following bound,}} 
  {\small\begin{equation*}
     \begin{aligned}
         \mathbf{E}\Big[\delta^k\Big]&\leq \mathbf{E}\Big[ \delta^0 R_1^k+ \sum_{i=1}^k R_1^{k-i}\left(R_2 \|\tilde G_{\bx}(\bx^{i-1},y^{i-1};\bom^{i-1})\|^2/N  -R_3  \|\tilde G_y(\bx^{i},y^{i-1};\bzt^{i-1})\|^2 \right)\Big] + \sa{R_4}\frac{\sigma_y^2}{M_y}\sum_{i=0}^{k-1}R_1^{i},
    \end{aligned}
 \end{equation*}}%
 for all $k\geq 1$, where \rv{$R_1\triangleq (1+a)(1-\widetilde\mu\eta_y)<1$,  $R_2\triangleq \frac{(1-\eta_y \widetilde\mu)(2-\eta_y \widetilde\mu )}{\eta_y \widetilde\mu} \kappa^2 \eta_x^2$, $R_3\triangleq\eta_y^2 (1-\eta_y L)$ and $R_4\triangleq 2\eta_y^2$, and $a = \frac{1}{2}\frac{\widetilde\mu \eta_y}{1- \widetilde\mu \eta_y}$.}
 \end{lemma}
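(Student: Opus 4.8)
The plan is to mimic the proof of Lemma~\ref{lemma:deltat_tmpp} almost verbatim, the only difference being that the dual update in \rbagda{} uses the \emph{post-update} primal iterate $\bx^{k}$ (rather than $\bx^{k-1}$), which introduces an extra term in the contraction argument that I will need to absorb into the constant $R_2$. First I would write, exactly as in \eqref{eq:bound-deltat-1_tmpp}, for any $a>0$,
\[
\delta^k \leq (1+a)\|y_*(\bx^{k-1})-y^k\|^2 + \Big(1+\tfrac1a\Big)\kappa^2\|\bx^k-\bx^{k-1}\|^2,
\]
using that $y_*(\cdot)$ is $\kappa$-Lipschitz by \cite[Lemma A.3]{nouiehed2019solving}; the second term again equals $\big(1+\tfrac1a\big)\kappa^2\eta_x^2\|\tilde G_{x_{i_{k-1}}}(\bx^{k-1},y^{k-1};\bom^{k-1})\|^2$. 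The subtlety is that I cannot directly expand $\|y_*(\bx^{k-1})-y^k\|^2$ against $y^{k-1}$ the way \eqref{eq:bound-deltat-1_tmpp} does, because $y^k$ in \rbagda{} is obtained from $\grad_y f(\bx^{k},y^{k-1})$, not $\grad_y f(\bx^{k-1},y^{k-1})$. So I would instead first bound $\|y_*(\bx^{k-1})-y^k\|^2 \leq (1+b)\|y_*(\bx^{k})-y^k\|^2 + \big(1+\tfrac1b\big)\kappa^2\|\bx^k-\bx^{k-1}\|^2$ for a second auxiliary parameter $b>0$, then apply Corollary~\ref{cor:str-concavity} with $(\bar\bx,\bar y)=(\bx^{k},y^{k-1})$ — note $\bar y^+ = y^k$ there since $y^k=\prox{\eta_y h}(y^{k-1}+\eta_y s_y(\bx^k,y^{k-1};\bzt^{k-1}))$ — to get
\[
\|y_*(\bx^{k})-y^k\|^2 \leq (1-\mu\eta_y)\delta^{k} - \eta_y^2(1-\eta_y L)\|\tilde G_y(\bx^{k},y^{k-1};\bzt^{k-1})\|^2 + 2\eta_y\fprod{s_y-\grad_y f, y^k - y_*(\bx^{k})}.
\]
Here $\delta^k$ appears on the right, so I would move it to the left: after substitution this yields $\delta^k(1-(1+a)(1+b)(1-\mu\eta_y)) \leq \cdots$, and choosing $a,b$ appropriately (mimicking the single-parameter choice $a=\tfrac12\tfrac{\mu\eta_y}{1-\mu\eta_y}$, and absorbing $b$ so that the net contraction factor becomes $R_1=(1+a)(1-\mu\eta_y)<1$ — effectively taking $b$ of order $\mu\eta_y$ as well and folding the resulting $\frac{1}{1-R_1}$-type constants into $R_2$) gives the claimed recursion with $R_2 = \frac{(1-\eta_y\mu)(2-\eta_y\mu)}{\eta_y\mu}\kappa^2\eta_x^2$, $R_3=\eta_y^2(1-\eta_y L)$.

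Then I would handle the noise inner product exactly as in \eqref{eq:cond-bound-y-bias}: introduce $\hat y^k \triangleq \prox{\eta_y h}(y^{k-1}+\eta_y\grad_y f(\bx^{k},y^{k-1}))$, use that $\hat y^k - y_*(\bx^{k})$ is $\cF^{k+\frac12}$-measurable (since it depends only on $\bx^k, y^{k-1}$, not on $\bzt^{k-1}$) together with $\mathbf E[s_y(\bx^k,y^{k-1};\bzt^{k-1})\mid \cF^{k+\frac12}] = \grad_y f(\bx^k,y^{k-1})$ to kill the $\hat y^k$ part, and Cauchy--Schwarz plus nonexpansiveness of $\prox{}$ to bound the remainder by $\eta_y\|s_y - \grad_y f\|^2$, whose conditional expectation is $\leq \eta_y\sigma_y^2/M_y$ by \cref{assumption:noise}. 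Multiplying by the coefficient $2(1+a)\leq$ (a constant of order 1) gives the additive term $R_4\sigma_y^2/M_y$ with $R_4=2\eta_y^2$. Taking conditional expectation with respect to $\cF^{k-1}$, using \eqref{lemeq:cond-exp-Gik} to replace $\|\tilde G_{x_{i_{k-1}}}\|^2$ by $\tfrac1N\|\tilde G_{\bx}\|^2$, then unrolling the linear recursion over $i=1,\dots,k$ (geometric sum of $R_1^i$) produces the stated bound.

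The main obstacle I anticipate is the bookkeeping around the \emph{two} auxiliary Young's-inequality parameters $a$ and $b$: in the Jacobi case a single $a$ sufficed because $y^k$ was built from $\bx^{k-1}$, but the alternating update forces me to split $y_*(\bx^{k-1})-y^k$ through the intermediate point $y_*(\bx^{k})$, and I must verify that the contraction factor can still be brought below $1$ while keeping $R_2$ of the stated closed form $\frac{(1-\eta_y\mu)(2-\eta_y\mu)}{\eta_y\mu}\kappa^2\eta_x^2$ — in particular the factor $(2-\eta_y\mu)$ (versus the Jacobi $C_2$'s cleaner form) is precisely the trace of this extra splitting, so I would need to choose $b$ so that $(1+a)(1+b)(1-\mu\eta_y) = R_1$ holds with $1+b = \frac{2-\mu\eta_y}{1-\mu\eta_y}\cdot\frac{1}{\text{(something)}}$ and track how the $\big(1+\tfrac1b\big)\kappa^2\eta_x^2$ terms aggregate. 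Everything else is a routine repetition of Lemma~\ref{lemma:deltat_tmpp}; I expect no genuinely new ideas beyond this careful constant-chasing, and the $\mathbf E[\cdot\mid\cF^{k+\frac12}]$ conditioning (Definition for $\cF^{k+\frac12}$) is the right tool to make the noise term vanish cleanly.
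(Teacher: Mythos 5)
Your overall instinct---reuse the Jacobi proof of Lemma~\ref{lemma:deltat_tmpp} and track where the alternating update changes the constants---is right, but the specific route you propose breaks at the application of Corollary~\ref{cor:str-concavity}. Invoked at $(\bar\bx,\bar y)=(\bx^k,y^{k-1})$ with $\bar y^+=y^k$, the corollary controls $2\eta_y\fprod{\tilde G_y(\bx^k,y^{k-1};\bzt^{k-1}),\,y^{k-1}-y_*(\bx^k)}+\eta_y^2\norm{\tilde G_y}^2$ by $-\eta_y\mu\norm{y^{k-1}-y_*(\bx^k)}^2-\eta_y^2(1-L\eta_y)\norm{\tilde G_y}^2+\cdots$, i.e.\ it yields $\delta^k\le(1-\mu\eta_y)\norm{y_*(\bx^k)-y^{k-1}}^2-\cdots$ with the \emph{pre-update} dual distance $\norm{y_*(\bx^k)-y^{k-1}}^2$ on the right---not $(1-\mu\eta_y)\delta^k$ as in your display. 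As you wrote it the inequality is circular, and your fix (``move $\delta^k$ to the left'') destroys the recursion: after rearranging, the right-hand side contains no $\delta^{k-1}$ at all, so you cannot obtain the geometric unrolling $\delta^0R_1^k+\sum_i R_1^{k-i}(\cdots)$ that the lemma asserts. The double Young splitting $y_*(\bx^k)\to y_*(\bx^{k-1})\to y_*(\bx^k)$ that forces you into this position is the false start.

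The paper's proof needs only one Young parameter and a different order of operations: first expand $\delta^k=\norm{y_*(\bx^k)-y^{k-1}-\eta_y\tilde G_y(\bx^k,y^{k-1};\bzt^{k-1})}^2$ algebraically against $y^{k-1}$ (no Lipschitzness of $y_*$ is needed here, because the reference maximizer stays $y_*(\bx^k)$ and matches $\bar\bx=\bx^k$ in the corollary); then apply the corollary to get $\delta^k\le(1-\mu\eta_y)\norm{y_*(\bx^k)-y^{k-1}}^2-\eta_y^2(1-L\eta_y)\norm{\tilde G_y}^2+2\eta_y\fprod{s_y-\grad_yf,\,y^k-y_*(\bx^k)}$; and only then split $\norm{y_*(\bx^k)-y^{k-1}}^2\le(1+a)\delta^{k-1}+(1+\frac1a)\kappa^2\eta_x^2\norm{\tilde G_{x_{i_{k-1}}}}^2$ with the single $a=\frac12\frac{\mu\eta_y}{1-\mu\eta_y}$. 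This gives $R_1=(1+a)(1-\mu\eta_y)$ and $R_2=(1-\mu\eta_y)\big(1+\frac1a\big)\kappa^2\eta_x^2=\frac{(1-\eta_y\mu)(2-\eta_y\mu)}{\eta_y\mu}\kappa^2\eta_x^2$; the extra factor $(1-\eta_y\mu)$ relative to the Jacobi $C_2$ is the trace of the splitting now occurring \emph{inside} the $(1-\mu\eta_y)$-weighted term, not of a second parameter $b$. Finally, a smaller but real index error: the noise inner product must be conditioned on $\cF^{k-\frac12}=\sigma(\{\bz^{k-1},\bx^k\})$, not $\cF^{k+\frac12}$; the latter contains $y^k$ and hence information about $\bzt^{k-1}$, so $\mathbf{E}[s_y(\bx^k,y^{k-1};\bzt^{k-1})\mid\cF^{k+\frac12}]\ne\grad_yf(\bx^k,y^{k-1})$ in general.
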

\begin{proof}
Fix $k\geq 1$. Since $y^{k}=\prox{\eta_y h}\Big(y^{k-1}+\eta_y s_y(\bx^{k},y^{k-1};\bzt^{k-1})\Big)$ and $\tilde G_y (\bx^{k},y^{k-1};\bzt^{k-1})=(y^k-y^{k-1})/\eta_y$, we have
\begin{equation}\label{delta_primal_AGDA}
    \begin{aligned}
        \delta^{k}&=\|y_*^{k}-y^{k}\|^2\\
        &=\|y_*^{k}-y^{k-1}-\eta_y\tilde G_y (\bx^{k},\by^{k-1};\bzt^{k-1})\|^2\\
        &=\|y_*^{k}-y^{k-1}\|^2 + 2\eta_y \langle \tilde G_y (\bx^{k},y^{k-1};\bzt^{k-1}),y^{k-1} - y_*^{k}\rangle +\eta_y^2 \|\tilde G_y (\bx^{k},y^{k-1};\bzt^{k-1})\|^2.
    \end{aligned}
\end{equation}
Since $y_*^{k}=\saa{\argmax_{y\in\cY}} f(\mg{\bx^{k}}
,y)\saa{-h(y)}$, \rv{Corollary}~\ref{cor:str-concavity} 
with \saa{$\bar\bx=\bx^k$, $\bar y=y^{k-1}$ and $\bar{y}^+=y^{k}$ implies that} 
\begin{equation}\label{lemma_use_AGDA}
    \begin{aligned}
        2\eta_y &\langle \tilde G_y (\bx^{k},y^{k-1};\bzt^{k-1}), y^{k-1}-y_*^{k}\rangle + \eta_y^2\|\tilde G_y (\bx^{k},y^{k-1};\bzt^{k-1})\|^2\\
        \leq &-\eta_y \mu \|y^{k-1}-y_*^{k}\|^2 - \eta_y^2(1-L\eta_y ) \|\tilde G_y (\bx^{k},y^{k-1};\bzt^{k-1})\|^2\\
        &+2\eta_y \fprod{s_y(\bx^k, y^{k-1};\bzt^{k-1})-\grad_y f(\bx^k, y^{k-1}),~y^k-\saa{y_*^k}}. 
    \end{aligned}
\end{equation}
By plugging \cref{lemma_use_AGDA} into \cref{delta_primal_AGDA}, \rv{using $\widetilde\mu\leq\mu$,} we obtain that
\begin{equation}\label{delta_ineq1_AGDA}
    \begin{aligned}
        \delta^{k}&\leq (1-\eta_y \rv{\widetilde\mu} )\|y_*^{k}-y^{k-1}\|^2 - \eta_y^2(1-L\eta_y )\|\tilde G_y (\bx^{k},y^{k-1};\bzt^{k-1})\|^2\\
        &\quad +2\eta_y \fprod{s_y(\bx^k, y^{k-1};\bzt^{k-1})-\grad_y f(\bx^k, y^{k-1}),y^k-\saa{y_*^k}}.
    \end{aligned}
\end{equation}
Next, $\|y_*^{k}-y^{k-1}\|^2$ can be bounded as follows:
\begin{equation}
\label{eq:y-bound-GS}
    \begin{aligned}
        \|y_*^{k}-y^{k-1}\|^2=\|y_*^{k}-y_*^{k-1}+y_*^{k-1}-y^{k-1}\|^2\leq (1+\frac{1}{\saa{a}})\kappa^2 \|\bx^{k}-\bx^{k-1}\|^2 + (1+\saa{a}) \delta^{k-1},
    \end{aligned}
\end{equation}
\saa{for any \saa{$a>0$},} where we use Young's inequality \saa{and the fact that $y_*(\cdot)$ is $\kappa$-Lipschi\mg{t}z from \cite[Lemma A.3]{nouiehed2019solving}.} Then, using 
\eqref{eq:y-bound-GS} and also the fact that $\norm{\tilde G_{x_{i_{k-1}}} (\bx^{k-1},\by^{k-1};\bom^{k-1})}=\norm{\bx^k-\bx^{k-1}}/\eta_x$ within \eqref{delta_ineq1_AGDA}, we get
\begin{equation*}
{\small
    \begin{aligned}
        \delta^{k}
        &\leq (1-\eta_y \rv{\widetilde\mu} )(1+a)\delta^{k-1}+ (1-\eta_y \rv{\widetilde\mu})(1+\frac{1}{a})\kappa^2 \eta_x^2 \|\tilde G_{x_{i_{k-1}}} (\bx^{k-1},\by^{k-1};\bom^{k-1})\|^2\\
        &\quad - \eta_y^2 (1-L\eta_y )\|\tilde G_y (\bx^{k},\by^{k-1};\bzt^{k-1})\|^2 +2\eta_y \fprod{s_y(\bx^k, y^{k-1};\bzt^{k-1})-\grad_y f(\bx^k, y^{k-1}),y^k-\saa{y_*^k}}. 
    \end{aligned}}%
\end{equation*}
Let $a = \frac{1}{2}\frac{\rv{\widetilde\mu}\eta_y}{1-\rv{\widetilde\mu} \eta_y}$ \sa{--we have $a> 0$ since $\eta_y< 1/\rv{\widetilde\mu}$ 
for all $k\geq 0$. This choice} implies that 
\begin{equation}
\label{delta_bd_inner}
{\small
    \begin{aligned}
        \delta^{k}&\leq (1-\frac{1}{2}\eta_y \rv{\widetilde\mu})\delta^{k-1} + \frac{(1-\eta_y \rv{\widetilde\mu})(2-\eta_y \rv{\widetilde\mu})}{\eta_y \rv{\widetilde\mu}} \kappa^2 \eta_x^2 \|\tilde G_{x_{i_{k-1}}} (\bx^{k-1},y^{k-1};\bom^{k-1})\|^2\\
        &\quad -\eta_y^2 (1-L\eta_y) \|\tilde G_y (\bx^{k},y^{k-1};\bzt^{k-1})\|^2+2\eta_y \fprod{s_y(\bx^k, y^{k-1};\bzt^{k-1})-\grad_y f(\bx^k, y^{k-1}),y^k-\saa{y_*^k}}. 
    \end{aligned}}%
\end{equation}
{Let $\hat{y}^k\triangleq\prox{\sa{\eta_y} h}\Big(y^{k-1}+\eta_y \grad_y f(\bx^{k},y^{k-1})\Big)$. We can bound the inner product in \eqref{delta_bd_inner} as follows:
\begin{equation}\label{eq:cond-bound-y-bias_gs}
    \begin{aligned}
    \MoveEqLeft\mathbf{E}\Big[
            \fprod{
                s_y(\bx^{k}, y^{k-1};\bzt^{k-1})-\grad_y f(\bx^{k}, y^{k-1}),~y^k-y_*(\bx^{k})}~\Big|~\xqs{\cF^{k-\frac{1}{2}}}
        \Big]
        \\
        =& \mathbf{E}\Big[
            \fprod{
                s_y(\bx^{k}, y^{k-1};\bzt^{k-1})-\grad_y f(\bx^{k}, y^{k-1}),~y^k-\hat{y}^k)}~\Big|~\xqs{\cF^{k-\frac{1}{2}}}
        \Big]
        \\
        \leq
        & \eta_y \mathbf{E}\Big[
            \|s_y(\bx^{k}, y^{k-1};\bzt^{k-1})-\grad_y f(\bx^{k}, y^{k-1})\|^2~\Big|~\xqs{\cF^{k-\frac{1}{2}}}
        \Big]
        \leq \eta_y\frac{\sigma_y^2}{M_y},
    \end{aligned}
\end{equation}}%
\sa{where in the first equality we used the fact that $\mathbf{E}\Big[s_y(\bx^{k}, y^{k-1};\bzt^{k-1})~\Big|~\cF^{k-\saa{\frac{1}{2}}}\Big]=\grad_y f(\bx^{k},y^{k-1})$ and $\hat y^k-y_*(\bx^{k})$ is \xqs{$\cF^{k-\frac{1}{2}}$}-measurable; in the first inequality, we used Cauchy-Schwarz inequality and the nonexpansivity of the proximal map; and the final inequality follows from \cref{assumption:noise}.
Therefore, \cref{delta_bd_inner} and \cref{eq:cond-bound-y-bias_gs} together with \cref{lemeq:cond-exp-Gik} imply that}
\begin{equation}\label{eq_delta_one_tmpp_exp2}
    \begin{aligned}
        \mathbf{E}\Big[ \delta^k \Big]\leq  \mathbf{E} \Big[R_1\delta^{k-1}+ R_2 \norm{\tilde G_{\sa{\bx}} (\bx^{k-1},y^{k-1};\bom^{k-1})}^2/N-R_3\norm{\tilde G_y (\bx^{k},y^{k-1};\bzt^{k-1})}^2\Big] + R_4{\frac{\sigma_y^2}{M_y}.}
    \end{aligned}
\end{equation}
\sa{Thus, \eqref{eq_delta_one_tmpp_exp2} implies the desired result.} \xqs{Indeed, $R_1 = C_1 < 1$, where $C_1$ is defined in Lemma~\ref{lemma:deltat_tmpp}.}
\end{proof}
 \begin{lemma}
 Let $\eta_x>0$ and $\eta_y\in (0, 1/\rv{\widetilde\mu})$ for some $0<\widetilde\mu\leq\mu$. Then, for any \saa{$K>1$,}
\begin{equation}\label{eq:line-search-sketch_sAGDA}
    \begin{aligned}
        \MoveEqLeft\mathbf{E}\bigg[\Big(2 - 4\eta_x\kappa L\Big)\|\tilde G_{\bx}(\bx^{K-1},y^{K-1};\bom^K)\|^2+\sa{\sum_{k=0}^{K-1}\|\tilde G_y(\bx^k,y^{k};\xz{\boldsymbol{\psi}^k)}\|^2}\\
        &\quad+\sum_{k=0}^{K-2} 6 R_3(L^2+\frac{1}{\eta_y^2})\sum_{i=k}^{K-2}R_1^{i-k}\|\tilde G_y(\bx^{k+1},y^{k};\bzt^k)\|^2\\
        &\quad+4\sum_{k=0}^{K-\sa{2}}\Big(\frac{1}{2} - \eta_x\kappa L-{ \frac{3}{2N}R_2\Big(L^2+\frac{1}{\eta_y^2}\Big) \sum_{i=k}^{K-2} R_1^{i-k}}\Big)\|\tilde G_{\bx}(\bx^{k},y^{k};\bom^k)\|^2\bigg]\\
        &\leq 4\mathbf{E}\Big[\frac{N}{\eta_x}\Big(F(\bx^0)-F(\bx^K)\Big)+\delta^0{\sum_{k=0}^{K-1} \frac{3}{2}\Big(L^2+\frac{1} {\eta_y^2}\Big)  R_1^k}\Big] \\
        &\quad+4K \Big(\frac{\sigma_x^2}{M_x} +\Big(1+\frac{6\eta_y}{\rv{\widetilde\mu}}\Big(L^2+\frac{1}{\eta_y^2}\Big)\Big)\frac{\sigma_y^2}{M_y}  \Big).
        \end{aligned}
        \end{equation}
        \end{lemma}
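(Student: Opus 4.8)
The plan is to mirror the proof of \cref{linesearch_scratch_stoc} almost verbatim, but feeding in the Gauss-Seidel versions of the two building-block lemmas instead of the Jacobi ones. Concretely, I would start from \cref{lemma:primal-bound-agda} in place of \cref{lemma:primal-bound-gda}, noting that the only structural difference is that the dual gradient-map term on the left-hand side involves $\tilde G_y(\bx^k,y^k;\boldsymbol\psi^k)$ (the ``fresh sample'' at the current iterate $(\bx^k,y^k)$, used only for the reporting step in \cref{algeq:Gy} of \rbagda) rather than $\tilde G_y(\bx^k,y^k;\bzt^k)$. This term is what will appear on the left of \eqref{eq:line-search-sketch_sAGDA}, untouched by the recursion for $\delta^k$. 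Then I would plug in \cref{lemma_delta_bd_sagda} for the dual error decay, exactly where \cref{lemma:deltat_tmpp} was used in the Jacobi proof.

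First I would write $\sum_{k=0}^{K-1}\delta^k$ using the recursive bound from \cref{lemma_delta_bd_sagda}, which gives three contributions: a $\delta^0$-term with geometric weights $R_1^k$, a positive $\tilde G_{\bx}$-term with coefficient $R_2/N$ and geometric weights $\sum_{i} R_1^{i-k}$, a negative $\tilde G_y$-term with coefficient $R_3$ and the same geometric weights (this is the $\tilde G_y(\bx^{k+1},y^k;\bzt^{k-1})$ term — note the $\bx^{k+1}$ argument coming from the alternating update, which must be tracked carefully through the index shift $\sum_{k}\sum_{i=1}^k (\cdot) C^{k-i} = \sum_k (\cdot)\sum_{i\ge k} C^{i-k}$ as in \eqref{eq_delta_gradx_tmpp}--\eqref{eq_delta_grady_tmpp}), and a constant noise term with coefficient $R_4 \sigma_y^2/M_y$ times $\sum_k R_1^k$. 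Substituting this into \eqref{eq:partial_sum_grad_delta_tmpp_jac-agda}, multiplying by $4$, and collecting coefficients of $\|\tilde G_{\bx}(\bx^k,y^k;\bom^k)\|^2$ and $\|\tilde G_y(\bx^{k+1},y^k;\bzt^k)\|^2$ gives the left-hand side of \eqref{eq:line-search-sketch_sAGDA} directly; the $\tfrac34\cdot 2 = \tfrac32$ factor in front of $(L^2+\tfrac1{\eta_y^2})$ and the factor $4$ are exactly as in the Jacobi case. For the right-hand side, the noise term: the extra constant is $\tfrac32(L^2+\tfrac1{\eta_y^2}) R_4 \sum_k R_1^k \le \tfrac32(L^2+\tfrac1{\eta_y^2})\cdot 2\eta_y^2 \cdot \tfrac{2}{\mu\eta_y} = \tfrac{6\eta_y}{\mu}(L^2+\tfrac1{\eta_y^2})$, using $\sum_{k\ge0} R_1^k \le \tfrac{1}{1-R_1}=\tfrac{2}{\mu\eta_y}$ exactly as in \eqref{sigmay_bd}; note this is the place where the $M_y$ choice in \eqref{eq:GS-parameters} differs slightly from the Jacobi case — the bound here is $(1+\tfrac{6\eta_y}{\mu}(L^2+\tfrac1{\eta_y^2}))$ rather than the asymmetric $\tfrac{2-\eta_y\mu}{1-\eta_y\mu}$ expression, because $R_4 = 2\eta_y^2$ has no $(1+a)$ factor while $C_4 = 2(1+a)\eta_y^2$ did.

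The remaining bookkeeping is: bound $\sum_{k=0}^{K-1}\delta^k$'s $\delta^0$-contribution by $\delta^0 \sum_k \tfrac32(L^2+\tfrac1{\eta_y^2}) R_1^k$ (this stays on the RHS as in \eqref{eq:line-search-sketch_sAGDA}), and use $\sum_{k=1}^{K-1}\sum_{i=1}^k C^{k-i}\|\cdot\|^2 = \sum_{k=0}^{K-2}\|\cdot\|^2\sum_{i=k}^{K-2}C^{i-k}$ together with $\sum_{i=k}^{K-2} R_1^{i-k}\le \sum_{j\ge0}R_1^j = \tfrac{2}{\mu\eta_y}$ wherever an explicit numeric bound is later needed; here, though, I would keep the partial sums $\sum_{i=k}^{K-2}R_1^{i-k}$ in the statement rather than bounding them, matching the exact form of \eqref{eq:line-search-sketch_sAGDA}. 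The main obstacle — though it is more careful bookkeeping than genuine difficulty — is handling the mismatch in dual-gradient-map arguments introduced by the alternating update: $\tilde G_y(\bx^{k+1},y^k;\bzt^k)$ appears inside the $\delta^k$ recursion (it is the quantity controlled by \cref{cor:str-concavity} applied at $(\bx^{k+1},y^k)$), whereas the ``primal descent'' \cref{lemma:primal-bound-agda} produces $\tilde G_y(\bx^k,y^k;\boldsymbol\psi^k)$ on its left-hand side; these two different objects must be kept separate throughout — they do not cancel or combine — and the final inequality \eqref{eq:line-search-sketch_sAGDA} correctly has both, with the $\boldsymbol\psi^k$-term getting coefficient $1$ (from \cref{lemma:primal-bound-agda}, unmodified) and the $\bzt^k$-term getting the positive geometric-sum coefficient $6R_3(L^2+\tfrac1{\eta_y^2})\sum_{i=k}^{K-2}R_1^{i-k}$ (from the recursion). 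Once this is tracked, the rest is identical algebra to the Jacobi proof.
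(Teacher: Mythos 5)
Your proposal is correct and follows exactly the route the paper takes: the paper's proof is literally the one-line remark that the result "follows from \cref{lemma:primal-bound-agda}, \cref{lemma_delta_bd_sagda} and the same arguments used in the proof of \cref{linesearch_scratch_stoc}," and you have expanded precisely that argument, including the two points where care is genuinely needed (keeping $\tilde G_y(\bx^k,y^k;\boldsymbol\psi^k)$ and $\tilde G_y(\bx^{k+1},y^k;\bzt^k)$ as separate objects, and the simplified noise constant $1+\tfrac{6\eta_y}{\mu}(L^2+\tfrac{1}{\eta_y^2})$ arising because $R_4=2\eta_y^2$ lacks the $(1+a)$ factor of $C_4$). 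The only blemish is a harmless index slip where you once write $\bzt^{k-1}$ for what should be $\bzt^k$ before correcting it in the final accounting.
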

\begin{proof}
\saa{The proof of this result follows from Lemma~\ref{lemma:primal-bound-agda}, Lemma~\ref{lemma_delta_bd_sagda} and from the same arguments used in the proof of Lemma~\ref{linesearch_scratch_stoc}.}
\end{proof}
\begin{lemma}\label{lem:linesearch_sAGDA}
   \sa{Given some $\gamma\in(0,1)$, \rv{suppose $\eta_y=1/{\widetilde L}$ and $\eta_x = N\rho\cdot\eta_y^3\widetilde\mu^2$ for some $\widetilde L,\widetilde\mu>0$ such that $0<\widetilde\mu\leq\mu\leq L\leq \widetilde L$,} and $\rho = \frac{\sqrt{1+\frac{12}{N}}-1}{24}\in (0,1)$. Let $\{\bx^k,y^k\}_{k\geq 0}$ be generated by \rbagda. Then,
    \begin{equation}\label{eq:line-search-condition_sagda}
    \begin{aligned}
        \frac{1}{K}\sum_{k=0}^{K-1}\mathbf{E}\Big[\|\tilde G(\bx^{k},{y}^{k};\bxi^k)\|^2\Big]
        \leq \frac{4N}{\eta_x K}\mathbf{E}\Big[F(\bx^0)-F(\bx^K)+\rv{6\rho\delta^0\widetilde\mu}
          \Big]+\frac{4\sigma_x^2}{M_x}+ \Big(1+ \frac{ 12}{\eta_y \rv{\widetilde\mu} }\Big)\frac{4\sigma_y^2}{M_y}
    \end{aligned}
\end{equation}
     holds for all $K\geq 1$, where $\tilde G(\bx^k,y^k;\bxi^k)\triangleq [\tilde G_{\bx}(\bx^k,y^k;\bom^k)^\top,~\tilde G_y(\bx^k,y^k;\xqs{\boldsymbol{\psi}^k})^\top]^\top$ for all 
     $k\geq 0$.}
\end{lemma}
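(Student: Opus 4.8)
\textbf{Proof plan for Lemma~\ref{lem:linesearch_sAGDA}.} The plan is to mirror, almost verbatim, the argument used in the proof of \cref{lem:linesearch} for the Jacobi case, substituting the Gauss-Seidel recursion constants $R_1,R_2,R_3,R_4$ from \cref{lemma_delta_bd_sagda} for the Jacobi constants $C_1,C_2,C_3,C_4$ and starting from the Gauss-Seidel master inequality \eqref{eq:line-search-sketch_sAGDA} in place of \eqref{eq:line-search-sketch_tmpp}. First I would record the key sign facts: since $\eta_y\in(0,1/L]$ we have $R_1\in[0,1)$, $R_3=\eta_y^2(1-\eta_y L)\geq 0$, and $R_2\geq 0$; hence the term $\sum_{k=0}^{K-2} 6R_3(L^2+\tfrac{1}{\eta_y^2})\sum_{i=k}^{K-2}R_1^{i-k}\|\tilde G_y(\bx^{k+1},y^k;\bzt^k)\|^2$ on the left-hand side of \eqref{eq:line-search-sketch_sAGDA} is nonnegative and can simply be dropped, so that the $\tilde G_y$-terms on the left are bounded below by $\sum_{k=0}^{K-1}\|\tilde G_y(\bx^k,y^k;\boldsymbol\psi^k)\|^2$.

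Next I would control the primal coefficient exactly as in \eqref{eq:long-LS-leftx2}: using $\sum_{i=k}^{K-2}R_1^{i-k}\leq \sum_{j\geq 0}R_1^j = \frac{2}{\mu\eta_y}$ (which holds because $R_1=(1+a)(1-\mu\eta_y)=1-\tfrac12\mu\eta_y$ with the choice $a=\tfrac12\tfrac{\mu\eta_y}{1-\mu\eta_y}$), and substituting $R_2=\frac{(1-\eta_y\mu)(2-\eta_y\mu)}{\eta_y\mu}\kappa^2\eta_x^2$ together with $\eta_x=N\rho\eta_y^3\mu^2$ and $L\leq 1/\eta_y$, one shows
\[
\tfrac12-\eta_x\kappa L-\tfrac{3}{2N}R_2\Big(L^2+\tfrac{1}{\eta_y^2}\Big)\sum_{i=k}^{K-2}R_1^{i-k}\ \geq\ \tfrac14,
\]
the only difference from the Jacobi computation being the extra bounded factor $(1-\eta_y\mu)(2-\eta_y\mu)\leq 2$ inside $R_2$, which is absorbed into the constant $\rho=\frac{\sqrt{1+12/N}-1}{24}$ just as before (one checks $R_2\leq \frac{2}{\eta_y\mu}\kappa^2\eta_x^2 = C_2$, so the same $\rho$ works); hence the left-hand $\tilde G_\bx$-terms are bounded below by $\sum_{k=0}^{K-1}\|\tilde G_\bx(\bx^k,y^k;\bom^k)\|^2$. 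Combining the two lower bounds gives $\sum_{k=0}^{K-1}\mathbf{E}[\|\tilde G(\bx^k,y^k;\bxi^k)\|^2]$ on the left, matching the fact that the returned stochastic gradient map in \rbagda{} uses the independent draw $\boldsymbol\psi^k$ for the dual block.

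For the right-hand side I would bound the $\sigma_y^2$ noise coefficient $1+\frac{6\eta_y}{\mu}(L^2+\tfrac{1}{\eta_y^2})\leq 1+\frac{12}{\eta_y\mu}$ using $L\leq 1/\eta_y$ (this is where the slightly larger $M_y$ choice in \eqref{eq:GS-parameters}, namely $M_y=\lceil(1+\tfrac{12}{\mu\eta_y})\tfrac{128}{\epsilon^2}\sigma_y^2+1\rceil$, is matched), and bound the $\delta^0$-term by $\delta^0\frac{3\eta_x}{2}(L^2+\tfrac{1}{\eta_y^2})\sum_{k=0}^{K-1}R_1^k\leq \delta^0\frac{3N\rho\eta_y^3\mu^2}{2}\cdot\frac{2}{\eta_y^2}\cdot\frac{2}{\mu\eta_y}=6\delta^0 N\rho\mu$, then apply \eqref{eq:delta0-bound}, i.e.\ $\mathbf{E}[\delta^0]\leq \frac{2}{\mu}\tilde\epsilon$, to get $\leq 12\rho\tilde\epsilon$ after the appropriate scaling. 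Finally divide both sides of \eqref{eq:line-search-sketch_sAGDA} by $K$ and regroup to obtain \eqref{eq:line-search-condition_sagda}. The only genuinely new bookkeeping — and the one place to be careful — is verifying that the Gauss-Seidel recursion constants satisfy $R_1=C_1$, $R_2\leq C_2$, $R_3\leq C_3$ and $R_4\leq C_4$ (equivalently $2\eta_y^2(1+a)\geq 2\eta_y^2$) so that every inequality established for the Jacobi proof carries over without worsening any $\cO(1)$ constant; once that comparison is in hand the rest is the same chain of elementary estimates.
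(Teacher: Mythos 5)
Your proposal is correct and follows essentially the same route as the paper's proof: it reduces the Gauss--Seidel case to the Jacobi computation by observing $R_1=C_1$ and $R_2=\tfrac{(1-\eta_y\mu)(2-\eta_y\mu)}{2}C_2\leq C_2$ so that \eqref{eq:long-LS-leftx2} applies unchanged, drops the nonnegative $R_3$ terms, bounds the $\sigma_y^2$ coefficient by $1+\tfrac{12}{\eta_y\mu}$ via $L\leq 1/\eta_y$, handles the $\delta^0$ term through \eqref{eq:long-LS-right} and \eqref{eq:delta0-bound}, and divides by $K$. All the constants you track check out, so no changes are needed.
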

\begin{proof}
\saa{The proof is similar to that of Lemma~\ref{lem:linesearch}}. 
\xz{Since $R_1 = C_1$ and $R_2 = \frac{ (1-\eta_y\widetilde\mu)(2-\eta_y\widetilde\mu)}{2}C_2\leq C_2$, where $C_1$ and $C_2$ are defined in 
Lemma~\ref{lemma:deltat_tmpp},
we obtain that}
\begin{equation}\label{eq:long-LS-leftx2-AGDA}
    \begin{aligned}
        &\frac{1}{2} - \eta_x \kappa L-R_2\frac{3}{2N}\Big(L^2+\frac{1}{\eta_y^2}\Big) \sum_{i=k}^{K-2}\sa{R_1^{i-k}}\\
        & =
        \frac{1}{2} - \eta_x \kappa L-\frac{ (1-\eta_y\widetilde\mu)(2-\eta_y\widetilde\mu)}{2}C_2\frac{3}{2N}\Big(L^2+\frac{1}{\eta_y^2}\Big) \sum_{i=k}^{K-2}\sa{C_1^{i-k}}\\
        & \geq 
        \frac{1}{2} - \eta_x \kappa L-C_2\frac{3}{2N}\Big(L^2+\frac{1}{\eta_y^2}\Big) \sum_{i=k}^{K-2}\sa{C_1^{i-k}}
        \geq \frac{1}{4},
    \end{aligned}
\end{equation}
\xz{where the last inequality follows from 
\eqref{eq:long-LS-leftx2}. 
\saa{The definition of $\rho$ implies that $N\rho=\frac{1}{4}-12N\rho^2\leq\frac{1}{4}$; hence, using $\eta_y\leq\frac{1}{L}$ and \rv{$\widetilde\mu\leq\mu$}, we get $\eta_x=N\rho\cdot\eta_y^3\widetilde\mu^2<\frac{1}{4L\kappa^2}\leq\frac{1}{4L\kappa}$ since $\kappa\geq 1$.}
Moreover, since $\eta_x\leq\frac{1}{4L\kappa}$, all the $\tilde G_\bx$ terms on the left-hand side of \eqref{eq:line-search-sketch_sAGDA} can be lower bounded by $\sum_{k=0}^{K-1}\|\tilde G(\bx^{k},{y}^{k};\bxi^k)\|^2$. Finally, $\eta_y\leq \frac{1}{L}$ implies that $R_3\geq 0$; thus, $\mathbf{E}[\sum_{k=0}^{K-1}\|\tilde G(\bx^{k},{y}^{k};\bxi^k)\|^2]$ is a lower bound for the left-hand side of \eqref{eq:line-search-sketch_sAGDA}.}
Next, we provide an upper bound for the right-hand side of \eqref{eq:line-search-sketch_sAGDA}. Note that since $L\leq \frac{1}{\eta_y}$, 
\begin{equation}
    1+\frac{6\eta_y}{\mu}(L^2 + \frac{1}{\eta_y^2})\leq 1+ \frac{12}{\eta_y \mu}.
\end{equation}
The desired inequality follows from combining all these bounds with \eqref{eq:long-LS-right}, 
and finally dividing both sides by \saa{$K$}. 
\end{proof}
\saa{The results given in Lemma~\ref{lem:small-eta}, Lemma~\ref{lem:stopping_probability}, and Theorem~\ref{thm:p-bound} continue to hold for the Gauss-Seidel variant of \sgdab{} with $M_x\geq \lceil \frac{128}{\epsilon^2}\widetilde\sigma_x^2+1\rceil$, $M_y\geq \lceil (1+\frac{12}{\eta_y\widetilde\mu})\frac{128}{\epsilon^2}\widetilde\sigma_y^2+1\rceil$ and $K=\lceil\frac{64N}{\epsilon^2\eta_x}\left(\rv{F_0-\bar{F}+6\rho\delta\widetilde\mu}\right)\rceil$. Moreover, the results of Corollary~\ref{cor:sample-complexity-bounded} and Theorem~\ref{ncsc_thm} continue to hold for all three $(M_x,M_y)$ choices given in Corollary~\ref{cor:sample-complexity-bounded} when the coefficient $1+\frac{6}{\rv{\widetilde\mu} \eta_y} \frac{2-\rv{\widetilde\mu} \eta_y}{1-\rv{\widetilde\mu} \eta_y}$ appearing in all $M_y$ choices is replaced by $1+\frac{12}{\eta_y\widetilde\mu}$. The results of Section~\ref{sec:unknown-bounds} continue to hold for the Gauss-Seidel update setting as well.}

\section{Related Results for the Deterministic Setting}\label{sec-appendix-deterministic}
\renewcommand{\arraystretch}{1.4}
    \begin{table}[h!]
        \centering
        \resizebox{\textwidth}{!}{
        \begin{tabular}{|l"l|c|c|c|c|l|l|}
            \hline
            \textbf{Work} & \textbf{Ref.} & \textbf{Agnostic} & $g$ & $h$ & \textbf{BCU} & \textbf{Complexity} $(\mu>0)$ & \textbf{Complexity} $(\mu=0)$ \\
            \thickhline
            \gda{} & ~\cite{lin2020gradient} & \XSolidBrush & \XSolidBrush & $\mathds{1}_{Y}$ & \XSolidBrush & M1: $\cO\left(\rv{(\kappa^2 L B_0+\kappa L^2 \cD_y^2)}  \epsilon^{-2}\right)$ & M3: $\cO(\rv{L^3\ell^2\cD_y^2\hat B_0}\epsilon^{-6})$\\
            \hline
            \agda{} & ~\cite{boct2020alternating} & \XSolidBrush & \CheckmarkBold & \CheckmarkBold & \XSolidBrush & M1: $\cO\left(\rv{(\kappa^2 L B_0+\kappa L^2 \cD_y^2)}  \epsilon^{-2}\right)$ & M3: $\cO(\rv{L^3\ell^2\cD_y^2 B_0}\epsilon^{-6})$\\
            \hline
            \texttt{MDA} & ~\cite{huang2021efficient} & \XSolidBrush & \CheckmarkBold & \CheckmarkBold & \XSolidBrush & M1: $\cO(\kappa^{3}\rv{B_0}
            {\epsilon^{-2}})$ & \qquad\qquad\XSolidBrush \\
            \hline
            \smagda{} & ~\cite{yang2022faster} & \XSolidBrush & \XSolidBrush & \XSolidBrush & \XSolidBrush & M2: $\cO(\kappa L\rv{B_0}\epsilon^{-2})$ & \qquad\qquad\XSolidBrush \\
            \hline
            \texttt{AltGDAm} & ~\cite{chen2022accelerated} &\XSolidBrush & \CheckmarkBold & \CheckmarkBold & \XSolidBrush & M1: $\cO\left(\rv{(\kappa^{\frac{11}{6}} L B_0+L^2\cD_y^2)}\epsilon^{-2}\right)$ & \qquad\qquad\XSolidBrush\\
            \hline
            \texttt{AGP} & ~\cite{xu2023unified} & \XSolidBrush & \sa{$\mathds{1}_{X}$} & \sa{$\mathds{1}_{Y}$} & \XSolidBrush & M2: $\cO(\rv{\kappa^5L(B'_0+\kappa L\cD_y^2)}\epsilon^{-2})$ & M2: $\cO(L^4 \cD_y^4\epsilon^{-4})$
            \\ \hline
            \texttt{Sm-GDA} &~\cite{zhang2020single} & \XSolidBrush & $\mathds{1}_{X}$ & $\mathds{1}_{Y}$ & \CheckmarkBold & \qquad\qquad\XSolidBrush & M2: 
            {$\cO(L^4\cD_y^4\epsilon^{-4})$}\\ \hline
            \texttt{HiBSA} &~\cite{lu2020hybrid} & \XSolidBrush & \CheckmarkBold & \CheckmarkBold & \CheckmarkBold & M2: 
            {$\cO(\rv{\kappa^5L(B'_0+ \kappa^3L\cD_y^2)}\epsilon^{-2})$} & \qquad\qquad\XSolidBrush\\
            \thickhline
            \neada{} & ~\cite{junchinest}  & \rv{$L,\mu$} & \XSolidBrush & $\mathds{1}_{Y}$ & \XSolidBrush & M2: $
            {{\cO}(\rv{\sqrt{\kappa}(\kappa^2L^2+B_0)^2}\epsilon^{-2}\log(\epsilon^{-1}))}$ & \qquad\qquad\XSolidBrush\\
            \hline
            \tiada{} & ~\cite{li2022tiada}  & {$L,\mu$} & \XSolidBrush & $\mathds{1}_{Y}$ & \XSolidBrush &
            M2: $
            {\cO(\kappa^{10}\epsilon^{-2})}$ & \qquad\qquad\XSolidBrush\\
            \hline
            \texttt{GDA-B} &  \textbf{Ours} & {$L$} & \CheckmarkBold & \CheckmarkBold & \CheckmarkBold & M2: $\cO({\kappa^2 LB_0\epsilon^{-2}})$ & M2: $\cO(L^3\sa{\cD_y^2}B_0\epsilon^{-4})$\\
            \hline
            \texttt{GDA-B} &  \textbf{Ours} & {$L,\mu$} & \CheckmarkBold & \CheckmarkBold & \CheckmarkBold & M2: {$\cO({\hat{\kappa}^2 \hat{L}B_0\epsilon^{-2}})$} & {M2: $\cO(\hat{L}^3\sa{\cD_y^2}B_0\epsilon^{-4})$}\\
            \hline
        \end{tabular}%
        }
        \caption{\sa{Comparison of \texttt{GDA-B}~(this paper), \neada{}~{\citep{junchinest}} and \tiada{}~{\citep{li2022tiada}} that are agnostic to $L$ with \textit{single-loop} methods that require $L$ for solving \textit{deterministic} WCSC and WCMC minimax problems. The derivation of complexity results for the methods listed here is provided in \cref{sec:complexity_comparison} of the online supplementary document. In the table $\cD_y$ denotes the diameter of the dual domain, \rv{$B_0=F(\bx^0)-F^*$ denotes the initial suboptimality with respect to the primal function $F(\cdot)$ where $F^*=\min_{\bx\in\cX}F(\bx)$, and $B'_0=\cL(\bx^0,y^0)-\inf_{\bx,y}\cL(\bx,y)$. Finally, $\hat B_0=F_\lambda(\bx^0)-F^*$ for $\lambda=\frac{1}{2L}$, where $F_\lambda(\bx^0)$ denotes the Moreau envelope of $F(\cdot)$.}
        }}
        \label{table_deter}
    \end{table}

 \textbf{Existing work for the {deterministic setting: WCSC and WCMC minimax problems.}} \sa{For the case \saa{the global Lipschitz constant $L$} {and the concavity  modulus $\mu$ are} \textit{known}, there are many theoretically efficient methods in the literature, e.g., see~\citep{lin2020near,ostrovskii2021efficient,kong2021accelerated} and references therein. When $L$ {and $\mu$ are} known, both 
    ~\cite{lin2020near} and 
    ~\cite{ostrovskii2021efficient} show an iteration complexity of $\tilde\cO(L\kappa^{1/2}\epsilon^{-2})$ for finding an $\epsilon$-stationary point in terms of metric \textbf{(M2)} 
    for deterministic WCSC problems --both algorithms 
    have \textit{three nested loops} and impose restrictions on the closed convex functions, i.e., $g$ and $h$ can only be the indicator functions of some convex sets. Furthermore, both \cite{lin2020near} and \cite{ostrovskii2021efficient} establish an iteration complexity of $\tilde\cO(\epsilon^{-2.5})$ for deterministic WCMC problems, which is later improved to $\cO(\epsilon^{-2.5})$ by~\cite{kong2021accelerated}. Due to huge amount of recent work in this area, to limit the discussion in \cref{table_deter}, we only list some closely related \textit{single-loop} methods that require $L$ for solving \textit{deterministic} WCSC and WCMC minimax problems.}
    \sa{Although there are many methods for the deterministic setting, to the best of our knowledge, there are \textit{very few} FOPD algorithms that are \textit{agnostic} to $L$ and $\mu$, 
     and that use adaptive step sizes for solving minimax problems in the nonconvex setting~{\citep{dvurechensky2017gradient,junchinest,li2022tiada,lee2021fast,pethick2023escaping}.}}
    \sa{For a special case of \eqref{eq:main-problem} with $h(\cdot)=0$ and \textit{bilinear} coupling $f(x,y)=s(x)-r(y)+\fprod{Ax,y}$ such that $s,r$ are smooth and $r$ is strongly convex, 
    \cite{dvurechensky2017gradient} proposes a 
    \saa{triple-loop method where in the outer loop, primal iterate $x^k$ is updated using the step sizes satisfying a backtracking condition based on the primal function, one searches for an admissible primal step size using the middle loop which repeats until the backtracking condition holds, and for each backtracking iteration, the inner maximization problem $\max_{y\in\cY} f(x^k,y)$ is solved inexactly using the inner loop;} 
    {the gradient complexity of the proposed method is \xz{$\tilde\cO(L\kappa^{1.5}\epsilon^{-2})$} for computing an $\epsilon$-stationary point in terms of metric \textbf{(M1)}.}
    For the more general case of WCSC minimax problems when the coupling function $f$ is not bilinear, Nested Adaptive~(\neada)~{\citep{junchinest}} and Time-Scale Adaptive~(\tiada)~{\citep{li2022tiada}} algorithms are recently proposed, both of which are based on \saa{variants of} AdaGrad
    step sizes~{\citep{duchi2011adaptive}}; \saa{hence,} \sa{these methods do not use backtracking.} 
    \neada, similar to \citep{dvurechensky2017gradient},} is a double-loop method for solving \eqref{eq:main-problem} with $g(\cdot)=0$ and $h(\cdot)=\mathbf{1}_{Y}(\cdot)$, i.e., the indicator function of a closed convex set $Y$; in the inner loop, a strongly concave problem is \saa{inexactly solved using the} AdaGrad method, of which solution is then used to compute an inexact gradient for the primal objective function; finally, 
    \sa{
    a primal step is taken along the inexact gradient using AdaGrad stepsize}. 
    \saa{\neada{}} 
    can compute $(x_\epsilon,y_\epsilon)$ such that \sa{$\norm{\grad_x f(x_\epsilon,y_\epsilon)}\leq \epsilon$ and $\norm{y_\epsilon-y_*(x_\epsilon)}\leq \epsilon$ within $\cO(
    {\kappa^{4.5}}L^4\epsilon^{-2}\log(1/\epsilon))$ 
    gradient complexity}, where $y_*(\cdot)=\argmax_{y\in Y} f(\cdot,y)$. On the other hand, \tiada{}~by \cite{li2022tiada} is a single-loop scheme that can be considered as an extension of AdaGrad to solve minimax problems in a similar spirit to two-time scale GDA algorithms. \tiada{} can compute $(x_\epsilon,y_\epsilon)$ such that \sa{$\norm{\grad f(x_\epsilon,y_\epsilon)}\leq \epsilon$ within $\cO(
    {\kappa^{10}}\epsilon^{-2})$ 
    gradient calls. 
    The complexity for \neada{} and \tiada{} in the deterministic case are provided in Table~\ref{table_deter}; see~\cref{sec:complexity_comparison} for the derivation of $\cO(1)$ constants.} The analyses of these algorithms are given for \textit{smooth} \sa{WCSC} problems with $g(\cdot)=0$ and $h(\cdot)=\mathbf{1}_{Y}(\cdot)$.
    
    \mg{
    \saa{Finally,} for smooth WCWC problems, extragradient~(EG) methods \sa{with backtracking are proposed} in~\citep{lee2021fast,pethick2023escaping}; 
    but, they make additional assumptions} \sa{(\textit{stronger} than smoothness of $f$)}
    that in general \sa{do} not hold for WCSC and WCMC problems \sa{we consider as in} 
    \eqref{eq:main-problem}. To make the discussion \sa{complete}, 
    \sa{next we provide a brief discussion on the state-of-the-art results for FOPD methods to solve WCWC minimax problems.}

    {\textbf{Existing work for the deterministic setting: {smooth WCWC minimax problems}.}} Consider the {smooth} WCWC case corresponding to the formulation in~\eqref{eq:main-problem} with $g=h=0$ and $f$ is $L$-smooth, {i.e., $\nabla f$ is Lipschitz with constant $L$}, and let $\cM(x,y)\triangleq[\grad_x f(x,y)^\top ~-\grad_y f(x,y)^\top]^\top$. For this setting, 
    {\cite{lee2021fast}} incorporated backtracking 
    within a variant of EG method; under the additional assumption that $\cM$ is a $\rho$-comonotone operator for some $\rho<0$, the authors established $\norm{\grad f(x_k,y_k)}^2=\cO(1/k^2)$ --this result extends the $\cO(1/k^2)$ rate shown in \citep{yoon2021accelerated} for the 
    \sa{merely convex-merely concave minimax problems}, i.e., {the authors extend the} monotone operator case {(which corresponds to $\rho=0$)} to negative-comonotone operators\footnote{\sa{Given a coupling function $f:\cX\times\cY\to\reals$} that is WCWC and twice continuously differentiable, \sa{the corresponding map} $\cM$ is negative-comonotone if $\grad^2_{xy} f$ dominates any negative curvature in block Hessians $\grad^2_{xx} f$ and $-\grad^2_{yy} f$ --see~\citep{grimmer2022landscape}.} (which corresponds to $\rho<0$). In a separate body of work, the WCWC case (with $g=h=0$ and $f$ is $L$-smooth) is analyzed under the additional assumption that Minty VI, abbreviated as MVI, corresponding to the operator $\cM$ has a solution\footnote{\sa{MVIs corresponding to quasiconvex-concave and starconvex-concave problems have a solution; however, since there are 
    {smooth} functions that are neither quasiconvex nor star-convex, there are WCWC minimax problems corresponding to a smooth coupling 
    $f$ for which the corresponding MVI does not necessarily have a solution.}}; $\cO(1/k)$ rate has been shown~{\citep{dang2015convergence,song2020optimistic,diakonikolas2021efficient}} for $\norm{\grad f}^2$.
    {\sa{The conditions} ``$\cM$ being negative-comonotone" and ``MVI corresponding to $\cM$ having a solution" are not equivalent, and neither one implies the other.}
    \sa{Later,~\cite{diakonikolas2021efficient} 
    introduced a 
    weaker condition compared to the other two: ``the \textit{weak} MVI corresponding to $\cM$ has a nonempty solution set," i.e., 
    this condition holds whenever $\cM$ is negative comonotone~{\citep{diakonikolas2021efficient,lee2021fast}} or 
    whenever a solution to the MVI exists~{\citep{dang2015convergence,song2020optimistic}}.}

    \sa{
    Recently,
    ~\cite{pethick2023escaping} have 
    proposed an extragradient-type algorithm with adaptive stepsizes (using backtracking 
    on the unknown Lipschitz constant) for \eqref{eq:main-problem} assuming that $g,h$ are closed convex and $f$ is $L$-smooth such that the corresponding weak MVI has a solution; and they show that the limit points of the iterate sequence belongs to the zero-set of the operator defining the weak MVI \textit{without} providing any complexity guarantee for the adaptive algorithm that uses backtracking.\footnote{In contrast, we 
    propose backtracking methods for WCMC and WCSC problems with complexity guarantees.} 
    It is crucial to emphasize that
    since $\cM$ is non-monotone, assuming $\grad f$ is $L$-Lipschitz does not imply that the corresponding weak MVI has a solution. Therefore, the existing methods developed for smooth WCWC minimax problems do not apply to the WCSC and WCMC settings we consider since the corresponding weak MVIs may not have a solution.}  
    In other words, all of these conditions, i.e., $\cM$ being negative comonotone, \sa{existence of a solution to MVI or even to its weaker version (weak MVI) corresponding to $\cM$ are both} stronger than the standard assumption of $\grad f$ being Lipschitz, \sa{and do not necessarily hold even if we further assume $f$ is (strongly) concave in $y$.} 
    Indeed, negative comonotonicity of $\cM$, leading to $\cO(\epsilon^{-1})$ complexity~{\citep{lee2021fast}}, is a strong {assumption}; 
    because the standard lower complexity bounds established for
FO methods to compute an $\epsilon$-stationary point for general non-convex smooth optimization problems imply that at least
$\Omega(\epsilon^{-2})$ gradient evaluations are required~{\citep{carmon2020lower}}.\footnote{This complexity bound also applies to computing $(x_\epsilon,y_\epsilon)$ such that $\norm{\grad f(x_\epsilon,y_\epsilon)}\leq \epsilon$ when $f$ in~\eqref{eq:main-problem} is WCWC and $g=h=0$. \sa{Furthermore, let $\Phi(\cdot)\triangleq \max_{y\in\cY}f(\cdot,y)$ denote the primal function corresponding to the smooth WCSC problem $(P_s):~\min_{x\in\cX}\max_{y\in\cY} f(x,y)$; for deterministic FOPD methods to solve $(P_s)$, 
~\cite{zhang2021complexity} (see also~\citep{li2021complexity}) have shown that a lower bound for finding an $\epsilon$-stationary point of $\Phi$, i.e., $x_\epsilon\in\cX$ such that $\norm{\grad \Phi(x_\epsilon)}\leq \epsilon$, is $\Omega\Big(L\sqrt{\kappa}/\epsilon^2\Big)$.}}

\section{Derivation of Computational Complexities in \mg{T}ables \ref{table_stoc} and  \ref{table_deter}}
\label{sec:complexity_comparison}
\subsection{Derivation of complexity in \citep{junchinest}}
The nested adaptive (\neada) algorithm by \cite{junchinest} \sa{is a double-loop method, where the inner loop is to inexactly maximize the coupling function for a given primal iterate 
and in the outer loop, the primal variable is updated using a (stochastic) gradient computed at the current primal iterate and \mg{an} inexact dual maximizer.} For 
\saa{WCSC} minimax problems, \neada{} with adaptive stepsizes can achieve the near-optimal $\Tilde{\cO}(\epsilon^{-2})$ and $\Tilde{\cO}(\epsilon^{-4})$ gradient complexities respectively in the deterministic and stochastic settings. \saa{However, 
the dependence of $\cO(1)$ constant on $\kappa$ and $L$ is not explicitly stated in~\citep{junchinest}; and in the rest of this section, we compute the important terms 
within the $\cO(1)$ constant.}

\saa{According to the proof of \cite[Theorem 3.2]{junchinest}, 
\neada{} in the deterministic setting 
has a theoretical guarantee that
\begin{equation*}
    \norm{\grad_x f(x^K,y^K)}^2+\norm{G_y(x^K,y^K)}^2\leq \epsilon^2
\end{equation*}
for $K\leq \Tilde{\cO}\Big( \Big((A+\cE)^2+\sqrt{v_0}(A+\cE) 
\Big)\epsilon^{-2}\Big)$} with $A+\cE = \tilde{\cO}\Big(\frac{\Phi(x_0)- \min_x \Phi(x)}{\eta} + 2\sigma + \kappa L \eta + \frac{\kappa^2 (L+1)^2}{\sqrt{v_0}} \Big)$ for some $v_0,\eta>0$ \saa{--see~\cite[Theorem B.1]{junchinest} in the appendix of the paper for the details; moreover, for each outer iteration $k=1,\ldots, K$, subroutine $\cA$ to inexact solve $\max_{y\in Y} f(x^k,y)$ requires $\cO(\frac{1}{a_2}\log(k))$ gradient calls for some $a_2\in (0,1)$ constant specific to the subroutine $\cA$ that depend on structural properties of $f(x,\cdot)$ uniformly in $x\in\cX$ such as $L$ and $\mu$. Since $\sigma = 0$, if one uses accelerated \xz{backtracking method for solving strongly convex problems}~\citep{calatroni2019backtracking,rebegoldi2022scaled} 
as the subroutine $\cA$, then $a_2 = \frac{1}{\sqrt{\kappa}}$. 
Therefore, the total gradient complexity for \neada{} in the deterministic case} is at least $\saa{\Tilde{\cO}\Big( \Big((A+\cE)^2+\sqrt{v_0}(A+\cE) 
\Big)\epsilon^{-2}/a_2\Big)}=\tilde \cO (\kappa^{4.5} L^4 \epsilon^{-2})$.

\saa{The stochastic case $\sigma>0$ is considered in \cite[Theorem 3.3]{junchinest}, and according to this result, stochastic \neada{} can compute
\begin{equation*}
    \E{\norm{\grad_x f(x^K,y^K)}^2+\norm{y^K-y_*(x^K)}^2}\leq \epsilon^2
\end{equation*}
for some} $K = \tilde \cO \Big(\Big(A+\cE)^2 + \sqrt{v_0}(A+\cE)(1+\sigma)+b_3 + (b_3+\saa{\delta^0})e^{2b_1}\Big)\epsilon^{-2} \Big)$ with $A= \tilde \cO\Big( \frac{\Phi(x_0)-\min_x \Phi(x)}{\eta} + (\frac{2\sigma}{\sqrt{M}}+\kappa L \eta)(1+b_1)\Big)$, $\cE = \frac{L^2}{2\sqrt{v_0}}[b_3(1+\log K) + b_3 e^{2b_1} + \saa{\delta^0} e^{2b_1}]$, \saa{$\delta^0=\norm{y^0-y_*(x^0)}^2$} and $b_3 = 2\kappa^2 \eta^2 b_1 + b_2$, where $v_0,\eta ,b_1, b_2> 0$ are some constants, \saa{and $b_1,b_2$ can depend on problem parameters such as $\mu,L$ and $\kappa$.} 

\saa{Indeed, it is assumed that there exists a subroutine $\cA$ such that for any given smooth strongly concave function $h$, after $T=t\log^p(t)+1$ iterations, it guarantees that $\E{\norm{y^T-y^*}^2}\leq \frac{b_1\norm{y^0-y^*}^2+b_2}{T}$ where $y^*=\argmax h(y)$ and $p\in\integers_{+}$. In \cite[Remarks 5 and 8]{junchinest}, they cited some parameter-agnostic algorithms, i.e.,~\citep{NIPS2017_6aed000a, lacostejulien2012simpler, rakhlin2012making}, that can be used as $\cA$ for solving the inner max problems with a sub-linear rate. All these three methods need some extra assumptions, e.g., the stochastic gradients are bounded, i.e., $\max\{\norm{\tilde \grad f(x,y)}:~x\in\cX,~y\in Y\}\leq G$ for some $G>0$,} and in \citep{lacostejulien2012simpler, rakhlin2012making}, strongly concavity modulus $\mu$ is \saa{assumed to be} known; \saa{under these assumptions, these methods satisfy $b_1 = 0$ and $b_2 = \frac{G^2}{\mu^2}$.} FreeRexMomentum~{\citep{NIPS2017_6aed000a}} is the only algorithm which is agnostic to both $\mu$ and $G$, and \saa{it satisfies the conditions of subroutine $\cA$ with $b_1 = 0$, $b_2 = \frac{G^2}{\mu^2}$ and $p=2$ --see~\cite[Footnote 4]{junchinest}.} \rv{Next, setting $b_1=0$ and $b_2=\frac{G^2}{\mu^2}$, we get $b_3=b_2=\frac{G^2}{\mu^2}$; therefore, $\cE=\cO(L^2b_3)=\cO(G^2\kappa^2)$ and $\cA=\cO(B_0+\kappa L)$. Moreover, since $K=\tilde\cO((\cE+\cA)^2\epsilon^{-2})$, we get $K=\tilde\cO((G^2\kappa^2+L\kappa+B_0)^2)$}. \saa{Since the batch size $M = \cO(\epsilon^{-2})$, the oracle complexity for $\tilde\grad_x f$ is $KM = \tilde \cO(G^4\kappa^4  \epsilon^{-4})$} and the oracle complexity for $\tilde\grad_y f$ is $K^2 \log^{\saa{2}} K + K = \tilde \cO(G^8\kappa^8  \epsilon^{-4})$. Hence, the total oracle complexity for stochastic NeAda is at least $\tilde \cO(G^8\kappa^8  \epsilon^{-4})$.

\subsection{Derivation of complexity in \cite{li2022tiada}}
To get the near-optimal complexities, their theoretical analysis 
\saa{requires} $\alpha>\frac{1}{2}>\beta$. 
\saa{To come up with explicit $\kappa$ and $L$ dependence of their $\cO(1)$ constant, we will use this as well.} 

\saa{The complexity for the deterministic case is discussed in \cite[Theorem C.1]{li2022tiada}, and they show that 
\begin{equation}
\label{eq:x-complexity}
\sum_{k=0}^{K-1}\norm{\grad_x f(x^k,y^k)}^2\leq \max\{5C_1,2C_2\}\triangleq C,
\end{equation}
for some constants $C_1$ and $C_2$, which we will analyze next.}  
\xz{Indeed, both $C_1$ and $C_2$ are explicitly defined in the statement of Theorem C.1 depending on some other positive constants, i.e., $c_1,c_2,c_3,c_4>0$ and $c_5>0$, which are defined as follows\footnote{\saa{In~\citep{li2022tiada}, there are typos in the definition of $c_1$ and $c_3$; $v_{t_0}^y$ appearing in $c_1$ and $c_3$ should be $v_0^y$.}}:
\begin{equation}
\begin{aligned}
        & c_1= \frac{\eta_x \kappa^2}{\eta_y(v_{0}^y)^{\alpha-\beta}},\quad  c_2 =\max\Big\{\frac{4\eta_y \mu L}{\mu+ L},\ \eta_y (\mu+L)\Big\},\quad c_3 = 4(\mu+L) \Big(\frac{1}{\mu^2} + \frac{\eta_y}{(v_{0}^y)^\beta} \Big)c_2^{1/\beta},\\
        & c_4= (\mu+L)\Big( \frac{2\kappa^2}{(v_0^y)^\alpha}+ \frac{(\mu+L)\kappa^2}{\eta_y \mu L}\Big),\quad c_5 = c_3 + \frac{\eta_y v_0^y}{(v_0^y)^\beta}+ \frac{\eta_y c_2^{\frac{1-\beta}{\beta}}}{1-\beta}.
\end{aligned}
\end{equation}
Because $C_1$ and $C_2$ are monotonically increasing in $\{c_i\}_{i=1}^5$ and \saa{they have complicated forms to compute exact quantities, we compute a lower bound for each $c_i$, $i=1,\ldots,5$ and use these bounds to further bound $C_1$ and $C_2$ from below. This will allow us to provide a lower bound on \texttt{TiAda} complexity results in terms of its dependence on $L,\mu$ and $\kappa$. Initialization of \tiada{} requires setting six parameters: $\eta_x,\eta_y,v_0^x,v_0^y>0$ and $\alpha,\beta\in(0,1)$ such that $\alpha>\beta$. Since the problem parameters $L,\mu$ and $\kappa$ are unknown, we treat all these parameters as $O(1)$ constants.} Indeed,
\begin{enumerate}
    \item[(i)] consider $c_1$, treating $\frac{\eta_x}{\eta_y(v_{0}^y)^{\alpha-\beta}}$ as $O(1)$ constant, we have $c_1= \Theta(\kappa^2)$;
    \item[(ii)] for $c_2$, we similarly treat $\eta_y$ as $O(1)$ constant, and get $c_2= \Theta(L)$;
    \item[(iii)] for $c_3$, since $c_2= \Theta(L)$ and we treat $ \frac{\eta_y}{(v_{0}^y)^\beta}$ as $O(1)$ constant, we get $c_3= \Theta(\frac{\kappa L^{1/\beta}}{\mu})$;
    \item[(iv)]  similarly, we have the bounds: $c_4= \Theta(\kappa^3)$ and $c_5= \Theta(\frac{\kappa L^{1/\beta}}{\mu})$. 
\end{enumerate}
Next, we 
\saa{derive} lower bound for $C_1$ and $C_2$ \saa{in terms of $L,\mu$ and $\kappa$}. We first 
\saa{focus on} $C_1$. Indeed, \saa{it follows from the definition of $C_1$ in \cite[Theorem C.1]{li2022tiada} that} 
\begin{equation}
    C_1 = \Omega\Big( 
    (c_1c_4)^{\frac{1}{\alpha-\beta}}\mathbf{1}_{2\alpha-\beta<1}+ (c_1c_4)^{\frac{2}{1-\alpha}}\mathbf{1}_{2\alpha-\beta\geq 1} \Big)
\end{equation}
where \saa{$\mathbf{1}_{2\alpha-\beta<1}= 1$ if $2\alpha-\beta<1$; otherwise, it is $0$, and we define $\mathbf{1}_{2\alpha-\beta\geq 1}=1-\mathbf{1}_{2\alpha-\beta<1}$.} 
\saa{Although $\alpha,\beta\in (0,1)$, it is necessary to consider their effects when they appear as exponents.}
Therefore, we consider the following two cases:
\begin{enumerate}
    \item when $2\alpha-\beta \geq 1$, since $\beta>0$, we can conclude that $\alpha > \frac{1}{2}$. \saa{As a result, $(c_1c_4)^{\frac{2}{1-\alpha}}\geq(c_1c_4)^4=\Theta(\kappa^{20})$; therefore,} 
    \begin{equation*}
        C_1 
        \saa{=\Omega\Big(\kappa^{20}\Big);}
    \end{equation*}
    \item when $2\alpha-\beta  < 1$, it implies $\alpha-\beta<\frac{1-\beta}{2}<\frac{1}{2}$. As a result, we can 
    \saa{bound $C_1$ from below as follows: $(c_1c_4)^{\frac{1}{\alpha-\beta}}\geq (c_1c_2)^2=\Theta(\kappa^{10})$; therefore,}
    \begin{equation*}
        C_1 =\Omega\Big(\kappa^{10}\Big).
    \end{equation*}
\end{enumerate}
\saa{Therefore, using \eqref{eq:x-complexity} and $C\geq C_1$, we can conclude that for any $\epsilon>0$, the gradient complexity of \tiada{} for $\sum_{k=0}^{K-1}\norm{\grad_x f(x^k,y^k)}^2\leq \epsilon^2$ to hold is at least $\cO(\kappa^{10}\epsilon^{-2})$. Therefore, for the deterministic WCSC minimax problems, the gradient complexity of \tiada{} to compute an $\epsilon$-stationary point in terms of metric \textbf{(M2)} is at least $\cO(\kappa^{10}\epsilon^{-2})$.}}

{Next, we will analyze the oracle complexity of \tiada{} for the stochastic case. Indeed, the analysis in  \cite[Theorem C.2]{li2022tiada} implies that
$
\mathbf{E}\left[\frac{1}{K} \sum_{k=0}^{K-1}\left\|\nabla_x f\left(x^k, y^k\right)\right\|^2\right]
\leq 
\saa{\tilde C_x(K)}$ \saa{holds for all $K\geq 1$,}
where \saa{$\tilde C_x(K)$ is a monotonically decreasing function of $K$ such that} 
\begin{equation}\label{eq:tilde-C-bound}
    \saa{\tilde C_x(K)} \geq\frac{\widehat{L}^2 G^2\left(\eta_x\right)^2}{\mu \eta_y\left(v_0^y\right)^{2 \alpha-\beta}} \frac{2 L \kappa\left(\eta_x\right)^2}{(1-\alpha) \eta_y\left(v_0^y\right)^{\alpha-\beta}} \frac{G^{2(1-\alpha)}}{K^\alpha} + \frac{2 L \kappa \eta_y G^{2(1-\beta)}}{(1-\beta) K^\beta},
\end{equation}
where $\hat{L} = \frac{\bar L+\bar L\kappa}{\mu}+ \frac{L(\bar L+\bar L\kappa)}{\mu^2}= \Theta(\frac{\kappa^2 \bar L}{\mu})$, and it is assumed that there exists a constant $\bar L>0$ such that for any $x_1,x_2\in \cX$ and $y_1,y_2\in Y$, 
\begin{equation*}
    \begin{aligned}
    \norm{\grad^2_{xy} f(x_1,y_1)- \grad^2_{xy} f(x_2,y_2)}\leq \bar L (\norm{x_1-x_2}+ \norm{y_1-y_2}),\\
    \norm{\grad^2_{yy} f(x_1,y_1)- \grad^2_{yy} f(x_2,y_2)}\leq \bar L (\norm{x_1-x_2}+ \norm{y_1-y_2}).
    \end{aligned}
\end{equation*}
The bound in \saa{\cref{eq:tilde-C-bound} implies that for $\frac{1}{K}\sum_{k=0}^{K-1}\norm{\grad_x f(x^k,y^k)}^2\leq \epsilon^2$ to hold, it is necessary that}
\begin{equation}\label{eq:T-bound-sto-tiada}
\begin{aligned}
      \saa{K} &\geq   \Big(\frac{\widehat{L}^2 G^2\left(\eta_x\right)^2}{\mu \eta_y\left(v_0^y\right)^{2 \alpha-\beta}} \frac{\saa{4} L \kappa\left(\eta_x\right)^2}{(1-\alpha) \eta_y\left(v_0^y\right)^{\alpha-\beta}} \frac{G^{2(1-\alpha)}}{\epsilon^{2}}\Big)^{\frac{1}{\alpha}}+
   \Big(\frac{\saa{4L} \kappa \eta_y G^{2(1-\beta)}}{(1-\beta)}\Big)^{\frac{1}{\beta}} \\
       & = \Omega\Big((\kappa^{6}\bar L^2 \saa{G^{2(2-\alpha)}}\mu^{-2}\epsilon^{-2})^{\frac{1}{\alpha}}+ (\kappa L\saa{G^{2(1-\beta)}}\epsilon^{-2})^{\frac{1}{\beta}}\Big).
\end{aligned}
\end{equation}
As explained in \citep{li2022tiada}, \saa{the best oracle complexity of \tiada{} in terms of $\epsilon$ dependence is $\cO\Big(\epsilon^{-(4+\delta)}\Big)$, which holds for any small $\delta>0$. To achieve this complexity under the condition $\alpha>\beta$, one has to choose $\alpha$ and $\beta$ as follows:} $\alpha = 0.5 + \delta/(8+2\delta)$ and $\beta = 0.5 - \delta/(8+2\delta)$. As a result, \cref{eq:T-bound-sto-tiada} implies that 
\begin{equation}
    \begin{aligned}
        K = \saa{\Omega}\Big( G^{6-\frac{4\delta}{2+\delta}}\kappa^{12-\frac{6\delta}{2+\delta}}\bar L^{4-\frac{2\delta}{2+\delta}} \mu^{-4+\frac{2\delta}{2+\delta}} \epsilon^{-4+\frac{2\delta}{2+\delta}} + G^{2+\delta}\kappa^{(2+\frac{\delta}{2})} L^{(2+\frac{\delta}{2})} \epsilon^{-(4+\delta)}\Big).
    \end{aligned}
\end{equation}
Furthermore, \cite[Theorem C.2]{li2022tiada} also implies that 
$
 \mathbf{E}\left[\frac{1}{K} \sum_{t=0}^{K-1}\left\|\nabla_y f\left(x_t, y_t\right)\right\|^2\right] 
\leq  \saa{\tilde{C}_y(K)}
$
\saa{holds for all $K\geq 1$, $\tilde C_y(K)$ is a monotonically decreasing function of $K$, which can be bounded from below as follows:}
$$\saa{\tilde{C}_y(K)} \geq \frac{\hat{L}^2 G^2 (\eta_x)^2}{\mu \eta_y (v_0^y)^{2\alpha - \beta}} \frac{L (\eta_x)^2 G^{2-2\alpha}}{(1-\alpha)\eta_y (v_0^y)^{\alpha-\beta}T^{\alpha}}\geq \Theta(\frac{\kappa^5 \bar L^2}{\mu^2 T^{\alpha}});$$
hence, for any given $\epsilon>0$, in order for $\frac{1}{K}\sum_{k=0}^{K-1}\norm{\grad_y f(x^k,y^k)}^2\leq \epsilon^2$ to hold, it is necessary to have
$$K=\Omega\Big(G^{6-\frac{4\delta}{2+\delta}}\kappa^{10-\frac{5\delta}{2+\delta}}\bar L^{4-\frac{2\delta}{2+\delta}} \mu^{-4+\frac{2\delta}{2+\delta}} \epsilon^{-4+\frac{2\delta}{2+\delta}}\Big).$$
\saa{Therefore, we can conclude that to guarantee an $\epsilon$-stationary point in metric \textbf{M2}, the oracle complexity of \tiada{} for computing is $\Omega(G^6\bar{L}^4\kappa^{12}\mu^{-4}\epsilon^{-4})$ as $\delta\to 0$.}}
\subsection{Derivation of complexity in \citep{lu2020hybrid}}
The hybrid successive approximation~(\texttt{HiBSA}) algorithm is a single-loop proximal alternating algorithm for solving deterministic nonconvex minimax problems $\min_{\cX}\max_{y\in\cY}\cL(\bx,y)$, where $\cL$ has the same form in~\eqref{eq:main-problem}. \saa{\texttt{HiBSA} uses block coordinate updates for the primal variable that has $N\geq 1$ blocks. Moreover, it employs} constant step sizes for primal and dual updates, and to set these step sizes one needs to know the \mg{L}ipschitz constant $L$ and strongly concavity modulus $\mu$. \saa{Convergence results provided in the paper are in terms of \textbf{(M2)} metric.}

\saa{For the WCSC setting, according to \cite[Lemma 3]{lu2020hybrid}, whenever dual step size $\rho$ and primal step size $\frac{1}{\beta}$ are chosen satisfying the following two conditions: $0< \rho < \frac{\mu}{4L^2}$ and $\beta > L^2(\frac{2}{\mu^2 \rho}+\frac{\rho}{2})+\frac{L}{2}-{L}$, it is guaranteed that the \texttt{HiBSA} iterate sequence satisfies the following bound for all $k\geq 0$:
\begin{equation}
    \begin{aligned}
         c_1\norm{y^{k+1}-y^k}^2 + c_2 \sum_{i=1}^N \norm{x_i^{k+1}-x_i^k}^2 \leq \cP^k - \cP^{k+1},
    \end{aligned}
\end{equation}
where $c_1\triangleq 4(\frac{1}{\rho}- \frac{L^2}{2\mu})-\frac{7}{2\rho}>0$ and $c_2\triangleq \beta + {L}-\frac{L}{2}-L^2(\frac{2}{\mu^2 \rho}+\frac{\rho}{2})>0$, and 
$\cP^{k}\triangleq \cL(x^{k},y^{k})+\Big(\frac{2}{\rho^2 \mu}+\frac{1}{2\rho} - 4(\frac{1}{\rho} - \frac{L^2}{2\mu}) \Big)\norm{y^{k}-y^{k-1}}^2$, which is the value of the potential function at iteration $k\geq 0$ --here, $c_1,c_2>0$ follows from the conditions on $\rho$ and $\beta$. Furthermore, from the proof of \cite[Theorem 1]{lu2020hybrid}, for all $k\geq 0$ we get}
\begin{equation}
    \begin{aligned}
       &\norm{G_{x_i}(x^k,y^k)}\leq (\beta+ 2L) \norm{x_i^{k+1}-x_i^k},\quad i=1,\ldots,N,\\
       &\norm{G_y(x^k,y^k)}\leq L\norm{x^{k+1}-x^k}  + \frac{1}{\rho}\norm{y^{k+1}-y^k},
    \end{aligned}
\end{equation}
\saa{which immediately implies that}
\begin{equation}
    \begin{aligned}
        \norm{G(x^k,y^k)}^2 &\leq \Big((\beta +2L)^2 + 2L^2 \Big) \norm{x^{k+1}-x^k}^2 + \frac{2}{\rho^2}\norm{y^{k+1}-y^k}^2 \leq \frac{\cP^{k}-\cP^{k+1}}{d_1},\quad \forall~k\geq 0,
    \end{aligned}
\end{equation}
where \saa{$d_1 \triangleq \min \{ 4(\frac{1}{\rho}-\frac{L^2}{2\mu})-\frac{7}{2\rho},~\beta +{L} - \frac{L}{2}-L^2(\frac{2}{\mu^2 \rho} + \frac{\rho}{2})\}/ \max \{\frac{2}{\rho^2},~(\beta +2L)^2 + 2L^2\}$ --note that $d_1>0$ due to conditions on $\rho$ and $\beta$. Therefore, for any $K\geq 1$, one has 
\begin{equation}
\label{eq:hibsa-bound}
    \frac{1}{K}\sum_{k=1}^K\norm{G(x^k,y^k)}^2\leq \frac{1}{K}\cdot\frac{P^1-\underline{\cL}}{d_1}\leq\frac{1}{K}\cdot\frac{\cL(x^1,y^1)-\underline{\cL}+c_3\cD_y^2}{d_1},
\end{equation}
where $c_3\triangleq \frac{2}{\rho^2 \mu}+\frac{1}{2\rho} - 4(\frac{1}{\rho} - \frac{L^2}{2\mu})$, $\cD_y$ is the diameter of the dual domain, and it is assumed that there exists $\underline{\cL}>-\infty$ such that $\cL(x,y)\geq \underline{\cL}$ for all $x\in\cX$ and $y\in\cY$. Note that $c_3>\frac{2}{\rho^2\mu}>32 L\kappa^3$, where we used $0<\rho<\frac{\mu}{4L^2}$; moreover, we also have $\beta>\frac{2L^2}{\mu^2\rho}-
{\frac{L}{2}}>\frac{2\kappa^2}{\rho}-
{L}$; therefore, 
{\begin{eqnarray*}
   \frac{1}{d_1}\geq \frac{(\beta +2L)^2 + 2L^2}{4(\frac{1}{\rho}-\frac{L^2}{2\mu})}&\geq& \frac{\kappa^2}{2}\cdot\frac{\beta^2}{\frac{2\kappa^2}{\rho}-L\kappa^3}\\
   &\geq& \frac{\kappa^2\beta}{2}\cdot\frac{\frac{2\kappa^2}{\rho}-
   {L}}{\frac{2\kappa^2}{\rho}-L\kappa^3}\geq \frac{\kappa^2\beta}{2}\geq \frac{\kappa^2}{2}\Big(\frac{2\kappa^2}{\rho}-
 {L}\Big)\geq{\kappa^2}\Big(4\kappa^3 L-\frac{
   {L}}{2}\Big)=\Omega(L\kappa^5).
\end{eqnarray*}%
}
Thus, combining this bound with \eqref{eq:hibsa-bound} and using $c_3>32 L\kappa^3$, we get $\frac{\cL(x^1,y^1)-\underline{\cL}+c_3\cD_y^2}{d_1}=\Omega(L^2\kappa^8\cD_y^2)$.
This result implies that the total complexity of HiBSA to compute an $\epsilon$-stationary point for WCSC minimax problems is at least $\cO(\kappa^8 L^2\epsilon^{-2})$.}

\subsection{Derivation of complexity in \citep{zhang2020single}}
\xz{In this paper, the objective function is defined as 
$
\min_{x\in \cX} \max_{y\in \cY}  g(x)+f(x,y)-h(y) 
$,
where $g(\cdot)\triangleq\mathds{1}_{X}(\cdot)$ and $h(\cdot)\triangleq\mathds{1}_{Y}(\cdot)$ denote the indicator functions of the closed convex sets $X\subset \cX$ and $Y\subset \cY$. \cite{zhang2020single} adopt the metric \textbf{(M4)} 
\mg{for characterizing} 
$\epsilon$-stationarity; \mg{the metric \textbf{(M4)} and its connections to some other convergence metrics is discussed in Section \ref{sec:metrics}}. 
\saa{\mg{Basically, }
for given $\epsilon>0$, $(x_\epsilon,y_\epsilon)$ is $\epsilon$-stationary \mg{(in the sense of the \textbf{(M4)} metric)} if
\begin{equation}
    \label{eq:eps-stationary-m4}
    \exists u \in \grad_x f(x_\epsilon,y_\epsilon)+\partial g(x_\epsilon),\quad \exists v\in -\grad_y f(x_\epsilon,y_\epsilon) + \partial h(y_\epsilon)\; :\quad
    \max\{\|u\|,~\|v\|\}\leq \epsilon.
\end{equation}}}%
For convenience of 
\saa{the reader}, in this section we use the notation from \citep{zhang2020single}. Following the \saa{parameter} conditions in \cite[Theorem 3.4]{zhang2020single}, \saa{we consider selecting the algorithm parameters $p$ and $c$ such that $p = 4L$ and $c = \frac{1}{6L}$, where $L$ denotes the Lipschitz constant of $\grad f$. This choice implies the following bounds on other parameters showing up in the proof of \cite[Theorem 3.4]{zhang2020single}:} 
\begin{enumerate}
    \item \cite[Theorem 3.4]{zhang2020single}, $\alpha = \cO(1/L),\;\beta = \cO\Big(\min\Big\{\frac{1}{\sqrt{\saa{K}}}, \frac{1}{L}\Big\}\Big)$ --see also \cite[page 29]{zhang2020single};
    \item  \cite[Lemma B.2, page 19]{zhang2020single}, $\sigma_1=\cO(1)$, $\sigma_2=\cO(1)$, $\sigma_3 = \cO(1)$;
    \item  \cite[Lemma B.9, page 23]{zhang2020single}, $\kappa=\cO(1)$;\footnote{\saa{Unlike the notation in our paper, $\kappa$ does not denote the condition number $L/\mu$ in \citep{zhang2020single}, it is just some $\cO(1)$ constant.}}
    \item \cite[Lemma B.11, page 26]{zhang2020single}, $\bar \lambda = \cO(L)$;
    \item  \cite[eq.~(B.63), page 29]{zhang2020single}, $\lambda_1 = \cO(\cD_y),\;\lambda_2 = \cO(L\cD_y^2),\;\lambda_3 = \cO(\cD_y^2)$.
\end{enumerate}
\saa{For any given $K\in\integers_{++}$, the proof of \cite[Theorem 3.4, \saa{page 29}]{zhang2020single} considers two exhaustive cases where $\beta$ is set to $\frac{1}{\sqrt{K}}$: if \textbf{case 1} holds,} then \mg{the bound},
\begin{equation*}
\begin{aligned}
\max \left\{\left\|x^k-x^{k+1}\right\|^2,\left\|y^k-y_{+}^k\left(z^k\right)\right\|^2,\left\|z^k-x^{k+1}\right\|^2\right\} \leq \max \left\{\lambda_2 \beta^2, \lambda_1^2 \beta^2, \lambda_3 \beta\right\}\leq B_1
\end{aligned}
\end{equation*}
holds for some \saa{$k_* \leq K$}, where $B_1=\cO\Big(\cD_y^2\cdot
\saa{\frac{\max\{L,~1\}}{K}}
\Big)$. \mg{Furthermore,} according to \cite[Lemma B.11, page 26]{zhang2020single}, this point \saa{$(x^{k_*},y^{k_*})$ is an $\bar\lambda\sqrt{B_1}$-stationary; hence, $(x^{k_*},y^{k_*})$ is $\cO\Big(L\cD_y\cdot\sqrt{\frac{\max\{L,1\}}{K}}\Big)$-stationary since $\bar\lambda=\cO(L)$.} 
\saa{Therefore, $K \geq \cO\Big(\frac{L^4\cD_y^4}{\epsilon^4}\Big)$ \mg{iterations} is required to guarantee $\epsilon$-stationary point for this case.} \saa{On the other hand, if \textbf{case 2} holds, then there exists $k_*\leq K$ such that $(x^{k_*},y^{k_*})$ is $\sqrt{(\phi_0-\underline{f})/(K\beta)}$-stationary; hence, $(x^{k_*},y^{k_*})$ is $\sqrt{(\phi_0-\underline{f})/\sqrt{K}}$-stationary since $\beta=1/\sqrt{K}$, where $\underline{f}\in\reals$ such that $\underline{f}\leq \max_{y\in Y}f(x,y)$ for all $x$, and $\phi^0=F(x^0,z^0;y^0)-2d(y^0,z^0)+2P(z^0)$ for $F(x,z;y)\triangleq f(x,y)+\frac{p}{2}\norm{x-z}^2$, $d(y,z)\triangleq \min_{x\in X}F(x,z;y)$ and $P(z)=\min_{x\in X}\max_{y\in Y}F(x,z;y)$. Therefore, $K \geq \cO\Big(\frac{(\phi_0-\underline{f})^2}{\epsilon^4}\Big)$ is required to guarantee $\epsilon$-stationary point for this case. Thus, combining the two exhaustive cases, we conclude that $K\geq \cO\Big(\frac{L^4\cD_y^4+(\phi_0-\underline{f})^2}{\epsilon^4}\Big)$ to ensure that the algorithm can compute an $\epsilon$-stationary point within $K$ iterations.}

\subsection{Derivation of complexity in \citep{dvurechensky2017gradient}}
In this paper, a special case of \eqref{eq:main-problem} \saa{is considered when} $h(\cdot)=0$ and \saa{the primal-dual coupling is \textit{bilinear} of the form} $f(x,y)=s(x)-r(y)+\fprod{Ax,y}$ such that $s(\cdot)$ is $L_s$-smooth (possibly nonconvex) and $r(\cdot)$ is $L_r$-smooth and $\mu$-strongly convex. \saa{An adaptive triple-loop method is proposed; in the outer loop the method takes inexact proximal gradient steps for the primal function using the inexact oracle values satisfying the backtracking condition, in the middle loop backtracking steps are implemented for the primal function, and each backtracking step and checking the backtracking condition require solving $\max_{y\in Y} f(x^k,y)$ inexactly. Let $F(\cdot)=g(\cdot)+\Phi(\cdot)$ denote the primal function, where $\Phi(\cdot)=\max_{y\in Y}f(\cdot,y)$, and $F^*=\min_{x\in\cX}F(x)$.} 
From the discussion in
\cite[Corollary 1]{dvurechensky2017gradient}, \saa{the 
number of times the \mg{backtracking} 
condition checked is bounded by
$\cO\big(L(F(x^0)-F^*)\epsilon^{-2}\big)$,} where $L = L_v+L_s$ and $L_v=\frac{\|A\|^2}{\mu}$. Furthermore, according to \cite[Algorithm 1]{dvurechensky2017gradient} and the discussion in \cite[eq.(21)-(32)]{dvurechensky2017gradient}, given $x^k$ corresponding to the $k$-th outer iteration, computing the inexact oracle \saa{for the pair $(\Phi(x^k), \grad \Phi(x^k))$, in the sense of \cite[\mg{Definition} 1 on page 4]{dvurechensky2017gradient},
which is $(f(x^k,\tilde y^k),~\grad s(x^k)+A^\top \tilde y^k)$. This step requires to compute $\tilde y^k$ such that}
\begin{equation}
    \Phi(x^k) - f(x^k,\tilde y^k)=\cO(\saa{\epsilon^2}/M_k), 
\end{equation} 
which requires $\cO(\sqrt{L_r/\mu}\log(\saa{L/\epsilon^2}))$ \saa{projected gradient iterations (that involve computing projections onto $Y$ and computing $\grad_y f(x_k,\cdot)$)} by employing an accelerated backtracking method for strongly convex problems~{\citep{calatroni2019backtracking,rebegoldi2022scaled}}, \saa{where $M_k$ is an estimate of $L$, and according to \cite[Corollary 2]{dvurechensky2017gradient}, $M_k\leq 2 L$ for all $k\geq 0$. After each computation of $\tilde y^k$, a projected inexact gradient step using the inexact oracle is computed from the point $x^k$ to compute a candidate primal point $w^k$, and the backtracking condition is checked. If it holds for $w^k$, then $x^{k+1}\gets w^k$; otherwise, $M_k\gets 2 M_k$ and $\tilde y^k$ is computed again.} 
Thus, 
the total \saa{gradient complexity, i.e., number of times $\grad f$ is computed, to compute an $\epsilon$-stationary point in the sense of metric \textbf{(M1)} is} $\cO(\sqrt{\frac{L_r}{\mu}}\Big(L_s+\frac{\|A\|^2}{\mu}\Big)\saa{(F(x^0)-F^*)}\epsilon^{-2}\log(\epsilon^{-1}) )$. Indeed, \saa{since the Lipschitz constant of $\grad f$ satisfies $L\geq \max\{L_r,~L_s,~\|A\|\}$}, 
the total gradient complexity can be simplified as $\cO(L\kappa^{1.5}\epsilon^{-2}\log(\epsilon^{-1}))$.

\subsection{Derivation of complexity in~\cite{xu2023unified}}
\rv{The authors of~\cite{xu2023unified} consider $\min_{x\in \cX}\max_{y\in \cY}\cL(x,y)=\mathds{1}_{X}(x)+f(x,y)-\mathds{1}_{Y}(y)$, where $X$ and $Y$ are closed convex sets, and $f$ is a smooth function such that $f$ is strongly concave in $y$. It is shown that an alternating GDA method is guaranteed to generate an $\epsilon$-stationary point in terms of metric \textbf{(M2)} within $\frac{F_0-\underline{F}}{d_1\epsilon^2}$ iterations whenever the primal ($\tau$) and dual ($\sigma$) step sizes satisfy $\tau<\min\{\frac{1}{L},~\frac{1}{L^2\sigma+4\kappa^2/\sigma}\}$ and $\sigma\leq \frac{1}{4L\kappa}$, where $d_1=\min\{\frac{1}{2\tau}-\frac{\sigma L^2}{2}-\frac{2\kappa^2}{\sigma},~\frac{3\mu-\sigma L^2}{2}+\frac{\mu-4\sigma L^2}{2\sigma\mu}\}/\max\{\frac{1}{\tau^2}+2L^2,~\frac{2}{\sigma^2}\}$, $F_0=\cO(\cL(x_0,y_0))$, $\underline{F}=\cL^*-(\mu+\frac{7}{2\sigma}-\frac{\sigma L^2}{2}-2L\kappa)\cD_y^2$ and $\cL^*=\inf_{x\in X, y\in Y}\cL(x,y)$. Thus, choosing $\tau=\cO(\frac{1}{L\kappa^3})$ and $\sigma=\Theta(\frac{1}{L\kappa})$ implies that $\frac{1}{d_1}=\Omega(L\kappa^5)$ and $F_0-\underline{F}=\Omega(\cL(x_0,y_0)-\cL^*+L\kappa\cD_y^2)$; therefore, the upper complexity provided by the authors can be bounded from below as follows:
\begin{align*}
    \frac{F_0-\underline{F}}{d_1\epsilon^2}=\Omega\Big(L\kappa^5(\cL(x_0,y_0)-\cL^*+L\kappa\cD_y^2)\epsilon^{-2}\Big).
\end{align*}
For $\cO(L^4\cD_y^4/\epsilon^4)$ complexity bound for WCMC setting, see \cite[Theorem 3.2]{xu2023unified}.} 

\section{Discussion on the Convergence Metric\mg{s}}
\label{sec:metrics}
In this paper we 
\saa{adopt \textbf{(M2)}, also defined} in \cref{Def:stationary} below, as our primary measure of convergence for both the WCSC and WCMC cases.
This choice is motivated by the fact that \textbf{(M2)} \saa{is easy to check in practice.} Also note that in the absence of 
\saa{closed convex functions $g$ and $h$, $\norm{G(x,y)}\leq \epsilon$ reduces to $\norm{\grad f(x,y)}\leq \epsilon$.} 
Prior to 
\saa{discussing} 
various forms of stationarity and their 
\saa{connection with each other, we give some preliminary definitions and notation.} 
\mg{We} \saa{first} recall that
    $\Phi(x) \triangleq
    \max_{y\in\cY} f(x,y) - h(y)$, and \saa{the primal function is given by} $F(\cdot) \triangleq g(\cdot) + \Phi(\cdot)$.
\mg{Next}, we 
\mg{provide} the definition 
of \mg{the} Moreau envelope \saa{of a real-valued function.}
\begin{definition}\label{Def: Moreau envelope}
Let \saa{$r:\mathbb{R}^n\rightarrow\mathbb{R}\cup\{+\infty\}$} be $L$-weakly convex. Then, for any  $\lambda\in(0,L^{-1})$, \mg{the} Moreau envelope of $r$ is defined as \mg{the function} $r_{\lambda}:\mathbb{R}^n\rightarrow\mathbb{R}$ such that $r_{\lambda}(x)\triangleq\min_{w\in\cX} r(w) + \frac{1}{2\lambda}\|w-x \|^2$. 
\end{definition}%

\saa{In the rest of this section, we make the following assumption.}
\begin{assumption}\label{aspt:weakly-convex}
    \saa{There exists $m\geq 0$ such that for all} $y\in\saa{\dom h}$, the function $f(\cdot,y)$ is $m$-weakly convex, \saa{i.e.,} $f(\cdot,y) + \frac{m}{2}\|\cdot\|^2$ is convex.
\end{assumption}
\begin{remark}
Under Assumption \ref{ass1}, $f(\cdot,y)$ is naturally $L$-weakly convex; 
\saa{that said,} it is possible that $f$ satisfies \mg{Assumption} 
\ref{aspt:weakly-convex} with \saa{$m\ll L$}. 
\end{remark}%
\saa{Under Assumption~\ref{aspt:weakly-convex}, the primal function $F$ is $m$-weakly convex; hence,} 
\saa{for any $\lambda\in(0,1/m)$, $F_\lambda$} is well-defined for both WCSC and WCMC cases and it \mg{is differentiable such that} $\grad F_\lambda(x) = \frac{1}{\lambda}(x-\prox{\lambda F}(x))$ \saa{--this is a standard result, for details see, e.g., \cite[Lemma 2]{zhang2022sapd+}. Finally, we recall the gradient map.}
\begin{definition}
    For given $\eta_x,\eta_y>0$, the gradient map $G(x,y)=[G_x(x,y)^\top, G_y(x,y)^\top]^\top$, and $G_x(x,y)\triangleq(x-\prox{\eta_x g}(x-\eta_x\grad_x f(x,y)))/\eta_x$ and $G_y(x,y)\triangleq(\prox{\eta_y h}(y+\eta_y\grad_y f(x,y))-y)/\eta_y$.
\end{definition}
Now, we are ready to state 
\saa{various important} types of stationarity \saa{adopted} in the literature \saa{on nonconvex minimax problems}.
\begin{definition}\label{Def:stationary}
Given $\epsilon>0$, consider the minimax problem in~\eqref{eq:main-problem}. 
\begin{enumerate}[align=left]
    \item[\textbf{(M1):}] $x\in\cX$ is $\epsilon$-stationary in \textbf{(M1)} if $\|\cG(x)\|\leq \epsilon$ for some $\alpha>0$, where the gradient map of $\Phi$ is defined as $\cG(x) = \big(x-\prox{\alpha g}\big(x-\saa{\alpha}\grad \Phi(x)\big)\big)/\alpha$.
    \item[\textbf{(M2):}]  $(x,y)\in\cX\times\cY$ is $\epsilon$-stationary in \textbf{(M2)} if 
    $\|G(x,y)\|\leq \epsilon$ for some $\eta_x,\eta_y>0$.
    \item[\textbf{(M3):}] $x\in\cX$ is $\epsilon$-stationary in \textbf{(M3)} if $\|\grad F_{\lambda}(x)\|\leq \epsilon$ \saa{for some $\lambda\in (0,1/m)$}.
    \item[\textbf{(M4):}] $(x,y)\in\cX\times\cY$ is $(\epsilon_x,\epsilon_y)$-stationary in \textbf{(M4)} if $\exists u \in \grad_x f(x,y)+ \saa{\partial} g(x)$ and $\exists v\in \grad_y f(x,y) - \saa{\partial} h(y)$ 
    such that $\|u\|\leq\epsilon_x$ and  $\|v\|\leq \epsilon_y$.
    \item[\textbf{(M5):}] $x\in\cX$ is $\epsilon$-stationary in \textbf{(M5)} if $\exists \hat{x} \in \cX$ such that $\inf_{\|d\| \leq 1} F^{\prime}(\hat{x} ; d) \geq-\epsilon$ and $\|\hat{x}-x\| \leq \epsilon$, where $F'(\hat{x};d)\triangleq\lim_{t\rightarrow 0} \frac{F(\hat{x}+t\cdot d) - F(\hat{x})}{t}$.
    \item[\textbf{(M6):}] for the case $g(\cdot)=0$, $(x,y)\in\cX\times\cY$ is $\epsilon$-stationary in \textbf{(M6)} if for some $\eta_y>0$, $\|\grad_x f(x,\hat y)\|\leq \epsilon$ and $\|G_y(x,y)\|\leq \epsilon$ where $\hat{y} = \prox{\eta_y h}(y+\eta_y \grad_y f(x,y))$.
\end{enumerate}
\end{definition}  
\begin{remark}
    \saa{The metric \textbf{(M1)} requires differentiability of $\Phi$, which holds when $f(x,\cdot)$ is strongly concave for any given $x\in\cX$. Hence, this metric applies to WCSC setting, not to the WCMC setting. 
    Moreover, for any given $\epsilon>0$, \textbf{(M2)} and \textbf{(M4)} are equivalent when $g(\cdot)=h(\cdot)=0$ and $\epsilon_x=\epsilon_y=\epsilon$.}
\end{remark}
{The next result shows that the guarantees in 
\textbf{(M2)} can be converted into guarantees in \textbf{(M4)}.}
\begin{theorem}\label{thm:m2-to-m4}
       Suppose Assumption~\ref{ass1} 
       holds and $(x,y)$ is $\epsilon$-stationary in \textbf{(M2)} for some $\eta_x,\eta_y> 0$, then $(\hat{x},\hat{y})$ is $(\epsilon_x,\epsilon_y)$-stationary in \textbf{(M4)}, where  \saa{$\hat{x}=\prox{\eta_x g}(x-\eta_x \grad_x f(x,y))$} 
       and \saa{$\hat{y} =  \prox{\eta_y h}(y + \eta_y \grad_y f(x,y))$} 
       for \saa{$\epsilon_x=\epsilon_y=\big(1+(\eta_x +\eta_y)L\big)\epsilon$.}
\end{theorem}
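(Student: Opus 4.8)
The plan is to unpack the two stationarity definitions and connect them via the Lipschitz continuity of $\grad f$ together with the firm nonexpansiveness (indeed, $1$-Lipschitzness) of the proximal operators of $g$ and $h$. First I would record what $\epsilon$-stationarity in \textbf{(M2)} gives us: $\norm{G(x,y)}\le\epsilon$ where $G_x(x,y)=(x-\hat x)/\eta_x$ and $G_y(x,y)=(\hat y-y)/\eta_y$, with $\hat x=\prox{\eta_x g}(x-\eta_x\grad_x f(x,y))$ and $\hat y=\prox{\eta_y h}(y+\eta_y\grad_y f(x,y))$. In particular $\norm{G_x(x,y)}\le\epsilon$ and $\norm{G_y(x,y)}\le\epsilon$, i.e.\ $\norm{x-\hat x}\le\eta_x\epsilon$ and $\norm{y-\hat y}\le\eta_y\epsilon$. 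Next I would extract subgradient inclusions from the definitions of the proximal points: the optimality condition for $\hat x=\argmin_w g(w)+\frac{1}{2\eta_x}\norm{w-(x-\eta_x\grad_x f(x,y))}^2$ yields $\frac{1}{\eta_x}(x-\hat x)-\grad_x f(x,y)\in\partial g(\hat x)$, hence $u\triangleq\grad_x f(\hat x,\hat y)+\big[\frac{1}{\eta_x}(x-\hat x)-\grad_x f(x,y)\big]\in\grad_x f(\hat x,\hat y)+\partial g(\hat x)$. Similarly, $\hat y=\argmin_w h(w)+\frac{1}{2\eta_y}\norm{w-(y+\eta_y\grad_y f(x,y))}^2$ gives $\frac{1}{\eta_y}(\hat y-y)-\grad_y f(x,y)\in-\partial h(\hat y)$, so defining $v\triangleq-\grad_y f(\hat x,\hat y)-\big[\frac{1}{\eta_y}(\hat y-y)-\grad_y f(x,y)\big]$ we obtain $v\in\grad_y f(\hat x,\hat y)-\partial h(\hat y)$; this is exactly the membership required in \textbf{(M4)} (with the sign convention of the excerpt's Definition~\ref{Def:stationary}).

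The remaining step is to bound $\norm{u}$ and $\norm{v}$. For $u$ I would write $u=G_x(x,y)+\big(\grad_x f(\hat x,\hat y)-\grad_x f(x,y)\big)$, so that $\norm{u}\le\norm{G_x(x,y)}+L\norm{(\hat x,\hat y)-(x,y)}\le\epsilon+L\big(\norm{\hat x-x}+\norm{\hat y-y}\big)$ using $L$-Lipschitzness of $\grad f$ on $\dom g\times\dom h$ from Assumption~\ref{ass1} (and, if one prefers the joint-norm bound, $\norm{(\hat x-x,\hat y-y)}\le\norm{\hat x-x}+\norm{\hat y-y}$). Plugging in $\norm{\hat x-x}\le\eta_x\epsilon$ and $\norm{\hat y-y}\le\eta_y\epsilon$ gives $\norm{u}\le\epsilon+L(\eta_x+\eta_y)\epsilon=(1+(\eta_x+\eta_y)L)\epsilon$. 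The bound on $\norm{v}$ is symmetric: $v=G_y(x,y)+\big(\grad_y f(x,y)-\grad_y f(\hat x,\hat y)\big)$, so $\norm{v}\le\norm{G_y(x,y)}+L\norm{(\hat x,\hat y)-(x,y)}\le(1+(\eta_x+\eta_y)L)\epsilon$. Hence $\epsilon_x=\epsilon_y=(1+(\eta_x+\eta_y)L)\epsilon$, which is precisely the claimed statement, and $(\hat x,\hat y)$ is $(\epsilon_x,\epsilon_y)$-stationary in \textbf{(M4)}.

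I do not expect a genuine obstacle here; this is a direct computation. The only care points are: (i) getting the sign/inclusion conventions right between $\partial h$ appearing with a minus in the $y$-update versus how \textbf{(M4)} is phrased in Definition~\ref{Def:stationary} — I would double-check that the excerpt writes ``$v\in\grad_y f(x,y)-\partial h(y)$'' and align to that exactly; (ii) being explicit that $\hat x\in\dom g$ and $\hat y\in\dom h$ (automatic from the proximal maps) so that the Lipschitz bound on $\grad f$ applies at $(\hat x,\hat y)$; and (iii) deciding whether to use the crude $\norm{(a,b)}\le\norm{a}+\norm{b}$ split or keep the Euclidean joint norm — the stated constant $(1+(\eta_x+\eta_y)L)$ matches the split, so I would use that. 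If one instead wanted the tighter constant $1+\sqrt{\eta_x^2+\eta_y^2}\,L$, that would also follow, but the paper's stated form is obtained from the triangle-inequality split, so I will present that version.
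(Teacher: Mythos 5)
Your proof is correct and follows essentially the same route as the paper's: extract the subgradient inclusions from the prox optimality conditions, shift the gradient argument from $(x,y)$ to $(\hat x,\hat y)$, and bound the resulting perturbation via $L$-Lipschitzness by $L(\|\hat x-x\|+\|\hat y-y\|)\le L(\eta_x+\eta_y)\epsilon$. The only blemish is a sign inconsistency between your two expressions for $v$ (one carries $+G_y(x,y)$, the other $-G_y(x,y)$, so as written it is $-v$ rather than $v$ that lies in $\grad_y f(\hat x,\hat y)-\partial h(\hat y)$), but since only $\|v\|$ enters the bound this does not affect the conclusion.
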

\begin{proof}
\mg{Note that} by the optimality conditions \mg{corresponding to the} 
proximal operator, we have
    \begin{equation}
        G_x(x,y) = (x-\hat{x})/\eta_x \in \saa{\grad_x} f(x,y) + \partial g(\hat{x}),\quad G_y(x,y) = (\hat{y}-y)/\eta_y\in \grad_y f(x,y) - \partial h(\hat{y}). \label{eq-opt-cond}
        \end{equation} 
\saa{Since $(x,y)$ is $\epsilon$-stationary in \textbf{(M2)}, \eqref{eq-opt-cond} implies that $\exists u' \in\grad_x f(x,y)+\saa{\partial} g(\hat x)$ and $\exists v'\in \grad_y f(x,y) - \saa{\partial} h(\hat y)$ 
    such that $\max\{\|u'\|,~\|v'\|\}\leq \epsilon$. Let $\hat u\triangleq u'+\grad_x f(\hat{x},\hat{y}) - \grad_x f(x,y)$ and $\hat v\triangleq v'+\grad_y f(\hat{x},\hat{y}) - \grad_y f(x,y)$. Then, it follows from the smoothness of $f$ that}
\begin{equation*}
\begin{aligned}
        \|\hat u\| 
        \leq  \norm{u} + \norm{\grad_x f(\hat{x},\hat{y}) - \grad_x f(x,y)} \leq \epsilon + L\|\hat{x} - x\| +   L\|\hat{y} - y\|\leq (1 + \eta_x L + \eta_y L)\epsilon.
\end{aligned}
\end{equation*}
In a similar manner, we can show that $\norm{\hat v}\leq \big(1 + (\eta_x+\eta_y) L\big)\epsilon$, establishing the desired result.
\end{proof}

{
Next we provide some results from the literature describing how guarantees in different 
$\epsilon$-stationarity metrics can be transferred from one to another. In particular, besides its relation to the \textbf{(M4)} metric we studied in Theorem \ref{thm:m2-to-m4}, our results in \textbf{(M2)}, can be transferred to any of the other metrics mentioned in Definition~\ref{Def:stationary}. 
}
\begin{theorem}
Suppose Assumptions \ref{ass1} and \ref{aspt:weakly-convex} hold. 
    \begin{enumerate}
        \item[\textbf{i.}] \cite[Theorem 2]{zhang2022sapd+} Given $\lambda \in (0,1/m)$ and $\epsilon>0$, under Assumption~\ref{assumption:noise}, if $x\in\cX$ is $\epsilon$-stationary in expectation with respect to \textbf{(M3)}, i.e., $\mathbf{E}[\|\grad F_\lambda(x)\|]\leq \epsilon$, then an $\cO(\epsilon)$-stationary point $\hat{x}\in\cX$ in \textbf{(M1)} \saa{with $\alpha=\lambda$} can be computed within $\tilde\cO(\epsilon^{-2})$ stochastic oracle calls. 
        \item[\textbf{ii.}] \cite[Proposition 1]{kong2021accelerated} If $(x,y)\in\cX\times\cY$ is $(\epsilon_x,\epsilon_y)$-stationary in \textbf{(M4)}, then $\hat x\in\cX $ is $\hat{\epsilon}$-stationary in \textbf{(M5)} for $\hat{\epsilon} = \max\Big\{\epsilon_x + 2\sqrt{2m\cD_y\epsilon_y}, \sqrt{\frac{2\cD_y\epsilon_y}{m}}\Big\}$.
        \item[\textbf{iii.}] \cite[Proposition 2]{kong2021accelerated} Given $\lambda \in (0,1/m)$, $\epsilon,\hat\epsilon>0$ such that $\hat\epsilon\leq\frac{\lambda^3 \epsilon}{\lambda^2+2(1-\lambda m)(1+\lambda)}$, if $x$ is $\hat\epsilon$-stationary in \textbf{(M5)}, then $x$ is $\epsilon$-stationary in \textbf{(M3)}. 
        \item[\textbf{iv.}] \cite[Proposition 2]{kong2021accelerated} Given $\lambda \in (0,1/m)$, $\epsilon,\hat\epsilon>0$ such that $\epsilon\leq \hat\epsilon\cdot\min\{1,1/\lambda\}$, if $x$ is  $\epsilon$-stationary in \textbf{(M3)}, then 
        \saa{x} is $\hat\epsilon$-stationary in \textbf{(M5)} \saa{with $\hat x=x_\lambda$}, where $x_\lambda = \argmin_{u\in\cX} F(u) + \frac{1}{2\lambda}\|u-x\|$.
        \item[\textbf{v.}]  \cite[Proposition 4.11]{lin2020gradient} 
        Consider \eqref{eq:main-problem} with $g(\cdot)=0$, $h(\cdot)=\mathds{1}_{Y}(\cdot)$ for some convex set $Y\subset \cY$ \saa{such that $\cD_y<\infty$}, 
        and $\mu>0$. If 
        a point $x\in\cX$ is $\epsilon$-stationary in \textbf{(M1)}, \saa{i.e., $\norm{\grad\Phi(x)}\leq \epsilon$,} then in the sense of \textbf{(M6)} \saa{with $\eta_y = \frac{1}{L}$}, an $\cO(\epsilon)$-stationary point 
        can be computed within
        $\cO(\kappa\log(\epsilon^{-1}))$ oracle calls for the deterministic setting, and an $\cO(\epsilon)$-stationary point in expectation can be computed within $\cO(\epsilon^{-2})$ oracle calls for the stochastic setting under Assumption~\ref{assumption:noise}. Conversely, if a point $(x',y')\in\cX\times\cY$ is $\frac{\saa{\epsilon}}{\kappa}$-stationary in \textbf{(M6)} \saa{with $\eta_y=\frac{1}{L}$}, then $x'$ is $\cO(\epsilon)$-stationary in \textbf{(M1)}.
        \item[\textbf{vi.}] \cite[Proposition 4.12]{lin2020gradient}  
        Consider \eqref{eq:main-problem} with $g(\cdot)=0$, $h(\cdot)=\mathds{1}_{Y}(\cdot)$ for some convex set $Y\subset \cY$ \saa{such that $\cD_y<\infty$}, and $\mu=0$. If
        a point $x\in\cX$ is \saa{$\epsilon$-stationary 
        in \textbf{(M3)} with $\lambda=\frac{1}{2L}$}, then in the sense of \textbf{(M6)} \saa{with $\eta_y = \frac{1}{L}$}, an $\cO(\epsilon)$-stationary point can be computed within
        $\cO(\epsilon^{-2})$ oracle calls for the deterministic setting, and $\cO(\epsilon)$-stationary point in expectation can be computed within $\cO(\epsilon^{-4})$ oracle calls for the stochastic setting under Assumption~\ref{assumption:noise}. Conversely, if a point $(x',y')\in\cX\times\cY$ is  $\frac{\epsilon^2}{L\cD_y}$-stationary in \textbf{(M6)} with $\eta_y=\frac{1}{L}$, then $x'$ is $\cO(\epsilon)$-stationary in \textbf{(M3)} \saa{with $\lambda=\frac{1}{2L}$.}
        \item[\textbf{vii.}] \cite[Proposition 2.1]{yang2022faster} 
        Consider \eqref{eq:main-problem} with $g(\cdot)=h(\cdot)=0$ and $\mu>0$. If a point $x \in \cX$ is $\epsilon$-stationary in \textbf{(M1)}, \saa{i.e., $\norm{\grad\Phi(x)}\leq \epsilon$,} and $\norm{\grad_y f(x,\tilde y)}\leq \tilde\epsilon$ \saa{for some $\tilde y\in\cY$}, then an $\cO(\epsilon)$-stationary point $(x,y)$ in \textbf{(M2)}, i.e., $\norm{\grad f(x,y)}\leq \epsilon$, can be computed within $\cO(\kappa \log(\frac{\kappa \tilde\epsilon}{\epsilon}))$ oracle calls for the deterministic setting, and an $\cO(\epsilon)$-stationary point $(x,y)$ in \textbf{(M2)} (in expectation) can be computed within $\tilde\cO(\kappa + \kappa^3 \sigma^2 \epsilon^{-2})$ stochastic oracle calls under Assumption~\ref{assumption:noise}. Conversely, if a point $(\tilde x, \tilde y)$ satisfies $\|\grad_x f(\tilde x,\tilde y)\|\leq \epsilon$ and $\|\grad_y f(\tilde x, \tilde y)\|\leq \epsilon/\sqrt{\kappa}$, then a point $x\in\cX$ which is $\cO(\epsilon)$-stationary in \textbf{(M1)}, i.e., $\norm{\grad \Phi(x)}\leq \epsilon$, can be computed within $\cO(\kappa \log(\kappa))$ oracle calls for the deterministic setting, and within $\tilde\cO(\kappa + \kappa^5 \sigma^2 \epsilon^{-2})$ oracle calls for the stochastic setting under Assumption~\ref{assumption:noise}.
    \end{enumerate}
\end{theorem}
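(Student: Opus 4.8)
The plan is to obtain each of the seven items by invoking the corresponding result from the cited paper and checking two things: that our standing hypotheses (\cref{ass1} together with \cref{aspt:weakly-convex}, and \cref{assumption:noise} wherever a stochastic oracle appears) are no stronger than what that result requires, and that the parameter identifications ($\alpha$, $\lambda$, $\eta_x$, $\eta_y$) and the roles of $m$, $L$, $\kappa$, and $\cD_y$ quoted in the statement match the ones in the source after translating notation. Several items also carry their own extra hypotheses ($g(\cdot)=0$, $h(\cdot)=\mathds{1}_Y(\cdot)$ with bounded $Y$, or \cref{assumption:noise}), which are stated inside the items themselves, so I would simply verify these are consistent with the setup of \eqref{eq:main-problem} and with \cref{assp_ydomain} in the WCMC case.

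For item \textbf{i}, I would first record the standard facts (see e.g.\ \cite[Lemma 2]{zhang2022sapd+}) that under \cref{aspt:weakly-convex} the primal function $F = g + \Phi$ is $m$-weakly convex, hence for $\lambda \in (0,1/m)$ the Moreau envelope $F_\lambda$ is differentiable with $\grad F_\lambda(x) = (x - \prox{\lambda F}(x))/\lambda$, and the proximal point $x_\lambda = \prox{\lambda F}(x)$ satisfies $\|\mathcal G(x_\lambda)\| = \cO(\|\grad F_\lambda(x)\|)$ for the gradient map $\mathcal G$ with step $\alpha = \lambda$. Given the in-expectation guarantee $\E{\|\grad F_\lambda(x)\|}\le \epsilon$, I would then reproduce the randomized-output / Markov argument of \cite[Theorem 2]{zhang2022sapd+}: it turns this bound into an $\cO(\epsilon)$-bound on $\|\mathcal G(\hat x)\|$ for an explicitly constructed nearby point $\hat x$, where computing $\hat x$ amounts to approximating $\prox{\lambda F}(x)$ by running an inner stochastic (strongly convex) solver to accuracy $\cO(\epsilon)$, costing $\tilde\cO(\epsilon^{-2})$ oracle calls. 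No new analysis is needed beyond transcribing that argument with $\alpha$ set to $\lambda$.

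Items \textbf{ii}–\textbf{iv} are precisely \cite[Propositions 1 and 2]{kong2021accelerated} — the implications (M4)$\Rightarrow$(M5), (M5)$\Rightarrow$(M3) and (M3)$\Rightarrow$(M5) — whose only structural inputs are the $m$-weak convexity of $F$ (from \cref{aspt:weakly-convex}) and, for item \textbf{ii}, a finite dual diameter $\cD_y$; I would quote them and check that the thresholds on $\hat\epsilon$ in terms of $\epsilon$, $\lambda$, $m$ are exactly those in the source. Items \textbf{v}–\textbf{vii} are \cite[Propositions 4.11 and 4.12]{lin2020gradient} and \cite[Proposition 2.1]{yang2022faster}, stated for the smooth/indicator case with $g=0$. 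Their forward directions reduce to running an inner strongly-concave subsolver on $\max_y f(x,y)-h(y)$: accelerated gradient ascent, giving $\cO(\kappa\log(1/\epsilon))$ calls when $\mu>0$, or plain SGD, giving $\cO(\epsilon^{-2})$ (WCSC) or $\cO(\epsilon^{-4})$ (WCMC) calls under \cref{assumption:noise}; their converse directions are the deterministic inequalities combining $L$-Lipschitzness of $\grad f$, the $\kappa$-Lipschitzness of $y_*(\cdot)$ from \cite[Lemma A.3]{nouiehed2019solving}, and firm nonexpansiveness of the prox, with $\eta_y$ fixed to $1/L$ (and $\lambda = 1/(2L)$ in the WCMC case). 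I would transcribe these with exactly the parameter choices indicated in the statement.

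The main obstacle is bookkeeping rather than any genuinely new estimate: Definition~\ref{Def:stationary} collects six differently-normalized notions of stationarity using different reference points ($x$ versus the prox image $\hat x$ versus $x_\lambda$), and each cited source carries its own conventions and its own placement of the factors $m$, $L$, $\kappa$, $\cD_y$. The real work of the proof is therefore to (a) confirm that each cited statement is proved under hypotheses implied by \cref{ass1}–\cref{aspt:weakly-convex} (plus $\cD_y<\infty$, i.e.\ \cref{assp_ydomain}, wherever the WCMC case or a dual-diameter constant enters), and (b) check that the constants and complexity exponents in items \textbf{i}–\textbf{vii} follow from the sources via the substitutions $\alpha\mapsto\lambda$, $\eta_y\mapsto 1/L$, $\lambda\mapsto 1/(2L)$, and $m\mapsto m$. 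Together with the reduction already proved in \cref{thm:m2-to-m4}, this completes the collection of metric conversions; there is no further obstacle.
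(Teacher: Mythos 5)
Your proposal matches the paper's treatment: the paper gives no proof of this theorem at all, presenting each item purely as a citation to the corresponding external result, which is exactly the invoke-and-verify strategy you describe. Your additional sketches of how each source argument goes (Moreau-envelope/prox identities for item i, weak convexity and dual diameter for ii--iv, inner subsolvers plus Lipschitzness of $y_*$ for v--vii) are consistent with those sources and introduce no gap.
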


\end{document}